\newif\ifDRAFT
\newtheorem{theorem}{Theorem}[section]
\newtheorem{proposition}[theorem]{Proposition}
\newtheorem{lemma}[theorem]{Lemma}
\newtheorem{corollary}[theorem]{Corollary}
\newtheorem{conjecture}[theorem]{Conjecture}
\theoremstyle{definition}
\newtheorem{definition}[theorem]{Definition}
\theoremstyle{remark}
\newtheorem{remark}[theorem]{Remark}
\theoremstyle{definition}
\newtheorem{example}[theorem]{Example}
\newcommand{\incl}{\xhookrightarrow{}}
\newcommand{\defeq}{:=}
\DeclareMathOperator{\Hom}{Hom}
\DeclareMathOperator{\End}{End}
\DeclareMathOperator{\tr}{tr}
\DeclareMathOperator{\Sym}{Sym}
\DeclareMathOperator{\ad}{ad}
\DeclareMathOperator{\rank}{rank}
\newcommand{\id}{\mathrm{id}}
\renewcommand{\phi}{\varphi}
\DeclarePairedDelimiter\abs{\lvert}{\rvert}
\newcommand{\N}{\mathbb{N}}
\newcommand{\Z}{\mathbb{Z}}
\newcommand{\R}{\mathbb{R}}
\newcommand{\C}{\mathbb{C}}
\newcommand{\Flow}{\mathsf{Flow}}
\newcommand{\Sc}{\mathsf{S}}
\newcommand{\Br}{\mathsf{Br}}
\newcommand{\VG}{\mathsf{VG}}
\newcommand{\csum}{\mathbin{\#}}
\newcommand{\raisebox{-6pt}{\marginbox{1.5pt 0pt}{
\begingroup%
  \makeatletter%
  \providecommand\color[2][]{%
    \errmessage{(Inkscape) Color is used for the text in Inkscape, but the package 'color.sty' is not loaded}%
    \renewcommand\color[2][]{}%
  }%
  \providecommand\transparent[1]{%
    \errmessage{(Inkscape) Transparency is used (non-zero) for the text in Inkscape, but the package 'transparent.sty' is not loaded}%
    \renewcommand\transparent[1]{}%
  }%
  \providecommand\rotatebox[2]{#2}%
  \newcommand*\fsize{\dimexpr\f@size pt\relax}%
  \newcommand*\lineheight[1]{\fontsize{\fsize}{#1\fsize}\selectfont}%
  \ifx\svgwidth\undefined%
    \setlength{\unitlength}{19.06299186bp}%
    \ifx\svgscale\undefined%
      \relax%
    \else%
      \setlength{\unitlength}{\unitlength * \real{\svgscale}}%
    \fi%
  \else%
    \setlength{\unitlength}{\svgwidth}%
  \fi%
  \global\let\svgwidth\undefined%
  \global\let\svgscale\undefined%
  \makeatother%
  \begin{picture}(1,0.99130448)%
    \lineheight{1}%
    \setlength\tabcolsep{0pt}%
    \put(0,0){\includegraphics[width=\unitlength,page=1]{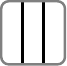}}%
  \end{picture}%
\endgroup%
}}}{\raisebox{-6pt}{\marginbox{1.5pt 0pt}{
\begingroup%
  \makeatletter%
  \providecommand\color[2][]{%
    \errmessage{(Inkscape) Color is used for the text in Inkscape, but the package 'color.sty' is not loaded}%
    \renewcommand\color[2][]{}%
  }%
  \providecommand\transparent[1]{%
    \errmessage{(Inkscape) Transparency is used (non-zero) for the text in Inkscape, but the package 'transparent.sty' is not loaded}%
    \renewcommand\transparent[1]{}%
  }%
  \providecommand\rotatebox[2]{#2}%
  \newcommand*\fsize{\dimexpr\f@size pt\relax}%
  \newcommand*\lineheight[1]{\fontsize{\fsize}{#1\fsize}\selectfont}%
  \ifx\svgwidth\undefined%
    \setlength{\unitlength}{19.06299186bp}%
    \ifx\svgscale\undefined%
      \relax%
    \else%
      \setlength{\unitlength}{\unitlength * \real{\svgscale}}%
    \fi%
  \else%
    \setlength{\unitlength}{\svgwidth}%
  \fi%
  \global\let\svgwidth\undefined%
  \global\let\svgscale\undefined%
  \makeatother%
  \begin{picture}(1,0.99130448)%
    \lineheight{1}%
    \setlength\tabcolsep{0pt}%
    \put(0,0){\includegraphics[width=\unitlength,page=1]{br2-id.pdf}}%
  \end{picture}%
\endgroup%
}}}
\newcommand{\raisebox{-6pt}{\marginbox{1.5pt 0pt}{
\begingroup%
  \makeatletter%
  \providecommand\color[2][]{%
    \errmessage{(Inkscape) Color is used for the text in Inkscape, but the package 'color.sty' is not loaded}%
    \renewcommand\color[2][]{}%
  }%
  \providecommand\transparent[1]{%
    \errmessage{(Inkscape) Transparency is used (non-zero) for the text in Inkscape, but the package 'transparent.sty' is not loaded}%
    \renewcommand\transparent[1]{}%
  }%
  \providecommand\rotatebox[2]{#2}%
  \newcommand*\fsize{\dimexpr\f@size pt\relax}%
  \newcommand*\lineheight[1]{\fontsize{\fsize}{#1\fsize}\selectfont}%
  \ifx\svgwidth\undefined%
    \setlength{\unitlength}{19.06299186bp}%
    \ifx\svgscale\undefined%
      \relax%
    \else%
      \setlength{\unitlength}{\unitlength * \real{\svgscale}}%
    \fi%
  \else%
    \setlength{\unitlength}{\svgwidth}%
  \fi%
  \global\let\svgwidth\undefined%
  \global\let\svgscale\undefined%
  \makeatother%
  \begin{picture}(1,0.99999993)%
    \lineheight{1}%
    \setlength\tabcolsep{0pt}%
    \put(0,0){\includegraphics[width=\unitlength,page=1]{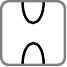}}%
  \end{picture}%
\endgroup%
}}}{\raisebox{-6pt}{\marginbox{1.5pt 0pt}{
\begingroup%
  \makeatletter%
  \providecommand\color[2][]{%
    \errmessage{(Inkscape) Color is used for the text in Inkscape, but the package 'color.sty' is not loaded}%
    \renewcommand\color[2][]{}%
  }%
  \providecommand\transparent[1]{%
    \errmessage{(Inkscape) Transparency is used (non-zero) for the text in Inkscape, but the package 'transparent.sty' is not loaded}%
    \renewcommand\transparent[1]{}%
  }%
  \providecommand\rotatebox[2]{#2}%
  \newcommand*\fsize{\dimexpr\f@size pt\relax}%
  \newcommand*\lineheight[1]{\fontsize{\fsize}{#1\fsize}\selectfont}%
  \ifx\svgwidth\undefined%
    \setlength{\unitlength}{19.06299186bp}%
    \ifx\svgscale\undefined%
      \relax%
    \else%
      \setlength{\unitlength}{\unitlength * \real{\svgscale}}%
    \fi%
  \else%
    \setlength{\unitlength}{\svgwidth}%
  \fi%
  \global\let\svgwidth\undefined%
  \global\let\svgscale\undefined%
  \makeatother%
  \begin{picture}(1,0.99999993)%
    \lineheight{1}%
    \setlength\tabcolsep{0pt}%
    \put(0,0){\includegraphics[width=\unitlength,page=1]{br2-e.pdf}}%
  \end{picture}%
\endgroup%
}}}
\newcommand{\raisebox{-6pt}{\marginbox{1.5pt 0pt}{
\begingroup%
  \makeatletter%
  \providecommand\color[2][]{%
    \errmessage{(Inkscape) Color is used for the text in Inkscape, but the package 'color.sty' is not loaded}%
    \renewcommand\color[2][]{}%
  }%
  \providecommand\transparent[1]{%
    \errmessage{(Inkscape) Transparency is used (non-zero) for the text in Inkscape, but the package 'transparent.sty' is not loaded}%
    \renewcommand\transparent[1]{}%
  }%
  \providecommand\rotatebox[2]{#2}%
  \newcommand*\fsize{\dimexpr\f@size pt\relax}%
  \newcommand*\lineheight[1]{\fontsize{\fsize}{#1\fsize}\selectfont}%
  \ifx\svgwidth\undefined%
    \setlength{\unitlength}{19.06299186bp}%
    \ifx\svgscale\undefined%
      \relax%
    \else%
      \setlength{\unitlength}{\unitlength * \real{\svgscale}}%
    \fi%
  \else%
    \setlength{\unitlength}{\svgwidth}%
  \fi%
  \global\let\svgwidth\undefined%
  \global\let\svgscale\undefined%
  \makeatother%
  \begin{picture}(1,0.99999993)%
    \lineheight{1}%
    \setlength\tabcolsep{0pt}%
    \put(0,0){\includegraphics[width=\unitlength,page=1]{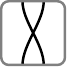}}%
  \end{picture}%
\endgroup%
}}}{\raisebox{-6pt}{\marginbox{1.5pt 0pt}{
\begingroup%
  \makeatletter%
  \providecommand\color[2][]{%
    \errmessage{(Inkscape) Color is used for the text in Inkscape, but the package 'color.sty' is not loaded}%
    \renewcommand\color[2][]{}%
  }%
  \providecommand\transparent[1]{%
    \errmessage{(Inkscape) Transparency is used (non-zero) for the text in Inkscape, but the package 'transparent.sty' is not loaded}%
    \renewcommand\transparent[1]{}%
  }%
  \providecommand\rotatebox[2]{#2}%
  \newcommand*\fsize{\dimexpr\f@size pt\relax}%
  \newcommand*\lineheight[1]{\fontsize{\fsize}{#1\fsize}\selectfont}%
  \ifx\svgwidth\undefined%
    \setlength{\unitlength}{19.06299186bp}%
    \ifx\svgscale\undefined%
      \relax%
    \else%
      \setlength{\unitlength}{\unitlength * \real{\svgscale}}%
    \fi%
  \else%
    \setlength{\unitlength}{\svgwidth}%
  \fi%
  \global\let\svgwidth\undefined%
  \global\let\svgscale\undefined%
  \makeatother%
  \begin{picture}(1,0.99999993)%
    \lineheight{1}%
    \setlength\tabcolsep{0pt}%
    \put(0,0){\includegraphics[width=\unitlength,page=1]{br2-x.pdf}}%
  \end{picture}%
\endgroup%
}}}
\newcommand{\raisebox{-6pt}{\marginbox{1.5pt 0pt}{
\begingroup%
  \makeatletter%
  \providecommand\color[2][]{%
    \errmessage{(Inkscape) Color is used for the text in Inkscape, but the package 'color.sty' is not loaded}%
    \renewcommand\color[2][]{}%
  }%
  \providecommand\transparent[1]{%
    \errmessage{(Inkscape) Transparency is used (non-zero) for the text in Inkscape, but the package 'transparent.sty' is not loaded}%
    \renewcommand\transparent[1]{}%
  }%
  \providecommand\rotatebox[2]{#2}%
  \newcommand*\fsize{\dimexpr\f@size pt\relax}%
  \newcommand*\lineheight[1]{\fontsize{\fsize}{#1\fsize}\selectfont}%
  \ifx\svgwidth\undefined%
    \setlength{\unitlength}{19.06299186bp}%
    \ifx\svgscale\undefined%
      \relax%
    \else%
      \setlength{\unitlength}{\unitlength * \real{\svgscale}}%
    \fi%
  \else%
    \setlength{\unitlength}{\svgwidth}%
  \fi%
  \global\let\svgwidth\undefined%
  \global\let\svgscale\undefined%
  \makeatother%
  \begin{picture}(1,0.99999993)%
    \lineheight{1}%
    \setlength\tabcolsep{0pt}%
    \put(0,0){\includegraphics[width=\unitlength,page=1]{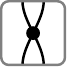}}%
  \end{picture}%
\endgroup%
}}}{\raisebox{-6pt}{\marginbox{1.5pt 0pt}{
\begingroup%
  \makeatletter%
  \providecommand\color[2][]{%
    \errmessage{(Inkscape) Color is used for the text in Inkscape, but the package 'color.sty' is not loaded}%
    \renewcommand\color[2][]{}%
  }%
  \providecommand\transparent[1]{%
    \errmessage{(Inkscape) Transparency is used (non-zero) for the text in Inkscape, but the package 'transparent.sty' is not loaded}%
    \renewcommand\transparent[1]{}%
  }%
  \providecommand\rotatebox[2]{#2}%
  \newcommand*\fsize{\dimexpr\f@size pt\relax}%
  \newcommand*\lineheight[1]{\fontsize{\fsize}{#1\fsize}\selectfont}%
  \ifx\svgwidth\undefined%
    \setlength{\unitlength}{19.06299186bp}%
    \ifx\svgscale\undefined%
      \relax%
    \else%
      \setlength{\unitlength}{\unitlength * \real{\svgscale}}%
    \fi%
  \else%
    \setlength{\unitlength}{\svgwidth}%
  \fi%
  \global\let\svgwidth\undefined%
  \global\let\svgscale\undefined%
  \makeatother%
  \begin{picture}(1,0.99999993)%
    \lineheight{1}%
    \setlength\tabcolsep{0pt}%
    \put(0,0){\includegraphics[width=\unitlength,page=1]{s2-v4.pdf}}%
  \end{picture}%
\endgroup%
}}}
\newcommand{\raisebox{-3pt}{\marginbox{1.5pt 0pt}{
\begingroup%
  \makeatletter%
  \providecommand\color[2][]{%
    \errmessage{(Inkscape) Color is used for the text in Inkscape, but the package 'color.sty' is not loaded}%
    \renewcommand\color[2][]{}%
  }%
  \providecommand\transparent[1]{%
    \errmessage{(Inkscape) Transparency is used (non-zero) for the text in Inkscape, but the package 'transparent.sty' is not loaded}%
    \renewcommand\transparent[1]{}%
  }%
  \providecommand\rotatebox[2]{#2}%
  \newcommand*\fsize{\dimexpr\f@size pt\relax}%
  \newcommand*\lineheight[1]{\fontsize{\fsize}{#1\fsize}\selectfont}%
  \ifx\svgwidth\undefined%
    \setlength{\unitlength}{19.06299321bp}%
    \ifx\svgscale\undefined%
      \relax%
    \else%
      \setlength{\unitlength}{\unitlength * \real{\svgscale}}%
    \fi%
  \else%
    \setlength{\unitlength}{\svgwidth}%
  \fi%
  \global\let\svgwidth\undefined%
  \global\let\svgscale\undefined%
  \makeatother%
  \begin{picture}(1,0.71912428)%
    \lineheight{1}%
    \setlength\tabcolsep{0pt}%
    \put(0,0){\includegraphics[width=\unitlength,page=1]{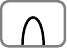}}%
  \end{picture}%
\endgroup%
}}}{\raisebox{-3pt}{\marginbox{1.5pt 0pt}{
\begingroup%
  \makeatletter%
  \providecommand\color[2][]{%
    \errmessage{(Inkscape) Color is used for the text in Inkscape, but the package 'color.sty' is not loaded}%
    \renewcommand\color[2][]{}%
  }%
  \providecommand\transparent[1]{%
    \errmessage{(Inkscape) Transparency is used (non-zero) for the text in Inkscape, but the package 'transparent.sty' is not loaded}%
    \renewcommand\transparent[1]{}%
  }%
  \providecommand\rotatebox[2]{#2}%
  \newcommand*\fsize{\dimexpr\f@size pt\relax}%
  \newcommand*\lineheight[1]{\fontsize{\fsize}{#1\fsize}\selectfont}%
  \ifx\svgwidth\undefined%
    \setlength{\unitlength}{19.06299321bp}%
    \ifx\svgscale\undefined%
      \relax%
    \else%
      \setlength{\unitlength}{\unitlength * \real{\svgscale}}%
    \fi%
  \else%
    \setlength{\unitlength}{\svgwidth}%
  \fi%
  \global\let\svgwidth\undefined%
  \global\let\svgscale\undefined%
  \makeatother%
  \begin{picture}(1,0.71912428)%
    \lineheight{1}%
    \setlength\tabcolsep{0pt}%
    \put(0,0){\includegraphics[width=\unitlength,page=1]{br2-cap.pdf}}%
  \end{picture}%
\endgroup%
}}}
\newcommand{\raisebox{-3pt}{\marginbox{1.5pt 0pt}{
\begingroup%
  \makeatletter%
  \providecommand\color[2][]{%
    \errmessage{(Inkscape) Color is used for the text in Inkscape, but the package 'color.sty' is not loaded}%
    \renewcommand\color[2][]{}%
  }%
  \providecommand\transparent[1]{%
    \errmessage{(Inkscape) Transparency is used (non-zero) for the text in Inkscape, but the package 'transparent.sty' is not loaded}%
    \renewcommand\transparent[1]{}%
  }%
  \providecommand\rotatebox[2]{#2}%
  \newcommand*\fsize{\dimexpr\f@size pt\relax}%
  \newcommand*\lineheight[1]{\fontsize{\fsize}{#1\fsize}\selectfont}%
  \ifx\svgwidth\undefined%
    \setlength{\unitlength}{19.06299321bp}%
    \ifx\svgscale\undefined%
      \relax%
    \else%
      \setlength{\unitlength}{\unitlength * \real{\svgscale}}%
    \fi%
  \else%
    \setlength{\unitlength}{\svgwidth}%
  \fi%
  \global\let\svgwidth\undefined%
  \global\let\svgscale\undefined%
  \makeatother%
  \begin{picture}(1,0.71912428)%
    \lineheight{1}%
    \setlength\tabcolsep{0pt}%
    \put(0,0){\includegraphics[width=\unitlength,page=1]{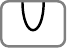}}%
  \end{picture}%
\endgroup%
}}}{\raisebox{-3pt}{\marginbox{1.5pt 0pt}{
\begingroup%
  \makeatletter%
  \providecommand\color[2][]{%
    \errmessage{(Inkscape) Color is used for the text in Inkscape, but the package 'color.sty' is not loaded}%
    \renewcommand\color[2][]{}%
  }%
  \providecommand\transparent[1]{%
    \errmessage{(Inkscape) Transparency is used (non-zero) for the text in Inkscape, but the package 'transparent.sty' is not loaded}%
    \renewcommand\transparent[1]{}%
  }%
  \providecommand\rotatebox[2]{#2}%
  \newcommand*\fsize{\dimexpr\f@size pt\relax}%
  \newcommand*\lineheight[1]{\fontsize{\fsize}{#1\fsize}\selectfont}%
  \ifx\svgwidth\undefined%
    \setlength{\unitlength}{19.06299321bp}%
    \ifx\svgscale\undefined%
      \relax%
    \else%
      \setlength{\unitlength}{\unitlength * \real{\svgscale}}%
    \fi%
  \else%
    \setlength{\unitlength}{\svgwidth}%
  \fi%
  \global\let\svgwidth\undefined%
  \global\let\svgscale\undefined%
  \makeatother%
  \begin{picture}(1,0.71912428)%
    \lineheight{1}%
    \setlength\tabcolsep{0pt}%
    \put(0,0){\includegraphics[width=\unitlength,page=1]{br2-cup.pdf}}%
  \end{picture}%
\endgroup%
}}}
\newcommand{\raisebox{-3pt}{\marginbox{1.5pt 0pt}{
\begingroup%
  \makeatletter%
  \providecommand\color[2][]{%
    \errmessage{(Inkscape) Color is used for the text in Inkscape, but the package 'color.sty' is not loaded}%
    \renewcommand\color[2][]{}%
  }%
  \providecommand\transparent[1]{%
    \errmessage{(Inkscape) Transparency is used (non-zero) for the text in Inkscape, but the package 'transparent.sty' is not loaded}%
    \renewcommand\transparent[1]{}%
  }%
  \providecommand\rotatebox[2]{#2}%
  \newcommand*\fsize{\dimexpr\f@size pt\relax}%
  \newcommand*\lineheight[1]{\fontsize{\fsize}{#1\fsize}\selectfont}%
  \ifx\svgwidth\undefined%
    \setlength{\unitlength}{12.75830607bp}%
    \ifx\svgscale\undefined%
      \relax%
    \else%
      \setlength{\unitlength}{\unitlength * \real{\svgscale}}%
    \fi%
  \else%
    \setlength{\unitlength}{\svgwidth}%
  \fi%
  \global\let\svgwidth\undefined%
  \global\let\svgscale\undefined%
  \makeatother%
  \begin{picture}(1,1.07448914)%
    \lineheight{1}%
    \setlength\tabcolsep{0pt}%
    \put(0,0){\includegraphics[width=\unitlength,page=1]{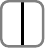}}%
  \end{picture}%
\endgroup%
}}}{\raisebox{-3pt}{\marginbox{1.5pt 0pt}{
\begingroup%
  \makeatletter%
  \providecommand\color[2][]{%
    \errmessage{(Inkscape) Color is used for the text in Inkscape, but the package 'color.sty' is not loaded}%
    \renewcommand\color[2][]{}%
  }%
  \providecommand\transparent[1]{%
    \errmessage{(Inkscape) Transparency is used (non-zero) for the text in Inkscape, but the package 'transparent.sty' is not loaded}%
    \renewcommand\transparent[1]{}%
  }%
  \providecommand\rotatebox[2]{#2}%
  \newcommand*\fsize{\dimexpr\f@size pt\relax}%
  \newcommand*\lineheight[1]{\fontsize{\fsize}{#1\fsize}\selectfont}%
  \ifx\svgwidth\undefined%
    \setlength{\unitlength}{12.75830607bp}%
    \ifx\svgscale\undefined%
      \relax%
    \else%
      \setlength{\unitlength}{\unitlength * \real{\svgscale}}%
    \fi%
  \else%
    \setlength{\unitlength}{\svgwidth}%
  \fi%
  \global\let\svgwidth\undefined%
  \global\let\svgscale\undefined%
  \makeatother%
  \begin{picture}(1,1.07448914)%
    \lineheight{1}%
    \setlength\tabcolsep{0pt}%
    \put(0,0){\includegraphics[width=\unitlength,page=1]{br1-id.pdf}}%
  \end{picture}%
\endgroup%
}}}
\newcommand{\raisebox{-10pt}{\marginbox{1.5pt 0pt}{
\begingroup%
  \makeatletter%
  \providecommand\color[2][]{%
    \errmessage{(Inkscape) Color is used for the text in Inkscape, but the package 'color.sty' is not loaded}%
    \renewcommand\color[2][]{}%
  }%
  \providecommand\transparent[1]{%
    \errmessage{(Inkscape) Transparency is used (non-zero) for the text in Inkscape, but the package 'transparent.sty' is not loaded}%
    \renewcommand\transparent[1]{}%
  }%
  \providecommand\rotatebox[2]{#2}%
  \newcommand*\fsize{\dimexpr\f@size pt\relax}%
  \newcommand*\lineheight[1]{\fontsize{\fsize}{#1\fsize}\selectfont}%
  \ifx\svgwidth\undefined%
    \setlength{\unitlength}{25.3133083bp}%
    \ifx\svgscale\undefined%
      \relax%
    \else%
      \setlength{\unitlength}{\unitlength * \real{\svgscale}}%
    \fi%
  \else%
    \setlength{\unitlength}{\svgwidth}%
  \fi%
  \global\let\svgwidth\undefined%
  \global\let\svgscale\undefined%
  \makeatother%
  \begin{picture}(1,0.9343837)%
    \lineheight{1}%
    \setlength\tabcolsep{0pt}%
    \put(0,0){\includegraphics[width=\unitlength,page=1]{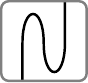}}%
  \end{picture}%
\endgroup%
}}}{\raisebox{-10pt}{\marginbox{1.5pt 0pt}{
\begingroup%
  \makeatletter%
  \providecommand\color[2][]{%
    \errmessage{(Inkscape) Color is used for the text in Inkscape, but the package 'color.sty' is not loaded}%
    \renewcommand\color[2][]{}%
  }%
  \providecommand\transparent[1]{%
    \errmessage{(Inkscape) Transparency is used (non-zero) for the text in Inkscape, but the package 'transparent.sty' is not loaded}%
    \renewcommand\transparent[1]{}%
  }%
  \providecommand\rotatebox[2]{#2}%
  \newcommand*\fsize{\dimexpr\f@size pt\relax}%
  \newcommand*\lineheight[1]{\fontsize{\fsize}{#1\fsize}\selectfont}%
  \ifx\svgwidth\undefined%
    \setlength{\unitlength}{25.3133083bp}%
    \ifx\svgscale\undefined%
      \relax%
    \else%
      \setlength{\unitlength}{\unitlength * \real{\svgscale}}%
    \fi%
  \else%
    \setlength{\unitlength}{\svgwidth}%
  \fi%
  \global\let\svgwidth\undefined%
  \global\let\svgscale\undefined%
  \makeatother%
  \begin{picture}(1,0.9343837)%
    \lineheight{1}%
    \setlength\tabcolsep{0pt}%
    \put(0,0){\includegraphics[width=\unitlength,page=1]{br-s-one.pdf}}%
  \end{picture}%
\endgroup%
}}}
\newcommand{\raisebox{-10pt}{\marginbox{1.5pt 0pt}{
\begingroup%
  \makeatletter%
  \providecommand\color[2][]{%
    \errmessage{(Inkscape) Color is used for the text in Inkscape, but the package 'color.sty' is not loaded}%
    \renewcommand\color[2][]{}%
  }%
  \providecommand\transparent[1]{%
    \errmessage{(Inkscape) Transparency is used (non-zero) for the text in Inkscape, but the package 'transparent.sty' is not loaded}%
    \renewcommand\transparent[1]{}%
  }%
  \providecommand\rotatebox[2]{#2}%
  \newcommand*\fsize{\dimexpr\f@size pt\relax}%
  \newcommand*\lineheight[1]{\fontsize{\fsize}{#1\fsize}\selectfont}%
  \ifx\svgwidth\undefined%
    \setlength{\unitlength}{25.3133083bp}%
    \ifx\svgscale\undefined%
      \relax%
    \else%
      \setlength{\unitlength}{\unitlength * \real{\svgscale}}%
    \fi%
  \else%
    \setlength{\unitlength}{\svgwidth}%
  \fi%
  \global\let\svgwidth\undefined%
  \global\let\svgscale\undefined%
  \makeatother%
  \begin{picture}(1,0.9343837)%
    \lineheight{1}%
    \setlength\tabcolsep{0pt}%
    \put(0,0){\includegraphics[width=\unitlength,page=1]{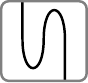}}%
  \end{picture}%
\endgroup%
}}}{\raisebox{-10pt}{\marginbox{1.5pt 0pt}{
\begingroup%
  \makeatletter%
  \providecommand\color[2][]{%
    \errmessage{(Inkscape) Color is used for the text in Inkscape, but the package 'color.sty' is not loaded}%
    \renewcommand\color[2][]{}%
  }%
  \providecommand\transparent[1]{%
    \errmessage{(Inkscape) Transparency is used (non-zero) for the text in Inkscape, but the package 'transparent.sty' is not loaded}%
    \renewcommand\transparent[1]{}%
  }%
  \providecommand\rotatebox[2]{#2}%
  \newcommand*\fsize{\dimexpr\f@size pt\relax}%
  \newcommand*\lineheight[1]{\fontsize{\fsize}{#1\fsize}\selectfont}%
  \ifx\svgwidth\undefined%
    \setlength{\unitlength}{25.3133083bp}%
    \ifx\svgscale\undefined%
      \relax%
    \else%
      \setlength{\unitlength}{\unitlength * \real{\svgscale}}%
    \fi%
  \else%
    \setlength{\unitlength}{\svgwidth}%
  \fi%
  \global\let\svgwidth\undefined%
  \global\let\svgscale\undefined%
  \makeatother%
  \begin{picture}(1,0.9343837)%
    \lineheight{1}%
    \setlength\tabcolsep{0pt}%
    \put(0,0){\includegraphics[width=\unitlength,page=1]{br-s-two.pdf}}%
  \end{picture}%
\endgroup%
}}}
\newcommand{\raisebox{-5pt}{\marginbox{1.5pt 0pt}{
\begingroup%
  \makeatletter%
  \providecommand\color[2][]{%
    \errmessage{(Inkscape) Color is used for the text in Inkscape, but the package 'color.sty' is not loaded}%
    \renewcommand\color[2][]{}%
  }%
  \providecommand\transparent[1]{%
    \errmessage{(Inkscape) Transparency is used (non-zero) for the text in Inkscape, but the package 'transparent.sty' is not loaded}%
    \renewcommand\transparent[1]{}%
  }%
  \providecommand\rotatebox[2]{#2}%
  \newcommand*\fsize{\dimexpr\f@size pt\relax}%
  \newcommand*\lineheight[1]{\fontsize{\fsize}{#1\fsize}\selectfont}%
  \ifx\svgwidth\undefined%
    \setlength{\unitlength}{29.10137747bp}%
    \ifx\svgscale\undefined%
      \relax%
    \else%
      \setlength{\unitlength}{\unitlength * \real{\svgscale}}%
    \fi%
  \else%
    \setlength{\unitlength}{\svgwidth}%
  \fi%
  \global\let\svgwidth\undefined%
  \global\let\svgscale\undefined%
  \makeatother%
  \begin{picture}(1,0.6142505)%
    \lineheight{1}%
    \setlength\tabcolsep{0pt}%
    \put(0,0){\includegraphics[width=\unitlength,page=1]{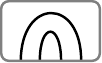}}%
  \end{picture}%
\endgroup%
}}}{\raisebox{-5pt}{\marginbox{1.5pt 0pt}{
\begingroup%
  \makeatletter%
  \providecommand\color[2][]{%
    \errmessage{(Inkscape) Color is used for the text in Inkscape, but the package 'color.sty' is not loaded}%
    \renewcommand\color[2][]{}%
  }%
  \providecommand\transparent[1]{%
    \errmessage{(Inkscape) Transparency is used (non-zero) for the text in Inkscape, but the package 'transparent.sty' is not loaded}%
    \renewcommand\transparent[1]{}%
  }%
  \providecommand\rotatebox[2]{#2}%
  \newcommand*\fsize{\dimexpr\f@size pt\relax}%
  \newcommand*\lineheight[1]{\fontsize{\fsize}{#1\fsize}\selectfont}%
  \ifx\svgwidth\undefined%
    \setlength{\unitlength}{29.10137747bp}%
    \ifx\svgscale\undefined%
      \relax%
    \else%
      \setlength{\unitlength}{\unitlength * \real{\svgscale}}%
    \fi%
  \else%
    \setlength{\unitlength}{\svgwidth}%
  \fi%
  \global\let\svgwidth\undefined%
  \global\let\svgscale\undefined%
  \makeatother%
  \begin{picture}(1,0.6142505)%
    \lineheight{1}%
    \setlength\tabcolsep{0pt}%
    \put(0,0){\includegraphics[width=\unitlength,page=1]{br-gl-killing.pdf}}%
  \end{picture}%
\endgroup%
}}}
\newcommand{\raisebox{-5pt}{\marginbox{1.5pt 0pt}{
\begingroup%
  \makeatletter%
  \providecommand\color[2][]{%
    \errmessage{(Inkscape) Color is used for the text in Inkscape, but the package 'color.sty' is not loaded}%
    \renewcommand\color[2][]{}%
  }%
  \providecommand\transparent[1]{%
    \errmessage{(Inkscape) Transparency is used (non-zero) for the text in Inkscape, but the package 'transparent.sty' is not loaded}%
    \renewcommand\transparent[1]{}%
  }%
  \providecommand\rotatebox[2]{#2}%
  \newcommand*\fsize{\dimexpr\f@size pt\relax}%
  \newcommand*\lineheight[1]{\fontsize{\fsize}{#1\fsize}\selectfont}%
  \ifx\svgwidth\undefined%
    \setlength{\unitlength}{29.10137747bp}%
    \ifx\svgscale\undefined%
      \relax%
    \else%
      \setlength{\unitlength}{\unitlength * \real{\svgscale}}%
    \fi%
  \else%
    \setlength{\unitlength}{\svgwidth}%
  \fi%
  \global\let\svgwidth\undefined%
  \global\let\svgscale\undefined%
  \makeatother%
  \begin{picture}(1,0.6142505)%
    \lineheight{1}%
    \setlength\tabcolsep{0pt}%
    \put(0,0){\includegraphics[width=\unitlength,page=1]{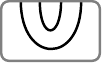}}%
  \end{picture}%
\endgroup%
}}}{\raisebox{-5pt}{\marginbox{1.5pt 0pt}{
\begingroup%
  \makeatletter%
  \providecommand\color[2][]{%
    \errmessage{(Inkscape) Color is used for the text in Inkscape, but the package 'color.sty' is not loaded}%
    \renewcommand\color[2][]{}%
  }%
  \providecommand\transparent[1]{%
    \errmessage{(Inkscape) Transparency is used (non-zero) for the text in Inkscape, but the package 'transparent.sty' is not loaded}%
    \renewcommand\transparent[1]{}%
  }%
  \providecommand\rotatebox[2]{#2}%
  \newcommand*\fsize{\dimexpr\f@size pt\relax}%
  \newcommand*\lineheight[1]{\fontsize{\fsize}{#1\fsize}\selectfont}%
  \ifx\svgwidth\undefined%
    \setlength{\unitlength}{29.10137747bp}%
    \ifx\svgscale\undefined%
      \relax%
    \else%
      \setlength{\unitlength}{\unitlength * \real{\svgscale}}%
    \fi%
  \else%
    \setlength{\unitlength}{\svgwidth}%
  \fi%
  \global\let\svgwidth\undefined%
  \global\let\svgscale\undefined%
  \makeatother%
  \begin{picture}(1,0.6142505)%
    \lineheight{1}%
    \setlength\tabcolsep{0pt}%
    \put(0,0){\includegraphics[width=\unitlength,page=1]{br-gl-casimir.pdf}}%
  \end{picture}%
\endgroup%
}}}
\newcommand{\raisebox{-8pt}{\marginbox{1.5pt 0pt}{
\begingroup%
  \makeatletter%
  \providecommand\color[2][]{%
    \errmessage{(Inkscape) Color is used for the text in Inkscape, but the package 'color.sty' is not loaded}%
    \renewcommand\color[2][]{}%
  }%
  \providecommand\transparent[1]{%
    \errmessage{(Inkscape) Transparency is used (non-zero) for the text in Inkscape, but the package 'transparent.sty' is not loaded}%
    \renewcommand\transparent[1]{}%
  }%
  \providecommand\rotatebox[2]{#2}%
  \newcommand*\fsize{\dimexpr\f@size pt\relax}%
  \newcommand*\lineheight[1]{\fontsize{\fsize}{#1\fsize}\selectfont}%
  \ifx\svgwidth\undefined%
    \setlength{\unitlength}{25.3133083bp}%
    \ifx\svgscale\undefined%
      \relax%
    \else%
      \setlength{\unitlength}{\unitlength * \real{\svgscale}}%
    \fi%
  \else%
    \setlength{\unitlength}{\svgwidth}%
  \fi%
  \global\let\svgwidth\undefined%
  \global\let\svgscale\undefined%
  \makeatother%
  \begin{picture}(1,0.9343837)%
    \lineheight{1}%
    \setlength\tabcolsep{0pt}%
    \put(0,0){\includegraphics[width=\unitlength,page=1]{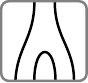}}%
  \end{picture}%
\endgroup%
}}}{\raisebox{-8pt}{\marginbox{1.5pt 0pt}{
\begingroup%
  \makeatletter%
  \providecommand\color[2][]{%
    \errmessage{(Inkscape) Color is used for the text in Inkscape, but the package 'color.sty' is not loaded}%
    \renewcommand\color[2][]{}%
  }%
  \providecommand\transparent[1]{%
    \errmessage{(Inkscape) Transparency is used (non-zero) for the text in Inkscape, but the package 'transparent.sty' is not loaded}%
    \renewcommand\transparent[1]{}%
  }%
  \providecommand\rotatebox[2]{#2}%
  \newcommand*\fsize{\dimexpr\f@size pt\relax}%
  \newcommand*\lineheight[1]{\fontsize{\fsize}{#1\fsize}\selectfont}%
  \ifx\svgwidth\undefined%
    \setlength{\unitlength}{25.3133083bp}%
    \ifx\svgscale\undefined%
      \relax%
    \else%
      \setlength{\unitlength}{\unitlength * \real{\svgscale}}%
    \fi%
  \else%
    \setlength{\unitlength}{\svgwidth}%
  \fi%
  \global\let\svgwidth\undefined%
  \global\let\svgscale\undefined%
  \makeatother%
  \begin{picture}(1,0.9343837)%
    \lineheight{1}%
    \setlength\tabcolsep{0pt}%
    \put(0,0){\includegraphics[width=\unitlength,page=1]{br-lie-one.pdf}}%
  \end{picture}%
\endgroup%
}}}
\newcommand{\raisebox{-8pt}{\marginbox{1.5pt 0pt}{
\begingroup%
  \makeatletter%
  \providecommand\color[2][]{%
    \errmessage{(Inkscape) Color is used for the text in Inkscape, but the package 'color.sty' is not loaded}%
    \renewcommand\color[2][]{}%
  }%
  \providecommand\transparent[1]{%
    \errmessage{(Inkscape) Transparency is used (non-zero) for the text in Inkscape, but the package 'transparent.sty' is not loaded}%
    \renewcommand\transparent[1]{}%
  }%
  \providecommand\rotatebox[2]{#2}%
  \newcommand*\fsize{\dimexpr\f@size pt\relax}%
  \newcommand*\lineheight[1]{\fontsize{\fsize}{#1\fsize}\selectfont}%
  \ifx\svgwidth\undefined%
    \setlength{\unitlength}{25.3133083bp}%
    \ifx\svgscale\undefined%
      \relax%
    \else%
      \setlength{\unitlength}{\unitlength * \real{\svgscale}}%
    \fi%
  \else%
    \setlength{\unitlength}{\svgwidth}%
  \fi%
  \global\let\svgwidth\undefined%
  \global\let\svgscale\undefined%
  \makeatother%
  \begin{picture}(1,0.9343837)%
    \lineheight{1}%
    \setlength\tabcolsep{0pt}%
    \put(0,0){\includegraphics[width=\unitlength,page=1]{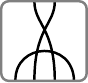}}%
  \end{picture}%
\endgroup%
}}}{\raisebox{-8pt}{\marginbox{1.5pt 0pt}{
\begingroup%
  \makeatletter%
  \providecommand\color[2][]{%
    \errmessage{(Inkscape) Color is used for the text in Inkscape, but the package 'color.sty' is not loaded}%
    \renewcommand\color[2][]{}%
  }%
  \providecommand\transparent[1]{%
    \errmessage{(Inkscape) Transparency is used (non-zero) for the text in Inkscape, but the package 'transparent.sty' is not loaded}%
    \renewcommand\transparent[1]{}%
  }%
  \providecommand\rotatebox[2]{#2}%
  \newcommand*\fsize{\dimexpr\f@size pt\relax}%
  \newcommand*\lineheight[1]{\fontsize{\fsize}{#1\fsize}\selectfont}%
  \ifx\svgwidth\undefined%
    \setlength{\unitlength}{25.3133083bp}%
    \ifx\svgscale\undefined%
      \relax%
    \else%
      \setlength{\unitlength}{\unitlength * \real{\svgscale}}%
    \fi%
  \else%
    \setlength{\unitlength}{\svgwidth}%
  \fi%
  \global\let\svgwidth\undefined%
  \global\let\svgscale\undefined%
  \makeatother%
  \begin{picture}(1,0.9343837)%
    \lineheight{1}%
    \setlength\tabcolsep{0pt}%
    \put(0,0){\includegraphics[width=\unitlength,page=1]{br-lie-two.pdf}}%
  \end{picture}%
\endgroup%
}}}
\newcommand{\raisebox{-8pt}{\marginbox{1.5pt 0pt}{
\begingroup%
  \makeatletter%
  \providecommand\color[2][]{%
    \errmessage{(Inkscape) Color is used for the text in Inkscape, but the package 'color.sty' is not loaded}%
    \renewcommand\color[2][]{}%
  }%
  \providecommand\transparent[1]{%
    \errmessage{(Inkscape) Transparency is used (non-zero) for the text in Inkscape, but the package 'transparent.sty' is not loaded}%
    \renewcommand\transparent[1]{}%
  }%
  \providecommand\rotatebox[2]{#2}%
  \newcommand*\fsize{\dimexpr\f@size pt\relax}%
  \newcommand*\lineheight[1]{\fontsize{\fsize}{#1\fsize}\selectfont}%
  \ifx\svgwidth\undefined%
    \setlength{\unitlength}{25.3133083bp}%
    \ifx\svgscale\undefined%
      \relax%
    \else%
      \setlength{\unitlength}{\unitlength * \real{\svgscale}}%
    \fi%
  \else%
    \setlength{\unitlength}{\svgwidth}%
  \fi%
  \global\let\svgwidth\undefined%
  \global\let\svgscale\undefined%
  \makeatother%
  \begin{picture}(1,0.9343837)%
    \lineheight{1}%
    \setlength\tabcolsep{0pt}%
    \put(0,0){\includegraphics[width=\unitlength,page=1]{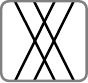}}%
  \end{picture}%
\endgroup%
}}}{\raisebox{-8pt}{\marginbox{1.5pt 0pt}{
\begingroup%
  \makeatletter%
  \providecommand\color[2][]{%
    \errmessage{(Inkscape) Color is used for the text in Inkscape, but the package 'color.sty' is not loaded}%
    \renewcommand\color[2][]{}%
  }%
  \providecommand\transparent[1]{%
    \errmessage{(Inkscape) Transparency is used (non-zero) for the text in Inkscape, but the package 'transparent.sty' is not loaded}%
    \renewcommand\transparent[1]{}%
  }%
  \providecommand\rotatebox[2]{#2}%
  \newcommand*\fsize{\dimexpr\f@size pt\relax}%
  \newcommand*\lineheight[1]{\fontsize{\fsize}{#1\fsize}\selectfont}%
  \ifx\svgwidth\undefined%
    \setlength{\unitlength}{25.3133083bp}%
    \ifx\svgscale\undefined%
      \relax%
    \else%
      \setlength{\unitlength}{\unitlength * \real{\svgscale}}%
    \fi%
  \else%
    \setlength{\unitlength}{\svgwidth}%
  \fi%
  \global\let\svgwidth\undefined%
  \global\let\svgscale\undefined%
  \makeatother%
  \begin{picture}(1,0.9343837)%
    \lineheight{1}%
    \setlength\tabcolsep{0pt}%
    \put(0,0){\includegraphics[width=\unitlength,page=1]{br-x-double.pdf}}%
  \end{picture}%
\endgroup%
}}}
\newcommand{\raisebox{-6pt}{\marginbox{1.5pt 0pt}{
\begingroup%
  \makeatletter%
  \providecommand\color[2][]{%
    \errmessage{(Inkscape) Color is used for the text in Inkscape, but the package 'color.sty' is not loaded}%
    \renewcommand\color[2][]{}%
  }%
  \providecommand\transparent[1]{%
    \errmessage{(Inkscape) Transparency is used (non-zero) for the text in Inkscape, but the package 'transparent.sty' is not loaded}%
    \renewcommand\transparent[1]{}%
  }%
  \providecommand\rotatebox[2]{#2}%
  \newcommand*\fsize{\dimexpr\f@size pt\relax}%
  \newcommand*\lineheight[1]{\fontsize{\fsize}{#1\fsize}\selectfont}%
  \ifx\svgwidth\undefined%
    \setlength{\unitlength}{19.06299186bp}%
    \ifx\svgscale\undefined%
      \relax%
    \else%
      \setlength{\unitlength}{\unitlength * \real{\svgscale}}%
    \fi%
  \else%
    \setlength{\unitlength}{\svgwidth}%
  \fi%
  \global\let\svgwidth\undefined%
  \global\let\svgscale\undefined%
  \makeatother%
  \begin{picture}(1,0.99130448)%
    \lineheight{1}%
    \setlength\tabcolsep{0pt}%
    \put(0,0){\includegraphics[width=\unitlength,page=1]{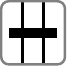}}%
  \end{picture}%
\endgroup%
}}}{\raisebox{-6pt}{\marginbox{1.5pt 0pt}{
\begingroup%
  \makeatletter%
  \providecommand\color[2][]{%
    \errmessage{(Inkscape) Color is used for the text in Inkscape, but the package 'color.sty' is not loaded}%
    \renewcommand\color[2][]{}%
  }%
  \providecommand\transparent[1]{%
    \errmessage{(Inkscape) Transparency is used (non-zero) for the text in Inkscape, but the package 'transparent.sty' is not loaded}%
    \renewcommand\transparent[1]{}%
  }%
  \providecommand\rotatebox[2]{#2}%
  \newcommand*\fsize{\dimexpr\f@size pt\relax}%
  \newcommand*\lineheight[1]{\fontsize{\fsize}{#1\fsize}\selectfont}%
  \ifx\svgwidth\undefined%
    \setlength{\unitlength}{19.06299186bp}%
    \ifx\svgscale\undefined%
      \relax%
    \else%
      \setlength{\unitlength}{\unitlength * \real{\svgscale}}%
    \fi%
  \else%
    \setlength{\unitlength}{\svgwidth}%
  \fi%
  \global\let\svgwidth\undefined%
  \global\let\svgscale\undefined%
  \makeatother%
  \begin{picture}(1,0.99130448)%
    \lineheight{1}%
    \setlength\tabcolsep{0pt}%
    \put(0,0){\includegraphics[width=\unitlength,page=1]{br2-alt.pdf}}%
  \end{picture}%
\endgroup%
}}}
\newcommand{\raisebox{-6pt}{\marginbox{1.5pt 0pt}{
\begingroup%
  \makeatletter%
  \providecommand\color[2][]{%
    \errmessage{(Inkscape) Color is used for the text in Inkscape, but the package 'color.sty' is not loaded}%
    \renewcommand\color[2][]{}%
  }%
  \providecommand\transparent[1]{%
    \errmessage{(Inkscape) Transparency is used (non-zero) for the text in Inkscape, but the package 'transparent.sty' is not loaded}%
    \renewcommand\transparent[1]{}%
  }%
  \providecommand\rotatebox[2]{#2}%
  \newcommand*\fsize{\dimexpr\f@size pt\relax}%
  \newcommand*\lineheight[1]{\fontsize{\fsize}{#1\fsize}\selectfont}%
  \ifx\svgwidth\undefined%
    \setlength{\unitlength}{19.06299186bp}%
    \ifx\svgscale\undefined%
      \relax%
    \else%
      \setlength{\unitlength}{\unitlength * \real{\svgscale}}%
    \fi%
  \else%
    \setlength{\unitlength}{\svgwidth}%
  \fi%
  \global\let\svgwidth\undefined%
  \global\let\svgscale\undefined%
  \makeatother%
  \begin{picture}(1,0.99130448)%
    \lineheight{1}%
    \setlength\tabcolsep{0pt}%
    \put(0,0){\includegraphics[width=\unitlength,page=1]{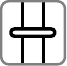}}%
  \end{picture}%
\endgroup%
}}}{\raisebox{-6pt}{\marginbox{1.5pt 0pt}{
\begingroup%
  \makeatletter%
  \providecommand\color[2][]{%
    \errmessage{(Inkscape) Color is used for the text in Inkscape, but the package 'color.sty' is not loaded}%
    \renewcommand\color[2][]{}%
  }%
  \providecommand\transparent[1]{%
    \errmessage{(Inkscape) Transparency is used (non-zero) for the text in Inkscape, but the package 'transparent.sty' is not loaded}%
    \renewcommand\transparent[1]{}%
  }%
  \providecommand\rotatebox[2]{#2}%
  \newcommand*\fsize{\dimexpr\f@size pt\relax}%
  \newcommand*\lineheight[1]{\fontsize{\fsize}{#1\fsize}\selectfont}%
  \ifx\svgwidth\undefined%
    \setlength{\unitlength}{19.06299186bp}%
    \ifx\svgscale\undefined%
      \relax%
    \else%
      \setlength{\unitlength}{\unitlength * \real{\svgscale}}%
    \fi%
  \else%
    \setlength{\unitlength}{\svgwidth}%
  \fi%
  \global\let\svgwidth\undefined%
  \global\let\svgscale\undefined%
  \makeatother%
  \begin{picture}(1,0.99130448)%
    \lineheight{1}%
    \setlength\tabcolsep{0pt}%
    \put(0,0){\includegraphics[width=\unitlength,page=1]{br2-j.pdf}}%
  \end{picture}%
\endgroup%
}}}
\newcommand{\raisebox{-3pt}{\marginbox{1.5pt 0pt}{
\begingroup%
  \makeatletter%
  \providecommand\color[2][]{%
    \errmessage{(Inkscape) Color is used for the text in Inkscape, but the package 'color.sty' is not loaded}%
    \renewcommand\color[2][]{}%
  }%
  \providecommand\transparent[1]{%
    \errmessage{(Inkscape) Transparency is used (non-zero) for the text in Inkscape, but the package 'transparent.sty' is not loaded}%
    \renewcommand\transparent[1]{}%
  }%
  \providecommand\rotatebox[2]{#2}%
  \newcommand*\fsize{\dimexpr\f@size pt\relax}%
  \newcommand*\lineheight[1]{\fontsize{\fsize}{#1\fsize}\selectfont}%
  \ifx\svgwidth\undefined%
    \setlength{\unitlength}{23.96091938bp}%
    \ifx\svgscale\undefined%
      \relax%
    \else%
      \setlength{\unitlength}{\unitlength * \real{\svgscale}}%
    \fi%
  \else%
    \setlength{\unitlength}{\svgwidth}%
  \fi%
  \global\let\svgwidth\undefined%
  \global\let\svgscale\undefined%
  \makeatother%
  \begin{picture}(1,0.57212585)%
    \lineheight{1}%
    \setlength\tabcolsep{0pt}%
    \put(0,0){\includegraphics[width=\unitlength,page=1]{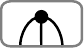}}%
  \end{picture}%
\endgroup%
}}}{\raisebox{-3pt}{\marginbox{1.5pt 0pt}{
\begingroup%
  \makeatletter%
  \providecommand\color[2][]{%
    \errmessage{(Inkscape) Color is used for the text in Inkscape, but the package 'color.sty' is not loaded}%
    \renewcommand\color[2][]{}%
  }%
  \providecommand\transparent[1]{%
    \errmessage{(Inkscape) Transparency is used (non-zero) for the text in Inkscape, but the package 'transparent.sty' is not loaded}%
    \renewcommand\transparent[1]{}%
  }%
  \providecommand\rotatebox[2]{#2}%
  \newcommand*\fsize{\dimexpr\f@size pt\relax}%
  \newcommand*\lineheight[1]{\fontsize{\fsize}{#1\fsize}\selectfont}%
  \ifx\svgwidth\undefined%
    \setlength{\unitlength}{23.96091938bp}%
    \ifx\svgscale\undefined%
      \relax%
    \else%
      \setlength{\unitlength}{\unitlength * \real{\svgscale}}%
    \fi%
  \else%
    \setlength{\unitlength}{\svgwidth}%
  \fi%
  \global\let\svgwidth\undefined%
  \global\let\svgscale\undefined%
  \makeatother%
  \begin{picture}(1,0.57212585)%
    \lineheight{1}%
    \setlength\tabcolsep{0pt}%
    \put(0,0){\includegraphics[width=\unitlength,page=1]{sc-trivalent.pdf}}%
  \end{picture}%
\endgroup%
}}}
\newcommand{\raisebox{-3pt}{\marginbox{1.5pt 0pt}{
\begingroup%
  \makeatletter%
  \providecommand\color[2][]{%
    \errmessage{(Inkscape) Color is used for the text in Inkscape, but the package 'color.sty' is not loaded}%
    \renewcommand\color[2][]{}%
  }%
  \providecommand\transparent[1]{%
    \errmessage{(Inkscape) Transparency is used (non-zero) for the text in Inkscape, but the package 'transparent.sty' is not loaded}%
    \renewcommand\transparent[1]{}%
  }%
  \providecommand\rotatebox[2]{#2}%
  \newcommand*\fsize{\dimexpr\f@size pt\relax}%
  \newcommand*\lineheight[1]{\fontsize{\fsize}{#1\fsize}\selectfont}%
  \ifx\svgwidth\undefined%
    \setlength{\unitlength}{23.96091938bp}%
    \ifx\svgscale\undefined%
      \relax%
    \else%
      \setlength{\unitlength}{\unitlength * \real{\svgscale}}%
    \fi%
  \else%
    \setlength{\unitlength}{\svgwidth}%
  \fi%
  \global\let\svgwidth\undefined%
  \global\let\svgscale\undefined%
  \makeatother%
  \begin{picture}(1,0.57212585)%
    \lineheight{1}%
    \setlength\tabcolsep{0pt}%
    \put(0,0){\includegraphics[width=\unitlength,page=1]{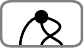}}%
  \end{picture}%
\endgroup%
}}}{\raisebox{-3pt}{\marginbox{1.5pt 0pt}{
\begingroup%
  \makeatletter%
  \providecommand\color[2][]{%
    \errmessage{(Inkscape) Color is used for the text in Inkscape, but the package 'color.sty' is not loaded}%
    \renewcommand\color[2][]{}%
  }%
  \providecommand\transparent[1]{%
    \errmessage{(Inkscape) Transparency is used (non-zero) for the text in Inkscape, but the package 'transparent.sty' is not loaded}%
    \renewcommand\transparent[1]{}%
  }%
  \providecommand\rotatebox[2]{#2}%
  \newcommand*\fsize{\dimexpr\f@size pt\relax}%
  \newcommand*\lineheight[1]{\fontsize{\fsize}{#1\fsize}\selectfont}%
  \ifx\svgwidth\undefined%
    \setlength{\unitlength}{23.96091938bp}%
    \ifx\svgscale\undefined%
      \relax%
    \else%
      \setlength{\unitlength}{\unitlength * \real{\svgscale}}%
    \fi%
  \else%
    \setlength{\unitlength}{\svgwidth}%
  \fi%
  \global\let\svgwidth\undefined%
  \global\let\svgscale\undefined%
  \makeatother%
  \begin{picture}(1,0.57212585)%
    \lineheight{1}%
    \setlength\tabcolsep{0pt}%
    \put(0,0){\includegraphics[width=\unitlength,page=1]{sc-trivalent-flip.pdf}}%
  \end{picture}%
\endgroup%
}}}
\newcommand{\raisebox{-3pt}{\marginbox{1.5pt 0pt}{
\begingroup%
  \makeatletter%
  \providecommand\color[2][]{%
    \errmessage{(Inkscape) Color is used for the text in Inkscape, but the package 'color.sty' is not loaded}%
    \renewcommand\color[2][]{}%
  }%
  \providecommand\transparent[1]{%
    \errmessage{(Inkscape) Transparency is used (non-zero) for the text in Inkscape, but the package 'transparent.sty' is not loaded}%
    \renewcommand\transparent[1]{}%
  }%
  \providecommand\rotatebox[2]{#2}%
  \newcommand*\fsize{\dimexpr\f@size pt\relax}%
  \newcommand*\lineheight[1]{\fontsize{\fsize}{#1\fsize}\selectfont}%
  \ifx\svgwidth\undefined%
    \setlength{\unitlength}{23.96091938bp}%
    \ifx\svgscale\undefined%
      \relax%
    \else%
      \setlength{\unitlength}{\unitlength * \real{\svgscale}}%
    \fi%
  \else%
    \setlength{\unitlength}{\svgwidth}%
  \fi%
  \global\let\svgwidth\undefined%
  \global\let\svgscale\undefined%
  \makeatother%
  \begin{picture}(1,0.57212585)%
    \lineheight{1}%
    \setlength\tabcolsep{0pt}%
    \put(0,0){\includegraphics[width=\unitlength,page=1]{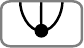}}%
  \end{picture}%
\endgroup%
}}}{\raisebox{-3pt}{\marginbox{1.5pt 0pt}{
\begingroup%
  \makeatletter%
  \providecommand\color[2][]{%
    \errmessage{(Inkscape) Color is used for the text in Inkscape, but the package 'color.sty' is not loaded}%
    \renewcommand\color[2][]{}%
  }%
  \providecommand\transparent[1]{%
    \errmessage{(Inkscape) Transparency is used (non-zero) for the text in Inkscape, but the package 'transparent.sty' is not loaded}%
    \renewcommand\transparent[1]{}%
  }%
  \providecommand\rotatebox[2]{#2}%
  \newcommand*\fsize{\dimexpr\f@size pt\relax}%
  \newcommand*\lineheight[1]{\fontsize{\fsize}{#1\fsize}\selectfont}%
  \ifx\svgwidth\undefined%
    \setlength{\unitlength}{23.96091938bp}%
    \ifx\svgscale\undefined%
      \relax%
    \else%
      \setlength{\unitlength}{\unitlength * \real{\svgscale}}%
    \fi%
  \else%
    \setlength{\unitlength}{\svgwidth}%
  \fi%
  \global\let\svgwidth\undefined%
  \global\let\svgscale\undefined%
  \makeatother%
  \begin{picture}(1,0.57212585)%
    \lineheight{1}%
    \setlength\tabcolsep{0pt}%
    \put(0,0){\includegraphics[width=\unitlength,page=1]{sc-trivalent-dual.pdf}}%
  \end{picture}%
\endgroup%
}}}
\newcommand{\raisebox{-3pt}{\marginbox{1.5pt 0pt}{
\begingroup%
  \makeatletter%
  \providecommand\color[2][]{%
    \errmessage{(Inkscape) Color is used for the text in Inkscape, but the package 'color.sty' is not loaded}%
    \renewcommand\color[2][]{}%
  }%
  \providecommand\transparent[1]{%
    \errmessage{(Inkscape) Transparency is used (non-zero) for the text in Inkscape, but the package 'transparent.sty' is not loaded}%
    \renewcommand\transparent[1]{}%
  }%
  \providecommand\rotatebox[2]{#2}%
  \newcommand*\fsize{\dimexpr\f@size pt\relax}%
  \newcommand*\lineheight[1]{\fontsize{\fsize}{#1\fsize}\selectfont}%
  \ifx\svgwidth\undefined%
    \setlength{\unitlength}{23.96091938bp}%
    \ifx\svgscale\undefined%
      \relax%
    \else%
      \setlength{\unitlength}{\unitlength * \real{\svgscale}}%
    \fi%
  \else%
    \setlength{\unitlength}{\svgwidth}%
  \fi%
  \global\let\svgwidth\undefined%
  \global\let\svgscale\undefined%
  \makeatother%
  \begin{picture}(1,0.57212585)%
    \lineheight{1}%
    \setlength\tabcolsep{0pt}%
    \put(0,0){\includegraphics[width=\unitlength,page=1]{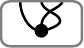}}%
  \end{picture}%
\endgroup%
}}}{\raisebox{-3pt}{\marginbox{1.5pt 0pt}{
\begingroup%
  \makeatletter%
  \providecommand\color[2][]{%
    \errmessage{(Inkscape) Color is used for the text in Inkscape, but the package 'color.sty' is not loaded}%
    \renewcommand\color[2][]{}%
  }%
  \providecommand\transparent[1]{%
    \errmessage{(Inkscape) Transparency is used (non-zero) for the text in Inkscape, but the package 'transparent.sty' is not loaded}%
    \renewcommand\transparent[1]{}%
  }%
  \providecommand\rotatebox[2]{#2}%
  \newcommand*\fsize{\dimexpr\f@size pt\relax}%
  \newcommand*\lineheight[1]{\fontsize{\fsize}{#1\fsize}\selectfont}%
  \ifx\svgwidth\undefined%
    \setlength{\unitlength}{23.96091938bp}%
    \ifx\svgscale\undefined%
      \relax%
    \else%
      \setlength{\unitlength}{\unitlength * \real{\svgscale}}%
    \fi%
  \else%
    \setlength{\unitlength}{\svgwidth}%
  \fi%
  \global\let\svgwidth\undefined%
  \global\let\svgscale\undefined%
  \makeatother%
  \begin{picture}(1,0.57212585)%
    \lineheight{1}%
    \setlength\tabcolsep{0pt}%
    \put(0,0){\includegraphics[width=\unitlength,page=1]{sc-trivalent-flip-dual.pdf}}%
  \end{picture}%
\endgroup%
}}}
\newcommand{\raisebox{-8pt}{\marginbox{1.5pt 0pt}{
\begingroup%
  \makeatletter%
  \providecommand\color[2][]{%
    \errmessage{(Inkscape) Color is used for the text in Inkscape, but the package 'color.sty' is not loaded}%
    \renewcommand\color[2][]{}%
  }%
  \providecommand\transparent[1]{%
    \errmessage{(Inkscape) Transparency is used (non-zero) for the text in Inkscape, but the package 'transparent.sty' is not loaded}%
    \renewcommand\transparent[1]{}%
  }%
  \providecommand\rotatebox[2]{#2}%
  \newcommand*\fsize{\dimexpr\f@size pt\relax}%
  \newcommand*\lineheight[1]{\fontsize{\fsize}{#1\fsize}\selectfont}%
  \ifx\svgwidth\undefined%
    \setlength{\unitlength}{22.32655767bp}%
    \ifx\svgscale\undefined%
      \relax%
    \else%
      \setlength{\unitlength}{\unitlength * \real{\svgscale}}%
    \fi%
  \else%
    \setlength{\unitlength}{\svgwidth}%
  \fi%
  \global\let\svgwidth\undefined%
  \global\let\svgscale\undefined%
  \makeatother%
  \begin{picture}(1,1.32691446)%
    \lineheight{1}%
    \setlength\tabcolsep{0pt}%
    \put(0,0){\includegraphics[width=\unitlength,page=1]{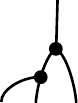}}%
  \end{picture}%
\endgroup%
}}}{\raisebox{-8pt}{\marginbox{1.5pt 0pt}{
\begingroup%
  \makeatletter%
  \providecommand\color[2][]{%
    \errmessage{(Inkscape) Color is used for the text in Inkscape, but the package 'color.sty' is not loaded}%
    \renewcommand\color[2][]{}%
  }%
  \providecommand\transparent[1]{%
    \errmessage{(Inkscape) Transparency is used (non-zero) for the text in Inkscape, but the package 'transparent.sty' is not loaded}%
    \renewcommand\transparent[1]{}%
  }%
  \providecommand\rotatebox[2]{#2}%
  \newcommand*\fsize{\dimexpr\f@size pt\relax}%
  \newcommand*\lineheight[1]{\fontsize{\fsize}{#1\fsize}\selectfont}%
  \ifx\svgwidth\undefined%
    \setlength{\unitlength}{22.32655767bp}%
    \ifx\svgscale\undefined%
      \relax%
    \else%
      \setlength{\unitlength}{\unitlength * \real{\svgscale}}%
    \fi%
  \else%
    \setlength{\unitlength}{\svgwidth}%
  \fi%
  \global\let\svgwidth\undefined%
  \global\let\svgscale\undefined%
  \makeatother%
  \begin{picture}(1,1.32691446)%
    \lineheight{1}%
    \setlength\tabcolsep{0pt}%
    \put(0,0){\includegraphics[width=\unitlength,page=1]{ad1.pdf}}%
  \end{picture}%
\endgroup%
}}}
\newcommand{\raisebox{-8pt}{\marginbox{1.5pt 0pt}{
\begingroup%
  \makeatletter%
  \providecommand\color[2][]{%
    \errmessage{(Inkscape) Color is used for the text in Inkscape, but the package 'color.sty' is not loaded}%
    \renewcommand\color[2][]{}%
  }%
  \providecommand\transparent[1]{%
    \errmessage{(Inkscape) Transparency is used (non-zero) for the text in Inkscape, but the package 'transparent.sty' is not loaded}%
    \renewcommand\transparent[1]{}%
  }%
  \providecommand\rotatebox[2]{#2}%
  \newcommand*\fsize{\dimexpr\f@size pt\relax}%
  \newcommand*\lineheight[1]{\fontsize{\fsize}{#1\fsize}\selectfont}%
  \ifx\svgwidth\undefined%
    \setlength{\unitlength}{22.51837607bp}%
    \ifx\svgscale\undefined%
      \relax%
    \else%
      \setlength{\unitlength}{\unitlength * \real{\svgscale}}%
    \fi%
  \else%
    \setlength{\unitlength}{\svgwidth}%
  \fi%
  \global\let\svgwidth\undefined%
  \global\let\svgscale\undefined%
  \makeatother%
  \begin{picture}(1,1.31561152)%
    \lineheight{1}%
    \setlength\tabcolsep{0pt}%
    \put(0,0){\includegraphics[width=\unitlength,page=1]{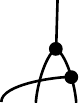}}%
  \end{picture}%
\endgroup%
}}}{\raisebox{-8pt}{\marginbox{1.5pt 0pt}{
\begingroup%
  \makeatletter%
  \providecommand\color[2][]{%
    \errmessage{(Inkscape) Color is used for the text in Inkscape, but the package 'color.sty' is not loaded}%
    \renewcommand\color[2][]{}%
  }%
  \providecommand\transparent[1]{%
    \errmessage{(Inkscape) Transparency is used (non-zero) for the text in Inkscape, but the package 'transparent.sty' is not loaded}%
    \renewcommand\transparent[1]{}%
  }%
  \providecommand\rotatebox[2]{#2}%
  \newcommand*\fsize{\dimexpr\f@size pt\relax}%
  \newcommand*\lineheight[1]{\fontsize{\fsize}{#1\fsize}\selectfont}%
  \ifx\svgwidth\undefined%
    \setlength{\unitlength}{22.51837607bp}%
    \ifx\svgscale\undefined%
      \relax%
    \else%
      \setlength{\unitlength}{\unitlength * \real{\svgscale}}%
    \fi%
  \else%
    \setlength{\unitlength}{\svgwidth}%
  \fi%
  \global\let\svgwidth\undefined%
  \global\let\svgscale\undefined%
  \makeatother%
  \begin{picture}(1,1.31561152)%
    \lineheight{1}%
    \setlength\tabcolsep{0pt}%
    \put(0,0){\includegraphics[width=\unitlength,page=1]{ad2.pdf}}%
  \end{picture}%
\endgroup%
}}}
\newcommand{\raisebox{-8pt}{\marginbox{1.5pt 0pt}{
\begingroup%
  \makeatletter%
  \providecommand\color[2][]{%
    \errmessage{(Inkscape) Color is used for the text in Inkscape, but the package 'color.sty' is not loaded}%
    \renewcommand\color[2][]{}%
  }%
  \providecommand\transparent[1]{%
    \errmessage{(Inkscape) Transparency is used (non-zero) for the text in Inkscape, but the package 'transparent.sty' is not loaded}%
    \renewcommand\transparent[1]{}%
  }%
  \providecommand\rotatebox[2]{#2}%
  \newcommand*\fsize{\dimexpr\f@size pt\relax}%
  \newcommand*\lineheight[1]{\fontsize{\fsize}{#1\fsize}\selectfont}%
  \ifx\svgwidth\undefined%
    \setlength{\unitlength}{22.32155614bp}%
    \ifx\svgscale\undefined%
      \relax%
    \else%
      \setlength{\unitlength}{\unitlength * \real{\svgscale}}%
    \fi%
  \else%
    \setlength{\unitlength}{\svgwidth}%
  \fi%
  \global\let\svgwidth\undefined%
  \global\let\svgscale\undefined%
  \makeatother%
  \begin{picture}(1,1.32721177)%
    \lineheight{1}%
    \setlength\tabcolsep{0pt}%
    \put(0,0){\includegraphics[width=\unitlength,page=1]{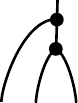}}%
  \end{picture}%
\endgroup%
}}}{\raisebox{-8pt}{\marginbox{1.5pt 0pt}{
\begingroup%
  \makeatletter%
  \providecommand\color[2][]{%
    \errmessage{(Inkscape) Color is used for the text in Inkscape, but the package 'color.sty' is not loaded}%
    \renewcommand\color[2][]{}%
  }%
  \providecommand\transparent[1]{%
    \errmessage{(Inkscape) Transparency is used (non-zero) for the text in Inkscape, but the package 'transparent.sty' is not loaded}%
    \renewcommand\transparent[1]{}%
  }%
  \providecommand\rotatebox[2]{#2}%
  \newcommand*\fsize{\dimexpr\f@size pt\relax}%
  \newcommand*\lineheight[1]{\fontsize{\fsize}{#1\fsize}\selectfont}%
  \ifx\svgwidth\undefined%
    \setlength{\unitlength}{22.32155614bp}%
    \ifx\svgscale\undefined%
      \relax%
    \else%
      \setlength{\unitlength}{\unitlength * \real{\svgscale}}%
    \fi%
  \else%
    \setlength{\unitlength}{\svgwidth}%
  \fi%
  \global\let\svgwidth\undefined%
  \global\let\svgscale\undefined%
  \makeatother%
  \begin{picture}(1,1.32721177)%
    \lineheight{1}%
    \setlength\tabcolsep{0pt}%
    \put(0,0){\includegraphics[width=\unitlength,page=1]{ad3.pdf}}%
  \end{picture}%
\endgroup%
}}}
\newcommand{\raisebox{-6pt}{\marginbox{1.5pt 0pt}{
\begingroup%
  \makeatletter%
  \providecommand\color[2][]{%
    \errmessage{(Inkscape) Color is used for the text in Inkscape, but the package 'color.sty' is not loaded}%
    \renewcommand\color[2][]{}%
  }%
  \providecommand\transparent[1]{%
    \errmessage{(Inkscape) Transparency is used (non-zero) for the text in Inkscape, but the package 'transparent.sty' is not loaded}%
    \renewcommand\transparent[1]{}%
  }%
  \providecommand\rotatebox[2]{#2}%
  \newcommand*\fsize{\dimexpr\f@size pt\relax}%
  \newcommand*\lineheight[1]{\fontsize{\fsize}{#1\fsize}\selectfont}%
  \ifx\svgwidth\undefined%
    \setlength{\unitlength}{10.82395958bp}%
    \ifx\svgscale\undefined%
      \relax%
    \else%
      \setlength{\unitlength}{\unitlength * \real{\svgscale}}%
    \fi%
  \else%
    \setlength{\unitlength}{\svgwidth}%
  \fi%
  \global\let\svgwidth\undefined%
  \global\let\svgscale\undefined%
  \makeatother%
  \begin{picture}(1,1.80573815)%
    \lineheight{1}%
    \setlength\tabcolsep{0pt}%
    \put(0,0){\includegraphics[width=\unitlength,page=1]{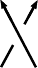}}%
  \end{picture}%
\endgroup%
}}}{\raisebox{-6pt}{\marginbox{1.5pt 0pt}{
\begingroup%
  \makeatletter%
  \providecommand\color[2][]{%
    \errmessage{(Inkscape) Color is used for the text in Inkscape, but the package 'color.sty' is not loaded}%
    \renewcommand\color[2][]{}%
  }%
  \providecommand\transparent[1]{%
    \errmessage{(Inkscape) Transparency is used (non-zero) for the text in Inkscape, but the package 'transparent.sty' is not loaded}%
    \renewcommand\transparent[1]{}%
  }%
  \providecommand\rotatebox[2]{#2}%
  \newcommand*\fsize{\dimexpr\f@size pt\relax}%
  \newcommand*\lineheight[1]{\fontsize{\fsize}{#1\fsize}\selectfont}%
  \ifx\svgwidth\undefined%
    \setlength{\unitlength}{10.82395958bp}%
    \ifx\svgscale\undefined%
      \relax%
    \else%
      \setlength{\unitlength}{\unitlength * \real{\svgscale}}%
    \fi%
  \else%
    \setlength{\unitlength}{\svgwidth}%
  \fi%
  \global\let\svgwidth\undefined%
  \global\let\svgscale\undefined%
  \makeatother%
  \begin{picture}(1,1.80573815)%
    \lineheight{1}%
    \setlength\tabcolsep{0pt}%
    \put(0,0){\includegraphics[width=\unitlength,page=1]{cr-left.pdf}}%
  \end{picture}%
\endgroup%
}}}
\newcommand{\raisebox{-6pt}{\marginbox{1.5pt 0pt}{
\begingroup%
  \makeatletter%
  \providecommand\color[2][]{%
    \errmessage{(Inkscape) Color is used for the text in Inkscape, but the package 'color.sty' is not loaded}%
    \renewcommand\color[2][]{}%
  }%
  \providecommand\transparent[1]{%
    \errmessage{(Inkscape) Transparency is used (non-zero) for the text in Inkscape, but the package 'transparent.sty' is not loaded}%
    \renewcommand\transparent[1]{}%
  }%
  \providecommand\rotatebox[2]{#2}%
  \newcommand*\fsize{\dimexpr\f@size pt\relax}%
  \newcommand*\lineheight[1]{\fontsize{\fsize}{#1\fsize}\selectfont}%
  \ifx\svgwidth\undefined%
    \setlength{\unitlength}{10.82395958bp}%
    \ifx\svgscale\undefined%
      \relax%
    \else%
      \setlength{\unitlength}{\unitlength * \real{\svgscale}}%
    \fi%
  \else%
    \setlength{\unitlength}{\svgwidth}%
  \fi%
  \global\let\svgwidth\undefined%
  \global\let\svgscale\undefined%
  \makeatother%
  \begin{picture}(1,1.80573815)%
    \lineheight{1}%
    \setlength\tabcolsep{0pt}%
    \put(0,0){\includegraphics[width=\unitlength,page=1]{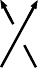}}%
  \end{picture}%
\endgroup%
}}}{\raisebox{-6pt}{\marginbox{1.5pt 0pt}{
\begingroup%
  \makeatletter%
  \providecommand\color[2][]{%
    \errmessage{(Inkscape) Color is used for the text in Inkscape, but the package 'color.sty' is not loaded}%
    \renewcommand\color[2][]{}%
  }%
  \providecommand\transparent[1]{%
    \errmessage{(Inkscape) Transparency is used (non-zero) for the text in Inkscape, but the package 'transparent.sty' is not loaded}%
    \renewcommand\transparent[1]{}%
  }%
  \providecommand\rotatebox[2]{#2}%
  \newcommand*\fsize{\dimexpr\f@size pt\relax}%
  \newcommand*\lineheight[1]{\fontsize{\fsize}{#1\fsize}\selectfont}%
  \ifx\svgwidth\undefined%
    \setlength{\unitlength}{10.82395958bp}%
    \ifx\svgscale\undefined%
      \relax%
    \else%
      \setlength{\unitlength}{\unitlength * \real{\svgscale}}%
    \fi%
  \else%
    \setlength{\unitlength}{\svgwidth}%
  \fi%
  \global\let\svgwidth\undefined%
  \global\let\svgscale\undefined%
  \makeatother%
  \begin{picture}(1,1.80573815)%
    \lineheight{1}%
    \setlength\tabcolsep{0pt}%
    \put(0,0){\includegraphics[width=\unitlength,page=1]{cr-right.pdf}}%
  \end{picture}%
\endgroup%
}}}
\newcommand{\raisebox{-6pt}{\marginbox{1.5pt 0pt}{
\begingroup%
  \makeatletter%
  \providecommand\color[2][]{%
    \errmessage{(Inkscape) Color is used for the text in Inkscape, but the package 'color.sty' is not loaded}%
    \renewcommand\color[2][]{}%
  }%
  \providecommand\transparent[1]{%
    \errmessage{(Inkscape) Transparency is used (non-zero) for the text in Inkscape, but the package 'transparent.sty' is not loaded}%
    \renewcommand\transparent[1]{}%
  }%
  \providecommand\rotatebox[2]{#2}%
  \newcommand*\fsize{\dimexpr\f@size pt\relax}%
  \newcommand*\lineheight[1]{\fontsize{\fsize}{#1\fsize}\selectfont}%
  \ifx\svgwidth\undefined%
    \setlength{\unitlength}{10.8110676bp}%
    \ifx\svgscale\undefined%
      \relax%
    \else%
      \setlength{\unitlength}{\unitlength * \real{\svgscale}}%
    \fi%
  \else%
    \setlength{\unitlength}{\svgwidth}%
  \fi%
  \global\let\svgwidth\undefined%
  \global\let\svgscale\undefined%
  \makeatother%
  \begin{picture}(1,1.80802073)%
    \lineheight{1}%
    \setlength\tabcolsep{0pt}%
    \put(0,0){\includegraphics[width=\unitlength,page=1]{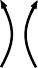}}%
  \end{picture}%
\endgroup%
}}}{\raisebox{-6pt}{\marginbox{1.5pt 0pt}{
\begingroup%
  \makeatletter%
  \providecommand\color[2][]{%
    \errmessage{(Inkscape) Color is used for the text in Inkscape, but the package 'color.sty' is not loaded}%
    \renewcommand\color[2][]{}%
  }%
  \providecommand\transparent[1]{%
    \errmessage{(Inkscape) Transparency is used (non-zero) for the text in Inkscape, but the package 'transparent.sty' is not loaded}%
    \renewcommand\transparent[1]{}%
  }%
  \providecommand\rotatebox[2]{#2}%
  \newcommand*\fsize{\dimexpr\f@size pt\relax}%
  \newcommand*\lineheight[1]{\fontsize{\fsize}{#1\fsize}\selectfont}%
  \ifx\svgwidth\undefined%
    \setlength{\unitlength}{10.8110676bp}%
    \ifx\svgscale\undefined%
      \relax%
    \else%
      \setlength{\unitlength}{\unitlength * \real{\svgscale}}%
    \fi%
  \else%
    \setlength{\unitlength}{\svgwidth}%
  \fi%
  \global\let\svgwidth\undefined%
  \global\let\svgscale\undefined%
  \makeatother%
  \begin{picture}(1,1.80802073)%
    \lineheight{1}%
    \setlength\tabcolsep{0pt}%
    \put(0,0){\includegraphics[width=\unitlength,page=1]{cr-zero.pdf}}%
  \end{picture}%
\endgroup%
}}}
\newcommand{\raisebox{-6pt}{\marginbox{1.5pt 0pt}{
\begingroup%
  \makeatletter%
  \providecommand\color[2][]{%
    \errmessage{(Inkscape) Color is used for the text in Inkscape, but the package 'color.sty' is not loaded}%
    \renewcommand\color[2][]{}%
  }%
  \providecommand\transparent[1]{%
    \errmessage{(Inkscape) Transparency is used (non-zero) for the text in Inkscape, but the package 'transparent.sty' is not loaded}%
    \renewcommand\transparent[1]{}%
  }%
  \providecommand\rotatebox[2]{#2}%
  \newcommand*\fsize{\dimexpr\f@size pt\relax}%
  \newcommand*\lineheight[1]{\fontsize{\fsize}{#1\fsize}\selectfont}%
  \ifx\svgwidth\undefined%
    \setlength{\unitlength}{11.51953029bp}%
    \ifx\svgscale\undefined%
      \relax%
    \else%
      \setlength{\unitlength}{\unitlength * \real{\svgscale}}%
    \fi%
  \else%
    \setlength{\unitlength}{\svgwidth}%
  \fi%
  \global\let\svgwidth\undefined%
  \global\let\svgscale\undefined%
  \makeatother%
  \begin{picture}(1,1.77482522)%
    \lineheight{1}%
    \setlength\tabcolsep{0pt}%
    \put(0,0){\includegraphics[width=\unitlength,page=1]{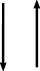}}%
  \end{picture}%
\endgroup%
}}}{\raisebox{-6pt}{\marginbox{1.5pt 0pt}{
\begingroup%
  \makeatletter%
  \providecommand\color[2][]{%
    \errmessage{(Inkscape) Color is used for the text in Inkscape, but the package 'color.sty' is not loaded}%
    \renewcommand\color[2][]{}%
  }%
  \providecommand\transparent[1]{%
    \errmessage{(Inkscape) Transparency is used (non-zero) for the text in Inkscape, but the package 'transparent.sty' is not loaded}%
    \renewcommand\transparent[1]{}%
  }%
  \providecommand\rotatebox[2]{#2}%
  \newcommand*\fsize{\dimexpr\f@size pt\relax}%
  \newcommand*\lineheight[1]{\fontsize{\fsize}{#1\fsize}\selectfont}%
  \ifx\svgwidth\undefined%
    \setlength{\unitlength}{11.51953029bp}%
    \ifx\svgscale\undefined%
      \relax%
    \else%
      \setlength{\unitlength}{\unitlength * \real{\svgscale}}%
    \fi%
  \else%
    \setlength{\unitlength}{\svgwidth}%
  \fi%
  \global\let\svgwidth\undefined%
  \global\let\svgscale\undefined%
  \makeatother%
  \begin{picture}(1,1.77482522)%
    \lineheight{1}%
    \setlength\tabcolsep{0pt}%
    \put(0,0){\includegraphics[width=\unitlength,page=1]{cr-id.pdf}}%
  \end{picture}%
\endgroup%
}}}
\newcommand{\raisebox{-6pt}{\marginbox{1.5pt 0pt}{
\begingroup%
  \makeatletter%
  \providecommand\color[2][]{%
    \errmessage{(Inkscape) Color is used for the text in Inkscape, but the package 'color.sty' is not loaded}%
    \renewcommand\color[2][]{}%
  }%
  \providecommand\transparent[1]{%
    \errmessage{(Inkscape) Transparency is used (non-zero) for the text in Inkscape, but the package 'transparent.sty' is not loaded}%
    \renewcommand\transparent[1]{}%
  }%
  \providecommand\rotatebox[2]{#2}%
  \newcommand*\fsize{\dimexpr\f@size pt\relax}%
  \newcommand*\lineheight[1]{\fontsize{\fsize}{#1\fsize}\selectfont}%
  \ifx\svgwidth\undefined%
    \setlength{\unitlength}{11.51816753bp}%
    \ifx\svgscale\undefined%
      \relax%
    \else%
      \setlength{\unitlength}{\unitlength * \real{\svgscale}}%
    \fi%
  \else%
    \setlength{\unitlength}{\svgwidth}%
  \fi%
  \global\let\svgwidth\undefined%
  \global\let\svgscale\undefined%
  \makeatother%
  \begin{picture}(1,1.77504669)%
    \lineheight{1}%
    \setlength\tabcolsep{0pt}%
    \put(0,0){\includegraphics[width=\unitlength,page=1]{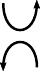}}%
  \end{picture}%
\endgroup%
}}}{\raisebox{-6pt}{\marginbox{1.5pt 0pt}{
\begingroup%
  \makeatletter%
  \providecommand\color[2][]{%
    \errmessage{(Inkscape) Color is used for the text in Inkscape, but the package 'color.sty' is not loaded}%
    \renewcommand\color[2][]{}%
  }%
  \providecommand\transparent[1]{%
    \errmessage{(Inkscape) Transparency is used (non-zero) for the text in Inkscape, but the package 'transparent.sty' is not loaded}%
    \renewcommand\transparent[1]{}%
  }%
  \providecommand\rotatebox[2]{#2}%
  \newcommand*\fsize{\dimexpr\f@size pt\relax}%
  \newcommand*\lineheight[1]{\fontsize{\fsize}{#1\fsize}\selectfont}%
  \ifx\svgwidth\undefined%
    \setlength{\unitlength}{11.51816753bp}%
    \ifx\svgscale\undefined%
      \relax%
    \else%
      \setlength{\unitlength}{\unitlength * \real{\svgscale}}%
    \fi%
  \else%
    \setlength{\unitlength}{\svgwidth}%
  \fi%
  \global\let\svgwidth\undefined%
  \global\let\svgscale\undefined%
  \makeatother%
  \begin{picture}(1,1.77504669)%
    \lineheight{1}%
    \setlength\tabcolsep{0pt}%
    \put(0,0){\includegraphics[width=\unitlength,page=1]{cr-cap.pdf}}%
  \end{picture}%
\endgroup%
}}}
\newcommand{\raisebox{-6pt}{\marginbox{1.5pt 0pt}{
\begingroup%
  \makeatletter%
  \providecommand\color[2][]{%
    \errmessage{(Inkscape) Color is used for the text in Inkscape, but the package 'color.sty' is not loaded}%
    \renewcommand\color[2][]{}%
  }%
  \providecommand\transparent[1]{%
    \errmessage{(Inkscape) Transparency is used (non-zero) for the text in Inkscape, but the package 'transparent.sty' is not loaded}%
    \renewcommand\transparent[1]{}%
  }%
  \providecommand\rotatebox[2]{#2}%
  \newcommand*\fsize{\dimexpr\f@size pt\relax}%
  \newcommand*\lineheight[1]{\fontsize{\fsize}{#1\fsize}\selectfont}%
  \ifx\svgwidth\undefined%
    \setlength{\unitlength}{17.11504129bp}%
    \ifx\svgscale\undefined%
      \relax%
    \else%
      \setlength{\unitlength}{\unitlength * \real{\svgscale}}%
    \fi%
  \else%
    \setlength{\unitlength}{\svgwidth}%
  \fi%
  \global\let\svgwidth\undefined%
  \global\let\svgscale\undefined%
  \makeatother%
  \begin{picture}(1,1.20492949)%
    \lineheight{1}%
    \setlength\tabcolsep{0pt}%
    \put(0,0){\includegraphics[width=\unitlength,page=1]{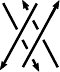}}%
  \end{picture}%
\endgroup%
}}}{\raisebox{-6pt}{\marginbox{1.5pt 0pt}{
\begingroup%
  \makeatletter%
  \providecommand\color[2][]{%
    \errmessage{(Inkscape) Color is used for the text in Inkscape, but the package 'color.sty' is not loaded}%
    \renewcommand\color[2][]{}%
  }%
  \providecommand\transparent[1]{%
    \errmessage{(Inkscape) Transparency is used (non-zero) for the text in Inkscape, but the package 'transparent.sty' is not loaded}%
    \renewcommand\transparent[1]{}%
  }%
  \providecommand\rotatebox[2]{#2}%
  \newcommand*\fsize{\dimexpr\f@size pt\relax}%
  \newcommand*\lineheight[1]{\fontsize{\fsize}{#1\fsize}\selectfont}%
  \ifx\svgwidth\undefined%
    \setlength{\unitlength}{17.11504129bp}%
    \ifx\svgscale\undefined%
      \relax%
    \else%
      \setlength{\unitlength}{\unitlength * \real{\svgscale}}%
    \fi%
  \else%
    \setlength{\unitlength}{\svgwidth}%
  \fi%
  \global\let\svgwidth\undefined%
  \global\let\svgscale\undefined%
  \makeatother%
  \begin{picture}(1,1.20492949)%
    \lineheight{1}%
    \setlength\tabcolsep{0pt}%
    \put(0,0){\includegraphics[width=\unitlength,page=1]{cr-dright.pdf}}%
  \end{picture}%
\endgroup%
}}}
\def\thickness{0.25mm}
\def\dthickness{0.75mm}
\newcommand{\gSide}{\begin{tikzpicture}[baseline=-0.5ex]
  \def\rad{0.3}
  \draw [gray] (0,0) circle [radius=\rad];
  \begin{scope} [radius=\rad]
    \clip (0,0) circle;
    \coordinate (A) at (45:\rad);
    \coordinate (B) at (135:\rad);
    \coordinate (C) at (225:\rad);
    \coordinate (D) at (315:\rad);
    
    \draw [line width=\thickness] (A) to[out=225,in=315] (B);
    \draw [line width=\thickness] (C) to[out=45,in=135] (D);
  \end{scope}
\end{tikzpicture}
}
\newcommand{\gUp}{\begin{tikzpicture}[baseline=-0.5ex]
  \def\rad{0.3}
  \draw [gray] (0,0) circle [radius=\rad];
  \begin{scope} [radius=\rad]
    \clip (0,0) circle;
    \coordinate (A) at (45:\rad);
    \coordinate (B) at (135:\rad);
    \coordinate (C) at (225:\rad);
    \coordinate (D) at (315:\rad);
    
    \draw [line width=\thickness] (A) to[out=225,in=135] (D);
    \draw [line width=\thickness] (B) to[out=315,in=45] (C);
  \end{scope}
\end{tikzpicture}
}
\newcommand{\gX}{\begin{tikzpicture}[baseline=-0.5ex]
  \def\rad{0.3}
  \draw [gray] (0,0) circle [radius=\rad];
  \begin{scope} [radius=\rad]
    \clip (0,0) circle;
    \coordinate (A) at (45:\rad);
    \coordinate (B) at (135:\rad);
    \coordinate (C) at (225:\rad);
    \coordinate (D) at (315:\rad);
    
    \draw [line width=\thickness] (A) to[out=225,in=45] (C);
    \draw [line width=\thickness] (B) to[out=315,in=135] (D);
    \filldraw (0,0) circle (0.7mm);
  \end{scope}
\end{tikzpicture}
}
\newcommand{\gV}{\begin{tikzpicture}[baseline=-0.5ex]
  \def\rad{0.3}
  \draw [gray] (0,0) circle [radius=\rad];
  \begin{scope} [radius=\rad]
    \clip (0,0) circle;
    \coordinate (A) at (45:\rad);
    \coordinate (B) at (135:\rad);
    \coordinate (C) at (225:\rad);
    \coordinate (D) at (315:\rad);
    
    \draw [line width=\thickness] (A) to[out=225,in=45] (C);
    \draw [line width=\thickness] (B) to[out=315,in=135] (D);
  \end{scope}
\end{tikzpicture}
}
\newcommand{\gRight}{\begin{tikzpicture}[baseline=-0.5ex]
  \def\rad{0.3}
  \draw [gray] (0,0) circle [radius=\rad];
  \begin{scope} [radius=\rad]
    \clip (0,0) circle;
    \coordinate (A) at (45:\rad);
    \coordinate (B) at (135:\rad);
    \coordinate (C) at (225:\rad);
    \coordinate (D) at (315:\rad);
    
    \draw [line width=\thickness] (B) to[out=315,in=135] (D);
    \draw [line width=\dthickness,double=black,draw=white,double distance=\thickness] (A) to[out=225,in=45] (C);
  \end{scope}
\end{tikzpicture}
}
\newcommand{\gLeft}{\begin{tikzpicture}[baseline=-0.5ex]
  \def\rad{0.3}
  \draw [gray] (0,0) circle [radius=\rad];
  \begin{scope} [radius=\rad]
    \clip (0,0) circle;
    \coordinate (A) at (45:\rad);
    \coordinate (B) at (135:\rad);
    \coordinate (C) at (225:\rad);
    \coordinate (D) at (315:\rad);
    
    \draw [line width=\thickness] (A) to[out=225,in=45] (C);
    \draw [line width=\dthickness,double=black,draw=white,double distance=\thickness] (B) to[out=315,in=135] (D);
  \end{scope}
\end{tikzpicture}
}
\begin{document}

\title{Planar diagrams for local invariants of graphs in surfaces}

\author{Calvin McPhail-Snyder}
\address{Department of Mathematics, University of California, Berkeley, California 94720-3840}
\email{cmcs@math.berkeley.edu}

\author{Kyle A. Miller}
\address{Department of Mathematics, University of California, Berkeley, California 94720-3840}
\email{kmill@math.berkeley.edu}

\subjclass[2010]{Primary 05C31; Secondary 57M27, 16T30}
\keywords{ribbon graphs, spatial graphs, flow polynomial, Penrose polynomials, Yamada polynomial}

\date{October 2018}

\begin{abstract}
  In order to apply quantum topology methods to nonplanar graphs, we define a planar diagram category that describes the local topology of embeddings of graphs into surfaces.
  These \emph{virtual graphs} are a categorical interpretation of ribbon graphs.
  We describe an extension of the flow polynomial to virtual graphs, the $S$-polynomial, and formulate the $\mathfrak{sl}(N)$ Penrose polynomial for non-cubic graphs, giving contraction-deletion relations.
  The $S$-polynomial is used to define an extension of the Yamada polynomial to virtual spatial graphs, and with it we obtain a sufficient condition for non-classicality of virtual spatial graphs.
  We conjecture the existence of local relations for the $S$-polynomial at squares of integers. 
\end{abstract}

\maketitle
\section{Introduction}
The study of invariants of topological objects like knots and links can frequently be simplified by the use of diagram categories.
These are categories whose objects are intervals with marked points and whose morphism spaces are of diagrams drawn between them, for example of graphs or tangles.
Numerical invariants of these objects correspond to functors from these categories to simpler linear categories with local relations.
Perhaps the most famous example is the use of the Kauffman bracket (equivalently, the $\mathcal{R}$-matrix of $\mathcal{U}_q(\mathfrak{sl}_2)$) to construct the Jones polynomial.

Many of the examples in the literature correspond to diagrams drawn in the plane, such as the tangle diagrams used in Reshetikhin-Turaev invariants\cite{Reshetikhin1990}.
Similarly, in \cite{Fendley2009} Fendley and Krushkal consider a category of planar graphs whose vertices are of degree $3$ (cubic graphs).
A natural generalization is to ask what happens when the objects are nonplanar --- for example, one might be interested in invariants of graphs on tori.
The diagrams of the relevant category are no longer drawn in the plane, but instead on a compact oriented surface with boundary, as in Figure~\ref{fig:surface-graph}, and analyzing composition laws can become more difficult.
\begin{figure}[htb]
  \centering
  \includegraphics{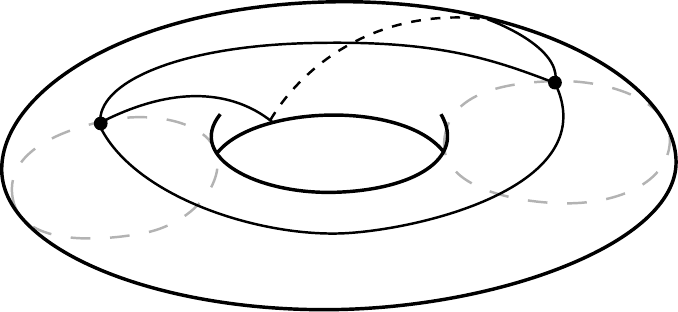}
  \caption{A theta graph $\Theta_3$ cellularly embedded in a torus.}
  \label{fig:surface-graph}
\end{figure}
In this paper, we explain how to represent the local information of graphs on surfaces in terms of planar diagrams for objects we call \emph{virtual graphs}, so named for their suitability in representing diagrams of virtual links and virtual spatial graphs.
Virtual graphs are equivalence classes that unite the notions of ribbon graphs and cellular embeddings.

When considering invariants of planar graphs or similar objects, it makes sense to adopt a functional or categorical viewpoint.
Graphs drawn in planar surfaces with boundary can have edges incident to the boundary, and one might compose diagrams by gluing boundaries while attaching corresponding boundary edges.
This idea has been formalized in a number of ways, such as planar algebras, operads, or tensor categories.

We consider a decorated $(1+1)$-dimensional oriented cobordism category: objects are closed $1$-manifolds (circles) with marked points, and morphisms are oriented cobordisms between them, decorated with embedded graphs meeting the marked boundary points in half-edges.
We refer to such graphs as \emph{surface graphs}, and in Definition~\ref{def:sym-surface-graph-category} we give a variation, the \emph{stable surface graph category}, where we impose a certain symmetric monoidal structure.

From this perspective, the data of a numerical invariant such as the flow polynomial, the Penrose polynomials, or the $S$-polynomial of Definition~\ref{def:s-polynomial} corresponds to a monoidal functor (possibly with other properties) from the surface graph category to the category of vector spaces.
Such a functor is a type of $(1+1)$-dimensional topological quantum field theory.
These theories are most naturally described in terms of diagrams drawn on surfaces, and, like the case of knots, links, and tangles, for many purposes it is easier to work with projected two-dimensional planar diagrams.
We use the viewpoint of virtual graphs to study invariants that naturally respect the symmetric monoidal structure at the graph level.

In the following, all surfaces are compact and oriented, possibly with boundary.
Everything is with respect to the piecewise linear (PL) category.

\subsection{Overview}
\label{sec:overview}

In Section~\ref{sec:virtual-graphs-introduction} we introduce \emph{virtual graphs} as a setting for invariants of ribbon graphs (combinatorial maps) and virtual spatial graph theory.
Section~\ref{sec:flow-and-s-polynomials} gives a state sum definition for the $S$-polynomial, which is an extension of the flow polynomial to virtual graphs.
We introduce a category of virtual graphs modulo edge subdivision in Section~\ref{sec:cat-virtual-graphs} and use it to produce a trace-preserving functor to the Brauer category (Section~\ref{sec:brauer-category}) that computes the $S$-polynomial in Section~\ref{sec:virtual-graph-inv-categories}.
Using this, we give formulas for edge and vertex connect sums as well as a connection between the $S$-polynomial and the flow polynomial when evaluated at $0$ and $4$.

Section~\ref{sec:penrose-polys} relates the $\mathfrak{sl}(N)$ and $\mathfrak{so}(N)$ Penrose polynomials to the $S$-polynomial, with an extension of the $\mathfrak{sl}(N)$ polynomial to non-cubic virtual graphs in Section~\ref{sec:sln-rels}.
In Section~\ref{sec:yamada-polynomial} we give an extension of the Yamada polynomial to virtual spatial graphs that when paired with Fleming and Mellor's extension can be used to tell that some virtual spatial graphs are not equivalent to classical spatial graphs, extending Miyazawa's result for virtual links.
Section~\ref{sec:golden-conj-vsg} shows that an interpretation of Agol and Krushkal's golden inequality conjecture does not hold for virtual spatial graphs.

\begin{table}[htb]
  \centering
  \caption{Correspondence between the traditional notation and the notation used in this paper, where $G$ is a combinatorial map or ribbon graph.}
  \label{tab:notation-correspondence}
  \begin{tabular}{ll}
    \toprule
    Combinatorial & Topological\\
    \midrule
    $v(G)$ & $\abs{V(G)}$\\
    $e(G)$ & $\abs{E(G)}$\\
    $k(g)$ & $b_0(G)$\\
    $r(g)$ & $\abs{V(G)}-b_0(G)$\\
    $n(G)$ & $b_1(G)$\\
    $bc(G)$ & $b_0(\partial G)=\abs{F(G)}$\\
    $k(G)+n(G)-bc(G)$ & $2g(G)$\\
    \bottomrule
  \end{tabular}
\end{table}

\section{Virtual graphs and virtual spatial graphs}
\label{sec:virtual-graphs-introduction}
In this section we define \emph{virtual graphs}, which provide a planar calculus for the local information of graphs embedded in surfaces.
The motivation for these virtual graph diagrams is to extend objects like the flow category in \cite{Agol2016} to nonplanar graphs, to define a Temperley-Lieb-like category for surfaces, and to give a setting for objects such as $3$-graphs\cite{Duzhin1998} or ($4$-valent) virtual graphs and the category $GraphCat$ of cubic graphs\cite{Kauffman2015}.
Virtual graphs are represented by objects frequently called \emph{combinatorial maps} or \emph{ribbon graphs}, but we wish to de-emphasize a particular surface embedding.

We define \emph{virtual spatial graphs} as virtual graphs with special $4$-valent vertices representing classical crossings subject to Reidemeister moves, instead of using Gauss codes as in \cite{Fleming2007}, and we review the interpretation of virtual spatial graphs as ribbon graphs embedded in thickened surfaces.

\subsection{Virtual graphs}
By a \emph{graph} we mean a compact $1$-dimensional CW complex with a given cell structure, which in other words is a finite abstract graph topologized so its vertices are points and its edges are closed unit intervals.

\begin{definition}
  A \emph{surface graph} $G\incl\Sigma$ is a graph $G$ embedded in the interior of a compact oriented surface $\Sigma$, possibly with boundary.
\end{definition}

There are two special kinds of surface graphs $G\incl\Sigma$:
\begin{enumerate}
\item
  A surface graph is a \emph{cellular} or \emph{combinatorial} embedding if $\Sigma-\nu(G)$ (the complement of a regular neighborhood) is a disjoint union of closed disks.
\item
  A surface graph is a \emph{ribbon graph} if $G\incl\Sigma$ is a homotopy equivalence.
\end{enumerate}

Cellular embeddings have, up to isotopy, a well-defined (\emph{geometric} or \emph{Poincar\'{e}}) \emph{dual} surface graph $G^*\incl\Sigma$ obtained by placing a single vertex inside each disk of $\Sigma-\nu(G)$ and joining such vertices by an edge when the corresponding disks are adjacent to the same edge of $G$.
In particular, the edges of $G$ and $G^*$ are in one-to-one correspondence, and we can arrange for each dual pair to transversely intersect at a single point.
This is the usual dual graph in the case of a graph embedded in the plane.

Cellular embeddings and ribbon graphs are intimately related.
Given a cellular embedding $G\incl\Sigma$, the restriction $G\incl\operatorname{cl}(\nu(G))$ is a ribbon graph.
Conversely, given a ribbon graph $G\incl\Sigma$, the extension $G\incl\Sigma'$ obtained by gluing disks into the boundary of $\Sigma$ to obtain a closed manifold $\Sigma'$ yields a cellular embedding.
Up to composition with an orientation-preserving surface homeomorphism, these are inverse operations.

\begin{definition}
  A \emph{stabilization move} of a surface graph $G \incl \Sigma$ is the surface graph $G\incl \Sigma'$ given by composition with an orientation-preserving embedding $\Sigma\incl\Sigma'$ into a compact oriented surface $\Sigma'$.
  \emph{Stable equivalence} is the equivalence relation on surface graphs generated by stabilization moves.  See Figure~\ref{fig:stab-example}.
\end{definition}
\begin{figure}[htb]
  \centering
\begingroup%
  \makeatletter%
  \providecommand\color[2][]{%
    \errmessage{(Inkscape) Color is used for the text in Inkscape, but the package 'color.sty' is not loaded}%
    \renewcommand\color[2][]{}%
  }%
  \providecommand\transparent[1]{%
    \errmessage{(Inkscape) Transparency is used (non-zero) for the text in Inkscape, but the package 'transparent.sty' is not loaded}%
    \renewcommand\transparent[1]{}%
  }%
  \providecommand\rotatebox[2]{#2}%
  \newcommand*\fsize{\dimexpr\f@size pt\relax}%
  \newcommand*\lineheight[1]{\fontsize{\fsize}{#1\fsize}\selectfont}%
  \ifx\svgwidth\undefined%
    \setlength{\unitlength}{240.01567751bp}%
    \ifx\svgscale\undefined%
      \relax%
    \else%
      \setlength{\unitlength}{\unitlength * \real{\svgscale}}%
    \fi%
  \else%
    \setlength{\unitlength}{\svgwidth}%
  \fi%
  \global\let\svgwidth\undefined%
  \global\let\svgscale\undefined%
  \makeatother%
  \begin{picture}(1,0.27738891)%
    \lineheight{1}%
    \setlength\tabcolsep{0pt}%
    \put(0,0){\includegraphics[width=\unitlength,page=1]{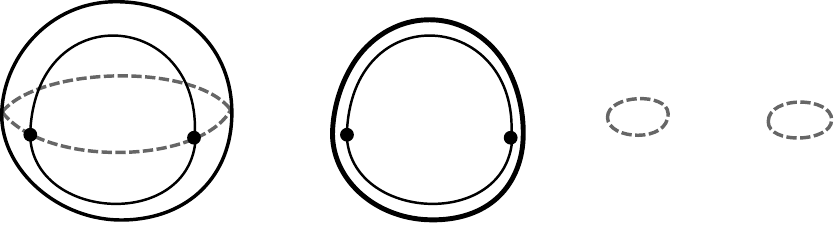}}%
    \put(0.31526062,0.13877835){\color[rgb]{0,0,0}\makebox(0,0)[lt]{\lineheight{0}\smash{\begin{tabular}[t]{l}$\sim$\end{tabular}}}}%
    \put(0,0){\includegraphics[width=\unitlength,page=2]{stabilization.pdf}}%
    \put(0.66285699,0.13877835){\color[rgb]{0,0,0}\makebox(0,0)[lt]{\lineheight{0}\smash{\begin{tabular}[t]{l}$\sim$\end{tabular}}}}%
    \put(0,0){\includegraphics[width=\unitlength,page=3]{stabilization.pdf}}%
  \end{picture}%
\endgroup%

  \caption{Example of stable equivalences, where the annulus in the middle is embedded in a sphere (left) and a torus (right).}
  \label{fig:stab-example}
\end{figure}
This definition is analogous to the one for virtual link diagrams in \cite{Carter2002}.
The closed-surface representatives are all related by $0$-surgery and $1$-surgery in the complement of $G$.
Note that ambient isotopies induce stable equivalence.

\begin{figure}[htb]
  \centering
  \includegraphics{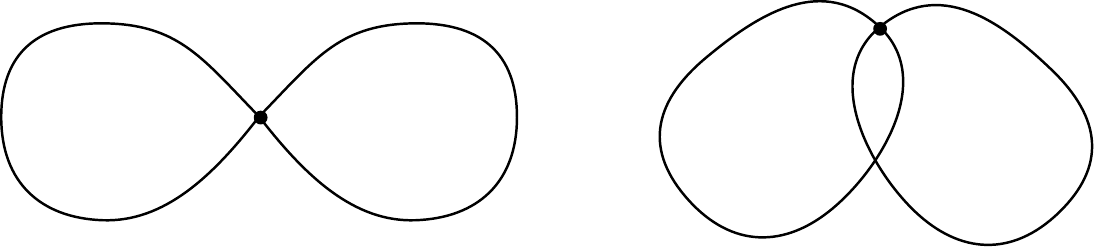}
  \caption{Planar and nonplanar virtual graphs.}
  \label{fig:virtual-graph-example}
\end{figure}

Stable equivalence allows us to ignore irrelevant topology.
For contraction-deletion-like rules, for example, stable equivalence makes edge deletion as simple as restricting to a subgraph.
\begin{definition}
  A \emph{virtual graph} is a stable equivalence class $[G\incl\Sigma]$ for a surface graph $G\incl\Sigma$.
  By abuse of notation, we refer $G$ as the virtual graph.
\end{definition}
Every virtual graph can be represented by a ribbon graph by destabilizing an arbitrary representative surface graph to a closed regular neighborhood of the graph.

The data for an arrow presentation of a ribbon graph (see Section~\ref{subsec:edge-contraction}) may also be given by the counterclockwise cyclic ordering of the half-edges of $G$ incident to each vertex, the totality of which is called a \emph{rotation system} for $G$.
Specifically, following \cite{Duzhin1998}, with $\mathcal{H}$ being a finite set (of \emph{half-edges}), let $\alpha,\sigma\in\Sym(\mathcal{H})$ be permutations such that the orbits of $\alpha$ are all of size $2$.
The orbits of $\alpha$ and $\sigma$ respectively give the edges and vertices of a graph, and $\sigma$ is its rotation system.

A virtual graph can be specified by a \emph{virtual graph diagram}, which is an immersion of a graph $G$ in the plane such that (1) no two vertices coincide, (2) no vertex coincides with the interior of an edge, and (3) edges intersect transversely.
Such intersections are called \emph{virtual crossings}.
One may imagine that the diagram portrays the image of $G$ through an orientation-preserving planar immersion of a ribbon graph representative.

The rotation system is obtained from such a diagram by reading off the half edges incident to a vertex in counterclockwise order.
Virtual graphs are in bijective correspondence with virtual graph diagrams up to isotopy, modulo the moves in Figure~\ref{fig:virt-graph-moves}.
As an example, the two virtual graphs in Figure~\ref{fig:inequivalent-theta-graphs} have the same underlying graph but different rotation systems, and hence are inequivalent.

\begin{figure}[tb]
  \centering
  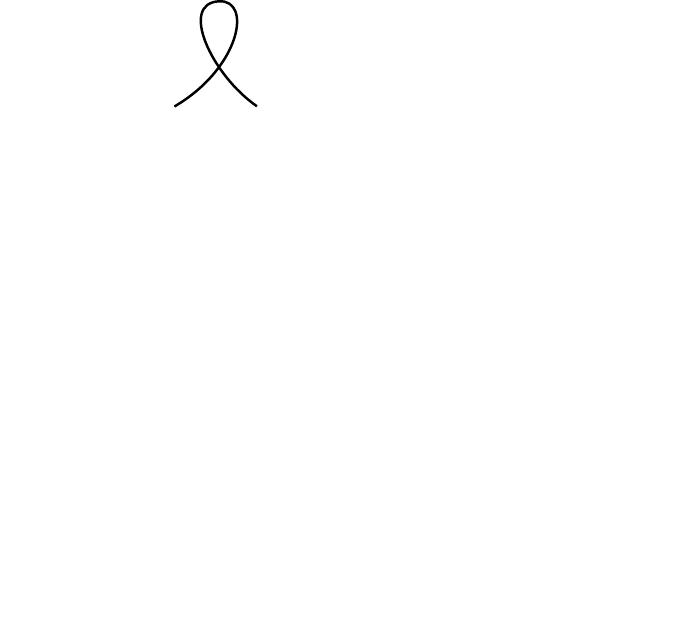
  \caption{Diagrammatic moves for virtual graph equivalence.}
  \label{fig:virt-graph-moves}
\end{figure}

\begin{figure}[tb]
  \centering
  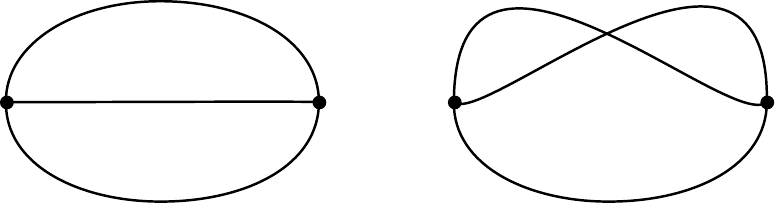
  \caption{Two inequivalent virtual graphs whose underlying graph is the \emph{theta graph} $\Theta_3$.
  The second is represented by the surface graph in Figure~\ref{fig:surface-graph}.}
  \label{fig:inequivalent-theta-graphs}
\end{figure}

\begin{figure}[tb]
  \centering
\begingroup%
  \makeatletter%
  \providecommand\color[2][]{%
    \errmessage{(Inkscape) Color is used for the text in Inkscape, but the package 'color.sty' is not loaded}%
    \renewcommand\color[2][]{}%
  }%
  \providecommand\transparent[1]{%
    \errmessage{(Inkscape) Transparency is used (non-zero) for the text in Inkscape, but the package 'transparent.sty' is not loaded}%
    \renewcommand\transparent[1]{}%
  }%
  \providecommand\rotatebox[2]{#2}%
  \newcommand*\fsize{\dimexpr\f@size pt\relax}%
  \newcommand*\lineheight[1]{\fontsize{\fsize}{#1\fsize}\selectfont}%
  \ifx\svgwidth\undefined%
    \setlength{\unitlength}{183.44884197bp}%
    \ifx\svgscale\undefined%
      \relax%
    \else%
      \setlength{\unitlength}{\unitlength * \real{\svgscale}}%
    \fi%
  \else%
    \setlength{\unitlength}{\svgwidth}%
  \fi%
  \global\let\svgwidth\undefined%
  \global\let\svgscale\undefined%
  \makeatother%
  \begin{picture}(1,0.37857322)%
    \lineheight{1}%
    \setlength\tabcolsep{0pt}%
    \put(0,0){\includegraphics[width=\unitlength,page=1]{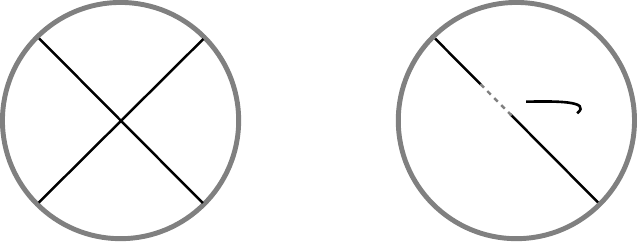}}%
    \put(0.45050188,0.17913877){\color[rgb]{0,0,0}\makebox(0,0)[lt]{\lineheight{1.25}\smash{\begin{tabular}[t]{l}$\longrightarrow$\end{tabular}}}}%
    \put(0,0){\includegraphics[width=\unitlength,page=2]{disk-to-torus.pdf}}%
  \end{picture}%
\endgroup%

  \caption{Resolving a virtual crossing by a connect sum with a torus.}
  \label{fig:disk-to-torus}
\end{figure}

One can also obtain a representative surface graph from a virtual graph diagram by a straightforward construction that preserves the rotation system, illustrated in Figure~\ref{fig:disk-to-torus}: replace a small disk neighborhood of each virtual crossing by a punctured torus through which the edges are routed so they no longer cross.

As a first invariant, the \emph{genus} of a virtual graph $G$ is
\begin{equation*}
  g(G)=\min_{G\incl\Sigma}g(\Sigma),
\end{equation*}
where $g(\Sigma)$ is the sum of the genera of the connected components if $\Sigma$ is disconnected.
In particular, if $G\incl\Sigma$ is a cellular embedding, $g(G)=g(\Sigma)$.
A \emph{planar} virtual graph is one whose genus is zero, and an abstract graph is planar if and only if it is the underlying graph of some planar virtual graph.
Virtual graphs up to move VI* in Figure~\ref{fig:vert-pliable-moves} are the same as abstract graphs.

With $G$ a virtual graph and $G\incl\Sigma$ a representative cellular embedding, the dual virtual graph $G^*$ is defined to be the dual surface graph $G^*\incl\Sigma$ as a virtual graph.
This is independent of the choice of representative.

\subsection{Virtual spatial graphs}
A \emph{spatial graph} $G$ is an embedding of a ribbon graph $G\hookrightarrow \Sigma$ in $S^3$, where $\Sigma$ is the \emph{ribbon structure} of the spatial graph.
The data for spatial graphs can be given by link diagrams extended to represent the vertices of $G$.
The ribbon structure of $G$ can be read off from such a diagram through the so-called blackboard framing, where the diagram is thought of as the image of $G$ through an orientation-preserving generic immersive projection of a ribbon graph onto the equatorial $S^2$.
(This is the most restrictive definition for a spatial graph with unoriented edges, where the diagrams are up to \emph{rigid vertex isotopy} and \emph{regular isotopy}.
The least restrictive, an embedding of the graph itself, is up to \emph{pliable vertex isotopy} and the full set of Reidemeister moves.
See \cite{Yamada1989} and \cite{Fleming2007}.)

The combinatorial data of a spatial graph diagram is a virtual graph with distinguished $4$-valent \emph{classical crossings} marked with which pair of opposite incident half edges corresponds to the over-strand.
By disregarding the planarity of the diagram, we obtain \emph{virtual spatial graphs}.
This is equivalent to the approach in \cite{Fleming2007}, which extends the original Gauss code approach for virtual links in \cite{Kauffman1999} to define virtual spatial graphs.

Virtual spatial graphs are related by the virtual graph moves in Figure~\ref{fig:virt-graph-moves}, where II\textquotesingle{}* applies to the classical crossings as well (illustrated in Figure~\ref{fig:virt-spatial-graph-moves}), along with the Reidemeister moves for framed link diagrams up to regular isotopy in Figure~\ref{fig:regular-isotopy} and the moves for rigid vertex isotopy in Figure~\ref{fig:vert-crossing-moves}.

\begin{figure}[htb]
  \centering
\begingroup%
  \makeatletter%
  \providecommand\color[2][]{%
    \errmessage{(Inkscape) Color is used for the text in Inkscape, but the package 'color.sty' is not loaded}%
    \renewcommand\color[2][]{}%
  }%
  \providecommand\transparent[1]{%
    \errmessage{(Inkscape) Transparency is used (non-zero) for the text in Inkscape, but the package 'transparent.sty' is not loaded}%
    \renewcommand\transparent[1]{}%
  }%
  \providecommand\rotatebox[2]{#2}%
  \newcommand*\fsize{\dimexpr\f@size pt\relax}%
  \newcommand*\lineheight[1]{\fontsize{\fsize}{#1\fsize}\selectfont}%
  \ifx\svgwidth\undefined%
    \setlength{\unitlength}{347.1572154bp}%
    \ifx\svgscale\undefined%
      \relax%
    \else%
      \setlength{\unitlength}{\unitlength * \real{\svgscale}}%
    \fi%
  \else%
    \setlength{\unitlength}{\svgwidth}%
  \fi%
  \global\let\svgwidth\undefined%
  \global\let\svgscale\undefined%
  \makeatother%
  \begin{picture}(1,0.10668493)%
    \lineheight{1}%
    \setlength\tabcolsep{0pt}%
    \put(0.16338226,0.05805904){\color[rgb]{0,0,0}\makebox(0,0)[lt]{\lineheight{0}\smash{\begin{tabular}[t]{l}$\sim$\end{tabular}}}}%
    \put(0,0){\includegraphics[width=\unitlength,page=1]{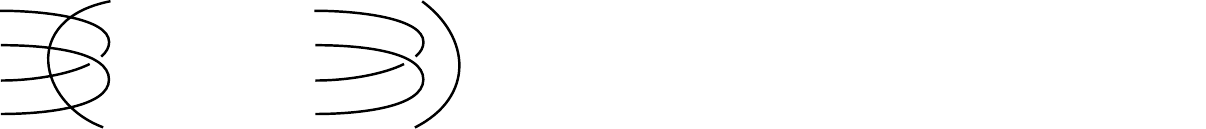}}%
    \put(0.78125635,0.05805904){\color[rgb]{0,0,0}\makebox(0,0)[lt]{\lineheight{0}\smash{\begin{tabular}[t]{l}$\sim$\end{tabular}}}}%
    \put(0,0){\includegraphics[width=\unitlength,page=2]{virtual-moves-classical.pdf}}%
  \end{picture}%
\endgroup%

  \caption{How move II\textquotesingle{}* applies to classical crossings.}
  \label{fig:virt-spatial-graph-moves}
\end{figure}

\begin{figure}[htb]
  \centering
\begingroup%
  \makeatletter%
  \providecommand\color[2][]{%
    \errmessage{(Inkscape) Color is used for the text in Inkscape, but the package 'color.sty' is not loaded}%
    \renewcommand\color[2][]{}%
  }%
  \providecommand\transparent[1]{%
    \errmessage{(Inkscape) Transparency is used (non-zero) for the text in Inkscape, but the package 'transparent.sty' is not loaded}%
    \renewcommand\transparent[1]{}%
  }%
  \providecommand\rotatebox[2]{#2}%
  \newcommand*\fsize{\dimexpr\f@size pt\relax}%
  \newcommand*\lineheight[1]{\fontsize{\fsize}{#1\fsize}\selectfont}%
  \ifx\svgwidth\undefined%
    \setlength{\unitlength}{192.16763985bp}%
    \ifx\svgscale\undefined%
      \relax%
    \else%
      \setlength{\unitlength}{\unitlength * \real{\svgscale}}%
    \fi%
  \else%
    \setlength{\unitlength}{\svgwidth}%
  \fi%
  \global\let\svgwidth\undefined%
  \global\let\svgscale\undefined%
  \makeatother%
  \begin{picture}(1,0.70213275)%
    \lineheight{1}%
    \setlength\tabcolsep{0pt}%
    \put(-0.00204289,0.58855716){\color[rgb]{0,0,0}\makebox(0,0)[lt]{\lineheight{0}\smash{\begin{tabular}[t]{l}I\textquotesingle)\end{tabular}}}}%
    \put(-0.00204289,0.35345118){\color[rgb]{0,0,0}\makebox(0,0)[lt]{\lineheight{0}\smash{\begin{tabular}[t]{l}II)\end{tabular}}}}%
    \put(-0.00204289,0.11834532){\color[rgb]{0,0,0}\makebox(0,0)[lt]{\lineheight{0}\smash{\begin{tabular}[t]{l}III)\end{tabular}}}}%
    \put(0,0){\includegraphics[width=\unitlength,page=1]{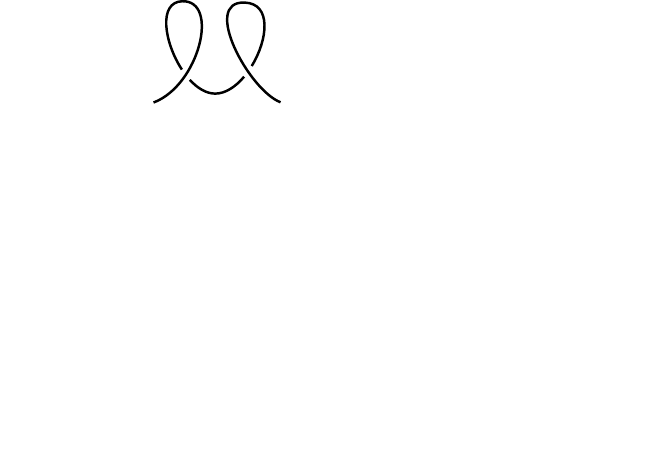}}%
    \put(0.56702644,0.59114367){\color[rgb]{0,0,0}\makebox(0,0)[lt]{\lineheight{0}\smash{\begin{tabular}[t]{l}$\sim$\end{tabular}}}}%
    \put(0,0){\includegraphics[width=\unitlength,page=2]{regular-isotopy-moves.pdf}}%
    \put(0.5694797,0.35963156){\color[rgb]{0,0,0}\makebox(0,0)[lt]{\lineheight{0}\smash{\begin{tabular}[t]{l}$\sim$\end{tabular}}}}%
    \put(0.5694797,0.12406687){\color[rgb]{0,0,0}\makebox(0,0)[lt]{\lineheight{0}\smash{\begin{tabular}[t]{l}$\sim$\end{tabular}}}}%
    \put(0,0){\includegraphics[width=\unitlength,page=3]{regular-isotopy-moves.pdf}}%
  \end{picture}%
\endgroup%

  \caption{Reidemeister moves for framed link diagrams up to regular isotopy.}
  \label{fig:regular-isotopy}
\end{figure}

\begin{figure}[htb]
  \centering
\begingroup%
  \makeatletter%
  \providecommand\color[2][]{%
    \errmessage{(Inkscape) Color is used for the text in Inkscape, but the package 'color.sty' is not loaded}%
    \renewcommand\color[2][]{}%
  }%
  \providecommand\transparent[1]{%
    \errmessage{(Inkscape) Transparency is used (non-zero) for the text in Inkscape, but the package 'transparent.sty' is not loaded}%
    \renewcommand\transparent[1]{}%
  }%
  \providecommand\rotatebox[2]{#2}%
  \newcommand*\fsize{\dimexpr\f@size pt\relax}%
  \newcommand*\lineheight[1]{\fontsize{\fsize}{#1\fsize}\selectfont}%
  \ifx\svgwidth\undefined%
    \setlength{\unitlength}{286.84943667bp}%
    \ifx\svgscale\undefined%
      \relax%
    \else%
      \setlength{\unitlength}{\unitlength * \real{\svgscale}}%
    \fi%
  \else%
    \setlength{\unitlength}{\svgwidth}%
  \fi%
  \global\let\svgwidth\undefined%
  \global\let\svgscale\undefined%
  \makeatother%
  \begin{picture}(1,0.13705821)%
    \lineheight{1}%
    \setlength\tabcolsep{0pt}%
    \put(-0.00273717,0.0645559){\color[rgb]{0,0,0}\makebox(0,0)[lt]{\lineheight{0}\smash{\begin{tabular}[t]{l}IV)\end{tabular}}}}%
    \put(0.37849666,0.06628867){\color[rgb]{0,0,0}\makebox(0,0)[lt]{\lineheight{0}\smash{\begin{tabular}[t]{l}$\sim$\end{tabular}}}}%
    \put(0,0){\includegraphics[width=\unitlength,page=1]{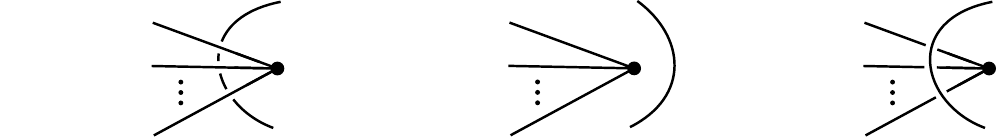}}%
    \put(0.75091701,0.06628867){\color[rgb]{0,0,0}\makebox(0,0)[lt]{\lineheight{0}\smash{\begin{tabular}[t]{l}$\sim$\end{tabular}}}}%
  \end{picture}%
\endgroup%

  \caption{Moves for rigid vertex isotopy.}
  \label{fig:vert-crossing-moves}
\end{figure}

\begin{figure}[htb]
  \centering
\begingroup%
  \makeatletter%
  \providecommand\color[2][]{%
    \errmessage{(Inkscape) Color is used for the text in Inkscape, but the package 'color.sty' is not loaded}%
    \renewcommand\color[2][]{}%
  }%
  \providecommand\transparent[1]{%
    \errmessage{(Inkscape) Transparency is used (non-zero) for the text in Inkscape, but the package 'transparent.sty' is not loaded}%
    \renewcommand\transparent[1]{}%
  }%
  \providecommand\rotatebox[2]{#2}%
  \newcommand*\fsize{\dimexpr\f@size pt\relax}%
  \newcommand*\lineheight[1]{\fontsize{\fsize}{#1\fsize}\selectfont}%
  \ifx\svgwidth\undefined%
    \setlength{\unitlength}{294.0252283bp}%
    \ifx\svgscale\undefined%
      \relax%
    \else%
      \setlength{\unitlength}{\unitlength * \real{\svgscale}}%
    \fi%
  \else%
    \setlength{\unitlength}{\svgwidth}%
  \fi%
  \global\let\svgwidth\undefined%
  \global\let\svgscale\undefined%
  \makeatother%
  \begin{picture}(1,0.27985358)%
    \lineheight{1}%
    \setlength\tabcolsep{0pt}%
    \put(-0.00021257,0.20714151){\color[rgb]{0,0,0}\makebox(0,0)[lt]{\lineheight{1.25}\smash{\begin{tabular}[t]{l}I)\end{tabular}}}}%
    \put(-0.00021257,0.0521216){\color[rgb]{0,0,0}\makebox(0,0)[lt]{\lineheight{1.25}\smash{\begin{tabular}[t]{l}V)\end{tabular}}}}%
    \put(0.37171712,0.208832){\color[rgb]{0,0,0}\makebox(0,0)[lt]{\lineheight{1.25}\smash{\begin{tabular}[t]{l}$\sim$\end{tabular}}}}%
    \put(0.37332051,0.05071922){\color[rgb]{0,0,0}\makebox(0,0)[lt]{\lineheight{1.25}\smash{\begin{tabular}[t]{l}$\sim$\end{tabular}}}}%
    \put(0.7248452,0.05141107){\color[rgb]{0,0,0}\makebox(0,0)[lt]{\lineheight{1.25}\smash{\begin{tabular}[t]{l}$\sim$\end{tabular}}}}%
    \put(0,0){\includegraphics[width=\unitlength,page=1]{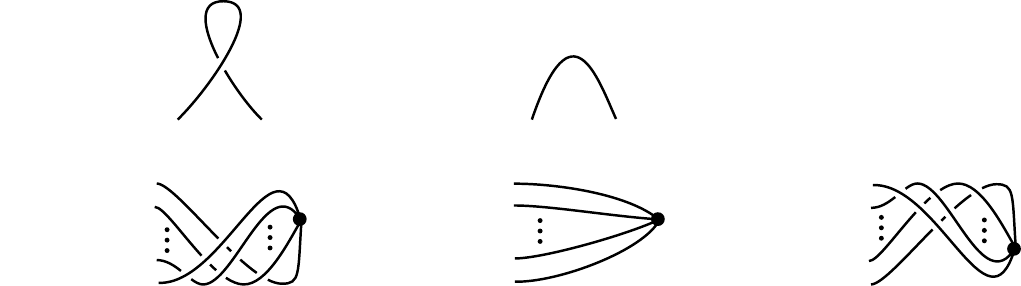}}%
  \end{picture}%
\endgroup%

  \caption{Moves for rigid vertex isotopy for \emph{flat vertex graphs}, spatial graphs whose edges are not framed and whose vertices are unoriented disks.}
  \label{fig:vert-crossing-moves-unframed}
\end{figure}

\begin{figure}[htb]
  \centering
\begingroup%
  \makeatletter%
  \providecommand\color[2][]{%
    \errmessage{(Inkscape) Color is used for the text in Inkscape, but the package 'color.sty' is not loaded}%
    \renewcommand\color[2][]{}%
  }%
  \providecommand\transparent[1]{%
    \errmessage{(Inkscape) Transparency is used (non-zero) for the text in Inkscape, but the package 'transparent.sty' is not loaded}%
    \renewcommand\transparent[1]{}%
  }%
  \providecommand\rotatebox[2]{#2}%
  \newcommand*\fsize{\dimexpr\f@size pt\relax}%
  \newcommand*\lineheight[1]{\fontsize{\fsize}{#1\fsize}\selectfont}%
  \ifx\svgwidth\undefined%
    \setlength{\unitlength}{298.6959752bp}%
    \ifx\svgscale\undefined%
      \relax%
    \else%
      \setlength{\unitlength}{\unitlength * \real{\svgscale}}%
    \fi%
  \else%
    \setlength{\unitlength}{\svgwidth}%
  \fi%
  \global\let\svgwidth\undefined%
  \global\let\svgscale\undefined%
  \makeatother%
  \begin{picture}(1,0.28177008)%
    \lineheight{1}%
    \setlength\tabcolsep{0pt}%
    \put(-0.00020924,0.23274579){\color[rgb]{0,0,0}\makebox(0,0)[lt]{\lineheight{0}\smash{\begin{tabular}[t]{l}VI)\end{tabular}}}}%
    \put(-0.00020924,0.06006261){\color[rgb]{0,0,0}\makebox(0,0)[lt]{\lineheight{0}\smash{\begin{tabular}[t]{l}VI*)\end{tabular}}}}%
    \put(0.36590453,0.23440984){\color[rgb]{0,0,0}\makebox(0,0)[lt]{\lineheight{0}\smash{\begin{tabular}[t]{l}$\sim$\end{tabular}}}}%
    \put(0,0){\includegraphics[width=\unitlength,page=1]{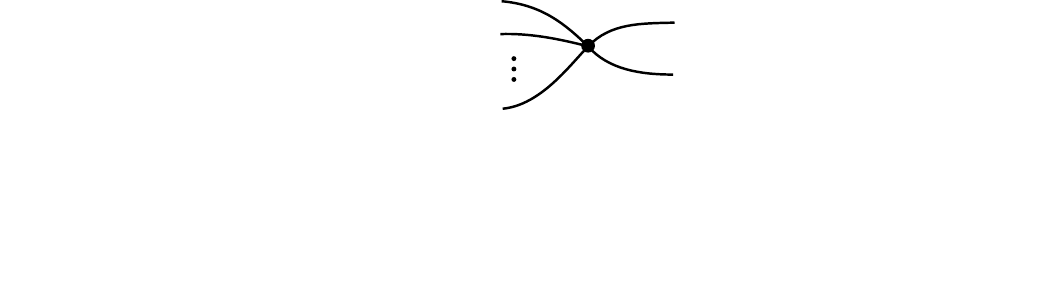}}%
    \put(0.72355437,0.23440984){\color[rgb]{0,0,0}\makebox(0,0)[lt]{\lineheight{0}\smash{\begin{tabular}[t]{l}$\sim$\end{tabular}}}}%
    \put(0,0){\includegraphics[width=\unitlength,page=2]{pliable-moves.pdf}}%
    \put(0.36590453,0.05864567){\color[rgb]{0,0,0}\makebox(0,0)[lt]{\lineheight{0}\smash{\begin{tabular}[t]{l}$\sim$\end{tabular}}}}%
    \put(0,0){\includegraphics[width=\unitlength,page=3]{pliable-moves.pdf}}%
  \end{picture}%
\endgroup%

  \caption{Moves for (VI) pliable vertex isotopy and (VI*) virtual pliable vertex isotopy.}
  \label{fig:vert-pliable-moves}
\end{figure}

Being a virtual graph, a virtual spatial graph can be represented as a surface graph $G\incl\Sigma$ with distinguished $4$-valent vertices for classical crossings --- that is, as a spatial graph diagram on a compact surface.
By thickening the surface $\Sigma$ and teasing apart the classical crossings, we may obtain a ribbon graph embedding $G\incl \Sigma\times I$ from the blackboard framing, where $I\defeq[0,1]$.
The definition of stable equivalence extends to thickened surfaces by requiring that the stabilization move $\Sigma\times I\incl \Sigma'\times I$ be induced from a map $\Sigma\incl \Sigma'$.

\begin{theorem}
  \label{thm:vsg-char}
  Virtual spatial graphs are in bijective correspondence with ribbon graphs in thickened closed oriented surfaces modulo stable equivalence.  Furthermore, each has a unique representative in a thickened closed surface (possibly disconnected) of minimal genus, up to stable equivalence induced by orientation-preserving self-homeomorphism of the surface.
\end{theorem}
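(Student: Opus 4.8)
The plan is to model the argument on Kuperberg's theorem that a virtual link has a unique minimal–genus representative in a thickened surface \cite{Kuperberg2003}, adapting it to ribbon graphs carrying classical crossings. There are two parts: a Reidemeister-type theorem identifying virtual spatial graphs with spatial graph diagrams on closed surfaces up to the listed moves and stabilization (this yields the bijection), and an incompressibility argument giving uniqueness of the minimal representative.

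For the bijection, recall that a virtual spatial graph is a virtual graph diagram with distinguished $4$-valent classical-crossing vertices, modulo the virtual moves of Figure~\ref{fig:virt-graph-moves}, the regular-isotopy moves of Figure~\ref{fig:regular-isotopy}, and the rigid-vertex move of Figure~\ref{fig:vert-crossing-moves}. Resolving the virtual crossings as in Figure~\ref{fig:disk-to-torus} presents such a diagram as a crossing-decorated surface graph $G\incl\Sigma$ in a closed oriented surface; thickening to $\Sigma\times I$ and pulling the classical crossings apart along the $I$-direction via the blackboard framing (as described before the theorem) produces an embedded ribbon graph $G\incl\Sigma\times I$, well defined up to isotopy. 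Conversely, putting $G\incl\Sigma\times I$ in general position with respect to the height function $\Sigma\times I\to I$ and projecting recovers such a decorated surface graph. The first assertion of the theorem then reduces to checking (i) that ambient isotopies of $G$ in $\Sigma\times I$ are generated by the diagrammatic moves above — the graph analogue of the standard diagrammatic calculus for links in thickened surfaces — and (ii) that the stabilization move on surface graphs corresponds exactly to the thickened-surface stabilization $\Sigma\times I\incl\Sigma'\times I$. Both are routine general-position and Morse-theory verifications, and together they show the two equivalence relations coincide, so the assignment is a bijection.

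For uniqueness of the minimal representative, the key fact is: a representative $G\incl\Sigma\times I$ with $\Sigma$ closed has minimal total genus among all representatives if and only if $\Sigma\times\{1\}$ is incompressible in the complement $M=(\Sigma\times I)\setminus\operatorname{int}\nu(G)$. Indeed, a compressing disk for $\Sigma\times\{1\}$ yields an essential simple closed curve $c\subset\Sigma$, disjoint from $G$ and bounding a disk on the $\Sigma\times\{1\}$ side, and surgering $\Sigma\times I$ along a pushed-in parallel copy of $c$ (a $0$- or $1$-surgery in the complement of $G$, as in the remark following the stabilization definition) produces a closed representative of strictly smaller total genus; conversely every destabilization arises this way. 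Now given two minimal representatives $G\incl\Sigma_1\times I$ and $G\incl\Sigma_2\times I$, the first part of the theorem shows they are joined by a finite zig-zag of stabilizations and destabilizations. Using an innermost-disk/outermost-arc argument in the relevant complement one can (a) push stabilizations past destabilizations to rewrite the zig-zag into a single ascent to some $\Sigma_3\times I$ followed by a single descent, and (b) make any two compressing curves appearing in the descent disjoint in $\Sigma_3$ and away from $G$; disjoint compressions commute, so the two descents to minimal genus interleave and therefore agree up to an orientation-preserving homeomorphism of the surface carrying $G$ to $G$. The same reasoning shows every representative destabilizes to this minimal one. (This is the uniqueness-of-characteristic-compression-body mechanism underlying Kuperberg's proof.)

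The main obstacle is the irreducibility hypothesis needed for the innermost-disk arguments: $(\Sigma_3\times I)\setminus\operatorname{int}\nu(G)$ fails to be irreducible when a component of $\Sigma_3$ is a $2$-sphere, or when $G$ has a planar split summand, since essential $2$-spheres then block the argument. As in Kuperberg's treatment, I would resolve this by permitting the minimal surface to be disconnected (which the statement already allows): first cut $\Sigma\times I$ along a maximal system of essential spheres in the complement of $G$, so that each piece is either a thickened positive-genus surface carrying part of $G$ or a thickened $S^2$ carrying a planar summand, and then run the compression argument on each piece, using that a planar summand has an obvious canonical genus-zero representative. A minor extra point over the link case is that $\nu(G)$ is a handlebody rather than a union of solid tori, so the compressing disks for $\Sigma\times\{1\}$ must be taken with the appropriate boundary pattern on $\partial\nu(G)$; but handlebody complements in an irreducible $\Sigma\times I$ are still irreducible, so this is only bookkeeping.
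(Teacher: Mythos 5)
There is a genuine gap, and it sits at the heart of your uniqueness argument. Your ``key fact'' --- that $G\incl\Sigma\times I$ has minimal genus if and only if $\Sigma\times\{1\}$ is incompressible in $M=(\Sigma\times I)\setminus\operatorname{int}\nu(G)$ --- is false, essentially vacuously: when $\Sigma$ has positive genus, $\Sigma\times\{1\}$ is incompressible in $M$ for \emph{every} representative. Indeed, any disk $D\subset\Sigma\times I$ with $\partial D\subset\Sigma\times\{1\}$ has $\partial D$ nullhomotopic in $\Sigma\times I$, hence (since $\pi_1(\Sigma\times\{1\})\to\pi_1(\Sigma\times I)$ is an isomorphism) nullhomotopic, and therefore bounding a disk, in $\Sigma\times\{1\}$; so no compressing disk with essential boundary can exist at all, with or without the graph. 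Destabilization of a thickened-surface representative is not a boundary compression of $\Sigma\times\{1\}$ in the complement; it is surgery along a \emph{vertical annulus} $c\times I$ disjoint from $G$, with $c$ essential in $\Sigma$. The correct minimality criterion --- the one the paper uses, following Kuperberg \cite{Kuperberg2003}, and the one exploited in the proof of Corollary~\ref{cor:virtual-graph-genus} --- is that every properly embedded annulus in $M$ joining $\Sigma\times\{0\}$ to $\Sigma\times\{1\}$ is inessential (bounds a ball together with pieces of the boundary). Because your descent mechanism is built on the wrong class of surfaces, the subsequent zig-zag manipulation (``push stabilizations past destabilizations, make compressing curves disjoint, disjoint compressions commute'') does not get started; and even with the annulus criterion in place, that step is the genuinely hard part of Kuperberg's theorem, which he handles via the characteristic (JSJ/annulus--torus) decomposition of the Haken manifold $M$ rather than by an elementary innermost-disk exchange, so your sketch would need substantially more to be a proof.

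For comparison, the paper does not reprove either half: it cites Carter--Kamada--Saito \cite{Carter2002} for the correspondence between virtual spatial graphs and crossing-decorated diagrams on surfaces up to stable equivalence (the part you outline as ``routine general-position and Morse-theory verifications''), and cites Kuperberg \cite{Kuperberg2003} for uniqueness of the minimal representative, with minimality phrased via essential vertical annuli as above. If you want a self-contained argument, the fix is to replace compressing disks for $\Sigma\times\{1\}$ by essential vertical annuli throughout, and then either invoke the characteristic decomposition as Kuperberg does or genuinely prove the commutation/interleaving of annulus compressions, including the sphere/planar-summand bookkeeping you mention.
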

\begin{proof}
  The fact that virtual spatial graphs correspond to spatial graph diagrams on compact surfaces up to stable equivalence is a result of Carter, Kamada, and Saito \cite{Carter2002}.
  Kuperberg applied JSJ theory to characterize the unique minimal representative, where minimality is in the sense that every properly embedded annulus in the graph complement that connects the two boundary components of the thickened surface bounds a ball \cite{Kuperberg2003}.
\end{proof}

In particular, once a virtual graph is represented as a diagram on a minimal-genus closed surface, equivalence is completely generated by the moves for regular isotopy and rigid vertex isotopy, along with the action by the mapping class group.
The \emph{virtual genus} of a virtual spatial graph is the genus of this surface.
A consequence of Theorem~\ref{thm:vsg-char} is that if two spatial graphs are equivalent as virtual spatial graphs, they are indeed equivalent as spatial graphs.

\begin{corollary}
  \label{cor:virtual-graph-genus}
  Let $G$ be a virtual graph.
  The virtual genus of $G$ as a virtual spatial graph equals the genus of $G$.
\end{corollary}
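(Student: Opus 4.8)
The plan is to match both quantities with the least genus of a closed surface carrying a suitable representative, using Theorem~\ref{thm:vsg-char} to control the spatial side. Write $\hat G$ for $G$ viewed as a virtual spatial graph. Since $G$ carries no classical crossings, a spatial-graph diagram of $\hat G$ on a surface $\Sigma$ is nothing but an embedding $G\incl\Sigma$ --- there is no over/under information to record --- and the passage to $\Sigma\times I$ via the blackboard framing just places it in the middle level $\Sigma\times\{1/2\}$. Thus the closed-surface representatives of the virtual graph $G$ and the ribbon-graph-in-thickened-closed-surface representatives of $\hat G$ correspond by \emph{sit in the middle level} and \emph{thicken}, and this correspondence preserves the genus of the ambient closed surface.

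One inequality is then immediate. Take a cellular embedding $G\incl\Sigma_0$ with $g(\Sigma_0)=g(G)$; thickening it gives a representative of $\hat G$ in a thickened closed surface of genus $g(G)$, so by Theorem~\ref{thm:vsg-char} the virtual genus of $\hat G$ is at most $g(G)$. For the reverse inequality I would start from this same level representative $G\incl\Sigma_0\times\{1/2\}$ and pass to the minimal-genus representative of $\hat G$ by destabilizing. The point to establish is that each destabilization move is supported in the complement of $G$ --- it removes a handle of the ambient surface disjoint from $G$ --- and therefore carries a level representative to a level representative. Granting this, the minimal representative of $\hat G$ has $G$ sitting in a level surface $\Sigma_{\min}\times\{1/2\}$, so $G\incl\Sigma_{\min}$ is a surface-graph representative of the virtual graph $G$ in a closed surface of genus equal to the virtual genus of $\hat G$; hence $g(G)$ is at most the virtual genus, and equality follows.

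The main obstacle is precisely the claim that destabilization preserves levelness, i.e.\ that the minimal thickened-surface representative of a crossing-free virtual spatial graph is flat. Concretely, one must show that the compressing disks (or, in the language of Theorem~\ref{thm:vsg-char}, the essential annuli joining the two boundary surfaces) that govern destabilization can be isotoped off the level surface $\Sigma_0\times\{1/2\}$ into the complement of $G$. This is where cellularity of $G\incl\Sigma_0$ does the work: $\Sigma_0\setminus\nu(G)$ is a disjoint union of disks, hence contains no essential curve of $\Sigma_0$, so the complement of $G$ in $\Sigma_0\times I$ is built from the balls $(\Sigma_0\setminus\nu(G))\times I$ glued along two collar slabs over $\nu(G)$, leaving no room for a genus-reducing destabilization that interacts nontrivially with $G$. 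A fully detailed argument would either track these surgeries directly or verify Kuperberg's annulus criterion for the representative $G\incl\Sigma_0\times I$; both routes reduce to this cellularity observation.
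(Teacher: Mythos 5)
Your overall strategy coincides with the paper's: the inequality (virtual genus $\le g(G)$) is immediate from thickening a cellular embedding, and the reverse inequality reduces to showing that the level representative $G\incl\Sigma_0\times\{\tfrac12\}\subset\Sigma_0\times I$ of a cellular embedding is already the Kuperberg-minimal representative, i.e.\ satisfies the annulus criterion quoted in Theorem~\ref{thm:vsg-char}. But at exactly that point your argument stops: the assertion that the decomposition of the complement into prisms over the complementary disks leaves ``no room for a genus-reducing destabilization that interacts nontrivially with $G$'' is a restatement of the claim, not a proof, and your own closing sentence defers the verification (``track these surgeries directly or verify Kuperberg's annulus criterion''). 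That deferred verification is the entire content of the paper's proof, and cellularity alone does not discharge it. Given a properly embedded annulus $A\subset(\Sigma_0\times I)-G$ joining the two boundary components, cellularity only tells you that every circle of $A\cap(\Sigma_0\times\{\tfrac12\})$ bounds a disk in the punctured level surface $\Sigma_0-\nu(G)$. One still has to (i) eliminate the intersection circles that are inessential in $A$ by an innermost-disk isotopy, which needs the irreducibility of $\Sigma_0\times I$ to supply the balls one isotopes across, and (ii) once $A-\Sigma_0$ is a stack of annuli $A_1,\dots,A_n$ alternating sides of the level surface, cap them with disks $D_i\subset\Sigma_0-G$ and disks in $\Sigma_0\times\partial I$, and show each sphere $D_{i-1}\cup A_i\cup D_i$ bounds a ball missing $G$ whose union is a complementary component of $A$. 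An annulus can a priori wander through your prisms and collar slabs in a complicated way; it is this innermost-circle-and-ball argument, not the mere shape of the decomposition, that rules that out.

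A secondary point: your first route, ``each destabilization is supported in the complement of $G$, hence preserves levelness,'' is not quite the right statement to aim for, because stable equivalence between the level representative and the minimal one is generated by stabilizations as well as destabilizations, so there is no a priori monotone sequence of destabilizations to track. The clean way out is exactly the one you name second and the paper takes: verify Kuperberg's annulus criterion directly for $G\incl\Sigma_0\times I$, which certifies by the uniqueness statement of Theorem~\ref{thm:vsg-char} that this representative \emph{is} the minimal one, so the virtual genus equals $g(\Sigma_0)=g(G)$.
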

\begin{proof}
  Consider a cellular embedding $G\incl\Sigma$ of a nonplanar virtual graph and an extension to $G\incl\Sigma\times I$ by composition with $x\mapsto (x,\frac{1}{2})$.
  Let $A\subset\Sigma\times I$ be a properly embedded annulus in the complement of $G$ connecting the two boundary components of $\Sigma\times I$.
  Consider the intersection between $A$ and $\Sigma_0=\Sigma\times\frac{1}{2}$, which is a collection of disjoint circles.
  Each circle must bound a disk in $\Sigma_0-G$ due to it being from a cellular embedding.
  Of the intersection circles that bound a disk in $A$, take the innermost.
  We can remove at least this intersection by isotoping $\Sigma_0$ along a ball bounded by this disk and one in $\Sigma_0-G$, where the ball exists by the irreducibility of $\Sigma\times I$.
  Hence, we may assume $A-\Sigma_0$ is a union of annuli $A_1,\dots,A_n$, alternating sides of $\Sigma_0$, with $A_i\cap A_{i+1}$ bounding a disk $D_i\subset \Sigma_0-G$ for $1\leq i<n$.
  $A_1\cup D_1$ and $A_n\cup D_{n-1}$ are disks incident to the boundary, and hence bound disks $D_0,D_n\subset\Sigma\times\partial I$, respectively.
  Each sphere $D_{i-1}\cup A_i\cup D_i$ bounds a ball not intersecting $G$, the union of which is a ball that is one of the components of $\Sigma\times I-A$.
\end{proof}
\begin{remark}
  We give an algebraic proof of a weaker statement of this corollary in Remark~\ref{rem:alg-virt-graph-genus}.  
\end{remark}

\section{Two invariants of virtual graphs}
\label{sec:flow-and-s-polynomials}
In this section we define two closely related polynomial invariants of virtual graphs.
These invariants will later be considered in a more general context in Section~\ref{sec:virtual-graph-inv-categories}, but for sake of a concrete definition we discuss them on their own first.

\subsection{The flow polynomial}
Since virtual graphs are abstract graphs equipped with additional data, abstract graph invariants such as the Tutte-Whitney polynomial --- and specializations like the chromatic polynomial and the flow polynomial --- are also invariants of virtual graphs up to move VI*.
Recall the definition of the flow polynomial:
\begin{definition}
  \label{def:flow-polynomial}
  Let $G$ be a graph and $Q$ an indeterminate.
  The \emph{flow polynomial} is given by the state sum formula
  \begin{equation*}
    F_G(Q) = \sum_{T \subset E(G)} (-1)^{\abs{T}} Q^{b_1(G-T)}.
  \end{equation*}
  Here $b_1(G-T) = \rank H_1(G-T)$ is the first Betti number.
  Each ``state'' is a choice of which edges to exclude.
\end{definition}

The combinatorial interpretation of the flow polynomial is that it counts the number of nowhere-zero $Q$-flows of a graph:
\begin{definition}
  Let $G$ be a graph and let $Q$ be a nonnegative integer.
  Fix an arbitrary orientation of the edges of $G$.
  A \emph{nowhere-zero $Q$-flow} of $G$ is a coloring of its edges by nonzero elements of $\Z/Q\Z$ satisfying Kirchhoff's Law: the signed sum (incoming sum minus outgoing sum) of the colorings at each vertex is zero.
  That is, a nowhere-zero $Q$-flow is a simplicial $1$-cycle with coefficients in $\Z/Q\Z$, all of whose coefficients are nonzero.
\end{definition}
A particular simplicial $1$-cycle can be economically and freely determined by giving its coefficients at only the edges outside a spanning tree of the graph, the number of which is counted by the first Betti number.
The state sum can be viewed as an application of the inclusion-exclusion principle, where excluding a set $T$ of edges is the same as forcing those edges to have zero for their coefficients, and the remaining degrees of freedom for such a simplicial $1$-cycle is $b_1(G-T)$.
Alternatively, one could instead check that both the state sum and the combinatorial interpretations are invariant under the local relations in Definition~\ref{def:flow-category}.

One motivation for counting nowhere-zero $Q$-flows is that they are dual to $Q$-colorings: If $G$ is a connected planar graph with dual graph $G^*$, then $F_G(Q)=Q^{-1}\chi_{G^*}(Q)$, where $\chi_{G^*}(Q)$ is the chromatic polynomial of the dual graph $G^*$.

\subsection{The \texorpdfstring{$S$}{S}-polynomial}
We now define a graph polynomial closely related to the flow polynomial.
\begin{definition}
  \label{def:s-polynomial}
  Let $G$ be a virtual graph and $Q$ an indeterminate.
  The \emph{$S$-polynomial} is given by the state sum formula
  \begin{equation*}
    S_G(Q) = \sum_{T \subset E(G)} (-1)^{\abs{T}} Q^{b_1(G-T) - g(G-T)}.
  \end{equation*}
\end{definition}
The only difference from the flow polynomial is the $g(G-T)$ in the exponent, which incorporates topological information about the rotation system of the virtual graph.
By construction, the $S$-polynomial is an invariant of virtual graphs.
It is immediate from this state sum definition that $S_G(Q)=F_G(Q)$ in the case that $G$ is a planar virtual graph, which is a way that the $S$-polynomial is an extension of the flow polynomial.

The motivations for the $S$-polynomial are discussed in more detail in Section \ref{sec:virtual-graph-inv-categories}.
Primarily, this invariant is the result of applying the functor in \cite{Fendley2009} to graphs on arbitrary surfaces.
Secondarily, unlike the flow polynomial it is sensitive to the rotation system and can, for instance, distinguish the graphs in Figure~\ref{fig:inequivalent-theta-graphs}.
This gives a new extension of the Yamada polynomial in Section~\ref{sec:yamada-polynomial} to virtual spatial graphs.

The $S$-polynomial is a specialization of the Krushkal polynomial reformulated for cellular embeddings, the definition of which is given after the following.
Given a surface graph $G\incl\Sigma$, the \emph{complementary genus} $g^\perp(G)$ (with $\Sigma$ implicit) is the genus of the surface $\Sigma-\nu(G)$.
If $G\incl\Sigma$ is a cellular embedding, then recall that the dual graph $G^*\incl\Sigma$ is from reversing the roles of vertices and faces, with the edges in $G$ and $G^*$ coming in dual pairs.
If $T\subset E(G)$, then $g^\perp(G-T)$ is the genus of the induced subgraph of $G^*$ whose edges are the edges dual to those in $T$.
\begin{definition}
  The \emph{Krushkal polynomial} when reformulated for a cellular embedding $G\incl\Sigma$ is given by the following state sum formula {\cite[section 4.2]{Krushkal2011}}:
  \begin{equation*}
    P'_{G,\Sigma}(X,Y,A,B) = \sum_{T\subset E(G)}X^{b_0(G-T)-b_0(G)} Y^{b_1(G-T)} A^{2g(G-T)} B^{2g^\perp(G-T)}.
  \end{equation*}
  With $G$ thought of as a virtual graph, define $P'_{G}=P'_{G,\Sigma}$.
\end{definition}

\begin{theorem}
  For $G$ a virtual graph,
  \begin{equation*}
    S_G(Q) = (-1)^{b_1(G)} P'_G(-1,-Q,Q^{-1/2},1).
  \end{equation*}
\end{theorem}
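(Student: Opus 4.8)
The plan is to prove the identity term by term in the two state sums, reducing everything to the additivity of the Euler characteristic under edge deletion; there is no global argument needed, only bookkeeping with signs.

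First I would substitute the four values into the reformulated Krushkal state sum. With $X=-1$, $Y=-Q$, $A=Q^{-1/2}$, $B=1$, the $B$-factor disappears, the $A$-factor becomes $(Q^{-1/2})^{2g(G-T)}=Q^{-g(G-T)}$, and the $Y$-factor becomes $(-Q)^{b_1(G-T)}=(-1)^{b_1(G-T)}Q^{b_1(G-T)}$. Collecting gives $P'_G(-1,-Q,Q^{-1/2},1)=\sum_{T\subset E(G)}(-1)^{b_0(G-T)-b_0(G)+b_1(G-T)}Q^{b_1(G-T)-g(G-T)}$, hence $(-1)^{b_1(G)}P'_G(-1,-Q,Q^{-1/2},1)=\sum_{T\subset E(G)}(-1)^{b_0(G-T)-b_0(G)+b_1(G-T)+b_1(G)}Q^{b_1(G-T)-g(G-T)}$. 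The $Q$-exponent is already the one appearing in $S_G(Q)$, so it remains only to check that the sign of the $T$-th term equals $(-1)^{\abs{T}}$.

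Next I would compare signs via $\chi(H)=\abs{V(H)}-\abs{E(H)}=b_0(H)-b_1(H)$. Since $G-T$ has the same vertex set as $G$ and $\abs{T}$ fewer edges, $\chi(G-T)=\chi(G)+\abs{T}$, which rearranges to $b_0(G-T)-b_0(G)=b_1(G-T)-b_1(G)+\abs{T}$. Plugging this into the exponent of $(-1)$ yields $b_0(G-T)-b_0(G)+b_1(G-T)+b_1(G)=2b_1(G-T)+\abs{T}\equiv \abs{T}\pmod 2$. Therefore the $T$-th term of $(-1)^{b_1(G)}P'_G(-1,-Q,Q^{-1/2},1)$ is precisely $(-1)^{\abs{T}}Q^{b_1(G-T)-g(G-T)}$, the $T$-th term of $S_G(Q)$; summing over $T\subset E(G)$ gives the claimed equality.

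The only step requiring genuine care — and the closest thing to an obstacle — is confirming that the symbol $g(G-T)$ denotes the same quantity in both formulas. In $S_G$ it is the genus of the virtual graph $G-T$ (the minimum of $g(\Sigma')$ over surface representatives), whereas in Krushkal's state sum it is the genus of the spanning ribbon subgraph obtained by restricting a fixed cellular representative $G\incl\Sigma$. These agree because the restriction of a surface graph to a spanning subgraph is a representative of the virtual graph $G-T$, and the genus of a virtual graph may be computed from any ribbon-graph representative; this is exactly the identification of edge deletion with subgraph restriction noted earlier in the paper. Once this is observed, the proof is the routine sign computation above.
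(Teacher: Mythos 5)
Your argument is correct and follows essentially the same route as the paper: substitute the four values into the state sum, then use the Euler characteristic identity $b_0(G-T)-b_1(G-T)=b_0(G)-b_1(G)+\abs{T}$ to convert the sign to $(-1)^{\abs{T}}$ after factoring out $(-1)^{b_1(G)}$. Your extra remark identifying the genus of the spanning ribbon subgraph with the genus of the virtual graph $G-T$ is a point the paper leaves implicit, and it is correct.
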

\begin{proof}
  Observe that
  \begin{align*}
    P'_G(-1,-Q,Q^{-1/2},1) &= \sum_{T \subset E(G)} (-1)^{b_0(G-T) - b_0(G)} (-Q)^{b_1(G-T)} Q^{-g(G-T)}\\
    &=\sum_{T\subset E(G)} (-1)^{b_0(G-T)-b_1(G-T)-b_0(G)} Q^{b_1(G-T)-g(G-T)}.
  \end{align*}
  Using the Euler characteristics of $G-T$ and $G$ as graphs,
  \begin{align*}
    b_0(G-T) - b_1(G-T) &= \abs{V(G-T)} - \abs{E(G-T)}\\
                        &= \abs{V(G)}-\abs{E(G)} + \abs{T}\\
                        &= b_0(G)-b_1(G)+\abs{T}.
  \end{align*}
  Thus,
  \begin{align*}
    P'_G(-1,-Q,Q^{-1/2},1) &= \sum_{T\subset E(G)} (-1)^{-b_1(G)+\abs{T}}Q^{b_1(G-T)-g(G-T)}.
  \end{align*}
  We can now pull out the overall factor of $(-1)^{b_1(G)}$ from the sum to give the desired formula.
\end{proof}

\section{Category of virtual graphs}
\label{sec:cat-virtual-graphs}

In this section, we describe a category for virtual graphs up to edge subdivision.
This is more generally a planar algebra, however describing the category is sufficient. 

\begin{definition}
  \label{def:virtual-graph-category}
  The \emph{virtual graph category} $\mathsf{VG}^R$ over the ring $R$ is a monoidal category whose objects are finite ordered sets $[n]=\{0,1,\dots,n-1\}$ for $n\in\N$ and whose morphism sets $\mathsf{VG}^R([m],[n])$ are formal $R$-linear combinations of virtual graph diagrams, modulo edge subdivision, on oriented disks with $m+n$ marked $1$-valent vertices along the boundary.
  Edges incident to the boundary are called \emph{boundary edges}.
  Non-boundary edges are called \emph{internal edges}.
  We imagine the $[m]$ points to be at the bottom of a diagram and the $[n]$ points to be at the top.
  
  Composition $\mathsf{VG}^R([n],[\ell])\times\mathsf{VG}^R([m],[n])\to\mathsf{VG}^R([m],[\ell])$ is defined on individual virtual graphs by attaching the disks in an orientation-respecting way along a pair of arcs such that vertices $0,1,\dots,n-1$ at the top of the second virtual graph are glued to the respective vertices $0,1,\dots,n-1$ at the bottom of the first.
  The composition extends by linearity.

  Monoidal composition is by the usual horizontal gluing along a pair of arcs disjoint from the marked points on the sides of the diagrams.
\end{definition}

Ignoring edge subdivision and requiring there to be edges incident to the boundary vertices together allow for an identity in $\mathsf{VG}^R([n],[n])$, namely the graph with $n$ paths connecting vertex $i$ at the bottom to vertex $i$ at the top, for all $0\leq i <n$.
Virtual crossings satisfy move II*, so one may view a virtual crossing between two edges as a transposition, giving an embedding of the symmetric group ring $R[S_n]$ into $\mathsf{VG}^R([n],[n])$.
This structure makes $\mathsf{VG}^R$ a symmetric monoidal category.
One could compare the case of a braided monoidal category, where the braiding is a crossing that is not necessarily an involution.

For now we will focus on virtual graphs, but there is a similar category $\mathsf{VSG}^R$ of virtual spatial graphs, extending $\mathsf{VG}^R$ with special marked $4$-valent vertices called \emph{classical crossings}.

The virtual graph category can be viewed as a planar diagram model for the category of graphs in surfaces with imposed symmetric monoidal structure at the level of graphs.

\begin{definition}
  \label{def:surface-graph-category}
  The \emph{surface graph category} over a ring $R$ is a monoidal category whose objects are closed oriented $1$-manifolds with finitely many marked points, and whose morphisms are formal $R$-linear combinations of oriented cobordisms with embedded surface graphs up to edge subdivision, intersecting the marked points transversely, with the cobordisms up to homeomorphism.
\end{definition}

This is a symmetric monoidal category, with disjoint union being the monoidal product.
The category satisfies laws similar to that of a Frobenius algebra.
The full subcategory whose objects are $1$-manifolds with exactly one marked point per connected component is a symmetric monoidal category, where an element of the symmetric group may be realized as a cobordism consisting of disjoint cylinders, each with an embedded path.
The morphism sets of the surface graph category can be embedded in this subcategory as bimodules over symmetric groups by composition with the pants maps $p_{a,b,0}$ and their duals $p^*_{a,b,0}$ from Figure~\ref{fig:surface-graph-cat-pants-map}.
If we wished to impose a condition on the category for these embeddings to be functorial, then we would require that $p_{a,b,0}\circ p^*_{a,b,0}=\id$, which permits $1$-surgeries that do not disconnect the surface.
If in addition we want $p^*_{a,b,0}\circ p_{a,b,0}=\id$, then arbitrary $1$-surgeries may be performed.
See Figure~\ref{fig:surface-graph-cat-pants-map-iso} for these compositions.
Hence, imposing that $p_{a,b,0}$ be an isomorphism generates stable equivalence.

\begin{figure}[htb]
  \centering
\begingroup%
  \makeatletter%
  \providecommand\color[2][]{%
    \errmessage{(Inkscape) Color is used for the text in Inkscape, but the package 'color.sty' is not loaded}%
    \renewcommand\color[2][]{}%
  }%
  \providecommand\transparent[1]{%
    \errmessage{(Inkscape) Transparency is used (non-zero) for the text in Inkscape, but the package 'transparent.sty' is not loaded}%
    \renewcommand\transparent[1]{}%
  }%
  \providecommand\rotatebox[2]{#2}%
  \newcommand*\fsize{\dimexpr\f@size pt\relax}%
  \newcommand*\lineheight[1]{\fontsize{\fsize}{#1\fsize}\selectfont}%
  \ifx\svgwidth\undefined%
    \setlength{\unitlength}{83.23028855bp}%
    \ifx\svgscale\undefined%
      \relax%
    \else%
      \setlength{\unitlength}{\unitlength * \real{\svgscale}}%
    \fi%
  \else%
    \setlength{\unitlength}{\svgwidth}%
  \fi%
  \global\let\svgwidth\undefined%
  \global\let\svgscale\undefined%
  \makeatother%
  \begin{picture}(1,0.82196687)%
    \lineheight{1}%
    \setlength\tabcolsep{0pt}%
    \put(0,0){\includegraphics[width=\unitlength,page=1]{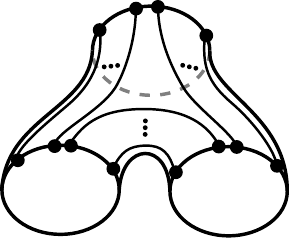}}%
    \put(0.35842239,0.63473432){\color[rgb]{0,0,0}\makebox(0,0)[lt]{\lineheight{1.25}\smash{\begin{tabular}[t]{l}$a$\end{tabular}}}}%
    \put(0.61041255,0.63325199){\color[rgb]{0,0,0}\makebox(0,0)[lt]{\lineheight{1.25}\smash{\begin{tabular}[t]{l}$b$\end{tabular}}}}%
    \put(0.52740418,0.35161587){\color[rgb]{0,0,0}\makebox(0,0)[lt]{\lineheight{1.25}\smash{\begin{tabular}[t]{l}$c$\end{tabular}}}}%
  \end{picture}%
\endgroup%

  \caption{Pants map $p_{a,b,c}$ in the surface graph category from a disjoint union of circles with $a+c$ and $b+c$ marked points each to a circle with $a+b$ marked points.}
  \label{fig:surface-graph-cat-pants-map}
\end{figure}

\begin{figure}[htb]
  \centering
\begingroup%
  \makeatletter%
  \providecommand\color[2][]{%
    \errmessage{(Inkscape) Color is used for the text in Inkscape, but the package 'color.sty' is not loaded}%
    \renewcommand\color[2][]{}%
  }%
  \providecommand\transparent[1]{%
    \errmessage{(Inkscape) Transparency is used (non-zero) for the text in Inkscape, but the package 'transparent.sty' is not loaded}%
    \renewcommand\transparent[1]{}%
  }%
  \providecommand\rotatebox[2]{#2}%
  \newcommand*\fsize{\dimexpr\f@size pt\relax}%
  \newcommand*\lineheight[1]{\fontsize{\fsize}{#1\fsize}\selectfont}%
  \ifx\svgwidth\undefined%
    \setlength{\unitlength}{191.33578552bp}%
    \ifx\svgscale\undefined%
      \relax%
    \else%
      \setlength{\unitlength}{\unitlength * \real{\svgscale}}%
    \fi%
  \else%
    \setlength{\unitlength}{\svgwidth}%
  \fi%
  \global\let\svgwidth\undefined%
  \global\let\svgscale\undefined%
  \makeatother%
  \begin{picture}(1,0.55794853)%
    \lineheight{1}%
    \setlength\tabcolsep{0pt}%
    \put(0,0){\includegraphics[width=\unitlength,page=1]{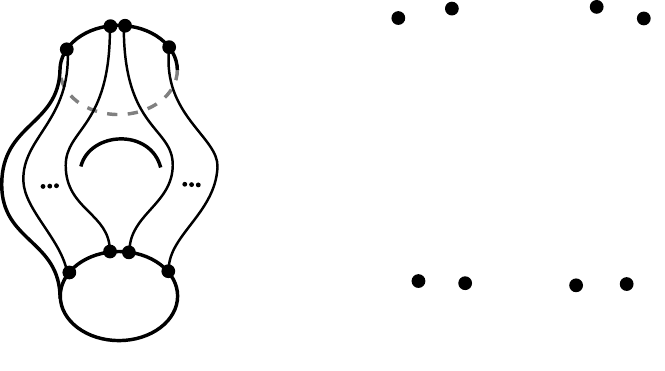}}%
    \put(0.05806966,0.30305388){\color[rgb]{0,0,0}\makebox(0,0)[lt]{\lineheight{1.25}\smash{\begin{tabular}[t]{l}$a$\end{tabular}}}}%
    \put(0.27729905,0.30305388){\color[rgb]{0,0,0}\makebox(0,0)[lt]{\lineheight{1.25}\smash{\begin{tabular}[t]{l}$b$\end{tabular}}}}%
    \put(0,0){\includegraphics[width=\unitlength,page=2]{pants-map-iso.pdf}}%
    \put(0.73768108,0.31377905){\color[rgb]{0,0,0}\makebox(0,0)[lt]{\lineheight{1.25}\smash{\begin{tabular}[t]{l}$a$\end{tabular}}}}%
    \put(0,0){\includegraphics[width=\unitlength,page=3]{pants-map-iso.pdf}}%
    \put(0.81634569,0.31248943){\color[rgb]{0,0,0}\makebox(0,0)[lt]{\lineheight{1.25}\smash{\begin{tabular}[t]{l}$b$\end{tabular}}}}%
  \end{picture}%
\endgroup%

  \caption{The compositions $p_{a,b,0}\circ p^*_{a,b,0}$ and $p_{a,b,0}^*\circ p_{a,b,0}$.}
  \label{fig:surface-graph-cat-pants-map-iso}
\end{figure}

\begin{definition}
  \label{def:sym-surface-graph-category}
  The \emph{stable surface graph category} over a ring $R$ is the surface graph category modulo stable equivalence of the cobordisms.
\end{definition}

The isomorphism classes of objects in this category are determined by the number of marked points, and one could view them as stable equivalence classes of $0$-manifolds embedded in $1$-manifolds.
There is an equivalence of categories between the stable surface graph category and the virtual graph category.

\section{The Brauer category}
\label{sec:brauer-category}

Recall the definition of the Temperley-Lieb algebra $TL_n^c$, where $n\in\N$ and $c$ is an indeterminate, sometimes chosen to be a specific complex number.
$TL_n^c$ is an algebra over $\mathbb{C}(c)$ spanned by diagrams, which are oriented disks with properly embedded $1$-manifolds (\emph{strings}) up to isotopy rel boundary, where the disks have the same $2n$ equally spaced boundary points, and the boundary points are partitioned into two sets of consecutive ``top'' and ``bottom'' points.
(A manifold $N$ is \emph{properly} embedded in a manifold $M$ if $N\cap\partial M=\partial N$ and this intersection is transverse.)
A diagram with a closed loop bounding a disk (in the complement of the strings) is $c$ times the same diagram without the loop.

Composition is given  by gluing the bottom of the first diagram to the top of the second.
There is a trace defined by gluing the top to the bottom of a diagram; the resulting diagram is a collection of circles embedded in a sphere, so it evaluates to the empty diagram scaled by a power of $c$.
The Temperley-Lieb algebra is the endomorphism ring of a monoidal category, similar to that of Definition~\ref{def:surface-graph-category}, but where the objects are instead arcs with marked points and where the morphisms are $1$-manifolds embedded in disks cobounding these arcs.

Instead of disks we could consider diagrams drawn on surfaces.
This is a version of the surface graph category but with only embedded $1$-manifolds, rather than graphs of arbitrary valence.
Even after imposing the relation that closed loops bounding disks can be replaced by multiplication by $c$, diagrams without boundary points do not necessarily evaluate to a scalar because of lingering essential loops.
One may compare this to skein modules of dimension greater than one.
If, however, we instead consider the stable surface graph category in Definition~\ref{def:sym-surface-graph-category}, then every loop bounds a disk in some representative surface graph of the stable equivalence class, and so they can be replaced with multiplication by $c$.
This leads to a ``virtual Temperley-Lieb category:''

\begin{definition}
  \label{def:brauer-category}
  The \emph{Brauer category} $\Br^c$ is the subcategory of $\VG^{\C(c)}$ generated by all virtual graphs that as surface graphs are properly embedded $1$-manifolds, modulo multiplication by $c$ being equivalent to inserting a closed loop.  Diagrammatically, this is the Temperley-Lieb category after allowing proper immersions of $1$-manifolds rather than requiring proper embeddings.
\end{definition}

The \emph{Brauer algebra} $Br^c_n=\Br^c([n],[n])$ is the endomorphism algebra for $[n]$, and it was introduced by Brauer in \cite{Brauer1937} for the Schur-Weyl duality of the orthogonal group.
It is worth reviewing the map $\Br^N([m],[n])\twoheadrightarrow \Hom_{SO(N)}(V^{\otimes m},V^{\otimes n})$, where $V$ is an inner product space, $N=\dim V$, and $m+n$ is even.
A virtual graph diagram, drawn with $[m]$ at the bottom and $[n]$ at the top, is interpreted as an element of $(V^{\otimes m})^*\otimes V^{n}$ according to the following piece-by-piece correspondence, where $\{e_i\}_i$ is an orthonormal basis for $V$ and $\{e^i\}_i$ the corresponding dual basis for $V^*$:
\begin{align*}
  \raisebox{-3pt}{\marginbox{1.5pt 0pt}{}}&\mapsto \sum_{i}e^i\otimes e_i &
  \raisebox{-6pt}{\marginbox{1.5pt 0pt}{}}&\mapsto \sum_{i,j}e^i\otimes e^j\otimes e_j\otimes e_i\\
  \raisebox{-3pt}{\marginbox{1.5pt 0pt}{}}&\mapsto \sum_{i}e^i\otimes e^i &
  \raisebox{-3pt}{\marginbox{1.5pt 0pt}{}}&\mapsto \sum_{i}e_i\otimes e_i
\end{align*}
Glued strings are contracted using the natural pairing between $V$ and $V^*$.
Notice that $\raisebox{-3pt}{\marginbox{1.5pt 0pt}{}}$ is the inner product (evaluation map) and $\raisebox{-3pt}{\marginbox{1.5pt 0pt}{}}$ is its ``Casimir'' (coevaluation map), and so they are related by the topological identity
\begin{equation*}
  (\raisebox{-3pt}{\marginbox{1.5pt 0pt}{}}\otimes \raisebox{-3pt}{\marginbox{1.5pt 0pt}{}})\circ(\raisebox{-3pt}{\marginbox{1.5pt 0pt}{}}\otimes \raisebox{-3pt}{\marginbox{1.5pt 0pt}{}})=\raisebox{-3pt}{\marginbox{1.5pt 0pt}{}}=(\raisebox{-3pt}{\marginbox{1.5pt 0pt}{}}\otimes\raisebox{-3pt}{\marginbox{1.5pt 0pt}{}})\circ(\raisebox{-3pt}{\marginbox{1.5pt 0pt}{}}\otimes\raisebox{-3pt}{\marginbox{1.5pt 0pt}{}}),
\end{equation*}
which in a less linear form can be represented as
\begin{equation*}
  \raisebox{-10pt}{\marginbox{1.5pt 0pt}{}}=\raisebox{-3pt}{\marginbox{1.5pt 0pt}{}}=\raisebox{-10pt}{\marginbox{1.5pt 0pt}{}}.
\end{equation*} %
The images of $\raisebox{-3pt}{\marginbox{1.5pt 0pt}{}}^{\otimes a}\otimes\raisebox{-6pt}{\marginbox{1.5pt 0pt}{}}\otimes \raisebox{-3pt}{\marginbox{1.5pt 0pt}{}}^{\otimes b}$, with $a+2+b=n=m$, are induced by and generate the right action of the symmetric group $S_n$ on the tensor power $V^{\otimes n}$.
Loops are the composition $\raisebox{-3pt}{\marginbox{1.5pt 0pt}{}}\circ\raisebox{-3pt}{\marginbox{1.5pt 0pt}{}}$, and hence they evaluate to the dimension of $V$.
This correspondence in fact determines an additive functor from $\Br^N$ to the full subcategory of $\operatorname{Rep}(SO(N))$ (equivalently $\operatorname{Rep}(\mathfrak{so}(N))$) generated by tensor powers of $V$.

The Brauer algebra is semisimple for generic values of $c$ (see \cite{Wenzl1988}), failing only at integers.
The algebra $Br^c_2$ will make a prominent appearance.
It has the basis $\{\raisebox{-6pt}{\marginbox{1.5pt 0pt}{}},\raisebox{-6pt}{\marginbox{1.5pt 0pt}{}},\raisebox{-6pt}{\marginbox{1.5pt 0pt}{}}\}$, and the primitive central idempotents for this algebra are
\begin{align*}
  p_1&=\frac{1}{c}\raisebox{-6pt}{\marginbox{1.5pt 0pt}{}}\\
  p_2&=\frac{1}{2}\raisebox{-6pt}{\marginbox{1.5pt 0pt}{}}-\frac{1}{2}\raisebox{-6pt}{\marginbox{1.5pt 0pt}{}}\\
  p_3&=\frac{1}{2}\raisebox{-6pt}{\marginbox{1.5pt 0pt}{}}-\frac{1}{c}\raisebox{-6pt}{\marginbox{1.5pt 0pt}{}}+\frac{1}{2}\raisebox{-6pt}{\marginbox{1.5pt 0pt}{}}.
\end{align*}
The element $p_2+p_3=\raisebox{-6pt}{\marginbox{1.5pt 0pt}{}}-\frac{1}{c}\raisebox{-6pt}{\marginbox{1.5pt 0pt}{}}$ is the Jones-Wenzl idempotent $P^{(2)}$ for the embedded Temperley-Lieb algebra.

\section{Categories for invariants of virtual graphs}
\label{sec:virtual-graph-inv-categories}
A large class of graph invariants, surface graph invariants, and ribbon graph invariants are determined by local relations, such as a form of contraction and deletion of edges.
Examples include the Tutte-Whitney polynomial, its specializations the chromatic and flow polynomials, the Krushkal polynomial\cite{Krushkal2011}, the Bollob\'{a}s-Riordan polynomial\cite{Bollobas2001}, the \emph{$S$-polynomial} of Definition~\ref{def:s-polynomial}, and the $W_{\mathfrak{so}(N)}$ and $W_{\mathfrak{sl}(N)}$ Penrose polynomials in Section~\ref{sec:penrose-polys}.
One way to interpret these invariants in a categorical context is to consider the quotient of a category like $\mathsf{VG}^R$ by those local relations.

A sense in which local relations totally determine an algebraic invariant is that any graph with no external edges can be reduced to a scalar times an empty graph.
This is equivalent to saying that $\End(\mathbf{1})$ is isomorphic to the ring $R$ of scalars for the category, where $\mathbf{1}$ is the monoidal unit.
This perspective gives a motivation for stable equivalence: if there were no way to remove ``far away'' topology, $\End([0])$ for quotients of the surface graph category could instead be like the situation for Kauffman bracket skein modules, which are potentially infinitely generated.
Even in the case where every graph on a surface reduces to a scalar times the empty graph on the same surface, $\End([0])$ would still be $R^{\N}$, with one copy of $R$ for each homeomorphism class of surface.

In particular, graphical categories with pairings for which $\End([0])\cong R$ have a Markov-like trace.
This trace is defined by connecting the top strings to the bottom strings, which removes all external edges, giving a diagram that evaluates to a scalar.
Morphisms $a$ such that $\tr(ab) = 0$ for every compatible morphism $b$ correspond to local relations that preserve the invariant.
Such an $a$ is called a \emph{negligible element}.

\subsection{Edge contraction}
\label{subsec:edge-contraction}

In \cite{Chmutov2009}, Chmutov defines edge contraction for ribbon graphs by generalizing the fact for planar graphs that edge contraction corresponds to deleting the edge from the dual graph.

\begin{figure}[htb]
  \centering
  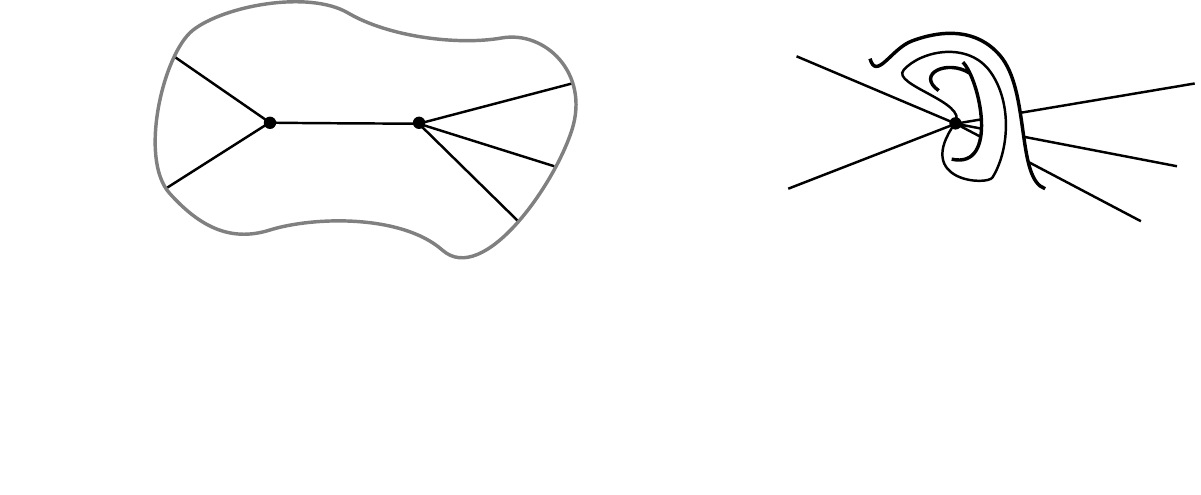
  \caption{Two virtual graphs that are locally related by the partial dual operation at the edge $e$, (a) as a cellular embedding, and (b) as an arrow presentation of a ribbon graph.}
  \label{fig:partial-dual}
\end{figure}

The \emph{partial dual} of a virtual graph $G$ at an edge $e$ is described in \cite{Chmutov2009} and \cite{Ellis-Monaghan2013}, and the operation, which we denote by $\delta_e G$, is illustrated in Figure~\ref{fig:partial-dual}.
The operation is best understood as a manipulation of an \emph{arrow presentation} for a ribbon graph, where the vertices of the ribbon graph are represented as closed disks with disjoint oriented labeled arcs on the boundaries, and the labels come in pairs to indicate how to glue in the edge disks.

The operation is an involution, and if $e,e'$ are two edges in $G$, $\delta_e$ and $\delta_{e'}$ commute.
The dual $G^*$ of $G$ is $\delta_{E(G)}G$, where $\delta_{E(G)}$ is the composition of all $\delta_e$ for $e\in E(G)$.
\begin{definition}
  \label{def:contraction}
  If $G$ is a virtual graph and $e$ is an edge in $G$, then $G/e$ is defined to be $\delta_e G-e$, the virtual graph obtained by \emph{contracting} the edge $e$.
\end{definition}

This definition avoids the problem that the quotient of a surface by an embedded loop might not itself be a manifold.
Instead, contracting a loop ``splits'' a vertex into two.
This particular definition of contraction is not well-defined for abstract graphs.

\subsection{The flow category}
Before defining the category $\Sc^Q$ for the $S$-polynomial, we motivate it with a more familiar example, the \emph{flow category}.
This category has been previously considered in, for instance, \cite{Agol2016}.
\begin{definition}
  \label{def:flow-category}
  The \emph{flow category} $\Flow^Q$ is the quotient of $\mathsf{VG}^{\mathbb{C}(Q)}$ by the following local relations:
  \begin{enumerate}
  \item Contraction-deletion: For $e$ an internal edge, if $e$ is not a loop, $[G]=[G/e]-[G-e]$, and otherwise $[G]=(Q-1)[G-e]$.
  \item If $v\in V(G)$ is a degree-$0$ internal vertex, $[G]=[G-v]$.
  \item If $G$ has a degree-$1$ internal vertex, $[G]=0$.
  \item If $G$ and $G'$ are related by move VI* in Figure~\ref{fig:vert-pliable-moves}, $[G]=[G']$.
  \end{enumerate}
  The \emph{flow polynomial} $F_G(Q)$ is the image of $G$ in $\Flow^Q([0],[0])\cong\mathbb{C}(Q)$, with $F_\emptyset(Q)=1$.
\end{definition}
It is not hard to show by induction on the number of edges that this flow polynomial is equivalent to the state sum given in Definition \ref{def:flow-polynomial}.
It is more natural to describe the flow category in terms of abstract graphs instead of virtual graphs, as in \cite{Agol2016}, but for sake of economical formalism we invoke move VI*.

\subsection{The \texorpdfstring{$S$-polynomial}{S-polynomial} category}
If $G$ is a planar graph and $e\in E(G)$ is a loop, then $F_{G/e}(Q)=F_{G-e}(Q)$.
This is because $e$ must intersect $G$ at exactly one point, so $G/e$ is a disjoint union of two graphs $G_1$ and $G_2$.
The flow polynomial is known to be multiplicative under both disjoint union and wedge sum, and it is evident that $G/e\cong G_1\amalg G_2$ and $G-e\cong G_1\vee G_2$.

Thus, for planar graphs the contraction-deletion relations for the flow category can be unified as $[G]=Q^{\beta(G,e)}[G/e]-[G-e]$, where $\beta(G,e)$ is half the difference between the Euler characteristics of $G/e$ and $G$, which is $1$ if $e$ is a loop and $0$ otherwise (see Figure~\ref{fig:spoly-contr-del}).
One way to define the $S$-polynomial is to declare that we take this rule seriously for nonplanar virtual graphs as well.

\begin{figure}[htb]
  \centering
    \begin{align*}
      \raisebox{-21pt}{\marginbox{4pt 0}{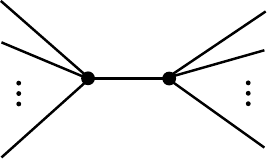}}
      &=       \raisebox{-21pt}{\marginbox{4pt 0}{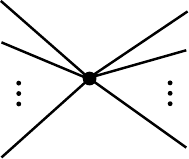}}
      -      \raisebox{-21pt}{\marginbox{4pt 0}{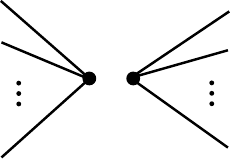}}\\
      \raisebox{-21pt}{\marginbox{4pt 0}{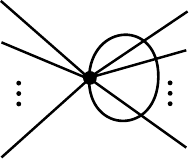}}
      &=Q\raisebox{-21pt}{\marginbox{4pt 0}{\input{fig/signed-vtx-dsd.pdf_tex}}}
        -\raisebox{-21pt}{\marginbox{4pt 0}{\input{fig/signed-vtx-d.pdf_tex}}}
    \end{align*}
  \caption{The contraction-deletion relations for $S^Q$.}
  \label{fig:spoly-contr-del}
\end{figure}

We will show that the $S$-polynomial is the invariant determined by the following axioms:
\begin{enumerate}
\item If $G$ is the empty graph, $S_G(Q)=1$.
\item If $G$ is a single-vertex virtual graph with no edges, $S_G(Q)=1$.
\item If $G$ has a degree-$1$ vertex, $S_G(Q)=0$.
\item If $G_1,G_2$ are virtual graphs, $S_{G_1\amalg G_2}(Q)=S_{G_1}(Q)S_{G_2}(Q)$.
\item If $e$ is an edge in a virtual graph $G$, $S_G(Q)=Q^{\beta(G,e)}S_{G/e}(Q)-S_{G-e}(Q)$.
\end{enumerate}
Uniqueness of such a polynomial follows from the fact that the right-hand sides of each equation involve virtual graphs of less complexity, measured by the sum of the numbers of vertices and edges.
While we could demonstrate existence by showing that the state sum in Definition~\ref{def:s-polynomial} satisfies these axioms, we will proceed by a more enlightening method: we construct a functor $\hat\Phi:\VG^{\C(Q)}\to \Br^{Q^{1/2}}$ that both satisfies the axioms and computes the state sum from $\VG^{\C(Q)}([0],[0])$.

Saying that the functor computes the polynomial means two things.
The most obvious is that $\hat \Phi$ maps any virtual graph with no external edges to the $S$-polynomial of that graph.
However, this will also be true for graphs with external edges, in the sense that if $\Sc^Q$ is the quotient of $\VG^{\C(Q)}$ by the local relations defining the $S$-polynomial, then $\hat \Phi$ factors through the quotient to give a trace-preserving functor $\Phi : \Sc^Q \to \Br^{Q^{1/2}}$.

Consider for a moment $S^0_G(Q)\in\mathbb{C}[Q^{\pm 1/2}]$, related to the $S$-polynomial by the renormalization $S_G(Q)=Q^{(\abs{E(G)}-\abs{V(G)})/2}S^0_G(Q)$.
The axioms from above when renormalized appear as follows:
\begin{enumerate}
\item If $G$ is the empty graph, $S^0_G(Q)=1$.
\item If $G$ is a single-vertex virtual graph with no edges, $S^0_G(Q)=Q^{1/2}$.
\item If $G$ has a degree-$1$ vertex, $S^0_G(Q)=0$.
\item If $G_1,G_2$ are virtual graphs, $S^0_{G_1\amalg G_2}(Q)=S^0_{G_1}(Q)S^0_{G_2}(Q)$.
\item If $e$ is an edge in a virtual graph $G$, $S^0_G(Q)=S^0_{G/e}(Q)-Q^{-1/2}S^0_{G - e}(Q)$.
\end{enumerate}

The last of these axioms suggests a relationship to the second Jones-Wenzl idempotent $P^{(2)}$, which plays a role in the following definition for $\hat\Phi$.

\begin{definition}
  \label{def:phi-functor}
  The functor $\hat \Phi:\VG^{\C(Q)}\to\Br^{Q^{1/2}}$ is defined on objects by sending $[n]$ to $[2n]$, and it is defined on morphisms in the following piece-by-piece fashion:
  \begin{itemize}
  \item Edges are replaced by $Q^{1/2}P^{(2)}=Q^{1/2}\left(\raisebox{-6pt}{\marginbox{1.5pt 0pt}{}}-Q^{-1/2}\raisebox{-6pt}{\marginbox{1.5pt 0pt}{}}\right)$.
  \item Internal vertices are replaced according to
    \begin{equation*}
      \raisebox{-14pt}{\marginbox{1.5pt 0}{
\begingroup%
  \makeatletter%
  \providecommand\color[2][]{%
    \errmessage{(Inkscape) Color is used for the text in Inkscape, but the package 'color.sty' is not loaded}%
    \renewcommand\color[2][]{}%
  }%
  \providecommand\transparent[1]{%
    \errmessage{(Inkscape) Transparency is used (non-zero) for the text in Inkscape, but the package 'transparent.sty' is not loaded}%
    \renewcommand\transparent[1]{}%
  }%
  \providecommand\rotatebox[2]{#2}%
  \newcommand*\fsize{\dimexpr\f@size pt\relax}%
  \newcommand*\lineheight[1]{\fontsize{\fsize}{#1\fsize}\selectfont}%
  \ifx\svgwidth\undefined%
    \setlength{\unitlength}{40.33613898bp}%
    \ifx\svgscale\undefined%
      \relax%
    \else%
      \setlength{\unitlength}{\unitlength * \real{\svgscale}}%
    \fi%
  \else%
    \setlength{\unitlength}{\svgwidth}%
  \fi%
  \global\let\svgwidth\undefined%
  \global\let\svgscale\undefined%
  \makeatother%
  \begin{picture}(1,0.93640957)%
    \lineheight{1}%
    \setlength\tabcolsep{0pt}%
    \put(0,0){\includegraphics[width=\unitlength,page=1]{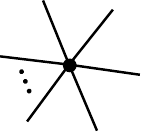}}%
  \end{picture}%
\endgroup%
}}
      \longmapsto
      Q^{-1/2}
      \raisebox{-14pt}{\marginbox{1.5pt 0}{
\begingroup%
  \makeatletter%
  \providecommand\color[2][]{%
    \errmessage{(Inkscape) Color is used for the text in Inkscape, but the package 'color.sty' is not loaded}%
    \renewcommand\color[2][]{}%
  }%
  \providecommand\transparent[1]{%
    \errmessage{(Inkscape) Transparency is used (non-zero) for the text in Inkscape, but the package 'transparent.sty' is not loaded}%
    \renewcommand\transparent[1]{}%
  }%
  \providecommand\rotatebox[2]{#2}%
  \newcommand*\fsize{\dimexpr\f@size pt\relax}%
  \newcommand*\lineheight[1]{\fontsize{\fsize}{#1\fsize}\selectfont}%
  \ifx\svgwidth\undefined%
    \setlength{\unitlength}{39.20439227bp}%
    \ifx\svgscale\undefined%
      \relax%
    \else%
      \setlength{\unitlength}{\unitlength * \real{\svgscale}}%
    \fi%
  \else%
    \setlength{\unitlength}{\svgwidth}%
  \fi%
  \global\let\svgwidth\undefined%
  \global\let\svgscale\undefined%
  \makeatother%
  \begin{picture}(1,0.95861196)%
    \lineheight{1}%
    \setlength\tabcolsep{0pt}%
    \put(0,0){\includegraphics[width=\unitlength,page=1]{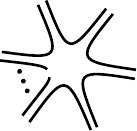}}%
  \end{picture}%
\endgroup%
}}.
    \end{equation*}
  \item Boundary vertices at the bottom of a diagram are replaced by $Q^{-1/2}\raisebox{-6pt}{\marginbox{1.5pt 0pt}{}}$, while those at the top by $\raisebox{-6pt}{\marginbox{1.5pt 0pt}{}}$.
  \item Virtual crossings are replaced by $\raisebox{-8pt}{\marginbox{1.5pt 0pt}{}}$.
  \end{itemize}
\end{definition}
These replacements along with the requirements that $\hat \Phi$ be monoidal and linear completely determine $\hat \Phi$.
The normalization is the cause of the awkwardness in the difference between top and bottom boundary vertices --- a functor for $S^0$ would not have these additional factors of $Q^{\pm 1/2}$.

\begin{remark}
  There is a whole family of functors $\hat\Phi_k$ from $\VG^{\C(Q)}$ to $\Br^{Q^{1/2}}$ sending $[n]$ to $[kn]$ and edges to the Jones-Wenzl projector $P^{(k)}$, with $k$ an even natural number.
\end{remark}

Next we will show that this functor factors through another category, $\Sc^Q$, to demonstrate the relationship to the $S$-polynomial.
A consequence of the functorial construction will be that $\Sc^Q([0],[0])\cong\mathbb{C}(Q)$ and that the image of a graph in this endomorphism ring is its $S$-polynomial.

\begin{definition}
  \label{def:s-category}
  The \emph{$S$-polynomial category} $\Sc^Q$ is the quotient of $\VG^{\C(Q)}$ by the following local relations:
  \begin{enumerate}
  \item Contraction-deletion: For $e$ an internal edge, $[G]=Q^{\beta(G,e)}[G/e]-[G-e]$, where $\beta(G,e)$ indicates if $e$ is a loop.
  \item If $v\in V(G)$ is a degree-$0$ internal vertex, $[G]=[G-v]$.
  \item If $G$ has a degree-$1$ internal vertex, $[G]=0$.
  \end{enumerate}
\end{definition}

\begin{theorem}
  $\hat\Phi$ factors through $\Sc^Q$ to give a trace-preserving functor $\Phi:\Sc^Q\to\Br^{Q^{1/2}}$
\end{theorem}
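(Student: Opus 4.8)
The plan is to verify that the monoidal $\C(Q)$-linear functor $\hat\Phi$ of Definition~\ref{def:phi-functor} annihilates the three families of defining relations of $\Sc^Q$, so that it descends to a functor $\Phi\colon\Sc^Q\to\Br^{Q^{1/2}}$ by the universal property of the quotient, and then to check that $\Phi$ intertwines the two traces. The two algebraic facts doing essentially all the work are that $P^{(2)}=\BrI-Q^{-1/2}\BrE$ is the second Jones--Wenzl idempotent --- so $(P^{(2)})^{2}=P^{(2)}$ and $P^{(2)}$ is killed by pre- or post-composition with a cap --- and that the $\mathfrak{so}(N)$-type replacement of a degree-$d$ vertex is the planar matching that ``follows the rotation'', so that capping one of its half-edges against itself produces the replacement of a degree-$(d-1)$ vertex (of a degree-$(d-2)$ vertex at the relevant endpoint, if the edge being contracted is a loop). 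The vertex relations are then quick: if $G$ has a degree-$1$ internal vertex $v$, its unique incident edge contributes $Q^{1/2}P^{(2)}$ and $v$ contributes a cap up to a scalar, so $\BrCap\circ P^{(2)}=0$ together with monoidality of $\hat\Phi$ forces $\hat\Phi(G)=0$; and a degree-$0$ internal vertex contributes trivially to $\hat\Phi$ by construction, giving $\hat\Phi(G)=\hat\Phi(G-v)$. (The same bookkeeping, with $(P^{(2)})^{2}=P^{(2)}$ and the degree-$2$ replacement being the $2$-strand identity, also re-confirms the edge-subdivision invariance already implicit in $\hat\Phi$ being defined on $\VG^{\C(Q)}$.)

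The substance is the contraction--deletion relation. Let $e$ be an internal edge of $G$. Expanding the edge decoration, $Q^{1/2}P^{(2)}=Q^{1/2}\BrI-\BrE$, so linearity of $\hat\Phi$ gives
\[
  \hat\Phi(G)=Q^{1/2}\,\hat\Phi(G)_{\BrI}-\hat\Phi(G)_{\BrE},
\]
where $\hat\Phi(G)_{\BrI}$, respectively $\hat\Phi(G)_{\BrE}$, is the Brauer element computed from the diagram of $G$ exactly as for $\hat\Phi(G)$ but with the two strands of $e$ joined by the identity $\BrI$, respectively broken by the cap--cup $\BrE$, all other local data left untouched. In the $\BrE$ term the cap and cup are absorbed into the $\mathfrak{so}(N)$-vertices at the two endpoints of $e$, and by the ``follow the rotation'' structure each absorption drops the degree of an endpoint by one --- or drops the degree of the single endpoint by two, if $e$ is a loop --- so that $\hat\Phi(G)_{\BrE}$ is exactly the Brauer element attached to $G-e$; hence $\hat\Phi(G)_{\BrE}=\hat\Phi(G-e)$. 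In the $\BrI$ term, joining the two strands of $e$ concatenates the two endpoint matchings into a single matching, which --- comparing with the arrow presentations in Figure~\ref{fig:partial-dual} --- is precisely the vertex replacement occurring in $G/e=\delta_eG-e$; the only mismatch is a scalar coming from the change in the number of internal vertices (the two endpoints coalesce when $e$ is not a loop, while the single endpoint splits in two when $e$ is a loop). Tracking this power of $Q$, and using the degree-$1$ relation to discard the degenerate cases, shows $Q^{1/2}\hat\Phi(G)_{\BrI}=Q^{\beta(G,e)}\hat\Phi(G/e)$, whence $\hat\Phi(G)=Q^{\beta(G,e)}\hat\Phi(G/e)-\hat\Phi(G-e)$, which is relation (1). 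Thus $\hat\Phi$ factors through the quotient as a functor $\Phi\colon\Sc^Q\to\Br^{Q^{1/2}}$.

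Finally, trace-preservation. In both $\Sc^Q$ and $\Br^{Q^{1/2}}$ the trace is the closure joining each top endpoint to the corresponding bottom endpoint; since $\Phi$ is monoidal it commutes with closure provided $\hat\Phi$ carries the closing arcs of a virtual-graph diagram to the closing arcs of $\Br^{Q^{1/2}}$, which it does up to an extra $P^{(2)}$ on each closing arc --- absorbed by $(P^{(2)})^{2}=P^{(2)}$ --- and up to an overall scalar that the asymmetric boundary-vertex normalization ($Q^{-1/2}$ at the bottom, $1$ at the top) is chosen to make equal to $1$. Consequently $\hat\Phi$ sends a closed virtual graph to a scalar equal to its $S$-polynomial; equivalently, the evaluation ``decorate every edge by $P^{(2)}$ and compute in $\Br^{Q^{1/2}}$'' satisfies the renormalized axioms stated just before Definition~\ref{def:phi-functor}, which one may also read off from the contraction--deletion relation just established together with the base cases. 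Hence $\tr_{\Br^{Q^{1/2}}}\circ\Phi=\tr_{\Sc^Q}$. The step I expect to be the main obstacle is the identification of $\hat\Phi(G)_{\BrI}$ with $Q^{\pm1/2}\hat\Phi(G/e)$: one must reconcile the Brauer-diagram gluing with Chmutov's partial-dual-then-delete operation cleanly for both loop and non-loop edges while pinning down the single power of $Q$ coming from the change in vertex count; once that is in hand, everything else is the Jones--Wenzl relations and scalar bookkeeping.
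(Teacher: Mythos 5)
Your proposal is correct and follows essentially the same route as the paper: expand each edge decoration $Q^{1/2}P^{(2)}=Q^{1/2}\,\BrI-\BrE$, identify the $\BrI$ term with contraction (the single power of $Q$ from the change in internal vertex count being exactly $\beta(G,e)$ in both the loop and non-loop cases) and the $\BrE$ term with deletion, kill degree-$1$ vertices via $P^{(2)}\circ\BrCup=0$, and use the asymmetric $Q^{-1/2}$ bottom / $1$ top boundary normalization together with idempotency of $P^{(2)}$ for compatibility with composition and the trace. The only nitpick is the degree-$0$ vertex relation: it does not hold ``by construction'' but because the vertex contributes $Q^{-1/2}$ times a free loop, which evaluates to the loop value $Q^{1/2}$ in $\Br^{Q^{1/2}}$, giving net factor $1$ --- a one-line fix.
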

\begin{proof}
  For the following, let $G\in\VG^{\C(Q)}([m],[n])$ be a diagram.

  If $G$ has a degree-$0$ internal vertex, then the diagram for $\hat\Phi(G)$ contains $Q^{-1/2}$ times a loop, which evaluates to $1$, and so $\hat\Phi(G)=\hat\Phi(G-v)$.

  If $G$ has a degree-$1$ internal vertex, then the diagram for $\hat\Phi(G)$ contains the composition $P^{(2)}\circ\raisebox{-3pt}{\marginbox{1.5pt 0pt}{}}$, where $P^{(2)}$ is from the edge incident to the vertex and $\raisebox{-3pt}{\marginbox{1.5pt 0pt}{}}$ is from the vertex itself.
  Then $\hat\Phi(G)=0$ since $P^{(2)}\circ\raisebox{-3pt}{\marginbox{1.5pt 0pt}{}}=0$.

  If $e\in E(G)$, one can show $\hat\Phi(G)=Q^{\beta(G,e)}\hat\Phi(G/e)-\hat\Phi(G-e)$, with $\beta(G,e)$ defined as it has been, by considering the two cases of $e$ being a loop or non-loop, and by expanding $e$ in the image as $Q^{1/2}\raisebox{-6pt}{\marginbox{1.5pt 0pt}{}}$ and $-\raisebox{-6pt}{\marginbox{1.5pt 0pt}{}}$.
  The first of these expansions corresponds to $Q^{\beta(G,e)}\hat\Phi(G/e)$, and the second to $-\hat\Phi(G-e)$.

  Thus, $\hat\Phi$ factors through the morphism sets of $\Sc^Q$.
  It furthermore factors through the composition law and the trace of $\Sc^Q$ because only one of the two vertices being glued contributes a factor of $Q^{-1/2}$ in $\Br^{Q^{1/2}}$.
\end{proof}

It is now established that the axioms give a well-defined polynomial invariant of virtual graphs.
The correspondence to the state sum formulation will be proved in Theorem~\ref{thm:phi-functor-gives-state-sum}.

\begin{remark}
  The definition of $\Phi$ is the extension of the $\mathsf{TL}^{Q^{1/2}}$ construction for the flow polynomial of cubic planar graphs in \cite{Fendley2009} to the $S$-polynomial of arbitrary virtual graphs.
\end{remark}

\begin{example}
  \label{example:polynomials}
  Let $G_1$ and $G_2$ respectively be the planar and toroidal theta graphs from Figure~\ref{fig:inequivalent-theta-graphs}.
  It is a quick application of the axioms to calculate
  \begin{align*}
    S_{G_1}(Q)=(Q-1)(Q-2) \text{ and }
    S_{G_2}(Q)=-2(Q-1).
  \end{align*}
  Thus, the polynomial can distinguish virtual graphs with the same underlying graph.
  The two graphs in Figure~\ref{fig:virtual-graph-example} have $S$-polynomials $(Q-1)^2$ and $Q-1$, respectively.
  Another example is the complete bipartite graph $K_{3,3}$ as a virtual graph by connecting via straight lines in $\R^2$ all the points $(1,n)$ to all the points $(2,m)$, with $1\leq m,n\leq 3$. $F_{K_{3,3}}(Q)=(Q-1)(Q-2)(Q^2-6Q+10)$, whereas $S_{K_{3,3}}(Q)=(Q-1)(Q-4)(Q+5)$.
  (The $S$-polynomials for $K_{3,3}$ with all possible rotation systems are that, $5(Q-1)(Q-4)$, and $-(Q-1)(Q-4)(Q-5)$.)
\end{example}

\begin{example}
  Alternatively, we could have computed the $S$-polynomial of the graphs from Figure~\ref{fig:inequivalent-theta-graphs} by the functor $\Phi$.
  In Figure~\ref{fig:state-sum-example}, $\raisebox{-6pt}{\marginbox{1.5pt 0pt}{}}$ represents $\raisebox{-6pt}{\marginbox{1.5pt 0pt}{}}-Q^{-1/2}\raisebox{-6pt}{\marginbox{1.5pt 0pt}{}}$, which in the expansion corresponds to the inclusion or exclusion of an edge.
\end{example}

\begin{figure}[tb]
  \centering
  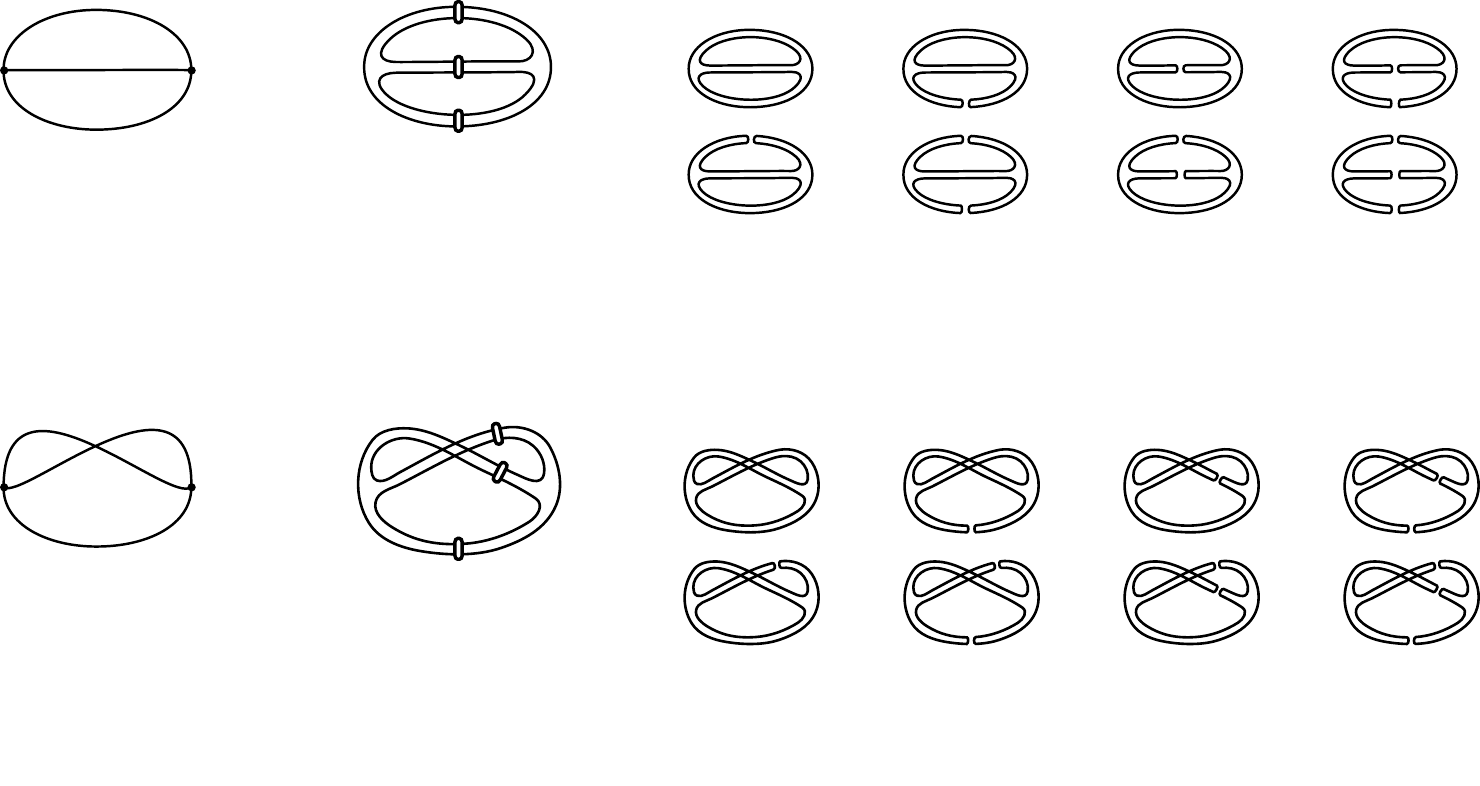
  \caption{$S$-polynomials of theta graphs by the state sum from the $\hat\Phi$ functor.}
  \label{fig:state-sum-example}
\end{figure}

\begin{theorem}
  \label{thm:phi-functor-gives-state-sum}
  Let $G$ be a virtual graph.
  Then $\hat\Phi(G)$ is $S_G(Q)$ as defined by the state sum in Definition~\ref{def:s-polynomial}.
\end{theorem}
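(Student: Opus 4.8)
The plan is to compute $\hat\Phi(G)$ directly by expanding the image of $G$, viewed as the closed diagram in $\VG^{\C(Q)}([0],[0])$, so that $\hat\Phi(G)$ lies in $\Br^{Q^{1/2}}([0],[0])$, which is spanned over $\C(Q^{1/2})$ by the empty diagram because in the stable setting every closed loop bounds a disk in some representative surface graph and so evaluates to $Q^{1/2}$. Since $\hat\Phi$ replaces each edge by $Q^{1/2}P^{(2)}=Q^{1/2}\BrI-\BrE$, multilinearity of $\hat\Phi$ gives $\hat\Phi(G)=\sum_{T\subset E(G)}\hat\Phi_T(G)$, where in $\hat\Phi_T(G)$ each edge $e$ is replaced by $Q^{1/2}\BrI$ when $e\notin T$ and by $-\BrE$ when $e\in T$, and vertices and virtual crossings are replaced as in Definition~\ref{def:phi-functor}. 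Each $\hat\Phi_T(G)$ is then a scalar times a closed diagram $D_T$; collecting a factor $Q^{1/2}$ from each of the $\abs{E(G)}-\abs{T}$ undeleted edges, a factor $-1$ from each of the $\abs{T}$ deleted ones, a factor $Q^{-1/2}$ from each of the $\abs{V(G)}$ internal vertices, and a factor $Q^{1/2}$ from each closed loop of $D_T$, one obtains
\[
  \hat\Phi_T(G)=(-1)^{\abs{T}}\,Q^{\frac12\bigl(\abs{E(G)}-\abs{T}-\abs{V(G)}+\ell_T\bigr)},
\]
where $\ell_T$ denotes the number of connected components of $D_T$.

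The crucial point is that $\ell_T=\abs{F(G-T)}$, the number of faces of the ribbon graph $G-T$. I will argue that $D_T$ is, as a $1$-manifold on its ambient surface, isotopic to the system of boundary curves of a regular neighbourhood of $G-T$. Tracing a component of $D_T$: at a vertex the Penrose replacement of Definition~\ref{def:phi-functor} carries a strand from one side of a half-edge $h$ to the next half-edge in the rotation order $\sigma$; an undeleted edge ($\BrI$) carries the two parallel strands along the edge-ribbon to the half-edge at the opposite end; and a deleted edge ($\BrE$, a cup-cap) immediately reflects the strand back, which is exactly how a face boundary of $G-T$ runs at a half-edge position where no edge is attached. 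Thus tracing $D_T$ reproduces the standard face permutation of the rotation system of $G-T$, with deleted half-edges transparently skipped, so the components of $D_T$ biject with the faces of $G-T$. The degenerate cases are consistent with this: a degree-$0$ vertex contributes one loop whose $Q^{1/2}$ is cancelled by the vertex's $Q^{-1/2}$, and if $G$ has a degree-$1$ vertex then $\hat\Phi(G)=0$ because $P^{(2)}\circ\BrCup=0$.

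It then remains to substitute. By Table~\ref{tab:notation-correspondence} (equivalently, by the Euler characteristic of the closed surface into which $G-T$ embeds cellularly) one has $\abs{F(G-T)}=b_0(G-T)+b_1(G-T)-2g(G-T)$, and since $\abs{V(G-T)}=\abs{V(G)}$ and $\abs{E(G-T)}=\abs{E(G)}-\abs{T}$ we have $b_0(G-T)-b_1(G-T)=\abs{V(G)}-\abs{E(G)}+\abs{T}$. Feeding both into the exponent above collapses it, all half-integer powers cancelling, to $b_1(G-T)-g(G-T)$, so $\hat\Phi_T(G)=(-1)^{\abs{T}}Q^{b_1(G-T)-g(G-T)}$ and summing over $T$ gives precisely the state sum of Definition~\ref{def:s-polynomial}, as illustrated for theta graphs in Figure~\ref{fig:state-sum-example}. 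Alternatively one could verify that the state sum obeys axioms (1)--(5): the deletion part is immediate on partitioning $T$ by whether $e\in T$, and the contraction part reduces to the single identity $b_1(H)-g(H)=\beta(H,e)+b_1(H/e)-g(H/e)$, after which uniqueness and the preceding theorem finish the argument; but that route needs a case analysis of how $b_1$ and $g$ transform under the partial dual $\delta_e$, which is exactly what the functorial expansion sidesteps.

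I expect the only genuine difficulty to be making the identification $\ell_T=\abs{F(G-T)}$ airtight: one must check, half-edge by half-edge, that the strand-following combinatorics of the Penrose vertex replacements composed with the $\BrI$ and $\BrE$ edge expansions really match the permutation description of the faces of $G-T$ with its induced rotation system, and in particular that deleting an edge produces $G-T$ rather than $G$ with some spurious local structure near the deleted edge. Once that bijection is in hand, everything else is Euler-characteristic bookkeeping.
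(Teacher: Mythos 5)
Your proposal is correct and takes essentially the same route as the paper's proof: expand each edge factor $Q^{1/2}P^{(2)}$ as inclusion/exclusion of the edge, identify the closed loops of each resulting diagram with the boundary components (faces) of the ribbon graph $G-T$, and convert $b_0(\partial(G-T))$ into $b_1(G-T)-g(G-T)$ by the same Euler-characteristic bookkeeping. The only difference is that you make explicit the loop-versus-face identification (and the degenerate-vertex checks) that the paper treats as immediate from the construction of $\hat\Phi$.
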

\begin{proof}
  By viewing the two terms in $P^{(2)}$ as inclusion and exclusion of the edge, we see that the image of a virtual graph $G$ is
  \begin{equation*}
    Q^{(\abs{E(G)}-\abs{V(G)})/2}\sum_{T\subset E(G)} (-1)^{\abs{T}}Q^{(b_0(\partial(G-T))-\abs{T})/2},
  \end{equation*}
  where by $\partial(G-T)$ we mean the boundary of a ribbon graph representative for $G-T$.
  Let $G-T\incl\Sigma$ be a cellular embedding of the virtual graph $G-T$.
  The number of disks in $\Sigma-\nu(G-T)$ is $b_0(\partial(G-T))$, so by Euler characteristics,
  \begin{equation*}
    \abs{V(G)}-(\abs{E(G)}-\abs{T})+b_0(\partial(G-T)) = 2b_0(G-T)-2g(G-T),
  \end{equation*}
  because $\abs{E(G-T)}=\abs{E(G)}-\abs{T}$.
  Since in addition $\abs{V(G)}-(\abs{E(G)}-\abs{T})=b_0(G-T)-b_1(G-T)$,
  \begin{equation*}
    b_0(\partial(G-T))-\abs{T} = 2b_1(G-T) -2g(G-T) + \abs{V(G)} - \abs{E(G)}.
  \end{equation*}
  Hence the state sum is equivalently
  \begin{align*}
    \sum_{T\subset E(G)}(-1)^{\abs{T}} Q^{b_1(G-T)-g(G-T)},
  \end{align*}
  which matches Definition~\ref{def:s-polynomial}.
\end{proof}

\begin{lemma}
  $\Sc^Q([m],[n])$ has a basis in one-to-one correspondence with fixed-point-free permutations of the $m+n$ boundary half-edges.
  In particular, in the diagram for $\sigma\in S_{m+n}$, each cycle in the cycle decomposition of $\sigma$ is an interior vertex, and the cycle is the rotation system for the vertex.
\end{lemma}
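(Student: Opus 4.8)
The plan is to show that the diagrams $D_\sigma$ of the statement — one interior vertex per cycle of a fixed-point-free $\sigma \in S_{m+n}$, with rotation system equal to that cycle and a boundary edge from each boundary half-edge to the vertex of its cycle (a $2$-cycle being, up to edge subdivision, a single through-arc) — both span $\Sc^Q([m],[n])$ over $\C(Q)$ and are linearly independent. As there is exactly one such $D_\sigma$ per fixed-point-free permutation, this is the asserted basis, and the spanning step will produce each diagram in the normal form described in the ``in particular'' clause.

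Spanning I would prove by induction on the number $k$ of internal edges of a diagram $G \in \VG^{\C(Q)}([m],[n])$. If $k>0$, pick an internal edge $e$ and apply the relation $[G] = Q^{\beta(G,e)}[G/e] - [G-e]$. Both $G-e$ and $G/e = \delta_e G - e$ have exactly $k-1$ internal edges: deletion is clear, and for $G/e$ note that as $e$ is internal its endpoints lie in the interior, so $\delta_e$ is supported away from the boundary and changes no edge's incidence to it — it simply drops $e$ from the internal edges (the loop case, where a vertex splits, is harmless since we count internal edges, not $|V|+|E|$). When $k=0$ only boundary edges remain; by relation (3) we may assume no interior vertex has degree $1$, by relation (2) we delete the degree-$0$ interior vertices, and then every interior vertex has degree $\ge 2$. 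Since each boundary half-edge lies on exactly one boundary edge, the half-edges are partitioned into the incidence sets of the interior vertices, plus the pairs joined by a through-arc, each block cyclically ordered by the rotation system; this datum is precisely a fixed-point-free $\sigma$ with $G = D_\sigma$.

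For linear independence I would apply the trace-preserving functor $\Phi : \Sc^Q \to \Br^{Q^{1/2}}$ constructed above and show that $\{\Phi(D_\sigma)\}$ is linearly independent in $\Br^{Q^{1/2}}([2m],[2n])$, which has the classical basis of Brauer diagrams \cite{Brauer1937}, i.e.\ perfect matchings of the $2m+2n$ doubled endpoints. Expanding every boundary edge's $P^{(2)} = \BrI - Q^{-1/2}\BrE$ and every interior vertex's Penrose replacement into planar pieces expresses each $\Phi(D_\sigma)$ as an explicit $\C(Q)$-combination of Brauer diagrams. I would then isolate the ``leading'' diagram $B_\sigma$ coming from the $\BrI$ term of every $P^{(2)}$ together with the parallel matching at each vertex that follows its rotation: the doubled strands run parallel along the boundary edges and reproduce the cyclic order at each vertex, so $B_\sigma$ records the cycle structure of $\sigma$, and $\sigma \mapsto B_\sigma$ is injective. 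Showing that $B_\sigma$ occurs in $\Phi(D_\sigma)$ with an invertible coefficient and occurs in $\Phi(D_\tau)$ only when $\tau$ precedes $\sigma$ in a suitable partial order on fixed-point-free permutations makes $\{\Phi(D_\sigma)\}$ triangular, hence linearly independent, and with it the $D_\sigma$.

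The main obstacle is this last step: computing which Brauer diagrams appear in each $\Phi(D_\sigma)$ and with what coefficients — the high-valence Penrose vertex replacement makes the combinatorics delicate — and identifying a partial order under which the $B_\sigma$ are unitriangular up to invertible scalars. An alternative avoiding $\Br$ is to build dual functionals from the Markov trace of $\Sc^Q$, pairing $D_\sigma$ with the vertical reflections of the $D_\tau$ and evaluating the $S$-polynomials of the resulting closed virtual graphs $\widehat{D_\tau\circ D_\sigma}$; on the diagonal these factor over the cycles into nonzero theta-type contributions, but one would still need a triangularity (with respect to refinement of the induced partitions) to deduce that the Gram matrix is nonsingular.
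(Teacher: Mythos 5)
Your spanning argument is essentially the paper's: apply contraction--deletion to kill all internal edges, delete isolated interior vertices, and observe that a fixed point of the resulting pairing/cyclic data would be a degree-$1$ interior vertex, which is zero by relation (3); this part is complete and correct.

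The genuine gap is in the independence half, and it is exactly the step you flag as the ``main obstacle.'' You correctly pass to $\Br^{Q^{1/2}}$ via $\Phi$ and single out the leading Brauer diagram $B_\sigma$ obtained by choosing $\BrI$ on every edge, but you never establish the property that makes this usable, namely that $B_\sigma$ occurs in the expansion of $\Phi(D_\tau)$ only for $\tau=\sigma$; instead you defer to an unspecified partial order and a unitriangularity claim (and your Gram-matrix alternative defers to the same unproved triangularity). The paper closes precisely this gap, and not by any ordering argument: after the spanning reduction every edge of $D_\tau$ is a boundary edge, so a term that uses $\BrE$ on some edge contains a string joining the two doubled points $2i$ and $2i+1$ of a single boundary half-edge, which have opposite parity, whereas in the all-$\BrI$ term the strings join only even boundary points to even ones or odd to odd (this is the paper's parity criterion on the doubled labels in $\Br^{Q^{1/2}}([2m],[2n])$). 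Hence the all-$\BrI$ term is the unique term of each $\Phi(D_\tau)$ whose Brauer diagram satisfies the like-parity condition, so $B_\sigma$ can only arise from the all-$\BrI$ term of $\Phi(D_\sigma)$ itself; since Brauer diagrams are linearly independent and $B_\sigma$ determines $\sigma$, the $\Phi(D_\sigma)$, and therefore the $D_\sigma$, are independent. In other words, the structure is diagonal rather than triangular, no partial order is needed, and no coefficients beyond the (visibly nonzero) leading one have to be computed; without this parity observation, or some equivalent device ruling out that $\BrE$-terms of other $\Phi(D_\tau)$ reproduce $B_\sigma$, your proof is incomplete at its decisive point.
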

\begin{proof}
  By contraction-deletion, we may assume that a particular element has no interior edges, and we may remove isolated interior vertices, hence the described set spans $\Sc^Q([m],[n])$ since a fixed point would correspond to a degree-$1$ interior vertex.
  For independence, consider the image under $\Phi$.
  In the expansion of an element corresponding to a permutation, the term corresponding to sending every edge to $\raisebox{-6pt}{\marginbox{1.5pt 0pt}{}}$ can be used to recover the permutation.
  This term can be identified by the fact that the strings are between only even boundary vertices or between only odd boundary vertices, where boundary vertices in $\Br^{Q^{1/2}}([2m],[2n])$ are \emph{even} or \emph{odd} depending on the parity of its numeric label.
  These terms are linearly independent in $\Br^{Q^{1/2}}([2m],[2n])$, hence the described set is independent.
\end{proof}

For a virtual graph $G$, call an edge $e$ a \emph{coloop} if the dual edge $e^*\in E(G^*)$ is a loop.
Bridge edges are coloops, and, if $G$ is planar, coloops are bridge edges.
From the perspective of ribbon graphs, an edge is a coloop if and only if the boundary components parallel to the edge are the same.
Thus, $b_0(\partial(G-e))=b_0(\partial G)+1$ if $e$ is a coloop, and otherwise $b_0(\partial(G-e))=b_0(\partial G)-1$.
In either case, $b_0(\partial(G/e))=b_0(\partial G)$, and an edge in $G/e$ is a coloop if and only if it was one in $G$.

\begin{theorem}
  \label{thm:s-poly-degree}
  For $G$ a virtual graph, $\deg S_G(Q)\leq b_1(G)-g(G)$.
  If $G$ has no coloops, then this is an equality and $S_G(Q)$ is monic.
\end{theorem}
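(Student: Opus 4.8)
The plan is to work directly with the state sum of Definition~\ref{def:s-polynomial} and to reduce the degree bound to a monotonicity statement about the number of boundary components of ribbon graph representatives as edges are deleted. First I would recall the identity derived in the proof of Theorem~\ref{thm:phi-functor-gives-state-sum}: for $T\subseteq E(G)$, writing $b_0(\partial(G-T))$ for the number of boundary components of any ribbon graph representative of $G-T$,
\begin{equation*}
  b_0(\partial(G-T)) - \abs{T} = 2b_1(G-T) - 2g(G-T) + \abs{V(G)} - \abs{E(G)}.
\end{equation*}
Setting $f(T) = b_0(\partial(G-T)) - \abs{T}$ and subtracting the $T=\emptyset$ instance, this rearranges to
\begin{equation*}
  b_1(G-T) - g(G-T) = \bigl(b_1(G) - g(G)\bigr) + \tfrac12\bigl(f(T) - f(\emptyset)\bigr).
\end{equation*}
So it suffices to show $f(T)\le f(\emptyset)$ for all $T$, and to identify exactly when equality holds.

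The next step is to delete the edges of $T$ one at a time and apply the single-edge dichotomy recorded just before the theorem statement: for a ribbon graph $H$ and an edge $e$, $b_0(\partial(H-e)) = b_0(\partial H) + 1$ if $e$ is a coloop of $H$ and $b_0(\partial H) - 1$ otherwise (the standard band-surgery fact: the two sides of the band for $e$ lie on one boundary circle, which then splits, or on two, which then merge). Along any deletion sequence each step therefore changes $f$ by $0$ (at a coloop) or by $-2$ (at a non-coloop), so $f$ is non-increasing; hence $f(T)\le f(\emptyset)$, every exponent appearing in the state sum is at most $b_1(G)-g(G)$, and $\deg S_G(Q)\le b_1(G)-g(G)$ with no cancellation argument needed.

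For the equality and monicity claim, observe that the coefficient of $Q^{b_1(G)-g(G)}$ in the state sum is $\sum(-1)^{\abs{T}}$ taken over the states $T$ with $f(T)=f(\emptyset)$. Fixing a deletion order, $f(T)-f(\emptyset)$ is a sum of terms each in $\{0,-2\}$, so $f(T)=f(\emptyset)$ forces every step to occur at a coloop — for any order, and in particular for one that deletes a chosen $e\in T$ first, forcing $e$ to be a coloop of $G$. When $G$ has no coloops this is impossible unless $T=\emptyset$, so the sum collapses to the single term $(-1)^{0}=1$; thus $\deg S_G(Q)=b_1(G)-g(G)$ and $S_G(Q)$ is monic.

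The argument is almost entirely Euler-characteristic bookkeeping, and the only genuine input — the single-edge boundary-count dichotomy — is already in hand, so I do not expect a serious obstacle. The point that wants a sentence of care is that the identity borrowed from Theorem~\ref{thm:phi-functor-gives-state-sum} is phrased via a cellular embedding of $G-T$, so I would note that $b_0(\partial(G-T))$ (equivalently, the number of complementary disks of any cellular representative) is an invariant of the virtual graph $G-T$ and hence independent of all the choices made when peeling off edges.
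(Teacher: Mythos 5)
Your proof is correct, and it takes a genuinely different route from the paper's. The paper renormalizes, setting $I_G(t)=t^{b_1(G)-g(G)}S_G(t^{-1})$, derives contraction--deletion recursions for $I_G$ that distinguish coloops from non-coloops, concludes by induction that $I_G$ is a polynomial (the degree bound), and then gets monicity by evaluating $I_G(0)$ through repeated contraction down to isolated vertices when no coloops are present. You instead work term-by-term in the state sum of Definition~\ref{def:s-polynomial}: the identity from Theorem~\ref{thm:phi-functor-gives-state-sum} converts the exponent $b_1(G-T)-g(G-T)$ into the quantity $f(T)=b_0(\partial(G-T))-\abs{T}$, and the coloop boundary-component dichotomy (already recorded just before the theorem) shows each single-edge deletion changes $f$ by $0$ or $-2$, giving the bound with no cancellation issues; for monicity you observe that attaining $f(T)=f(\emptyset)$ forces the first deleted edge to be a coloop of $G$, so with no coloops only $T=\emptyset$ contributes to the top coefficient, which is therefore $1$. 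Both arguments ultimately rest on the same combinatorial input (how $b_0(\partial)$ changes at a coloop versus a non-coloop), but yours uses the state-sum definition directly and identifies the leading coefficient explicitly as a signed count of top-exponent states, whereas the paper's uses the contraction--deletion axioms and sets up the auxiliary polynomial $I_G(t)$, which it then reuses in Proposition~\ref{thm:s-poly-interlace-strict}; your closing remark that $b_0(\partial(G-T))$ is an invariant of the virtual graph $G-T$ is exactly the right point to flag and suffices to make the bookkeeping legitimate.
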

\begin{proof}
  We will show that $I_G(t)=t^{b_1(G)-g(G)}S_G(t^{-1})$ is a polynomial.
  Recall that $b_1(G)-g(G)=\frac{1}{2}(\abs{E(G)}-\abs{V(G)}+b_0(\partial G))$.
  By analyzing the changes in the numbers of edges, vertices, and boundary components, one can get from the properties of the $S$-polynomial that
  \begin{enumerate}
  \item $I_{\mathrm{pt}}(t)=1$,
  \item $I_{G_1\amalg G_2}(t)=I_{G_1}(t)I_{G_2}(t)$,
  \item $I_{G}(t)=I_{G/e}(t)-t I_{G-e}(t)$ if $e\in E(G)$ is not a coloop, and
  \item $I_{G}(t)=I_{G/e}(t)-I_{G-e}(t)$ if $e\in E(G)$ is a coloop.
  \end{enumerate}
  These four properties are enough to compute $I$ for any graph, hence by induction it is a polynomial.
  Therefore $\deg S_G(Q)\leq b_1(G)-g(G)$.

  The value of $I_G(0)$ is the coefficient of the $Q^{b_1(G)-g(G)}$ term in $S_G(Q)$.
  If $G$ has no coloops, then since $G/e$ creates no new coloops, the contraction-deletion rules simplify to the single contraction rule $I_G(0)=I_{G/e}(0)$.
  Repeated application reduces the graph to a collection of discrete vertices, which evaluates to $1$.
  Hence in this case $\deg S_G(Q)=b_1(G)-g(G)$ with $S_G(Q)$ monic.
\end{proof}

\begin{definition}
  For two virtual graphs $G_1$ and $G_2$, consider arrow presentations, each with a distinguished boundary arc on a vertex disk.
  Then $G_1\vee G_2$ is the result of gluing the corresponding ribbon graphs along the two arcs.
  The resulting virtual graph is the identification of a vertex from $G_1$ with a vertex from $G_2$, with the rotation system at the vertex being some non-interleaved concatenation of both rotation systems.
\end{definition}

\begin{proposition}
  \label{prop:s-poly-bridge-zero}
  Let $G$ be a virtual graph.
  If $G$ has a bridge edge, then $S_G(Q)=0$.
  Furthermore, if $G_1$ and $G_2$ are two virtual graphs, $S_{G_1\amalg G_2}(Q)=S_{G_1\vee G_2}(Q)$.
\end{proposition}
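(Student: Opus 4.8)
The plan is to prove the vanishing statement first, by a sign-reversing pairing on the state sum of Definition~\ref{def:s-polynomial}, and then to deduce $S_{G_1\amalg G_2}=S_{G_1\vee G_2}$ formally by inserting a bridge and invoking contraction--deletion.

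For the first part, let $e\in E(G)$ be a bridge. Then $e$ lies on no cycle of $G$, hence on no cycle of any subgraph $G-T$ with $e\notin T$, so $e$ is a bridge --- and therefore a coloop --- of the virtual graph $H\defeq G-T$. I would then check that deleting $e$ from $H$ raises $b_0$ by one, leaves $b_1$ fixed, and raises $b_0(\partial H)$ by one (the coloop property); combined with $2g=b_0+b_1-b_0(\partial)$ from Table~\ref{tab:notation-correspondence}, this shows that $b_1(G-T)-g(G-T)$ is unchanged when $e$ is adjoined to or removed from $T$. Partitioning the subsets of $E(G)$ into pairs $\{T,\,T\cup\{e\}\}$ with $e\notin T$, the two members of each pair then contribute the same monomial $Q^{b_1(G-T)-g(G-T)}$ with opposite signs $(-1)^{\abs{T}}$ and $(-1)^{\abs{T}+1}$, so the state sum telescopes to zero.

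For the identity, I would fix the chosen non-interleaved concatenation of rotation systems at the merged vertex of $G_1\vee G_2$ and form a virtual graph $H$ from $G_1\amalg G_2$ by adjoining a new edge $e$ between a vertex $v_1$ of $G_1$ and a vertex $v_2$ of $G_2$, choosing the rotations at $v_1,v_2$ in $H$ so that deleting $e$ restores $G_1\amalg G_2$ while contracting $e$ (Definition~\ref{def:contraction}, Figure~\ref{fig:partial-dual}) amalgamates $v_1$ and $v_2$ into exactly the chosen concatenation. This is possible because contracting a non-loop edge concatenates the two endpoint rotations without interleaving, and conversely every non-interleaved concatenation arises from such a choice. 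Since $e$ is a non-loop edge of $H$ we have $\beta(H,e)=0$, so contraction--deletion gives
\begin{equation*}
  S_H(Q)=S_{H/e}(Q)-S_{H-e}(Q)=S_{G_1\vee G_2}(Q)-S_{G_1\amalg G_2}(Q),
\end{equation*}
and $S_H(Q)=0$ by the first part, because $e$ is a bridge of $H$; the identity follows.

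The step I expect to be most delicate is the rotation-system bookkeeping in the second part: verifying that the partial-dual definition of edge contraction coincides, for a non-loop edge, with the naive amalgamation of the two endpoint vertices along their rotation systems, and that every non-interleaved concatenation is hit by some choice of rotation at $v_1$ and $v_2$. One must also confirm that the cited coloop facts (bridges are coloops; deleting a coloop increases $b_0(\partial\,\cdot)$ by one) apply to each $G-T$ rather than only to $G$ itself, which holds since $e$ remains a bridge of $G-T$; everything else is a routine Euler-characteristic computation.
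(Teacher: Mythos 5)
Your proof is correct, and it splits into two halves of different character relative to the paper. For the bridge-vanishing statement you work directly with the state sum of Definition~\ref{def:s-polynomial}, using the sign-reversing pairing $T\leftrightarrow T\cup\{e\}$: since a bridge of $G$ remains a bridge, hence a coloop, of every $G-T$ with $e\notin T$, deleting it raises $b_0$ and $b_0(\partial\,\cdot)$ by one and fixes $b_1$, so $b_1-g=\tfrac12\bigl(b_1-b_0+b_0(\partial\,\cdot)\bigr)$ is unchanged and the paired states cancel; this checks out. The paper instead argues categorically: a bridge exhibits $G$ as a composite through the object $[1]$, and $\Sc^Q([1],[0])$ and $\Sc^Q([0],[1])$ are zero (a degree-one vertex is killed by the local relations; equivalently, there is no fixed-point-free permutation of a single boundary half-edge), so the image of $G$, i.e.\ $S_G(Q)$, vanishes. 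Your route is more elementary and self-contained, needing only the state sum together with the coloop facts recorded before Theorem~\ref{thm:s-poly-degree}, whereas the paper's is a one-line consequence of the functor and the structure of the morphism spaces of $\Sc^Q$, a pattern it reuses elsewhere (e.g.\ for connect sums). For the identity $S_{G_1\amalg G_2}=S_{G_1\vee G_2}$ your argument is the same as the paper's: adjoin a connecting edge $e$ to get $H$ with $H-e=G_1\amalg G_2$ and $H/e=G_1\vee G_2$, note $\beta(H,e)=0$, and combine contraction--deletion with the first part. Your additional care that every non-interleaved concatenation of the two rotation systems is realized by a suitable placement of the half-edges of $e$ is a useful precision, since $G_1\vee G_2$ depends on that choice and the proposition is asserted for all of them.
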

\begin{proof}
  If $G$ has a bridge edge, then it is a composition of elements in $\VG^Q([1],[0])$ and $\VG^Q([0],[1])$.
  Since $\Sc^Q([1],[0]$ and $\Sc^Q([0],[1])$ are both zero-dimensional, it follows from the functor that $S_G(Q)=0$.
  Now let $G_1$ and $G_2$ be two virtual graphs, and let $G=(G_1\amalg G_2)\cup\{e\}$ for some new edge $e$, which is incident to a vertex in $G_1$ and a vertex in $G_2$.
  Then $G-e=G_1\amalg G_2$, $G/e=G_1\vee G_2$, and $G$ has a bridge edge.
  Hence, $0=S_G(Q)=S_{G_1\vee G_2}(Q)-S_{G_1\amalg G_2}(Q)$ by contraction-deletion.
\end{proof}

\begin{definition}
  Two distinct coloops $e,f\in E(G)$ are \emph{interlaced} if $f$ is not a coloop in $G-e$, or equivalently if $e^*$ and $f^*$ are incident in $G^*$ and their half edges come in interleaved order around the incident vertex.
\end{definition}

\begin{proposition}
  \label{thm:s-poly-interlace-strict}
  For $G$ a virtual graph, if $e\in E(G)$ is a coloop that does not interlace any other coloop, then $\deg S_G(Q)<b_1(G)-g(G)$, where $\deg 0=-\infty$.
\end{proposition}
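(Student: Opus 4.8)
The plan is to recast the inequality as the vanishing of the top coefficient of $S_G$ and then induct on the number of edges, with the interlacement hypothesis entering through a stability lemma that is cleanest to prove in the Poincar\'e dual. As in the proof of Theorem~\ref{thm:s-poly-degree}, write $I_G(t)=t^{b_1(G)-g(G)}S_G(t^{-1})$, a polynomial whose value $I_G(0)$ is the coefficient of $Q^{b_1(G)-g(G)}$ in $S_G(Q)$; since $\deg S_G(Q)\le b_1(G)-g(G)$ already, the claim is equivalent to $I_G(0)=0$. I will use the recursions recorded there: $I_{\mathrm{pt}}(t)=1$, multiplicativity under $\amalg$, $I_G(t)=I_{G/f}(t)-t\,I_{G-f}(t)$ when $f$ is not a coloop, and $I_G(t)=I_{G/f}(t)-I_{G-f}(t)$ when $f$ is a coloop. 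Setting $t=0$ gives $I_G(0)=I_{G/f}(0)$ for non-coloops $f$, and $I_G(0)=I_{G/f}(0)-I_{G-f}(0)$ for coloops $f$.

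The key lemma is: if $e$ is a coloop of $G$ not interlacing any coloop of $G$, then $e$ remains such a coloop both in $G/f$ for any edge $f\neq e$, and in $G-f$ for any \emph{coloop} $f\neq e$. I would prove this by dualizing, using the standard partial-duality identities $(G/f)^*=G^*-f^*$ and $(G-f)^*=G^*/f^*$, under which coloops of $G$ become loops of $G^*$, contracting $f$ becomes deleting $f^*$, and deleting a coloop $f$ becomes contracting the loop $f^*$, which splits its vertex $v_f$ into two. Deleting $f^*$ neither merges nor splits vertices and merely drops half-edges from rotation systems, so loop-ness and all interleaving relations among the surviving edges are unchanged; this handles the $G/f$ case. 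For the $G-f$ case, the two half-edges of $f^*$ cut the rotation at $v_f$ into two arcs; because $e$ does not interlace $f$, the half-edges of $e^*$ lie in a single arc, so $e^*$ stays a loop, and for any other loop $h^*$ of $G^*/f^*$ one checks --- according to whether $v_e$, $v_h$, $v_f$ coincide --- that $e^*$ and $h^*$ either land on distinct vertices or stay non-interleaved; the only subtle case is $v_e=v_h=v_f$, where $e^*,h^*,f^*$ are pairwise non-interleaved around $v_f$ and one verifies that cutting along $f^*$ cannot make $e^*$ and $h^*$ interleaved.

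Granting the lemma, the induction on $\abs{E(G)}$ runs as follows. If $G$ has a bridge (in particular if $e$ is a bridge), then $S_G(Q)=0$ by Proposition~\ref{prop:s-poly-bridge-zero}, so $I_G(0)=0$. Otherwise $e$ is a non-bridge coloop, which forces $\abs{E(G)}\ge 2$ (a lone edge that is a non-bridge coloop is impossible: it is either a loop, hence the band of an annulus in which it is not a coloop, or a bridge), so choose $f\in E(G)\setminus\{e\}$. If $f$ is not a coloop, then $I_G(0)=I_{G/f}(0)$, which is $0$ by induction applied to $G/f$, whose edge $e$ is still a non-interlacing coloop by the lemma. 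If $f$ is a coloop, then $I_G(0)=I_{G/f}(0)-I_{G-f}(0)$; the first term is $0$ by induction on $G/f$, and the second is $0$ by induction on $G-f$, where the lemma applies because $e$ does not interlace the coloop $f$. In every case $I_G(0)=0$, i.e.\ $\deg S_G(Q)<b_1(G)-g(G)$.

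The part I expect to be the main obstacle is the $G-f$ case of the stability lemma: one must carefully track how contracting a loop in $G^*$ --- equivalently, how deleting a coloop in $G$, which splits a boundary circle rather than a vertex --- rearranges the rotation at the affected vertex, and rule out that two previously non-interleaved loops become interleaved after the split. The rest is routine bookkeeping with the $I_G$-recursions and Euler characteristics already established in the proof of Theorem~\ref{thm:s-poly-degree}.
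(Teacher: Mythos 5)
Your argument is correct, and it reaches the conclusion by a genuinely different mechanism than the paper, even though both start the same way: both pass to $I_G(t)=t^{b_1(G)-g(G)}S_G(t^{-1})$ from the proof of Theorem~\ref{thm:s-poly-degree} and reduce the claim to $I_G(0)=0$. The paper then contracts away all non-coloops (observing, as you do, that contraction changes neither coloop-ness nor interlacement), so that $G^*$ becomes a disjoint union of bouquets, and finishes by expanding at $e$ itself: since $e$ interlaces nothing, $G/e$ and $G-e$ are $G_1\vee G_2$ and $G_1\amalg G_2$, and the identity $S_{G_1\amalg G_2}=S_{G_1\vee G_2}$ from Proposition~\ref{prop:s-poly-bridge-zero} kills the difference $I_{G/e}(0)-I_{G-e}(0)$ in one stroke. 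You instead never expand at $e$: you induct on $\abs{E(G)}$ over the \emph{other} edges, which forces you to also delete coloops $f\neq e$, and hence to prove the stability lemma that non-interlacement of $e$ survives coloop deletion --- dually, that contracting the loop $f^*$ splits its vertex along the two arcs cut out by $f^*$, creates no new loops, and cannot make two non-interleaved loops interleaved; your case analysis of this (including the $v_e=v_h=v_f$ case, where all four half-edges of $e^*,h^*$ lie in a single arc whose induced cyclic order is inherited from the original one) is sound, and your bottoming-out at bridges via the first part of Proposition~\ref{prop:s-poly-bridge-zero} (including the observation that a lone non-bridge coloop is impossible) closes the induction. The trade-off: the paper's route is shorter because it reuses the $\amalg/\vee$ identity and only needs the easy half of the stability statement (contraction of non-coloops), while your route needs only ``bridge $\Rightarrow S_G=0$'' among the global identities but pays for it with the coloop-deletion bookkeeping in the dual --- which is, on the other hand, a reusable combinatorial fact that the paper leaves implicit in the phrase ``by considering the dual graph.''
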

\begin{proof}
  Let $I_G(t)$ be the polynomial as in Theorem~\ref{thm:s-poly-degree}.
  We will show under the hypothesis that $e\in E(G)$ is a coloop that does not interlace any other coloop, then $I_G(0)=0$.
  For each $f\in E(G)$ that is not a coloop, we have $I_G(0)=I_{G/f}(0)$ without changing which edges are coloops in the contraction or their interlacement.
  Hence, without loss of generality the edges of $G$ are all coloops, which is to say $G^*$ is a disjoint union of bouquets.

  Since $e$ interlaces with no coloops, $G-e$ and $G/e$ are, in some order, $G_1\amalg G_2$ and $G_1\vee G_2$ for some $G_1$ and $G_2$, by considering the dual graph.
  Hence $I_G(0)=I_{G/e}(0)-I_{G-e}(0)=0$.
\end{proof}

\subsection{Connect sums and the \texorpdfstring{$S$}{S}-polynomial}
\begin{definition}
  \label{def:edge-connect-sum}
  Let $G_1$ and $G_2$ be virtual graphs, each with a distinguished oriented edge.
  Decompose the graphs as $G_1=G_1'\circ \raisebox{-3pt}{\marginbox{1.5pt 0pt}{}}$ and $G_2=\raisebox{-3pt}{\marginbox{1.5pt 0pt}{}}\circ G_2'$, where $\raisebox{-3pt}{\marginbox{1.5pt 0pt}{}}$ and $\raisebox{-3pt}{\marginbox{1.5pt 0pt}{}}$ are the distinguished edges, co-oriented.
  The \emph{edge connect sum} of $G_1$ and $G_2$ at their respective edges is the virtual graph $G_1'\circ G_2'$, written $G_1\csum_2G_2$.
\end{definition}

\begin{proposition}
  \label{prop:s-connect-sum}
  If $G_1$ and $G_2$ are virtual graphs, $(Q-1)S_{G_1\csum_2 G_2}(Q)= S_{G_1}(Q)S_{G_2}(Q)$, no matter the choice or orientation of the distinguished edges.
\end{proposition}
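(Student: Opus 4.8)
The plan is to compute everything inside the category $\Sc^Q$, exploiting that its hom-spaces on two boundary points are as small as possible. By the lemma giving a basis of $\Sc^Q([m],[n])$ indexed by fixed-point-free permutations of the $m+n$ boundary half-edges, the spaces $\Sc^Q([2],[0])$ and $\Sc^Q([0],[2])$ are each one-dimensional over $\C(Q)$: the unique fixed-point-free permutation of two symbols is the transposition, whose diagram is a bivalent interior vertex, that is, a single arc after edge subdivision. Hence the images of $\BrCap$ and $\BrCup$ span $\Sc^Q([2],[0])$ and $\Sc^Q([0],[2])$ respectively. Using the decompositions $G_1 = G_1'\circ\BrCup$ and $G_2 = \BrCap\circ G_2'$ from Definition~\ref{def:edge-connect-sum}, with $G_1'\in\VG^{\C(Q)}([2],[0])$ and $G_2'\in\VG^{\C(Q)}([0],[2])$, I would pass to $\Sc^Q$ and write $[G_1']=\lambda_1\BrCap$ and $[G_2']=\lambda_2\BrCup$ for uniquely determined $\lambda_1,\lambda_2\in\C(Q)$.

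The one computational input needed is the value of a circle in $\Sc^Q([0],[0])\cong\C(Q)$. Present a circle by one bivalent interior vertex carrying a loop edge $e$; then the contraction--deletion relation of Definition~\ref{def:s-category}, together with the relation deleting isolated interior vertices and with $\beta=1$ because $e$ is a loop, gives
\[
  [\text{circle}] = Q\,[\mathrm{pt}\amalg\mathrm{pt}] - [\mathrm{pt}] = Q\cdot 1 - 1 = Q-1 .
\]
Diagrammatically $\BrCap\circ\BrCup$ is exactly such a circle, so $\BrCap\circ\BrCup = Q-1$ in $\Sc^Q([0],[0])$.

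Assembling these facts, functoriality of the quotient $\VG^{\C(Q)}\to\Sc^Q$ and bilinearity of composition give $S_{G_1}=[G_1]=[G_1'\circ\BrCup]=\lambda_1(\BrCap\circ\BrCup)=\lambda_1(Q-1)$, and likewise $S_{G_2}=\lambda_2(Q-1)$, while
\[
  S_{G_1\csum_2 G_2} = [G_1'\circ G_2'] = \lambda_1\lambda_2\,(\BrCap\circ\BrCup) = \lambda_1\lambda_2(Q-1) .
\]
Multiplying by $Q-1$ and substituting, $(Q-1)S_{G_1\csum_2 G_2}=\lambda_1\lambda_2(Q-1)^2=S_{G_1}S_{G_2}$. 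Since this argument goes through verbatim for any choice of oriented distinguished edges, the identity — and so $S_{G_1\csum_2 G_2}$ itself, as $Q-1\neq 0$ in $\C(Q)$ — is independent of those choices; for the orientations, reversing one simply swaps the two free strands of the corresponding $G_i'$, i.e.\ precomposes it with a virtual crossing, which fixes the generator of the relevant one-dimensional space.

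I do not expect a genuine obstacle: once the categorical framework of the preceding sections is in place the argument is essentially forced, and the only nontrivial ingredient is the evaluation of the circle. The point that requires care is the bookkeeping with composition conventions — checking that $G_1'\circ\BrCup$ reconstructs $G_1$ along the distinguished edge and that $G_1'\circ G_2'$ joins the two arcs into a single loop rather than two disjoint loops, which would change the power of $Q-1$.
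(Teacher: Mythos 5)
Your proof is correct and follows essentially the same route as the paper's: decompose along the distinguished edges, use that $\Sc^Q([2],[0])$ and $\Sc^Q([0],[2])$ are one-dimensional (spanned by the arc), and conclude from $\BrCap\circ\BrCup=Q-1$. The only cosmetic difference is that you justify one-dimensionality via the fixed-point-free-permutation basis lemma and verify the circle value by contraction--deletion, steps the paper leaves implicit.
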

\begin{proof}
  $\Sc^Q([1],[1])$ is one-dimensional and its trace is non-degenerate.
  Hence, if $G$ is a virtual graph that is $\tr G'$ for $G'\in \VG^Q([1],[1])$, the image of $G'$ in $Sc^Q([1],[1])$ is $\frac{S_G(Q)}{Q-1}\raisebox{-3pt}{\marginbox{1.5pt 0pt}{}}$.
  A similar result applies to $\Sc^Q([2],[0])$ and $\Sc^Q([0],[2])$ by composition with $\raisebox{-3pt}{\marginbox{1.5pt 0pt}{}}$, $\raisebox{-3pt}{\marginbox{1.5pt 0pt}{}}$, and $\raisebox{-3pt}{\marginbox{1.5pt 0pt}{}}$.

  Decompose $G_1$ and $G_2$ as $G_1'$ and $G_2'$ as in Definition~\ref{def:edge-connect-sum}.
  Their images in $\Sc^Q([2],[0])$ and $\Sc^Q([0],[2])$ are $\frac{S_{G_1}(Q)}{Q-1}\raisebox{-3pt}{\marginbox{1.5pt 0pt}{}}$ and $\frac{S_{G_2}(Q)}{Q-1}\raisebox{-3pt}{\marginbox{1.5pt 0pt}{}}$, respectively.
  The conclusion follows from $\raisebox{-3pt}{\marginbox{1.5pt 0pt}{}}\circ\raisebox{-3pt}{\marginbox{1.5pt 0pt}{}}=Q-1$.
\end{proof}

\begin{definition}
  Let $G_1$ and $G_2$ be virtual graphs each with a distinguished degree-$3$ vertex and incident half edge.
  Decompose $G_1$ as $G'_1\circ\raisebox{-3pt}{\marginbox{1.5pt 0pt}{}}$ and $G_2$ as $\raisebox{-3pt}{\marginbox{1.5pt 0pt}{}}\circ G'_2$ with the distinguished half edges left-most in $\raisebox{-3pt}{\marginbox{1.5pt 0pt}{}}$ and $\raisebox{-3pt}{\marginbox{1.5pt 0pt}{}}$.
  The composition $G'_1\circ G'_2$ is the \emph{(trivalent) vertex connect sum} $G_1\csum_3G_2$ for the distinguished vertices.
\end{definition}

For a virtual graph $G$ with vertex $v$, let $\sigma_v G$ be $G$ except that $v$ is given the opposite rotation.
The \emph{twisted (trivalent) vertex connect sum} is $G_1\csum_3\sigma_v G_2$, for $v\in V(G_2)$.
For both vertex connect sums, the order of $G_1$ and $G_2$ does not matter.

\begin{proposition}
  \label{prop:s-vertex-connect-sum}
  Let $G_1$ and $G_2$ be virtual graphs with distinguished degree-$3$ vertices $v_1$ and $v_2$, respectively.
  Then, as depicted in Figure~\ref{fig:s-connect-sums},
  \begin{equation*}
    (Q-1)(Q-2)S_{G_1\csum_3\sigma_{v_2}G_2}(Q)-2(Q-1)S_{G_1\csum_3 G_2}(Q)
    =S_{G_1}(Q)S_{\sigma_{v_2}G_2}(Q) + S_{\sigma_{v_1}G_1}(Q)S_{G_2}(Q).
  \end{equation*}
\end{proposition}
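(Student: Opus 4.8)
The plan is to follow the template of the proof of Proposition~\ref{prop:s-connect-sum}, but with the one-dimensional space $\Sc^Q([1],[1])$ replaced by the two-dimensional spaces $\Sc^Q([3],[0])$ and $\Sc^Q([0],[3])$. By the lemma identifying a basis of $\Sc^Q([m],[n])$ with the fixed-point-free permutations of the $m+n$ boundary half-edges, a basis of $\Sc^Q([3],[0])$ is given by the two $3$-cycles on $\{0,1,2\}$; by that lemma these are exactly the trivalent vertices $\ScTri$ and $\ScTriFlip$ carrying the two rotation systems, read as morphisms $[3]\to[0]$, and dually $\ScTriDual,\ScTriFlipDual$ form a basis of $\Sc^Q([0],[3])$. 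Writing $G_1=G_1'\circ\ScTriDual$ and $G_2=\ScTri\circ G_2'$ as in the definition of $\csum_3$, I would let $u\in\Sc^Q([3],[0])$ and $w\in\Sc^Q([0],[3])$ denote the images of $G_1'$ and $G_2'$ under $\VG^{\C(Q)}\to\Sc^Q$, and expand $u=u_1\ScTri+u_2\ScTriFlip$ and $w=w_1\ScTriDual+w_2\ScTriFlipDual$ with $u_i,w_j\in\C(Q)$.

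The first substantive step is to compute the structure constants of the composition pairing $\Sc^Q([3],[0])\times\Sc^Q([0],[3])\to\Sc^Q([0],[0])\cong\C(Q)$, $(x,y)\mapsto S_{x\circ y}$. Composing a trivalent vertex with its dual closes up to the planar theta graph, whereas composing it with the flipped dual closes up to the toroidal theta graph; hence, by the values in Example~\ref{example:polynomials}, this pairing has Gram matrix $\bigl(\begin{smallmatrix}P&M\\ M&P\end{smallmatrix}\bigr)$ with $P=(Q-1)(Q-2)$ and $M=-2(Q-1)$, relative to the chosen bases. (Determining which of the two glued theta graphs is the planar one is immediate from inspection of Figure~\ref{fig:s-connect-sums}.) Using the functoriality of the quotient $\VG^{\C(Q)}\to\Sc^Q$ together with the fact that the image of a closed virtual graph in $\Sc^Q([0],[0])$ is its $S$-polynomial, I would translate the six graphs in the statement into this setting. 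Since $\sigma_{v_1}$ merely reverses the rotation of the vertex sitting inside the $\ScTriDual$ piece, $\sigma_{v_1}G_1=G_1'\circ\ScTriFlipDual$, and likewise $\sigma_{v_2}G_2=\ScTriFlip\circ G_2'$; pairing these against $u$ and $w$ gives $S_{G_1}=Pu_1+Mu_2$, $S_{\sigma_{v_1}G_1}=Mu_1+Pu_2$, $S_{G_2}=Pw_1+Mw_2$, and $S_{\sigma_{v_2}G_2}=Mw_1+Pw_2$.

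For the connect sums, $G_1\csum_3 G_2=G_1'\circ G_2'$ has $S$-polynomial $u^{\mathsf T}\bigl(\begin{smallmatrix}P&M\\ M&P\end{smallmatrix}\bigr)w$. To handle $G_1\csum_3\sigma_{v_2}G_2$, I would first re-present $\sigma_{v_2}G_2=\ScTriFlip\circ G_2'$ in the normal form $\ScTri\circ G_2''$ demanded by the definition of $\csum_3$: since $\ScTriFlip=\ScTri\circ s$ for $s$ the virtual crossing transposing the two non-distinguished strands, one has $G_2''=s\circ G_2'$, and post-composition by $s$ interchanges $\ScTriDual$ and $\ScTriFlipDual$ (as $s$ is an involution by move II*); therefore $S_{G_1\csum_3\sigma_{v_2}G_2}=u^{\mathsf T}\bigl(\begin{smallmatrix}M&P\\ P&M\end{smallmatrix}\bigr)w$. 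Finally, expanding in the monomials $u_iw_j$ and using $(Q-1)(Q-2)=P$ and $-2(Q-1)=M$, both the left-hand side $P\cdot S_{G_1\csum_3\sigma_{v_2}G_2}+M\cdot S_{G_1\csum_3 G_2}$ and the right-hand side $S_{G_1}S_{\sigma_{v_2}G_2}+S_{\sigma_{v_1}G_1}S_{G_2}$ reduce to $2PM(u_1w_1+u_2w_2)+(P^2+M^2)(u_1w_2+u_2w_1)$, which establishes the identity.

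I expect the only genuine obstacle to be the diagrammatic bookkeeping in the middle two paragraphs: keeping straight the orientations of the trivalent pieces, the role of the distinguished incident half-edge, and above all the identification of which of the two glued pairs of trivalent vertices produces the planar $\Theta_3$ rather than the toroidal one. An error there would swap $P$ with $M$ and the two closing expansions would no longer agree, so this is precisely where the otherwise formal argument must be pinned down against the figures; the remaining algebra is routine.
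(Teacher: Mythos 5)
Your proposal is correct and follows essentially the same route as the paper: decompose at the distinguished trivalent vertices, expand the images of $G_1'$ and $G_2'$ in the bases $\{\ScTri,\ScTriFlip\}$ and $\{\ScTriDual,\ScTriFlipDual\}$ of $\Sc^Q([3],[0])$ and $\Sc^Q([0],[3])$, and verify the identity using the pairing whose Gram matrix has the planar and toroidal theta-graph $S$-polynomials $(Q-1)(Q-2)$ and $-2(Q-1)$ as entries. Your explicit handling of $\sigma_{v_2}G_2$ via the strand transposition and the final bilinear expansion simply makes precise the bookkeeping the paper's proof leaves implicit, and your Gram matrix agrees with the one the paper records in Subsection~\ref{subsec:local-relations-s-poly}.
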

\begin{proof}
  The space $\Sc^Q([3],[0])$ has the basis $\{\raisebox{-3pt}{\marginbox{1.5pt 0pt}{}},\raisebox{-3pt}{\marginbox{1.5pt 0pt}{}}\}$ and $\Sc^Q([0],[3])$ the basis $\{\raisebox{-3pt}{\marginbox{1.5pt 0pt}{}},\raisebox{-3pt}{\marginbox{1.5pt 0pt}{}}\}$.
  The argument is similar to the one in Lemma~\ref{prop:s-connect-sum},
  but each graph is represented as a composition with a trivalent vertex.
  By writing $G'_1$ and $G'_2$ in terms of the respective bases, we can expand both sides of the required equation.
  The coefficients are the $S$-polynomials of the two theta graphs.
\end{proof}

\begin{figure}[htb]
  \centering
  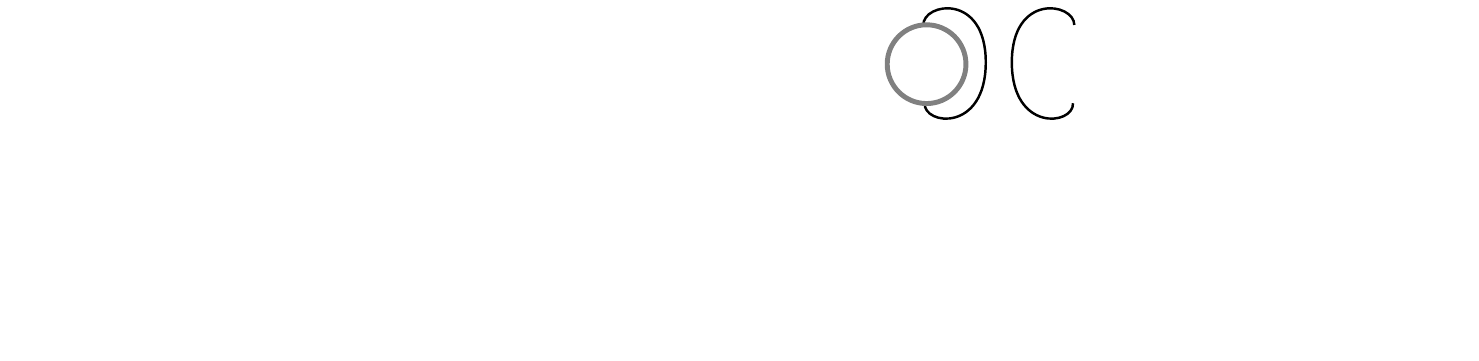
  \caption{Graphical representations of the edge and vertex connect sum relations.}
  \label{fig:s-connect-sums}
\end{figure}

\subsection{Local relations for the \texorpdfstring{$S$}{S}-polynomial}
\label{subsec:local-relations-s-poly}
The $S$-polynomial has additional local relations at certain values of $Q$.
The idea is that there is a pairing between $\Sc^Q([n],[0])$ and $\Sc^Q([0],[n])$ by composition, and, when this pairing is degenerate, elements in the radical of the pairing (that is, the kernel of the pairing as a map $\Sc^Q([n],[0])\to\Sc^Q([0],[n])^*$) give local linear relations.
This is a slight generalization of the use of negligible elements.

The space $\Sc^Q([3],[0])$ has the basis $\{\raisebox{-3pt}{\marginbox{1.5pt 0pt}{}},\raisebox{-3pt}{\marginbox{1.5pt 0pt}{}}\}$, and the Gramian matrix of the pairing with respect to this basis is
\begin{equation*}
  \begin{pmatrix}
    (Q-1)(Q-2)&-2(Q-1)\\
    -2(Q-1)&(Q-1)(Q-2)
  \end{pmatrix},
\end{equation*}
which is singular when $Q=0,1,4$.
For $Q=0$, the radical is spanned by $\raisebox{-3pt}{\marginbox{1.5pt 0pt}{}}-\raisebox{-3pt}{\marginbox{1.5pt 0pt}{}}$, implying the local relation $S_{\raisebox{-3pt}{\marginbox{1.5pt 0pt}{}}\circ G}(0)=S_{\raisebox{-3pt}{\marginbox{1.5pt 0pt}{}}\circ G}(0)$, where $G\in\VG^Q([0],[3])$.
Similarly, for $Q=4$, the radical is spanned by $\raisebox{-3pt}{\marginbox{1.5pt 0pt}{}}+\raisebox{-3pt}{\marginbox{1.5pt 0pt}{}}$, implying $S_{\raisebox{-3pt}{\marginbox{1.5pt 0pt}{}}\circ G}(4)=-S_{\raisebox{-3pt}{\marginbox{1.5pt 0pt}{}}\circ G}(4)$.
The $Q=1$ case is uninteresting, and it is a direct consequence of the state sum that $S_G(1)=0$ for all $G$.

Recall that for $v\in V(G)$, $\sigma_v G$ is $G$ with the rotation system at $v$ reversed.
When $W\subset V(G)$, let $\sigma_W$ denote the composition of all $\sigma_v$ for $v\in W$.

\begin{lemma}
  \label{lemma:q4-vtx-flip}
  At $Q=4$, the $S$-polynomial has additional local relations as depicted in Figure~\ref{fig:Q4-relation}.
  In particular, for $G$ a virtual graph and $v\in V(G)$, $S_{\sigma_v G}(4)=(-1)^{\deg(v)}S_G(4)$.
\end{lemma}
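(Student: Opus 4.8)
The plan is to prove, by strong induction on $d=\deg(v)$, the statement $P(d)$: for every virtual graph $G$ and every $v\in V(G)$ with $\deg(v)=d$, one has $S_{\sigma_v G}(4)=(-1)^{d}S_G(4)$. The trivalent case $P(3)$ will be read off directly from the computation of the radical of the $Q=4$ pairing on $\Sc^4([3],[0])$ carried out just above, and the general case will follow by expanding a $d$-valent vertex into a trivalent vertex and a $(d-1)$-valent vertex joined by a non-loop edge, then flipping one trivalent vertex at a time.

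Base cases. For $d=0$ the vertex $v$ is isolated, so $\sigma_v G=G$, and $(-1)^0=1$; for $d=2$, reversing a cyclic order on two symbols changes nothing, so again $\sigma_v G=G$ and $(-1)^2=1$. For $d=1$ both $G$ and $\sigma_v G$ contain a degree-$1$ vertex, hence $S_G(Q)=S_{\sigma_v G}(Q)=0$ by the degree-$1$ relation defining $\Sc^Q$, and $(-1)^1\cdot 0=0$. For $d=3$, write the local picture at $v$ as $\ScTri$ and the rest of $G$ as $H\in\VG^{\C(Q)}([0],[3])$, so that $G=\ScTri\circ H$ and $\sigma_v G=\ScTriFlip\circ H$; since the radical of the $Q=4$ pairing on $\Sc^4([3],[0])$ is spanned by $\ScTri+\ScTriFlip$, we get $S_{\sigma_v G}(4)=S_{\ScTriFlip\circ H}(4)=-S_{\ScTri\circ H}(4)=(-1)^3 S_G(4)$.

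Now fix $d\geq 4$ and assume $P(k)$ for all $k<d$. Let $h_1,h_2$ be two half-edges adjacent in the rotation at $v$, and let $\tilde G$ be obtained from $G$ by replacing $v$ with a trivalent vertex $w$ incident to $h_1,h_2$ and a new half-edge $e$, together with a $(d-1)$-valent vertex $v'$ incident to $e$ and the remaining half-edges of $v$, the rotations at $w$ and $v'$ chosen so that contracting the (non-loop) edge $e$ recovers $G$; thus $\tilde G/e=G$. Set $G'':=\tilde G-e$, in which $w$ has become a degree-$2$ vertex (an edge) on $h_1,h_2$ and $v'$ a degree-$(d-2)$ vertex on the remaining half-edges. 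Since $e$ is not a loop, $\beta(\tilde G,e)=0$, so contraction--deletion in $\Sc^Q$ gives the identity $S_G(Q)=S_{\tilde G}(Q)+S_{G''}(Q)$, valid for all $Q$ (and likewise with $G$ replaced by $\sigma_v G$). The key point is that reversing the rotation at $v$ amounts to reversing the rotations at both $w$ and $v'$: one checks that $\widetilde{\sigma_v G}=\sigma_w\sigma_{v'}\tilde G$ and $\widetilde{\sigma_v G}-e=\sigma_{v'}G''$ (the interposed degree-$2$ vertex being insensitive to reversal). Applying $P(3)$ to $w$ and $P(d-1)$ to $v'$ in $\tilde G$ yields $S_{\widetilde{\sigma_v G}}(4)=(-1)^3(-1)^{d-1}S_{\tilde G}(4)=(-1)^{d}S_{\tilde G}(4)$, and applying $P(d-2)$ to $v'$ in $G''$ yields $S_{\widetilde{\sigma_v G}-e}(4)=(-1)^{d-2}S_{G''}(4)=(-1)^{d}S_{G''}(4)$. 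Adding and using the identity above gives $S_{\sigma_v G}(4)=(-1)^{d}\bigl(S_{\tilde G}(4)+S_{G''}(4)\bigr)=(-1)^{d}S_G(4)$, closing the induction.

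The place where care is needed is the expansion step: one must verify that the rotations at $w$ and $v'$ can be chosen so that $\tilde G/e=G$, that reversing the rotation at $v$ is equivalent to reversing those at both pieces, and that deleting $e$ from $\widetilde{\sigma_v G}$ produces $\sigma_{v'}G''$ rather than $G''$ itself. All three are mechanical once the splicing rule for contracting a non-loop edge of a ribbon graph is written out, and they are exactly what Figure~\ref{fig:Q4-relation} encodes; the sign identity $(-1)^{d}=(-1)^{3}(-1)^{d-1}=(-1)^{d-2}$ then makes the two summands combine coherently.
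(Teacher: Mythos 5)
Your proposal is correct and follows essentially the same route as the paper: an induction on the vertex degree with the trivalent case supplied by the radical of the $Q=4$ pairing on $\Sc^4([3],[0])$, low degrees handled trivially, and the inductive step carried out by splitting the vertex into a trivalent vertex joined by a non-loop edge to a lower-degree vertex and applying contraction--deletion, with the signs $(-1)^3(-1)^{d-1}=(-1)^{d-2}=(-1)^d$ recombining exactly as in the paper's diagrammatic computation. Your explicit verification of the splicing identities $\widetilde{\sigma_v G}=\sigma_w\sigma_{v'}\tilde G$ and $\widetilde{\sigma_v G}-e=\sigma_{v'}G''$ simply spells out what the paper's figures encode.
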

\begin{figure}[htb]
  \centering
\begingroup%
  \makeatletter%
  \providecommand\color[2][]{%
    \errmessage{(Inkscape) Color is used for the text in Inkscape, but the package 'color.sty' is not loaded}%
    \renewcommand\color[2][]{}%
  }%
  \providecommand\transparent[1]{%
    \errmessage{(Inkscape) Transparency is used (non-zero) for the text in Inkscape, but the package 'transparent.sty' is not loaded}%
    \renewcommand\transparent[1]{}%
  }%
  \providecommand\rotatebox[2]{#2}%
  \newcommand*\fsize{\dimexpr\f@size pt\relax}%
  \newcommand*\lineheight[1]{\fontsize{\fsize}{#1\fsize}\selectfont}%
  \ifx\svgwidth\undefined%
    \setlength{\unitlength}{157.38290802bp}%
    \ifx\svgscale\undefined%
      \relax%
    \else%
      \setlength{\unitlength}{\unitlength * \real{\svgscale}}%
    \fi%
  \else%
    \setlength{\unitlength}{\svgwidth}%
  \fi%
  \global\let\svgwidth\undefined%
  \global\let\svgscale\undefined%
  \makeatother%
  \begin{picture}(1,0.29299551)%
    \lineheight{1}%
    \setlength\tabcolsep{0pt}%
    \put(0,0){\includegraphics[width=\unitlength,page=1]{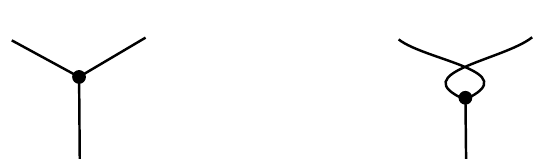}}%
    \put(0.31815596,0.11647913){\color[rgb]{0,0,0}\makebox(0,0)[lt]{\lineheight{1.25}\smash{\begin{tabular}[t]{l}${}=(-1)^{\deg(v)}$\end{tabular}}}}%
    \put(0,0){\includegraphics[width=\unitlength,page=2]{Q4-relation.pdf}}%
  \end{picture}%
\endgroup%

  \caption{At $Q=4$, the $S$-polynomial has additional local relations for vertex flips (rotation reversals), from negligible elements of $\Sc^4$.}
  \label{fig:Q4-relation}
\end{figure}
\begin{proof}
  We induct on the degree $k$ of flipped vertices, where $k=3$ was handled by analyzing the Gramian, and $k=1,2$ are from the fact flipping such vertices does not alter the virtual graph.
  \begin{align*}
    \raisebox{-16pt}{\marginbox{1.5pt 0}{
\begingroup%
  \makeatletter%
  \providecommand\color[2][]{%
    \errmessage{(Inkscape) Color is used for the text in Inkscape, but the package 'color.sty' is not loaded}%
    \renewcommand\color[2][]{}%
  }%
  \providecommand\transparent[1]{%
    \errmessage{(Inkscape) Transparency is used (non-zero) for the text in Inkscape, but the package 'transparent.sty' is not loaded}%
    \renewcommand\transparent[1]{}%
  }%
  \providecommand\rotatebox[2]{#2}%
  \newcommand*\fsize{\dimexpr\f@size pt\relax}%
  \newcommand*\lineheight[1]{\fontsize{\fsize}{#1\fsize}\selectfont}%
  \ifx\svgwidth\undefined%
    \setlength{\unitlength}{49.05671643bp}%
    \ifx\svgscale\undefined%
      \relax%
    \else%
      \setlength{\unitlength}{\unitlength * \real{\svgscale}}%
    \fi%
  \else%
    \setlength{\unitlength}{\svgwidth}%
  \fi%
  \global\let\svgwidth\undefined%
  \global\let\svgscale\undefined%
  \makeatother%
  \begin{picture}(1,0.77551888)%
    \lineheight{1}%
    \setlength\tabcolsep{0pt}%
    \put(0,0){\includegraphics[width=\unitlength,page=1]{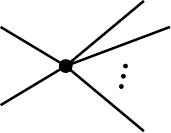}}%
  \end{picture}%
\endgroup%
}}
    &=\raisebox{-16pt}{\marginbox{1.5pt 0}{
\begingroup%
  \makeatletter%
  \providecommand\color[2][]{%
    \errmessage{(Inkscape) Color is used for the text in Inkscape, but the package 'color.sty' is not loaded}%
    \renewcommand\color[2][]{}%
  }%
  \providecommand\transparent[1]{%
    \errmessage{(Inkscape) Transparency is used (non-zero) for the text in Inkscape, but the package 'transparent.sty' is not loaded}%
    \renewcommand\transparent[1]{}%
  }%
  \providecommand\rotatebox[2]{#2}%
  \newcommand*\fsize{\dimexpr\f@size pt\relax}%
  \newcommand*\lineheight[1]{\fontsize{\fsize}{#1\fsize}\selectfont}%
  \ifx\svgwidth\undefined%
    \setlength{\unitlength}{64.05671453bp}%
    \ifx\svgscale\undefined%
      \relax%
    \else%
      \setlength{\unitlength}{\unitlength * \real{\svgscale}}%
    \fi%
  \else%
    \setlength{\unitlength}{\svgwidth}%
  \fi%
  \global\let\svgwidth\undefined%
  \global\let\svgscale\undefined%
  \makeatother%
  \begin{picture}(1,0.59391755)%
    \lineheight{1}%
    \setlength\tabcolsep{0pt}%
    \put(0,0){\includegraphics[width=\unitlength,page=1]{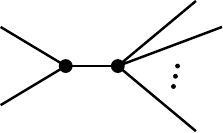}}%
  \end{picture}%
\endgroup%
}}
      +\raisebox{-16pt}{\marginbox{1.5pt 0}{
\begingroup%
  \makeatletter%
  \providecommand\color[2][]{%
    \errmessage{(Inkscape) Color is used for the text in Inkscape, but the package 'color.sty' is not loaded}%
    \renewcommand\color[2][]{}%
  }%
  \providecommand\transparent[1]{%
    \errmessage{(Inkscape) Transparency is used (non-zero) for the text in Inkscape, but the package 'transparent.sty' is not loaded}%
    \renewcommand\transparent[1]{}%
  }%
  \providecommand\rotatebox[2]{#2}%
  \newcommand*\fsize{\dimexpr\f@size pt\relax}%
  \newcommand*\lineheight[1]{\fontsize{\fsize}{#1\fsize}\selectfont}%
  \ifx\svgwidth\undefined%
    \setlength{\unitlength}{45.25601997bp}%
    \ifx\svgscale\undefined%
      \relax%
    \else%
      \setlength{\unitlength}{\unitlength * \real{\svgscale}}%
    \fi%
  \else%
    \setlength{\unitlength}{\svgwidth}%
  \fi%
  \global\let\svgwidth\undefined%
  \global\let\svgscale\undefined%
  \makeatother%
  \begin{picture}(1,0.84064854)%
    \lineheight{1}%
    \setlength\tabcolsep{0pt}%
    \put(0,0){\includegraphics[width=\unitlength,page=1]{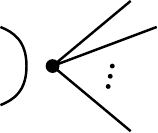}}%
  \end{picture}%
\endgroup%
}}\\
    &=-(-1)^{k-1}\raisebox{-16pt}{\marginbox{1.5pt 0}{
\begingroup%
  \makeatletter%
  \providecommand\color[2][]{%
    \errmessage{(Inkscape) Color is used for the text in Inkscape, but the package 'color.sty' is not loaded}%
    \renewcommand\color[2][]{}%
  }%
  \providecommand\transparent[1]{%
    \errmessage{(Inkscape) Transparency is used (non-zero) for the text in Inkscape, but the package 'transparent.sty' is not loaded}%
    \renewcommand\transparent[1]{}%
  }%
  \providecommand\rotatebox[2]{#2}%
  \newcommand*\fsize{\dimexpr\f@size pt\relax}%
  \newcommand*\lineheight[1]{\fontsize{\fsize}{#1\fsize}\selectfont}%
  \ifx\svgwidth\undefined%
    \setlength{\unitlength}{64.03460349bp}%
    \ifx\svgscale\undefined%
      \relax%
    \else%
      \setlength{\unitlength}{\unitlength * \real{\svgscale}}%
    \fi%
  \else%
    \setlength{\unitlength}{\svgwidth}%
  \fi%
  \global\let\svgwidth\undefined%
  \global\let\svgscale\undefined%
  \makeatother%
  \begin{picture}(1,0.59402653)%
    \lineheight{1}%
    \setlength\tabcolsep{0pt}%
    \put(0,0){\includegraphics[width=\unitlength,page=1]{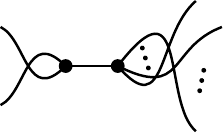}}%
  \end{picture}%
\endgroup%
}}
      +(-1)^{k-2}\raisebox{-16pt}{\marginbox{1.5pt 0}{
\begingroup%
  \makeatletter%
  \providecommand\color[2][]{%
    \errmessage{(Inkscape) Color is used for the text in Inkscape, but the package 'color.sty' is not loaded}%
    \renewcommand\color[2][]{}%
  }%
  \providecommand\transparent[1]{%
    \errmessage{(Inkscape) Transparency is used (non-zero) for the text in Inkscape, but the package 'transparent.sty' is not loaded}%
    \renewcommand\transparent[1]{}%
  }%
  \providecommand\rotatebox[2]{#2}%
  \newcommand*\fsize{\dimexpr\f@size pt\relax}%
  \newcommand*\lineheight[1]{\fontsize{\fsize}{#1\fsize}\selectfont}%
  \ifx\svgwidth\undefined%
    \setlength{\unitlength}{56.53460449bp}%
    \ifx\svgscale\undefined%
      \relax%
    \else%
      \setlength{\unitlength}{\unitlength * \real{\svgscale}}%
    \fi%
  \else%
    \setlength{\unitlength}{\svgwidth}%
  \fi%
  \global\let\svgwidth\undefined%
  \global\let\svgscale\undefined%
  \makeatother%
  \begin{picture}(1,0.67283133)%
    \lineheight{1}%
    \setlength\tabcolsep{0pt}%
    \put(0,0){\includegraphics[width=\unitlength,page=1]{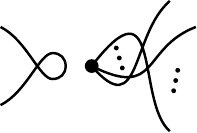}}%
  \end{picture}%
\endgroup%
}}\\
    &=(-1)^{k}\left(\raisebox{-16pt}{\marginbox{1.5pt 0}{}}
      +\raisebox{-16pt}{\marginbox{1.5pt 0}{}}\right)
      =(-1)^{k}\raisebox{-16pt}{\marginbox{1.5pt 0}{
\begingroup%
  \makeatletter%
  \providecommand\color[2][]{%
    \errmessage{(Inkscape) Color is used for the text in Inkscape, but the package 'color.sty' is not loaded}%
    \renewcommand\color[2][]{}%
  }%
  \providecommand\transparent[1]{%
    \errmessage{(Inkscape) Transparency is used (non-zero) for the text in Inkscape, but the package 'transparent.sty' is not loaded}%
    \renewcommand\transparent[1]{}%
  }%
  \providecommand\rotatebox[2]{#2}%
  \newcommand*\fsize{\dimexpr\f@size pt\relax}%
  \newcommand*\lineheight[1]{\fontsize{\fsize}{#1\fsize}\selectfont}%
  \ifx\svgwidth\undefined%
    \setlength{\unitlength}{49.03459976bp}%
    \ifx\svgscale\undefined%
      \relax%
    \else%
      \setlength{\unitlength}{\unitlength * \real{\svgscale}}%
    \fi%
  \else%
    \setlength{\unitlength}{\svgwidth}%
  \fi%
  \global\let\svgwidth\undefined%
  \global\let\svgscale\undefined%
  \makeatother%
  \begin{picture}(1,0.77574311)%
    \lineheight{1}%
    \setlength\tabcolsep{0pt}%
    \put(0,0){\includegraphics[width=\unitlength,page=1]{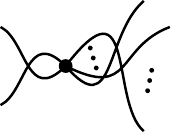}}%
  \end{picture}%
\endgroup%
}}
  \end{align*}
\end{proof}

\begin{proposition}
  \label{prop:s0-f0}
  If $G$ is a virtual graph, $S_G(0)=F_G(0)$.
\end{proposition}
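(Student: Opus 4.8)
The plan is to compare the two state sums term by term at $Q=0$. Recall from Definitions~\ref{def:flow-polynomial} and~\ref{def:s-polynomial} that
\[
  F_G(Q)=\sum_{T\subset E(G)}(-1)^{\abs{T}}Q^{b_1(G-T)},\qquad
  S_G(Q)=\sum_{T\subset E(G)}(-1)^{\abs{T}}Q^{b_1(G-T)-g(G-T)}.
\]
Evaluating at $Q=0$ kills every term whose exponent is a strictly positive integer and contributes $(-1)^{\abs{T}}$ for every term whose exponent is $0$. So the whole proposition reduces to showing that, for each $T\subset E(G)$, the exponent $b_1(G-T)$ is zero if and only if the exponent $b_1(G-T)-g(G-T)$ is zero; once that is established, both evaluations equal $\sum_{T:\,b_1(G-T)=0}(-1)^{\abs{T}}$, and we are done.

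The key input is the elementary bound $2g(H)\le b_1(H)$ for every virtual graph $H$, together with the observation that $g(H)=0$ whenever $b_1(H)=0$. I would prove this by passing to a cellular representative $H\incl\Sigma$ and treating each connected component separately (both $g$ and $b_1$ are additive over components). For a connected closed surface of genus $g$, Euler's formula gives $\abs{V}-\abs{E}+\abs{F}=2-2g$, and since $b_1(H)=\abs{E}-\abs{V}+1$ this rearranges to $\abs{F}=1+b_1(H)-2g(H)$; because $\abs{F}\ge 1$ we get $2g(H)\le b_1(H)$. (This is just the row $k(G)+n(G)-bc(G)=2g(G)$ of Table~\ref{tab:notation-correspondence} combined with $bc(G)\ge k(G)$.) Hence the $S$-polynomial exponent $b_1(H)-g(H)$ is a nonnegative integer; it equals $0$ exactly when $g(H)=b_1(H)$, which via $2g(H)\le b_1(H)$ forces $b_1(H)\le 0$, i.e. $b_1(H)=0$; and conversely $b_1(H)=0$ forces $g(H)=0$.

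Applying this with $H=G-T$ (which is again a virtual graph, deletion being just restriction to a subgraph) shows that $\{T:b_1(G-T)=0\}=\{T:b_1(G-T)-g(G-T)=0\}$, so the surviving terms of the two state sums at $Q=0$ coincide, giving $S_G(0)=F_G(0)$. There is no real obstacle here beyond recording the genus inequality; the only point that needs a word of care is the disconnected case, which is dispatched by additivity of $g$ and $b_1$.
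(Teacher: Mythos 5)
Your argument is correct, but it reaches the conclusion by a genuinely different route than the paper. You work directly with the two state sums (Definitions~\ref{def:flow-polynomial} and~\ref{def:s-polynomial}) and observe that the Euler-characteristic identity for a cellular embedding, $b_0(G-T)+b_1(G-T)-b_0(\partial(G-T))=2g(G-T)$ together with $b_0(\partial(G-T))\geq b_0(G-T)$, gives $2g(G-T)\leq b_1(G-T)$; hence every exponent $b_1(G-T)-g(G-T)$ is a nonnegative integer and vanishes exactly when $b_1(G-T)=0$, so the two evaluations at $Q=0$ retain the same set of states (those $T$ with $G-T$ a forest) with the same signs. The paper instead stays inside its categorical framework: it notes that at $Q=0$ the Gramian pairing on $\Sc^Q([3],[0])$ degenerates, yielding the local relation that a trivalent vertex may be flipped, and then runs the same contraction--deletion induction as in Lemma~\ref{lemma:q4-vtx-flip} to show the $Q=0$ specialization is invariant under move VI* for vertices of arbitrary degree, so that it satisfies the defining axioms of the flow polynomial. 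Your approach is more elementary and self-contained (it needs neither the functor $\hat\Phi$, nor negligible elements, nor the equivalence of the axiomatic and state-sum descriptions of $F_G$), and it exhibits the common value concretely as $\sum_{T:\,b_1(G-T)=0}(-1)^{\abs{T}}$; the paper's proof, while less direct, produces along the way the local vertex-flip relation at $Q=0$ parallel to the $Q=4$ relation, which is the form of statement the surrounding section is organized around (and which, in your version, is only recovered a posteriori from the fact that $F_G(0)$ depends solely on the abstract graph). The only point your write-up rightly flags, and handles, is the disconnected case via additivity of $b_1$ and $g$; with that in place the proof is complete.
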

\begin{proof}
  The $S$-polynomial at $Q=0$ has the additional local relation $\raisebox{-3pt}{\marginbox{1.5pt 0pt}{}}=\raisebox{-3pt}{\marginbox{1.5pt 0pt}{}}$.
  By a similar use of the contraction-deletion relation as in Lemma~\ref{lemma:q4-vtx-flip}, the polynomial is invariant under move VI*, and so at this evaluation the $S$-polynomial has the same axioms as the flow polynomial.
\end{proof}

\begin{lemma}
  \label{lemma:f0-bridge}
  Let $G$ be an abstract graph.
  The following are equivalent: (1) $G$ has a bridge edge, (2) $F_G(Q)=0$, and (3) $F_G(0)=0$.
\end{lemma}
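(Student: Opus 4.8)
The plan is to prove the implications cyclically, $(1)\Rightarrow(2)\Rightarrow(3)\Rightarrow(1)$; the step $(2)\Rightarrow(3)$ is trivial, since a polynomial that is identically zero vanishes at $Q=0$. For $(1)\Rightarrow(2)$, let $e$ be a bridge of $G$. Then $G-e$ has one more component than $G$, and after grouping its components we may write $G-e=G_1\amalg G_2$ in such a way that $G/e=G_1\vee G_2$ is the one-point join at the vertex obtained by contracting $e$. The flow polynomial is multiplicative under disjoint union and under one-point joins --- both facts follow directly from the state sum in Definition~\ref{def:flow-polynomial}, using $b_1(A\amalg B)=b_1(A\vee B)=b_1(A)+b_1(B)$ --- so $F_{G/e}(Q)=F_{G_1}(Q)F_{G_2}(Q)=F_{G-e}(Q)$, and the contraction--deletion relation $F_G=F_{G/e}-F_{G-e}$ (Definition~\ref{def:flow-category}) yields $F_G\equiv 0$.

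For $(3)\Rightarrow(1)$ I would argue by contraposition, proving a stronger statement by induction on $|E(G)|$: setting $\epsilon(G):=(-1)^{b_1(G)}F_G(0)$, I claim that $\epsilon(G)\ge 0$ for every graph $G$, with $\epsilon(G)=0$ if and only if $G$ has a bridge. Since deleting isolated vertices changes neither $F_G$, $b_1(G)$, nor the set of bridges, assume there are none. If $G$ is disconnected, split it as $G=G_1\amalg G_2$ with each $G_i$ carrying at least one edge, and use $\epsilon(G)=\epsilon(G_1)\epsilon(G_2)$ with the inductive hypothesis (noting $G$ has a bridge iff some $G_i$ does). If $G$ is connected and has a bridge, then $\epsilon(G)=0$ by $(1)\Rightarrow(2)$, consistent with the claim. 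If $G$ is connected, bridgeless, and has a loop $e$, then $G-e$ is connected, bridgeless, with fewer edges, and the loop relation $F_G=(Q-1)F_{G-e}$ together with $b_1(G)=b_1(G-e)+1$ gives $\epsilon(G)=\epsilon(G-e)$, which is positive by induction.

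The main case is $G$ connected, bridgeless, loopless, with an edge $e$, which is then neither a loop nor a bridge. From $F_G=F_{G/e}-F_{G-e}$, $b_1(G/e)=b_1(G)$, and $b_1(G-e)=b_1(G)-1$ one obtains $\epsilon(G)=\epsilon(G/e)+\epsilon(G-e)$. Here the key observation is that $G/e$ is again bridgeless: for any edge $f$ one has $b_0((G/e)-f)=b_0((G-f)/e)=b_0(G-f)$ and $b_0(G/e)=b_0(G)$, so $f$ is a bridge of $G/e$ exactly when it is a bridge of $G$. Thus $\epsilon(G/e)>0$ by induction, while $\epsilon(G-e)\ge 0$ by induction regardless of whether deleting $e$ created a bridge, so $\epsilon(G)>0$. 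Since $\epsilon(G)\ne 0$ is equivalent to $F_G(0)\ne 0$, this gives $F_G(0)\ne 0$ for bridgeless $G$, which is the contrapositive of $(3)\Rightarrow(1)$.

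The genuinely delicate point is precisely this loopless case: a bridgeless graph can acquire bridges after a single edge deletion (deleting any edge of a triangle already does so), so a naive induction on $G-e$ is unavailable. The remedy is the strengthened, signed statement about $\epsilon$, which absorbs the ``$G-e$ has a bridge'' possibility harmlessly into the clause $\epsilon(G-e)\ge 0$, combined with the robust fact that \emph{contraction} never destroys bridgelessness, so that the $\epsilon(G/e)$ term by itself forces the strict inequality.
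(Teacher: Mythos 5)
Your proof is correct, and for the substantive direction $(3)\Rightarrow(1)$ it is essentially the paper's argument in specialized form: the paper renormalizes to $f_G(x)=(-1)^{b_1(G)}F_G(1-x)$, runs the same contraction--deletion induction with the same case split (loops, bridges, non-loop non-bridge edges, multiplicativity over disjoint union), concludes that $f_G$ is a nonzero polynomial with non-negative coefficients when $G$ is bridgeless, and evaluates at $x=1$; your $\epsilon(G)$ is exactly $f_G(1)$, so you are carrying out the identical induction on the specialized scalar rather than on the polynomial. The genuine differences are modest: for $(1)\Rightarrow(2)$ you use multiplicativity of $F$ under disjoint union and one-point join together with $G/e\cong G_1\vee G_2$, $G-e\cong G_1\amalg G_2$, whereas the paper factors a bridged graph through the zero-dimensional morphism spaces $\Flow^Q([0],[1])$ and $\Flow^Q([1],[0])$ of the flow category; and you spell out the fact the paper leaves implicit, that contracting a non-loop edge of a bridgeless graph is again bridgeless, which is precisely what lets the induction close since the deletion term may vanish --- a worthwhile clarification. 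The paper's polynomial-level statement is marginally stronger (it also records $\deg f_G=b_1(G)$), but the lemma does not need it; conversely, your write-up leaves the trivial base case (edgeless graphs, $\epsilon=1$) implicit, which is harmless.
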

\begin{proof}
  If $G$ has a bridge edge, then the flow polynomial of $G$ is from the composition of elements in $\Flow^Q([0],[1])$ and $\Flow^Q([1],[0])$, but both are zero-dimensional hence $F_G(Q)=0$, and thus $F_G(0)=0$.

  Consider the renormalization $f_G(x)=(-1)^{b_1(G)}F_G(1-x)$.
  This satisfies $f_{\mathrm{pt}}(x)=1$, $f_{G_1\amalg G_2}(x)=f_{G_1}(x)f_{G_2}(x)$, $f_{G}(x)=x f_{G-e}(x)$ for $e\in E(G)$ a loop edge, and $f_{G}(x)=f_{G/e}(x)+f_{G-e}(x)$ for $e\in E(G)$ a non-loop non-bridge edge, where $f_G(x)=0$ in the case $G$ has a bridge edge.
  By induction, if $G$ has no bridge edges, $f_G(x)$ is a nonzero degree-$b_1(G)$ polynomial with non-negative coefficients.
  Thus, if $G$ has no bridge edges, $f_G(1)>0$ and therefore $F_G(0)\neq 0$.
\end{proof}
\begin{corollary}
  Let $G$ be a virtual graph.
  The following are equivalent: (1) $G$ has a bridge edge, (2) $S_G(Q)=0$, and (3) $S_G(0)=0$.
\end{corollary}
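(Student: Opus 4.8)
The plan is to prove the equivalence by the cycle of implications (1) $\Rightarrow$ (2) $\Rightarrow$ (3) $\Rightarrow$ (1), reducing everything to statements already established for the flow polynomial.

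First, (1) $\Rightarrow$ (2) is precisely Proposition~\ref{prop:s-poly-bridge-zero}: if $G$ has a bridge edge, then $S_G(Q)=0$ as a polynomial. The implication (2) $\Rightarrow$ (3) is immediate, since $S_G(0)$ is the evaluation of the zero polynomial at $Q=0$.

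For (3) $\Rightarrow$ (1), I would invoke Proposition~\ref{prop:s0-f0}, which gives $S_G(0)=F_G(0)$, so the hypothesis $S_G(0)=0$ becomes $F_G(0)=0$. Now the flow polynomial depends only on the underlying abstract graph of $G$, and likewise the property of having a bridge edge is a property of the underlying abstract graph. Thus Lemma~\ref{lemma:f0-bridge}, applied to the underlying abstract graph, converts $F_G(0)=0$ into the assertion that $G$ has a bridge edge, completing the cycle.

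The only point requiring any care is the observation that ``bridge edge'' for a virtual graph $G$ is the same notion as for its underlying abstract graph, so that Lemma~\ref{lemma:f0-bridge} transfers verbatim; this is essentially a matter of unwinding definitions rather than a genuine obstacle, and no new computation is needed beyond citing the three prior results.
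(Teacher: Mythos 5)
Your proposal is correct and follows essentially the same route as the paper, which likewise combines Proposition~\ref{prop:s-poly-bridge-zero} for (1)$\Rightarrow$(2) with Lemma~\ref{lemma:f0-bridge} for (3)$\Rightarrow$(1); your explicit citation of Proposition~\ref{prop:s0-f0} just makes visible the identification $S_G(0)=F_G(0)$ that the paper leaves implicit.
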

\begin{proof}
  This follows from Proposition~\ref{prop:s-poly-bridge-zero} and Lemma~\ref{lemma:f0-bridge}.
\end{proof}

\begin{proposition}
  \label{prop:s4-f4}
  Let $G$ be a virtual graph such that $\sigma_W G$ is planar for some $W\subset V(G)$.
  Then with $n=\sum_{v\in W}\deg(v)$, $(-1)^n S_4(G)=F_4(G)$.
  If $G$ is a cubic, then $S_4(G)=(-1)^{\abs{W}}F_4(G)$.
\end{proposition}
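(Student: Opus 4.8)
The strategy is to reduce to the planar case using the vertex-flip relation at $Q=4$. Three ingredients are already available. First, for any planar virtual graph $H$ one has $S_H(Q)=F_H(Q)$ identically, as noted immediately after Definition~\ref{def:s-polynomial}. Second, the flow polynomial depends only on the underlying abstract graph, so reversing rotation systems does not change it; that is, $F_{\sigma_W G}(Q)=F_G(Q)$ for every $W\subseteq V(G)$. Third, Lemma~\ref{lemma:q4-vtx-flip} says $S_{\sigma_v G}(4)=(-1)^{\deg(v)}S_G(4)$ for each vertex $v$.

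First I would iterate the third fact over the vertices of $W$. Since $\sigma_W$ is a composition of commuting involutions, one for each $v\in W$, applying Lemma~\ref{lemma:q4-vtx-flip} once per vertex gives
\[
  S_{\sigma_W G}(4)=(-1)^{\sum_{v\in W}\deg(v)}\,S_G(4)=(-1)^n S_G(4).
\]
Now $\sigma_W G$ is planar by hypothesis, so by the first ingredient $S_{\sigma_W G}(4)=F_{\sigma_W G}(4)$, and by the second ingredient $F_{\sigma_W G}(4)=F_G(4)$. Concatenating these three identities yields $(-1)^n S_G(4)=F_G(4)$, which is the first claim. When $G$ is cubic every $v\in W$ has $\deg(v)=3$, hence $n=3\abs{W}$ and $(-1)^n=(-1)^{\abs{W}}$; multiplying the displayed identity through by $(-1)^n$ (which is its own inverse) gives $S_G(4)=(-1)^{\abs{W}}F_G(4)$.

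Because all of the real content is packaged into Lemma~\ref{lemma:q4-vtx-flip} and the planar identity $S=F$, there is no serious obstacle here; the only points worth stating carefully are that the sign accumulated by iterating the vertex flip is $(-1)^{\sum_{v\in W}\deg(v)}$ independently of the order (immediate, since the $\sigma_v$ commute and square to the identity) and that the flow polynomial is genuinely blind to the rotation system, so that passing from $\sigma_W G$ back to $G$ costs nothing on the $F$ side.
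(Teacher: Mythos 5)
Your proposal is correct and follows essentially the same route as the paper: iterate Lemma~\ref{lemma:q4-vtx-flip} over $W$ to get $S_{\sigma_W G}(4)=(-1)^n S_G(4)$, then invoke the coincidence of the $S$- and flow polynomials on the planar graph $\sigma_W G$ together with the rotation-system-blindness of $F$. The cubic case via $(-1)^{3\abs{W}}=(-1)^{\abs{W}}$ is likewise the intended argument.
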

\begin{proof}
  Let $W\subset V(G)$ be such that $\sigma_W G$ is planar.
  By Lemma~\ref{lemma:q4-vtx-flip},
  \[S_{\sigma_W G}(Q)
    =\left(\prod_{v\in W}(-1)^{\deg(v)}\right)S_G(Q)
    =(-1)^{\sum_{v\in W}\deg(v)}S_G(Q).\]
  The result then follows from the equivalence of the two polynomials for planar graphs.
\end{proof}
\begin{remark}
  The $S$-polynomials of virtual graphs for $K_{3,3}$ from Example~\ref{example:polynomials} have a root at $4$, yet the flow polynomial of $K_{3,3}$ is nonzero at $4$, hence $K_{3,3}$ is nonplanar.
  The flow polynomial of the Petersen graph is zero at $4$, as are the $S$-polynomials of all rotation systems of the Petersen graph.
  However, the condition $\pm S_G(4)=F_G(4)$ does not characterize planarity even when $S_G(4)\neq 0$, demonstrated by the cubic virtual graph in Figure~\ref{fig:withk33minor} whose underlying graph is nonplanar.
\end{remark}
\begin{figure}[htb]
  \centering
\begingroup%
  \makeatletter%
  \providecommand\color[2][]{%
    \errmessage{(Inkscape) Color is used for the text in Inkscape, but the package 'color.sty' is not loaded}%
    \renewcommand\color[2][]{}%
  }%
  \providecommand\transparent[1]{%
    \errmessage{(Inkscape) Transparency is used (non-zero) for the text in Inkscape, but the package 'transparent.sty' is not loaded}%
    \renewcommand\transparent[1]{}%
  }%
  \providecommand\rotatebox[2]{#2}%
  \newcommand*\fsize{\dimexpr\f@size pt\relax}%
  \newcommand*\lineheight[1]{\fontsize{\fsize}{#1\fsize}\selectfont}%
  \ifx\svgwidth\undefined%
    \setlength{\unitlength}{275.47878251bp}%
    \ifx\svgscale\undefined%
      \relax%
    \else%
      \setlength{\unitlength}{\unitlength * \real{\svgscale}}%
    \fi%
  \else%
    \setlength{\unitlength}{\svgwidth}%
  \fi%
  \global\let\svgwidth\undefined%
  \global\let\svgscale\undefined%
  \makeatother%
  \begin{picture}(1,0.24672859)%
    \lineheight{1}%
    \setlength\tabcolsep{0pt}%
    \put(0,0){\includegraphics[width=\unitlength,page=1]{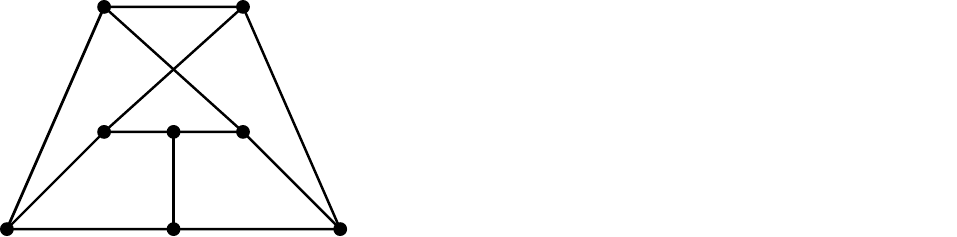}}%
    \put(0.4050557,0.1553087){\color[rgb]{0,0,0}\makebox(0,0)[lt]{\lineheight{0}\smash{\begin{tabular}[t]{l}$F_G(Q)=(Q-1)(Q-2)(Q-3)(Q^2-6 Q+11)$\end{tabular}}}}%
    \put(0.4050557,0.09141993){\color[rgb]{0,0,0}\makebox(0,0)[lt]{\lineheight{0}\smash{\begin{tabular}[t]{l}$S_G(Q)=-(Q-1)(Q-3)(Q^2-8 Q+22)$\end{tabular}}}}%
  \end{picture}%
\endgroup%

  \caption{A virtual graph with a $K_{3,3}$ minor where $F_G(4)=-S_G(4)=18\neq 0$.}
  \label{fig:withk33minor}
\end{figure}

One class of additional local linear relations comes from analyzing symmetrized elements.
Let $\operatorname{Sym}:\Sc^Q([n],[0])\to \Sc^Q([n],[0])$ be defined as
composition with the symmetrizer $\frac{1}{n!}\sum_{\sigma\in S_n}\sigma_*$, let $p_0(n_1,\dots,n_k)\in \Sc^Q([\sum_{i}n_i],[0])$ indicate any basis element with exactly $k$ internal vertices of degrees $n_1,\dots,n_k$, and let $p(n_1,\dots,n_k)=\operatorname{Sym}(p_0(n_1,\dots,n_k))$, which is independent of the choice of $p_0$.
Since the symmetrizer is an idempotent, if at a particular evaluation the pairing on $\operatorname{Sym} \Sc^Q([n],[0])$ is degenerate, then the pairing on $\Sc^Q([n],[0])$ is degenerate as well.
Table~\ref{tab:s-sym-negligible-elts} lists bases for negligible elements of $\operatorname{Sym}\Sc^{(n-1)^2}([n],[0])$, giving additional local relations for $\Sc^{(n-1)^2}([n],[0])$.
These are the only additional relations arising in this way for $n=2,\dots,8$.

\begin{table}[htb]
  \centering
  \caption{Some local relations for $\Sc$ at particular evaluations.}
  \label{tab:s-sym-negligible-elts}
  \begin{tabular}{rl}
    \toprule
    $Q$ & Basis for negligible elements of $\operatorname{Sym} \Sc^Q([Q^{1/2}+1],[0])$\\
    \midrule
    $1$ & $p(2)$ \\
    \addlinespace[2pt]
    $4$ & $p(3)$ \\
    \addlinespace[2pt]
    $9$ & $p(4) - \frac{3}{2}p(2,2)$ \\
    \addlinespace[2pt]
    $16$ & $p(5) - \frac{10}{3}p(3,2)$ \\
    \addlinespace[2pt]
    $25$ & $p(6) - \frac{15}{4}p(4,2)- \frac{5}{3}p(3,3) + \frac{25}{8}p(2,2,2)$ \\
    \addlinespace[2pt]
    $36$ & $p(7) - \frac{21}{5}p(5,2) - \frac{7}{2}p(4,3) + \frac{21}{2}p(3,2,2)$ \\
    \addlinespace[2pt]
    $49$ & $p(8) - \frac{14}{3}p(6,2) - \frac{56}{15}p(5,3) - \frac{7}{4}p(4,4) + \frac{49}{4}p(4,2,2) + \frac{98}{9}p(3,3,2) - \frac{343}{48}p(2,2,2,2)$ \\
    \bottomrule
  \end{tabular}
\end{table}

We computed the determinants of the Gramian matrices for $\Sc^Q([n],[0])$ with $n=2,\dots,6$ (listed in Table~\ref{tab:s-gramian-dets}) and offer the following conjecture:
\begin{conjecture}
  $S_Q(G)$ has additional local linear relations at $Q=n^2$ with $n$ an integer.
  In particular, the pairing is degenerate for $\Sc^Q([n],[0])$ exactly at $Q=1^2,2^2,\cdots,(n-1)^2$ and additionally at $Q=0$ when $n\geq 3$.
\end{conjecture}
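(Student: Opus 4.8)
Write $d_n(Q)\in\Z[Q]$ for the determinant of the Gram matrix of the composition pairing $\Sc^Q([n],[0])\times\Sc^Q([0],[n])\to\Sc^Q([0],[0])\cong\C(Q)$ in the bases indexed by fixed-point-free permutations of the $n$ boundary half-edges, so that the $(\sigma,\tau)$ entry is $S_H(Q)$ for the closed virtual graph $H$ obtained by gluing the two rotation systems along the $n$ edges. The goal is to show that, up to a nonzero constant,
\[
d_n(Q)=Q^{a_n}\prod_{k=1}^{n-1}(Q-k^2)^{e_{n,k}},\qquad e_{n,k}\ge 1,
\]
with $a_n\ge 1$ exactly when $n\ge 3$; the first sentence of the conjecture then follows by taking $n=k+1$.

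\emph{Step 1: the listed values really are roots.} Fix an integer $k\ge1$ and suppose $n\ge k+1$. Composing the trace-preserving functor $\Phi\colon\Sc^Q\to\Br^{Q^{1/2}}$ built from $\hat\Phi$ in Definition~\ref{def:phi-functor}, evaluated at $Q=k^2$, with the classical surjection $\Br^{k}([m],[m'])\twoheadrightarrow\Hom_{SO(k)}(V^{\otimes m},V^{\otimes m'})$ of Section~\ref{sec:brauer-category} (with $\dim V=k$) yields a pairing-preserving functor from $\Sc^{k^2}$ into a semisimple category; hence the radical of the pairing on $\Sc^{k^2}([n],[0])$ contains the kernel of this composite, and it is enough to produce one nonzero element of that kernel. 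Under $\Phi$ an edge becomes $Q^{1/2}P^{(2)}=Q^{1/2}\left(\BrI-Q^{-1/2}\BrE\right)$, which for $SO(k)$ is $k$ times the projection of $V\otimes V$ onto its non-trivial summand, so a symmetrized diagram with $n\ge k+1$ edges incident to a single vertex projects to an antisymmetrization-type expression that vanishes identically on a $k$-dimensional space. This is precisely the mechanism behind the $Q=4$ ($k=2$) relation of Lemma~\ref{lemma:q4-vtx-flip} and the negligible elements of $\operatorname{Sym}\Sc^{k^2}([k+1],[0])$ recorded in Table~\ref{tab:s-sym-negligible-elts}; the resulting element is nonzero in $\Sc^{k^2}([n],[0])$, rather than merely the zero morphism of $\Br^k$, because its ``all edges $\mapsto\BrI$'' term survives, as in the independence argument used above for the fixed-point-free-permutation basis. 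Finally, for $n\ge3$ the relation $\ScTri=\ScTriFlip$ holding at $Q=0$ (Proposition~\ref{prop:s0-f0}, equivalently degeneracy of the $[3]$ Gramian at $Q=0$) composes with identities on the remaining strands to give degeneracy of the pairing on $\Sc^Q([n],[0])$ at $Q=0$, so $a_n\ge1$ when $n\ge3$.

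\emph{Step 2: there are no other roots.} This is the main obstacle. One route is a determinant recursion: peel off a single boundary half-edge and use contraction--deletion to simplify the two rotation systems meeting along the corresponding edge, producing an identity relating $d_n(Q)$ to $d_{n-1}(Q)$ and Gram data of $\Sc^Q$ on fewer strands, in the spirit of the Hanlon--Wales formula for the Brauer algebra or the Temperley--Lieb Gram determinant; solving the recursion should display $d_n$ as a product of powers of $Q$ and of $Q-k^2$ only. A more conceptual variant is to prove that for $Q\notin\{0\}\cup\{k^2:k\in\Z\}$ the category $\Sc^Q$ is semisimple --- one expects it to behave like a ``generic orthogonal'' category with dimension parameter $\delta=Q^{1/2}$, so that $\Sc^Q([n],[0])$ decomposes into standard modules on which the composition pairing restricts to a nonzero invariant form whose determinant on the block labelled by a diagram $\lambda$ is a product of dimensions $\delta\mp k$ that pair off into factors $Q-k^2$; nonvanishing away from $\{0\}\cup\{k^2\}$ is then automatic, and the multiplicities $e_{n,k}$ and $a_n$ are read off from the block structure. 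Either way one must also confirm that every $k\le n-1$ actually occurs with $e_{n,k}\ge1$, using the explicit computations of Table~\ref{tab:s-gramian-dets} for $n\le6$ as the base of an induction.

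The hard point is precisely the exclusion of extra roots: it amounts to showing that the generic-in-$Q$ structure of $\Sc^Q$ is rigid enough that new linear relations appear only when $\delta=Q^{1/2}$ is an integer, together with $\delta=0$ for $n\ge3$. Making this generic structure precise, or equivalently deriving and solving the Gram-determinant recursion in closed form, is where essentially all of the work lies; Step~1, by contrast, only repackages the $SO(N)$ invariant-theory argument already used in the $Q=0$ and $Q=4$ cases.
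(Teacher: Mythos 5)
This statement is a conjecture in the paper: the authors offer no proof, only the Gram-determinant computations of Table~\ref{tab:s-gramian-dets} for $n\le 6$, the directly computed negligible elements of Table~\ref{tab:s-sym-negligible-elts}, and a representation-theoretic motivation via Wenzl's theorem that the Brauer trace degenerates exactly at integer loop values. Your proposal does not close it either: your Step~2 --- ruling out roots other than $0,1^2,\dots,(n-1)^2$ and showing each listed value actually occurs --- is exactly the open content of the conjecture, and you state yourself that essentially all the work lies there. So at best this is an expanded version of the heuristic the paper already gives, not a proof.

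There is also a concrete flaw in your Step~1. The kernel-of-functor-lies-in-the-radical observation is fine, but the element you propose, the symmetrization of a single degree-$n$ vertex, does not in general die under the composite $\Sc^{k^2}\to\Br^{k}\to\operatorname{Rep}(SO(k))$. Edges are sent to $Q^{1/2}P^{(2)}=Q^{1/2}\bigl(\BrI-Q^{-1/2}\BrE\bigr)$, the trace-free projector on $V\otimes V$, not an antisymmetrizer, so no ``more slots than dimensions'' vanishing applies. The paper's own data contradicts your claim: at $Q=9$ (so $k=3$) the negligible part of $\operatorname{Sym}\Sc^{9}([4],[0])$ is spanned by $p(4)-\tfrac{3}{2}p(2,2)$, so $p(4)$ alone is \emph{not} negligible --- had your single-vertex vanishing been correct, $p(4)$ itself would lie in the radical. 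To make Step~1 work for general $k$ you would need the Brauer analogues of the Jones--Wenzl projectors \cite{Tuba2005,Lehrer2012,King2016}, pulled back along $\Phi$, together with a proof that the pullback is nonzero in $\Sc^{k^2}([n],[0])$ at the specialized value; the paper explicitly says it did not attempt this, and warns that $(P^{(2)})^{\otimes n}\circ \Br^{Q^{1/2}}([m],[n])\circ (P^{(2)})^{\otimes m}$ does not characterize the image of $\Phi$, so degeneracy does not transfer between $\Br^{k}$ and $\Sc^{k^2}$ automatically in either direction. As it stands, both halves of your argument need genuinely new input.
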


\begin{table}[htb]
  \centering
  \caption{Determinants of Gramian matrices for $\Sc^Q([n],[0])$.}
  \label{tab:s-gramian-dets}
  \begin{tabular}{rl}
    \toprule
    $n$ & Determinant \\
    \midrule
    $2$ & $Q-1$ \\
    \addlinespace[2pt]
    $3$ & $(Q-4)(Q-1)^2Q$ \\
    \addlinespace[2pt]
    $4$ & $(Q-9)(Q-4)^6(Q-1)^9Q^8$\\
    \addlinespace[2pt]
    $5$ & $(Q-16)(Q-9)^{12}(Q-4)^{38}(Q-1)^{44}Q^{61}$\\ 
    \addlinespace[2pt]
    $6$ & $(Q-25)(Q-16)^{20}(Q-9)^{120}(Q-4)^{250}(Q-1)^{290}Q^{484}$\\
    \bottomrule
  \end{tabular}
\end{table}

\begin{remark}
There is also a representation-theoretic motivation for this conjecture.
In \cite{Fendley2009}, Fendley and Krushkal find an infinite family of local relations for the flow polynomial of planar graphs by pulling back the trace radical of the Temperley-Lieb algebra to the flow algebra of planar graphs.
This approach works because the trace radical is well-understood: it exists only when the loop parameter is a root of unity, and it is the tensor ideal generated by a Jones-Wenzl projector.
The projectors have a straightforward combinatorial description, and they give local relations for planar graphs with $n$ boundary edges at the values
\[
Q = 4 \cos^2 \left( \frac{\pi k}{2 n+1}\right),
\]
where $k < 2n$.
For $k = 1$ these are the Beraha numbers, which are conjectured to be accumulation points of the zeros of chromatic polynomials of planar triangulations.
Because the loop value $c = Q^{1/2}$ of the Temperley-Lieb category is interpreted as the \emph{quantum integer} $[2]_q = q + q^{-1}$, $Q = [2]_q^2$ with $q=e^{\pi ik/N}$ takes the above form.

As previously discussed, passing to nonplanar graphs requires the use of the Brauer category instead of the Temperley-Lieb category.
In this case, the special loop values are no longer $q + q^{-1}$ for $q$ a root of unity, but integers, as shown by Wenzl~\cite{Wenzl1988}, who computed that the trace for the Brauer algebras is degenerate only when the loop value is integral.
Because $Q$ corresponds to the square of the loop value under the functor $\Phi$, it appears that local relations for the $S$-polynomial should occur at squares of integers.

There are similar, but more complicated, formulas for projectors of the Brauer algebras~\cite{Tuba2005, Lehrer2012, King2016}.
We did not attempt to pull these back to the $S$-polynomial category, but instead we computed some local relations directly, as in Subsection~\ref{subsec:local-relations-s-poly}.
One complication is that $(P^{(2)})^{\otimes n}\circ \mathsf{Br}^{Q^{1/2}}([m],[n])\circ (P^{(2)})^{\otimes m}$ does not characterize the image of the functor from $\mathsf{S}^Q$, unlike the situation for the planar graph flow category and $\mathsf{TL}^{Q^{1/2}}$.

Finally, since there seem to be some connections between the flow polynomial and the $S$-polynomial, we mention that Jacobsen and Salas~\cite{Jacobsen2013} make a similar conjecture based on computational evidence for the flow and chromatic polynomials for certain families of nonplanar graphs, that the analogue for the Beraha numbers for nonplanar graphs are simply the nonnegative integers.
\end{remark}

\subsection{Virtual chromatic polynomial}
There is a virtual graph version of the chromatic polynomial, defined by analogy to the axioms for the $S$-polynomial from the flow polynomial.
For $G$ a virtual graph, the Laurent polynomial $\lambda_G(t)$ is determined by the following:
\begin{itemize}
\item If $G$ is a collection of $n$ isolated vertices, $\lambda_G(t)=t^n$.
\item If $e\in E(G)$ is a non-loop edge, $\lambda_G(t)=\lambda_{G-e}(t)-\lambda_{G/e}(t)$.
\item If $e\in E(G)$ is a loop edge, $\lambda_G(t)=\lambda_{G-e}(t)-t^{-1}\lambda_{G/e}(t)$.
\end{itemize}

\begin{proposition}
  Letting $G$ be a virtual graph, then $\lambda_G(Q)=Q^{b_0(G)-g(G)}S_{G^*}(Q)$.
  Equivalently, $S_G(Q)=Q^{g(G)-b_0(G)}\lambda_{G^*}(Q)$.
  Therefore $\lambda_G$ is well-defined, and for planar graphs $\lambda_G$ is the chromatic polynomial.
\end{proposition}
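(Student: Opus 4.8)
The plan is to exhibit the candidate $\tilde\lambda_G(Q)\defeq Q^{\gamma(G)}S_{G^*}(Q)$, where $\gamma(G)\defeq b_0(G)-g(G)$, and to check that it satisfies the three defining properties of $\lambda$. Those properties determine at most one assignment $G\mapsto\lambda_G$ by strong induction on $\abs{E(G)}$, since the contraction-deletion rules strictly lower the edge count and the first property pins down edgeless graphs; producing one such assignment therefore proves simultaneously that $\lambda$ is well defined and that $\lambda_G=\tilde\lambda_G$, which is the first displayed identity. The ``equivalently'' form then drops out by applying this to $G^*$ in place of $G$ and using $(G^*)^*=G$ together with $g(G^*)=g(G)$ and $b_0(G^*)=b_0(G)$.

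First I would gather the bookkeeping facts. Duality of virtual graphs preserves $b_0$ and $g$ (a cellular representative $G\incl\Sigma$ has $G^*\incl\Sigma$ cellular, and $b_0(G)=b_0(\Sigma)=b_0(G^*)$, $g(G)=g(\Sigma)=g(G^*)$), so $\gamma(H)=\gamma(H^*)$. Combining $b_0(G)-b_1(G)=\abs{V(G)}-\abs{E(G)}$ with the identity $b_1(G)-g(G)=\tfrac12(\abs{E(G)}-\abs{V(G)}+b_0(\partial G))$ recalled in the proof of Theorem~\ref{thm:s-poly-degree} gives $\gamma(G)=\tfrac12(\abs{V(G)}-\abs{E(G)}+b_0(\partial G))$. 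Feeding in the edge-operation formulas around Definition~\ref{def:contraction} --- $b_0(\partial(G-e))$ is $b_0(\partial G)+1$ if $e$ is a coloop and $b_0(\partial G)-1$ otherwise, $b_0(\partial(G/e))=b_0(\partial G)$ always, and contracting a non-loop merges two vertices while contracting a loop splits one --- yields
\begin{equation*}
  \gamma(G-e)=\gamma(G)+[e\text{ a coloop in }G],\qquad\gamma(G/e)=\gamma(G)+[e\text{ a loop in }G].
\end{equation*}
Finally, from $G^*=\delta_{E(G)}G$ and the commuting involutions $\delta_e$, one has $(G/e)^*=G^*-e^*$ and $(G-e)^*=G^*/e^*$, and $e$ is a loop (resp.\ coloop) in $G$ exactly when $e^*$ is a coloop (resp.\ loop) in $G^*$.

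With all this the verification is routine. For the base case, an edgeless $G$ on $n$ vertices has $\gamma(G)=n$, its dual is again $n$ isolated vertices, and $S_{G^*}(Q)=1$, so $\tilde\lambda_G=Q^n$, matching the first property. For an edge $e$, apply the $S$-polynomial contraction-deletion rule of Definition~\ref{def:s-category} to $G^*$ at $e^*$, namely $S_{G^*}=Q^{\beta(G^*,e^*)}S_{G^*/e^*}-S_{G^*-e^*}$, and rewrite it as $S_{G^*}=Q^{[e\text{ a coloop in }G]}S_{(G-e)^*}-S_{(G/e)^*}$ using the duality identities and $\beta(G^*,e^*)=[e^*\text{ a loop in }G^*]$. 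Multiplying through by $Q^{\gamma(G)}$ and substituting $S_{(G-e)^*}=Q^{-\gamma(G-e)}\tilde\lambda_{G-e}$, $S_{(G/e)^*}=Q^{-\gamma(G/e)}\tilde\lambda_{G/e}$: the exponent on $\tilde\lambda_{G-e}$ is $\gamma(G)+[e\text{ a coloop}]-\gamma(G-e)=0$, and the exponent on $\tilde\lambda_{G/e}$ is $\gamma(G)-\gamma(G/e)=-[e\text{ a loop}]$. Hence $\tilde\lambda_G=\tilde\lambda_{G-e}-\tilde\lambda_{G/e}$ when $e$ is a non-loop and $\tilde\lambda_G=\tilde\lambda_{G-e}-Q^{-1}\tilde\lambda_{G/e}$ when $e$ is a loop, exactly the defining properties of $\lambda$. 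For the planar statement, $g(G)=0$ makes $\lambda_G=Q^{b_0(G)}F_{G^*}(Q)$ (there $S=F$, and $g(G^*)=0$ as well), and the plane-duality identity $F_H(Q)=Q^{-1}\chi_{H^*}(Q)$ from the introduction, together with multiplicativity of $F$ and $\chi$ over disjoint union and $(H^*)^*=H$ in the plane, rearranges to $Q^{b_0(G)}F_{G^*}(Q)=\chi_G(Q)$.

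The main obstacle is the exponent accounting in the last step: one cannot assume that a loop is never a coloop --- the one-vertex, genus-one bouquet with two loop edges is a counterexample --- so the coloop/non-coloop dichotomy has to be carried through the loop case as well. What makes this painless is precisely that $\gamma$ transforms uniformly under $G\mapsto G-e$ and $G\mapsto G/e$ (the latter read off through the partial-dual description $(G/e)^*=G^*-e^*$), so all of the competing exponent shifts cancel in every case without exceptions.
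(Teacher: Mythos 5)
Your proposal is correct and follows essentially the same route as the paper: both arguments verify that the closed formula $Q^{b_0(G)-g(G)}S_{G^*}(Q)$ satisfies the defining recursion for $\lambda$ by applying contraction--deletion for the $S$-polynomial to the dual graph and using the partial-duality identities $(G-e)^*=G^*/e^*$ and $(G/e)^*=G^*-e^*$ coming from the commuting involutions $\delta_e$, together with an induction on the number of edges that also yields well-definedness. The only real difference is bookkeeping: the paper passes to the renormalization $S^0$, whose contraction--deletion rule is loop-independent, and then tracks the factor $Q^{\abs{V(G)}/2}$, whereas you work with $S$ directly and track the exponent $b_0(G)-g(G)=\tfrac12(\abs{V(G)}-\abs{E(G)}+b_0(\partial G))$ through the loop/coloop cases --- and you make explicit the base case, the possibility that a loop is also a coloop, and the planar chromatic identification, which the paper leaves as ``one can check.''
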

\begin{proof}
  Let $\lambda^0_G(Q)=S^0_{G^*}(Q)$.
  For $e\in E(G)$ and $F=E(G)-e$,
  \begin{align*}
    \lambda^0_G(Q)=S^0_{\delta_{E(G)}G}(Q)
                  &=S^0_{\delta_FG-e}(Q)-Q^{-1/2}S^0_{\delta_e\delta_FG-e}(Q)\\
                  &=S^0_{\delta_F(G-e)}(Q)-Q^{1/2}S^0_{\delta_F(\delta_eG-e)}(Q)\\
                  &=\lambda^0_{G-e}(Q)-Q^{-1/2}\lambda^0_{G/e}(Q).
  \end{align*}
  We will show $\lambda_G(Q)=Q^{\abs{V(G)}/2}\lambda^0_G(Q)$.

  If $G$ is a collection of isolated vertices, $Q^{\abs{V(G)}/2}\lambda^0_G(Q)=S^0_G(Q)=Q^{\abs{V(G)}/2}S^0_G(Q)=Q^{\abs{V(G)}}$.
  If $e\in E(G)$ is a loop edge or a non-loop edge, then one can check that $Q^{\abs{V(G)}/2}\lambda^0_G(Q)$ has the same deletion-contraction relation as $\lambda_G(Q)$.

  Then, $\lambda_G(Q)=Q^{\abs{V(G)}/2}S^0_{G^*}(Q)=Q^{(\abs{V(G)}-\abs{E(G)}+\abs{V(G^*)}/2}S_{G^*}(Q)$, and the result follows from $\abs{V(G)}-\abs{E(G)}+\abs{V(G^*)}=2b_0(G)-2g(G)$.
\end{proof}

\section{Penrose polynomials}
\label{sec:penrose-polys}

In \cite{Penrose1971}, Penrose introduced tensor diagrams and calculated invariants of virtual graphs by interpreting each vertex as a tensor with edges representing contraction via some nondegenerate (anti)symmetric form, and he demonstrated the correspondence between the $\mathfrak{so}(3)$, $\mathfrak{sl}(2)$, and ``$\mathfrak{so}(-2)$'' invariants, where the ``dimension'' is $\sum_i\langle e^i,e_i\rangle$ when $\{e_i\}_i$ and $\{e^i\}_i$ are bases such that $\langle e_i,e^j\rangle=\delta_{ij}$, a convention that allows the form to remain implicit in tensor diagrams.
A general case is summarized in \cite{Bar-Natan1997}, which associates to a metric Lie algebra $\mathfrak{g}$ a scalar invariant $W_{\mathfrak{g}}(G)$ of a virtual graph $G$, all of whose vertices are degree $2$ or $3$, by replacing each degree-$2$ vertex with $\langle-,-\rangle:\mathfrak{g}^{\otimes 2}\to\C$, each degree-$3$ vertex with the invariant $3$-form $\langle -,[-,-]\rangle:\mathfrak{g}^{\otimes 3}\to\C$, and then contracting along each edge with the Casimir element.
In other words, this is an invariant from coloring the graph by the adjoint representation of $\mathfrak{g}$, where the orientation of the edges is irrelevant since the metric gives $\mathfrak{g}\cong\mathfrak{g}^*$.
The invariants $W_{\mathfrak{sl}(N)}(G)$, $W_{\mathfrak{so}(N)}(G)$, and $W_{\mathfrak{sp}(2N)}(G)$ are polynomials in $N$, and these are called the \emph{Penrose polynomials}.
By rescaling the $2$- and $3$-forms, the Penrose polynomials vary by a normalization factor of $a^{\abs{V(G)}}b^{\abs{E(G)}}$, with $a$ and $b$ functions of $N$ independent of the graph.

The $W_{\mathfrak{so}(N)}$ polynomial was extended in \cite{Aigner1997} for planar graphs with vertices of arbitrary degree, and again in \cite{Ellis-Monaghan2013} for surface graphs.
An algebra of connected cubic virtual graphs modulo the IHX relation is considered in \cite{Duzhin1998}, where, with multiplication being edge connect sum, the map to the Penrose polynomials is a homomorphism.

We give an extension of $W_{\mathfrak{sl}(N)}$ to signed virtual graphs of arbitrary degree in Section~\ref{sec:sln-rels}.

\subsection{Via the Brauer category}

In this section, we reiterate \cite{Penrose1971} and \cite{Bar-Natan1997} for $W_{\mathfrak{so}(N)}$ and $W_{\mathfrak{sl}(N)}$ in terms of the Brauer category and virtual graphs.
Let $k$ be a field, and consider the vector space $k^N$ with the standard inner product, which gives an isomorphism $k^N\cong (k^N)^*$ and a correspondence between $\mathfrak{gl}(N)=\End_k(k^N)$ and $k^N\otimes k^N$.
The trace operator $\tr:\mathfrak{gl}(N)\to k$ under this correspondence is $A\mapsto \raisebox{-3pt}{\marginbox{1.5pt 0pt}{}}\circ A$, and matrix multiplication is $A\otimes B\mapsto (\raisebox{-3pt}{\marginbox{1.5pt 0pt}{}}\otimes\raisebox{-3pt}{\marginbox{1.5pt 0pt}{}}\otimes\raisebox{-3pt}{\marginbox{1.5pt 0pt}{}})\circ(A\otimes B)$.
The Killing form on $\mathfrak{gl}(N)$ is a scale multiple of the trace form $A\otimes B\mapsto \tr(AB)$ and thus is a scale multiple of
\begin{equation*}
  A\otimes B\mapsto \raisebox{-5pt}{\marginbox{1.5pt 0pt}{}}\circ (A\otimes B).
\end{equation*}
The vertical reflection $\raisebox{-5pt}{\marginbox{1.5pt 0pt}{}}$ is the Casimir, and $\raisebox{-5pt}{\marginbox{1.5pt 0pt}{}}\circ\raisebox{-5pt}{\marginbox{1.5pt 0pt}{}}=N^2=\dim(\mathfrak{gl}(N))$.
The anti-involution $A\mapsto A^T$ is composition with $\raisebox{-6pt}{\marginbox{1.5pt 0pt}{}}$, and since $BA=(A^TB^T)^T$ we have the following representation for the Lie bracket:
\begin{equation*}
  [A,B]=\left(\raisebox{-8pt}{\marginbox{1.5pt 0pt}{}}-\raisebox{-8pt}{\marginbox{1.5pt 0pt}{}}\right)\circ(A\otimes B).
\end{equation*}

The algebra $Br^N_2$ contains projectors onto $\mathfrak{so}(N)$ and $\mathfrak{sl}(N)$.
The element $\frac{1}{2}\raisebox{-6pt}{\marginbox{1.5pt 0pt}{}}-\frac{1}{2}\raisebox{-6pt}{\marginbox{1.5pt 0pt}{}}$ is the antisymmetrizer, thus projects onto $\mathfrak{so}(N)$.
The element $\raisebox{-6pt}{\marginbox{1.5pt 0pt}{}}:=\raisebox{-6pt}{\marginbox{1.5pt 0pt}{}}-\frac{1}{N}\raisebox{-6pt}{\marginbox{1.5pt 0pt}{}}$ eliminates the trace of a matrix, since composition with it is equivalent to $A\mapsto A-\frac{\tr A}{N}\id_N$, thus projects onto $\mathfrak{sl}(N)$.

The Penrose polynomials, then, can be calculated by replacing each (trivalent) vertex with $\raisebox{-8pt}{\marginbox{1.5pt 0pt}{}}-\raisebox{-8pt}{\marginbox{1.5pt 0pt}{}}$ and each edge with the respective projector, since the Killing form for $\mathfrak{gl}(N)$ is two parallel strings.
Virtual crossings are replaced with $\raisebox{-8pt}{\marginbox{1.5pt 0pt}{}}$.
We are being cavalier with ``top'' and ``bottom'' with the diagrams because self-duality affords us this liberty.

There are some shortcuts one may take to calculate these Penrose polynomials.
For $W_{\mathfrak{so}(N)}$, since $\raisebox{-6pt}{\marginbox{1.5pt 0pt}{}}$ negates the antisymmetrizer, $\raisebox{-6pt}{\marginbox{1.5pt 0pt}{}}\circ\raisebox{-8pt}{\marginbox{1.5pt 0pt}{}}\circ(\raisebox{-6pt}{\marginbox{1.5pt 0pt}{}}\otimes\raisebox{-6pt}{\marginbox{1.5pt 0pt}{}})=-\raisebox{-8pt}{\marginbox{1.5pt 0pt}{}}$.
Hence, up to normalization, we may replace trivalent vertices with $\raisebox{-8pt}{\marginbox{1.5pt 0pt}{}}$ and edges with $\raisebox{-6pt}{\marginbox{1.5pt 0pt}{}}-\raisebox{-6pt}{\marginbox{1.5pt 0pt}{}}$.
We can extend this to arbitrary virtual graphs (as for surface graphs in \cite{Ellis-Monaghan2013}) using the following replacement:
\begin{equation*}
  \raisebox{-14pt}{\marginbox{1.5pt 0}{}}
  \longmapsto
  \raisebox{-14pt}{\marginbox{1.5pt 0}{}}.
\end{equation*}
Note that isolated vertices contribute a factor of $N$.
Loops evaluate to $N(N-1)$.

For $W_{\mathfrak{sl}(N)}$, we may replace edges that are incident to trivalent vertices with $\raisebox{-6pt}{\marginbox{1.5pt 0pt}{}}$ rather than $\raisebox{-6pt}{\marginbox{1.5pt 0pt}{}}-\frac{1}{N}\raisebox{-6pt}{\marginbox{1.5pt 0pt}{}}$ because $\tr[-,-]=0$.
Loops evaluate to $N^2-1$.

\subsection{Relations for \texorpdfstring{$W_{\mathfrak{so}(N)}$}{Wso(N)}}

In this section, we describe the ``contraction-deletion'' relations for $W_{\mathfrak{so}(N)}$ present in \cite{Ellis-Monaghan2013}, but in terms of virtual graphs.
In that paper, Ellis-Monaghan and Moffatt use the \emph{twisted dual} operation for graphs on unoriented surfaces, yet we have only been considering oriented surfaces.
Nevertheless, we will derive a relation directly from the Brauer algebra replacement rules, but we will also show how to augment our virtual graph notation to accommodate the description of relations with the twisted dual.

Let $\raisebox{-6pt}{\marginbox{1.5pt 0pt}{}}:=\raisebox{-6pt}{\marginbox{1.5pt 0pt}{}}-\raisebox{-6pt}{\marginbox{1.5pt 0pt}{}}$.
In general, a solid bar across $n$ strings is $\sum_{\sigma\in S_n}(-1)^{\sigma}\sigma_*$ in Penrose notation.
\begin{proposition}
  \label{prop:son-contr-del}
  For $W_{\mathfrak{so}(N)}$, the following relation holds for a non-loop edge:
  \begin{equation*}
    \raisebox{-15pt}{\marginbox{1.5pt 0}{
\begingroup%
  \makeatletter%
  \providecommand\color[2][]{%
    \errmessage{(Inkscape) Color is used for the text in Inkscape, but the package 'color.sty' is not loaded}%
    \renewcommand\color[2][]{}%
  }%
  \providecommand\transparent[1]{%
    \errmessage{(Inkscape) Transparency is used (non-zero) for the text in Inkscape, but the package 'transparent.sty' is not loaded}%
    \renewcommand\transparent[1]{}%
  }%
  \providecommand\rotatebox[2]{#2}%
  \newcommand*\fsize{\dimexpr\f@size pt\relax}%
  \newcommand*\lineheight[1]{\fontsize{\fsize}{#1\fsize}\selectfont}%
  \ifx\svgwidth\undefined%
    \setlength{\unitlength}{80.16330617bp}%
    \ifx\svgscale\undefined%
      \relax%
    \else%
      \setlength{\unitlength}{\unitlength * \real{\svgscale}}%
    \fi%
  \else%
    \setlength{\unitlength}{\svgwidth}%
  \fi%
  \global\let\svgwidth\undefined%
  \global\let\svgscale\undefined%
  \makeatother%
  \begin{picture}(1,0.43901773)%
    \lineheight{1}%
    \setlength\tabcolsep{0pt}%
    \put(0,0){\includegraphics[width=\unitlength,page=1]{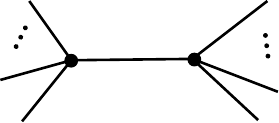}}%
  \end{picture}%
\endgroup%
}}
    =\raisebox{-15pt}{\marginbox{1.5pt 0}{
\begingroup%
  \makeatletter%
  \providecommand\color[2][]{%
    \errmessage{(Inkscape) Color is used for the text in Inkscape, but the package 'color.sty' is not loaded}%
    \renewcommand\color[2][]{}%
  }%
  \providecommand\transparent[1]{%
    \errmessage{(Inkscape) Transparency is used (non-zero) for the text in Inkscape, but the package 'transparent.sty' is not loaded}%
    \renewcommand\transparent[1]{}%
  }%
  \providecommand\rotatebox[2]{#2}%
  \newcommand*\fsize{\dimexpr\f@size pt\relax}%
  \newcommand*\lineheight[1]{\fontsize{\fsize}{#1\fsize}\selectfont}%
  \ifx\svgwidth\undefined%
    \setlength{\unitlength}{44.9400898bp}%
    \ifx\svgscale\undefined%
      \relax%
    \else%
      \setlength{\unitlength}{\unitlength * \real{\svgscale}}%
    \fi%
  \else%
    \setlength{\unitlength}{\svgwidth}%
  \fi%
  \global\let\svgwidth\undefined%
  \global\let\svgscale\undefined%
  \makeatother%
  \begin{picture}(1,0.78868601)%
    \lineheight{1}%
    \setlength\tabcolsep{0pt}%
    \put(0,0){\includegraphics[width=\unitlength,page=1]{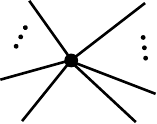}}%
  \end{picture}%
\endgroup%
}}
    -(-1)^m\raisebox{-15pt}{\marginbox{1.5pt 0}{
\begingroup%
  \makeatletter%
  \providecommand\color[2][]{%
    \errmessage{(Inkscape) Color is used for the text in Inkscape, but the package 'color.sty' is not loaded}%
    \renewcommand\color[2][]{}%
  }%
  \providecommand\transparent[1]{%
    \errmessage{(Inkscape) Transparency is used (non-zero) for the text in Inkscape, but the package 'transparent.sty' is not loaded}%
    \renewcommand\transparent[1]{}%
  }%
  \providecommand\rotatebox[2]{#2}%
  \newcommand*\fsize{\dimexpr\f@size pt\relax}%
  \newcommand*\lineheight[1]{\fontsize{\fsize}{#1\fsize}\selectfont}%
  \ifx\svgwidth\undefined%
    \setlength{\unitlength}{51.39968703bp}%
    \ifx\svgscale\undefined%
      \relax%
    \else%
      \setlength{\unitlength}{\unitlength * \real{\svgscale}}%
    \fi%
  \else%
    \setlength{\unitlength}{\svgwidth}%
  \fi%
  \global\let\svgwidth\undefined%
  \global\let\svgscale\undefined%
  \makeatother%
  \begin{picture}(1,0.71842933)%
    \lineheight{1}%
    \setlength\tabcolsep{0pt}%
    \put(0,0){\includegraphics[width=\unitlength,page=1]{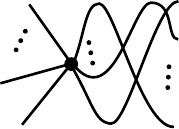}}%
  \end{picture}%
\endgroup%
}}
  \end{equation*}
  where $m$ is the number of half edges incident to the right vertex, not including the edge to be contracted.
  For a loop edge,
  \begin{align*}
    \raisebox{-15pt}{\marginbox{1.5pt 0}{
\begingroup%
  \makeatletter%
  \providecommand\color[2][]{%
    \errmessage{(Inkscape) Color is used for the text in Inkscape, but the package 'color.sty' is not loaded}%
    \renewcommand\color[2][]{}%
  }%
  \providecommand\transparent[1]{%
    \errmessage{(Inkscape) Transparency is used (non-zero) for the text in Inkscape, but the package 'transparent.sty' is not loaded}%
    \renewcommand\transparent[1]{}%
  }%
  \providecommand\rotatebox[2]{#2}%
  \newcommand*\fsize{\dimexpr\f@size pt\relax}%
  \newcommand*\lineheight[1]{\fontsize{\fsize}{#1\fsize}\selectfont}%
  \ifx\svgwidth\undefined%
    \setlength{\unitlength}{44.94009014bp}%
    \ifx\svgscale\undefined%
      \relax%
    \else%
      \setlength{\unitlength}{\unitlength * \real{\svgscale}}%
    \fi%
  \else%
    \setlength{\unitlength}{\svgwidth}%
  \fi%
  \global\let\svgwidth\undefined%
  \global\let\svgscale\undefined%
  \makeatother%
  \begin{picture}(1,0.78868588)%
    \lineheight{1}%
    \setlength\tabcolsep{0pt}%
    \put(0,0){\includegraphics[width=\unitlength,page=1]{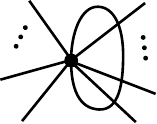}}%
  \end{picture}%
\endgroup%
}}
    =\raisebox{-15pt}{\marginbox{1.5pt 0}{
\begingroup%
  \makeatletter%
  \providecommand\color[2][]{%
    \errmessage{(Inkscape) Color is used for the text in Inkscape, but the package 'color.sty' is not loaded}%
    \renewcommand\color[2][]{}%
  }%
  \providecommand\transparent[1]{%
    \errmessage{(Inkscape) Transparency is used (non-zero) for the text in Inkscape, but the package 'transparent.sty' is not loaded}%
    \renewcommand\transparent[1]{}%
  }%
  \providecommand\rotatebox[2]{#2}%
  \newcommand*\fsize{\dimexpr\f@size pt\relax}%
  \newcommand*\lineheight[1]{\fontsize{\fsize}{#1\fsize}\selectfont}%
  \ifx\svgwidth\undefined%
    \setlength{\unitlength}{59.16329002bp}%
    \ifx\svgscale\undefined%
      \relax%
    \else%
      \setlength{\unitlength}{\unitlength * \real{\svgscale}}%
    \fi%
  \else%
    \setlength{\unitlength}{\svgwidth}%
  \fi%
  \global\let\svgwidth\undefined%
  \global\let\svgscale\undefined%
  \makeatother%
  \begin{picture}(1,0.59484756)%
    \lineheight{1}%
    \setlength\tabcolsep{0pt}%
    \put(0,0){\includegraphics[width=\unitlength,page=1]{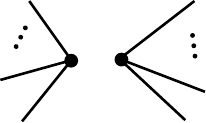}}%
  \end{picture}%
\endgroup%
}}
    -(-1)^m\raisebox{-15pt}{\marginbox{1.5pt 0}{}}
  \end{align*}
  where $m$ is the number of half edges between the two half edges of the loop on the right side.
\end{proposition}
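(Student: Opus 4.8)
The plan is to prove this as a purely local identity of morphisms in $\Br^N$, by starting from the Brauer-category description of $W_{\mathfrak{so}(N)}$ and expanding the projector $\BrAlt=\BrI-\BrX$ carried by the distinguished edge. Recall that, up to the overall normalization, the edge to be contracted is replaced by the double strand $\BrAlt$ joining the (antisymmetrizing-bar) replacement patterns of its two endpoints, and that a solid bar across $n$ strings equals $\sum_{\sigma\in S_n}(-1)^\sigma\sigma_*$. Everything reduces to the behaviour of a transposition sitting against such a bar.

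\textbf{Non-loop case.} Writing $\BrAlt=\BrI-\BrX$ and using linearity of $W_{\mathfrak{so}(N)}$ splits the local picture into two terms. In the $\BrI$ term the two endpoint patterns are glued directly along two parallel strands; I would check that this fused picture is exactly the $\mathfrak{so}(N)$ vertex-replacement of the single vertex obtained by merging the two endpoints of $e$ and deleting both copies of $e$, i.e.\ the local picture for $G/e=\delta_e G-e$ (matching the combinatorial description of $\mathfrak{so}$-contraction, which concatenates the two rotation systems). This gives the first term on the right. In the $-\BrX$ term the transposition sits on the two strands of $e$ feeding into the right vertex's bar. Since precomposing a solid bar with a permutation only multiplies by its sign, I slide $\BrX$ past the $m$ strands corresponding to the other half-edges at the right vertex and re-identify the output as the standard vertex pattern with that vertex's rotation reversed (the twisted-dual operation of \cite{Ellis-Monaghan2013}); the sign accumulated is $(-1)^m$, which together with the leading $-$ produces the coefficient $-(-1)^m$.

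\textbf{Loop case.} The same expansion applies, but now $\BrAlt$ sits on the two strands of the loop $e$, both attached to a single vertex bar. In the $\BrI$ term the two strands become parallel identities threading the bar, and tracing them out yields the $W_{\mathfrak{so}(N)}$ picture of $G/e$ (where $\delta_e$ of a loop splits the vertex), possibly producing a free circle evaluating via the loop value $N(N-1)$. In the $-\BrX$ term the transposition is again absorbed into the bar, giving the twisted-dual picture with sign governed by the number $m$ of half-edges lying between the two half-edges of the loop in the rotation, hence the coefficient $-(-1)^m$.

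\textbf{Main obstacle.} The delicate point is the sign bookkeeping: one must fix the orientation conventions for the antisymmetrizing bars and for rotation reversal, determine exactly which half-edges are counted by $m$ in each case, and verify that sliding the transposition through the bar and re-drawing the result as a standard (rotation-reversed) vertex contributes precisely $(-1)^m$ with no stray factor. A secondary check is that the $\BrI$ term of the loop expansion genuinely reproduces the $\delta_e$-contraction of a loop (a vertex split) rather than a naive smoothing; this is a finite local verification on the replacement patterns.
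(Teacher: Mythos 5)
Your skeleton is the same as the paper's: expand the projector $\BrAlt=\BrI-\BrX$ on the distinguished edge, read the $\BrI$ term as the contraction $G/e=\delta_eG-e$, and read the $-\BrX$ term as the rotation-reversed (twisted-dual) term with coefficient $-(-1)^m$. But the step that actually yields the proposition --- identifying the crossing term and producing the sign $(-1)^m$ --- rests on a misreading of the $W_{\mathfrak{so}(N)}$ replacement rules, and the mechanism you give would not work. In the paper's setup a degree-$d$ vertex is replaced by the cyclic (trace) strand pattern --- this is why an isolated vertex evaluates to $N$ and a loop to $N(N-1)$ --- and the only antisymmetrizers present are the two-strand projectors $\BrAlt$ carried by the incident edges; the vertex is \emph{not} a solid antisymmetrizing bar across all of its strands. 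If it were, precomposition with any permutation of its legs would change the diagram only by an overall sign, so the ``twisted-dual'' term would equal the contraction term up to sign, the claimed relation would collapse, and $W_{\mathfrak{so}(N)}$ would be blind to the rotation system, contrary to the point of the embedded Penrose polynomial (and an isolated vertex would then evaluate to $1$, not $N$). Moreover your two sign statements are inconsistent: a single bar hit by one transposition gives a single $-1$, not $(-1)^m$, and ``sliding $\BrX$ past the $m$ strands'' accumulates no signs at all, since a crossing picks up nothing from passing other strands.

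The correct mechanism (and the one in the paper) is different: after subdividing edges so that each of the other $m$ half-edges at the right vertex carries its own $\BrAlt$, one flips the entire right-hand cluster over; this introduces $m+1$ crossings, the one on the contracted edge cancels the $\BrX$ coming from the expansion, and each of the remaining $m$ crossings is absorbed into the adjacent projector via $\BrX\circ\BrAlt=-\BrAlt$, giving exactly $(-1)^m$ while exhibiting the flipped cluster as the rotation-reversed vertex of the twisted dual. This is precisely the verification you postpone in your ``main obstacle'' paragraph, so as written the argument is incomplete at its central step. The same gap affects your loop case: there too the sign comes from flipping part of the cluster and absorbing crossings into the $m$ intervening $\BrAlt$'s, and one must note (as the paper does) that when $m=0$ the contraction creates an isolated vertex, contributing a factor of $N$ rather than a free circle appearing incidentally.
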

\begin{proof}
  Subdividing edges as necessary,
  \begin{align*}
    \raisebox{-15pt}{\marginbox{1.5pt 0}{
\begingroup%
  \makeatletter%
  \providecommand\color[2][]{%
    \errmessage{(Inkscape) Color is used for the text in Inkscape, but the package 'color.sty' is not loaded}%
    \renewcommand\color[2][]{}%
  }%
  \providecommand\transparent[1]{%
    \errmessage{(Inkscape) Transparency is used (non-zero) for the text in Inkscape, but the package 'transparent.sty' is not loaded}%
    \renewcommand\transparent[1]{}%
  }%
  \providecommand\rotatebox[2]{#2}%
  \newcommand*\fsize{\dimexpr\f@size pt\relax}%
  \newcommand*\lineheight[1]{\fontsize{\fsize}{#1\fsize}\selectfont}%
  \ifx\svgwidth\undefined%
    \setlength{\unitlength}{80.44661768bp}%
    \ifx\svgscale\undefined%
      \relax%
    \else%
      \setlength{\unitlength}{\unitlength * \real{\svgscale}}%
    \fi%
  \else%
    \setlength{\unitlength}{\svgwidth}%
  \fi%
  \global\let\svgwidth\undefined%
  \global\let\svgscale\undefined%
  \makeatother%
  \begin{picture}(1,0.45732441)%
    \lineheight{1}%
    \setlength\tabcolsep{0pt}%
    \put(0,0){\includegraphics[width=\unitlength,page=1]{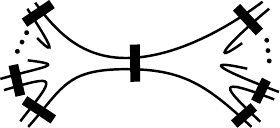}}%
  \end{picture}%
\endgroup%
}}
    &=\raisebox{-15pt}{\marginbox{1.5pt 0}{
\begingroup%
  \makeatletter%
  \providecommand\color[2][]{%
    \errmessage{(Inkscape) Color is used for the text in Inkscape, but the package 'color.sty' is not loaded}%
    \renewcommand\color[2][]{}%
  }%
  \providecommand\transparent[1]{%
    \errmessage{(Inkscape) Transparency is used (non-zero) for the text in Inkscape, but the package 'transparent.sty' is not loaded}%
    \renewcommand\transparent[1]{}%
  }%
  \providecommand\rotatebox[2]{#2}%
  \newcommand*\fsize{\dimexpr\f@size pt\relax}%
  \newcommand*\lineheight[1]{\fontsize{\fsize}{#1\fsize}\selectfont}%
  \ifx\svgwidth\undefined%
    \setlength{\unitlength}{80.44661765bp}%
    \ifx\svgscale\undefined%
      \relax%
    \else%
      \setlength{\unitlength}{\unitlength * \real{\svgscale}}%
    \fi%
  \else%
    \setlength{\unitlength}{\svgwidth}%
  \fi%
  \global\let\svgwidth\undefined%
  \global\let\svgscale\undefined%
  \makeatother%
  \begin{picture}(1,0.45732441)%
    \lineheight{1}%
    \setlength\tabcolsep{0pt}%
    \put(0,0){\includegraphics[width=\unitlength,page=1]{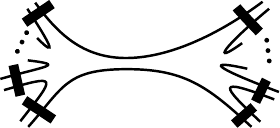}}%
  \end{picture}%
\endgroup%
}}
      -\raisebox{-15pt}{\marginbox{1.5pt 0}{
\begingroup%
  \makeatletter%
  \providecommand\color[2][]{%
    \errmessage{(Inkscape) Color is used for the text in Inkscape, but the package 'color.sty' is not loaded}%
    \renewcommand\color[2][]{}%
  }%
  \providecommand\transparent[1]{%
    \errmessage{(Inkscape) Transparency is used (non-zero) for the text in Inkscape, but the package 'transparent.sty' is not loaded}%
    \renewcommand\transparent[1]{}%
  }%
  \providecommand\rotatebox[2]{#2}%
  \newcommand*\fsize{\dimexpr\f@size pt\relax}%
  \newcommand*\lineheight[1]{\fontsize{\fsize}{#1\fsize}\selectfont}%
  \ifx\svgwidth\undefined%
    \setlength{\unitlength}{80.44661765bp}%
    \ifx\svgscale\undefined%
      \relax%
    \else%
      \setlength{\unitlength}{\unitlength * \real{\svgscale}}%
    \fi%
  \else%
    \setlength{\unitlength}{\svgwidth}%
  \fi%
  \global\let\svgwidth\undefined%
  \global\let\svgscale\undefined%
  \makeatother%
  \begin{picture}(1,0.45732441)%
    \lineheight{1}%
    \setlength\tabcolsep{0pt}%
    \put(0,0){\includegraphics[width=\unitlength,page=1]{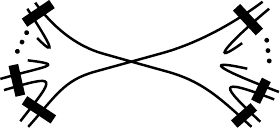}}%
  \end{picture}%
\endgroup%
}}\\
    &=\raisebox{-15pt}{\marginbox{1.5pt 0}{}}
      -(-1)^m\raisebox{-15pt}{\marginbox{1.5pt 0}{
\begingroup%
  \makeatletter%
  \providecommand\color[2][]{%
    \errmessage{(Inkscape) Color is used for the text in Inkscape, but the package 'color.sty' is not loaded}%
    \renewcommand\color[2][]{}%
  }%
  \providecommand\transparent[1]{%
    \errmessage{(Inkscape) Transparency is used (non-zero) for the text in Inkscape, but the package 'transparent.sty' is not loaded}%
    \renewcommand\transparent[1]{}%
  }%
  \providecommand\rotatebox[2]{#2}%
  \newcommand*\fsize{\dimexpr\f@size pt\relax}%
  \newcommand*\lineheight[1]{\fontsize{\fsize}{#1\fsize}\selectfont}%
  \ifx\svgwidth\undefined%
    \setlength{\unitlength}{80.14988689bp}%
    \ifx\svgscale\undefined%
      \relax%
    \else%
      \setlength{\unitlength}{\unitlength * \real{\svgscale}}%
    \fi%
  \else%
    \setlength{\unitlength}{\svgwidth}%
  \fi%
  \global\let\svgwidth\undefined%
  \global\let\svgscale\undefined%
  \makeatother%
  \begin{picture}(1,0.55110977)%
    \lineheight{1}%
    \setlength\tabcolsep{0pt}%
    \put(0,0){\includegraphics[width=\unitlength,page=1]{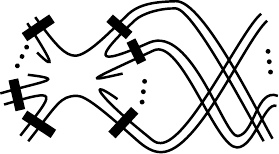}}%
  \end{picture}%
\endgroup%
}}
  \end{align*}
  where the second equality is from flipping the right-hand cluster over by introducing $m+1$ $\raisebox{-6pt}{\marginbox{1.5pt 0pt}{}}$'s, then using the fact that $\raisebox{-6pt}{\marginbox{1.5pt 0pt}{}}\circ\raisebox{-6pt}{\marginbox{1.5pt 0pt}{}}=-\raisebox{-6pt}{\marginbox{1.5pt 0pt}{}}$ for each of the $m$ $\raisebox{-6pt}{\marginbox{1.5pt 0pt}{}}$'s.
  The proof for loop edges is similar, but be aware that when $m=0$ contraction introduces an isolated vertex.
\end{proof}
\begin{corollary}
  $W_{\mathfrak{so}(N)}(G)-W_{\mathfrak{so}(N)}(G/e)
    =W_{\mathfrak{so}(N)}(\delta_e G)-W_{\mathfrak{so}(N)}(\delta_e G/e)$. That is,
  \begin{align*}
    \raisebox{-15pt}{\marginbox{1.5pt 0}{}}
    -\raisebox{-15pt}{\marginbox{1.5pt 0}{}}
    &= \raisebox{-15pt}{\marginbox{1.5pt 0}{}}
      -\raisebox{-15pt}{\marginbox{1.5pt 0}{}}.
  \end{align*}
\end{corollary}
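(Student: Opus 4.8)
The plan is to apply Proposition~\ref{prop:son-contr-del} twice — once to the edge $e$ in $G$ and once to the same edge $e$ in the partial dual $\delta_e G$ — and to check that the two ``twisted dual'' error terms agree. First I would record what follows from $\delta_e$ being an involution: since $\delta_e\delta_e G=G$, Definition~\ref{def:contraction} gives $\delta_e G/e=\delta_e(\delta_e G)-e=G-e$ and likewise $G/e=\delta_e G-e$. So the identity to be proved is equivalent to $W_{\mathfrak{so}(N)}(G)-W_{\mathfrak{so}(N)}(G/e)=W_{\mathfrak{so}(N)}(\delta_e G)-W_{\mathfrak{so}(N)}(G-e)$.

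Next I would work in the Brauer picture, where an edge of $G$ carries $\BrAlt=\BrI-\BrX$. Write $W_{\mathfrak{so}(N)}(H;M_e)$ for the evaluation of $H$ with the projector on edge $e$ replaced by $M\in Br^N_2$. Expanding the projector on $e$ gives $W_{\mathfrak{so}(N)}(H)=W_{\mathfrak{so}(N)}(H;\BrI_e)-W_{\mathfrak{so}(N)}(H;\BrX_e)$. A short check with the replacement rules shows that the $\BrI_e$-term is exactly the contraction: replacing the antisymmetrizer on $e$ by a pair of through-strands and reabsorbing them into the fan-replacements at the two endpoints of $e$ yields the fan-replacement of $H/e$ (for loop edges this is where the ``vertex splitting'' definition of $H/e=\delta_e H-e$ is used, and one verifies it still holds). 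Hence $W_{\mathfrak{so}(N)}(H)=W_{\mathfrak{so}(N)}(H/e)-W_{\mathfrak{so}(N)}(H;\BrX_e)$; the term $W_{\mathfrak{so}(N)}(H;\BrX_e)$ is, up to the sign $(-1)^m$, precisely the twisted dual term of Proposition~\ref{prop:son-contr-del}. Applying this to $H=G$ and to $H=\delta_e G$ and using $\delta_e G/e=G-e$, subtraction reduces the corollary to the single equality
\[
  W_{\mathfrak{so}(N)}(G;\BrX_e)=W_{\mathfrak{so}(N)}(\delta_e G;\BrX_e).
\]

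Finally I would prove this last equality, which is the main obstacle. The idea is that inserting $\BrX$ on the two strands running along $e$ makes the edge ``self-dual'': partial duality $\delta_e$ is a move supported in a neighbourhood of $e$ and its endpoints, and in the Brauer diagram it rewires the fan-replacements at those two endpoints in a way that amounts to a quarter turn of the pair of strands along $e$ — an operation sending $\BrI_e\leftrightarrow\BrE_e$ but fixing $\BrX_e$. To make this precise I would either (a) track through the arrow-presentation description of $\delta_e$ in Figure~\ref{fig:partial-dual} exactly how the two fan clusters at the endpoints of $e$ are reconnected, and observe that with $\BrX$ inserted the reconnection is absorbed — this is the ``flip the right-hand cluster over by introducing $\BrX$'s'' manipulation from the proof of Proposition~\ref{prop:son-contr-del}, now used symmetrically at both endpoints, so that the two ways of inserting $\BrX$ on $e$ and closing up give the same diagram — or (b) argue directly that the (signed) twisted dual term of Proposition~\ref{prop:son-contr-del} is unchanged upon first applying $\delta_e$, using that $\delta_e$ is an involution interchanging $e$ with its dual edge, so that $\delta_e$ followed by the twisted-dual-contract at $e$ is the same operation as the twisted-dual-contract at $e$ itself. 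One must also note that the reduction above never used which case (loop or non-loop) of Proposition~\ref{prop:son-contr-del} applies, so the bookkeeping when $e$ is a loop in exactly one of $G$, $\delta_e G$ goes through without change.
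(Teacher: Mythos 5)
Your proposal is correct and takes essentially the route the paper intends: apply Proposition~\ref{prop:son-contr-del} (equivalently, expand the antisymmetrizer on $e$ in the Brauer picture) to both $G$ and $\delta_e G$, use $\delta_e G/e=G-e$, and observe that the two twisted ($\BrX$-insertion) terms coincide, which is exactly the cluster-flipping manipulation already used in the paper's proof of that proposition. Your description of $\delta_e$ as acting on the two strands of $e$ by a quarter turn, interchanging $\BrI$ and $\BrE$ while fixing $\BrX$, is the correct justification for that coincidence (and is cleaner than your alternative argument (b), which as stated merely restates the claim).
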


As temporary notation, for an edge $e\in E(G)$ with a distinguished half edge, let $G\curlywedge e$ denote the second term on the right-hand side for each relation, where the distinguished half edge is the side where the rotation is reversed, which for a loop we mean the half edges counterclockwise from it.
The following relations give a recursive procedure to calculate $W_{\mathfrak{so}(N)}(G)$.
\begin{enumerate}
\item If $v\in V(G)$ is an isolated vertex, $W_{\mathfrak{so}(N)}(G)=NW_{\mathfrak{so}(N)}(G-v)$.
\item If $v\in V(G)$ is a vertex of degree $1$, $W_{\mathfrak{so}(N)}(G)=0$.
\item For any edge $e\in E(G)$, $W_{\mathfrak{so}(N)}(G)=W_{\mathfrak{so}(N)}(G/e)-(-1)^m W_{\mathfrak{so}(N)}(G\curlywedge e)$, where $m+1$ is the number incident half edges from the distinguished end of $e$.
\item $W_{\mathfrak{so}(N)}(G_1\amalg G_2)=W_{\mathfrak{so}(N)}(G_1)W_{\mathfrak{so}(N)}(G_2)$
\end{enumerate}
Other rules one might apply include:
\begin{itemize}
\item Subdividing an edge multiplies $W_{\mathfrak{so}(N)}$ by $2$.
  (This is due to the normalization.)
\item If $e$ is a loop whose half edges are adjacent in the rotation, then
  \begin{equation*}
    W_{\mathfrak{so}(N)}(G)=(N-1)W_{\mathfrak{so}(N)}(G-e). 
  \end{equation*}
\item Flipping a degree-$m$ vertex over (the virtual version of move V) multiplies by $(-1)^m$.
\item If $v$ is a degree-$3$ vertex with a loop, $W_{\mathfrak{so}(N)}(G)=0$.
\item The IHX relation from $\ad$-invariance of the Lie bracket:
  $\raisebox{-8pt}{\marginbox{1.5pt 0pt}{}}=\raisebox{-8pt}{\marginbox{1.5pt 0pt}{}}+\raisebox{-8pt}{\marginbox{1.5pt 0pt}{}}$.
\end{itemize}

\begin{proposition}
  If $G_1$ and $G_2$ are virtual graphs,
  \[ 2N(N-1)W_{\mathfrak{so}(N)}(G_1\csum_2 G_2) = W_{\mathfrak{so}(N)}(G_1\amalg G_2), \]
  where the connect sum is performed at degree-$2$ vertices, and
  \[ N(N-1)(N-2)W_{\mathfrak{so}(N)}(G_1\csum_3 G_2) = W_{\mathfrak{so}(N)}(G_1\amalg G_2), \]
  where $N(N-1)(N-2)$ is the $W_{\mathfrak{so}(N)}$ polynomial of the theta graph.
\end{proposition}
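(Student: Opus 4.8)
The plan is to run the arguments of Propositions~\ref{prop:s-connect-sum} and~\ref{prop:s-vertex-connect-sum}, now for the functor computing $W_{\mathfrak{so}(N)}$, which (through the Brauer category) lands in the full subcategory of $\operatorname{Rep}(SO(N))$ generated by tensor powers of the adjoint module $\mathfrak{g}=\mathfrak{so}(N)$. Since $W_{\mathfrak{so}(N)}(G)$ is a polynomial in $N$ and both asserted identities are polynomial, it suffices to verify them for $N\geq 5$, where $\mathfrak{g}$ is a simple $SO(N)$-module and Schur's lemma governs the $\Hom$ spaces that appear.

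For the degree-$2$ statement, cut $G_1$ and $G_2$ at the chosen degree-$2$ vertices as in Definition~\ref{def:edge-connect-sum}, writing $G_1=G_1'\circ\BrCup$ and $G_2=\BrCap\circ G_2'$ with $G_1'\in\VG([2],[0])$, $G_2'\in\VG([0],[2])$, so that $G_1\csum_2 G_2=G_1'\circ G_2'$. Applying the functor, $W(G_1')\in\Hom_{SO(N)}(\mathfrak{g}^{\otimes 2},\mathbf 1)$ and $W(G_2')\in\Hom_{SO(N)}(\mathbf 1,\mathfrak{g}^{\otimes 2})$; each space is one-dimensional, spanned respectively by $W(\BrCap)$ (the Killing form) and $W(\BrCup)$ (the Casimir). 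Hence $W(G_1')=\mu_1 W(\BrCap)$ and $W(G_2')=\mu_2 W(\BrCup)$ for scalars $\mu_i$, and since $\BrCap\circ\BrCup$ is, as a virtual graph, the two-edge theta graph $\Theta_2$ (a once-subdivided loop), functoriality yields $W_{\mathfrak{so}(N)}(G_i)=\mu_i W_{\mathfrak{so}(N)}(\Theta_2)$ and $W_{\mathfrak{so}(N)}(G_1\csum_2 G_2)=\mu_1\mu_2 W_{\mathfrak{so}(N)}(\Theta_2)$. Together with multiplicativity $W_{\mathfrak{so}(N)}(G_1\amalg G_2)=W_{\mathfrak{so}(N)}(G_1)W_{\mathfrak{so}(N)}(G_2)$ this gives $W_{\mathfrak{so}(N)}(\Theta_2)\cdot W_{\mathfrak{so}(N)}(G_1\csum_2 G_2)=W_{\mathfrak{so}(N)}(G_1\amalg G_2)$, and $W_{\mathfrak{so}(N)}(\Theta_2)=2N(N-1)$ follows from the loop rule ($N(N-1)$) together with the fact that subdividing an edge multiplies $W_{\mathfrak{so}(N)}$ by $2$.

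The degree-$3$ statement has the identical shape: cut $G_i$ at the distinguished trivalent vertex, $G_1=G_1'\circ\ScTriDual$ and $G_2=\ScTri\circ G_2'$, so $W(G_1')\in\Hom_{SO(N)}(\mathfrak{g}^{\otimes 3},\mathbf 1)$ and $W(G_2')\in\Hom_{SO(N)}(\mathbf 1,\mathfrak{g}^{\otimes 3})$. Each is again one-dimensional, spanned by the image of the trivalent vertex (the invariant $3$-form $\langle -,[-,-]\rangle$) and its dual: as in Proposition~\ref{prop:s-vertex-connect-sum}, such a morphism reduces by contraction-deletion to a combination of $\ScTri$ and $\ScTriFlip$, and antisymmetry of the bracket --- flipping a trivalent vertex multiplies $W_{\mathfrak{so}(N)}$ by $-1$ --- forces $\ScTriFlip=-\ScTri$. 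Writing $W(G_1')=\nu_1 W(\ScTri)$ and $W(G_2')=\nu_2 W(\ScTriDual)$ and noting that $\ScTri\circ\ScTriDual$ is the theta graph $\Theta_3$, the same bookkeeping gives $W_{\mathfrak{so}(N)}(\Theta_3)\cdot W_{\mathfrak{so}(N)}(G_1\csum_3 G_2)=W_{\mathfrak{so}(N)}(G_1\amalg G_2)$ with $W_{\mathfrak{so}(N)}(\Theta_3)=N(N-1)(N-2)$, the theta-graph value.

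The step that needs care is the one-dimensionality of $\Hom_{SO(N)}(\mathfrak{g}^{\otimes 3},\mathbf 1)$ --- equivalently, that generically in $N$ the bracket is, up to scalar, the only invariant trilinear map on $\mathfrak{so}(N)$, i.e.\ $S^2\mathfrak{so}(N)$ contains no copy of the adjoint, in contrast with $\mathfrak{sl}(N)$, whose symmetric $d$-tensor supplies a second generator (and hence the two-term formula of Proposition~\ref{prop:s-vertex-connect-sum}). One may alternatively verify this inside $\Br^c$ for generic $c$: the perfect matchings on $6$ points surviving the three antisymmetrizers span, under the sign-twisted $(\Z/2)^3$-action, a single line. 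Granting this non-degeneracy (which amounts to $N(N-1)(N-2)\neq 0$), the remaining points --- identifying $\BrCap\circ\BrCup$ and $\ScTri\circ\ScTriDual$ with $\Theta_2$ and $\Theta_3$, and evaluating $W_{\mathfrak{so}(N)}(\Theta_2)$ --- are routine.
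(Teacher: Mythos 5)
Your argument is correct, and it reaches the conclusion by a genuinely different route from the paper. The paper's proof is purely diagrammatic: it invokes Proposition~\ref{prop:son-contr-del} to reduce, by contraction--deletion inside the graph category, any graph with two or three boundary edges to a scalar multiple of the single-vertex star, which gives the needed one-dimensionality uniformly in $N$ (with the parameter treated formally, so no interpolation step is needed). You instead establish the same one-dimensionality representation-theoretically --- Schur's lemma for $\Hom_{SO(N)}(\mathfrak g^{\otimes 2},\mathbf 1)$ and the classification of invariant trilinear forms on $\mathfrak{so}(N)$ for the three-legged space --- at large fixed $N$, and then transfer back to the polynomial identity by interpolation; the remaining bookkeeping (cutting at the distinguished vertex, multiplicativity under $\amalg$, pairing via the theta graphs $\Theta_2$, $\Theta_3$ with values $2N(N-1)$ and $N(N-1)(N-2)$) matches Propositions~\ref{prop:s-connect-sum} and~\ref{prop:s-vertex-connect-sum} just as the paper intends. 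One caveat: the assertion that $S^2\mathfrak{so}(N)$ contains no copy of the adjoint for all $N\geq 5$ fails at $N=6$, since $\mathfrak{so}(6)\cong\mathfrak{sl}(4)$ carries the symmetric $d$-tensor, so $\Hom_{SO(6)}(\mathfrak g^{\otimes 3},\mathbf 1)$ is two-dimensional; this is harmless because the interpolation only needs infinitely many values of $N$, but the restriction should read ``$N\geq 5$, $N\neq 6$'' (or ``generic $N$''). Your alternative verification inside the Brauer category is in fact stronger than you claim: composing a perfect matching on six points with the three pair-antisymmetrizers produces no closed loops, so the span is one-dimensional for \emph{every} value of the loop parameter, not just generically --- carried out that way, your argument becomes essentially the paper's, with the interpolation step unnecessary.
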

\begin{proof}
  The approach is similar to Propositions \ref{prop:s-connect-sum} and \ref{prop:s-vertex-connect-sum}.
  Proposition~\ref{prop:son-contr-del} reduces a graph with two or three boundary edges to a multiple of one with a single interior vertex.
\end{proof}

\begin{figure}[htb]
  \centering
\begingroup%
  \makeatletter%
  \providecommand\color[2][]{%
    \errmessage{(Inkscape) Color is used for the text in Inkscape, but the package 'color.sty' is not loaded}%
    \renewcommand\color[2][]{}%
  }%
  \providecommand\transparent[1]{%
    \errmessage{(Inkscape) Transparency is used (non-zero) for the text in Inkscape, but the package 'transparent.sty' is not loaded}%
    \renewcommand\transparent[1]{}%
  }%
  \providecommand\rotatebox[2]{#2}%
  \newcommand*\fsize{\dimexpr\f@size pt\relax}%
  \newcommand*\lineheight[1]{\fontsize{\fsize}{#1\fsize}\selectfont}%
  \ifx\svgwidth\undefined%
    \setlength{\unitlength}{191.01180413bp}%
    \ifx\svgscale\undefined%
      \relax%
    \else%
      \setlength{\unitlength}{\unitlength * \real{\svgscale}}%
    \fi%
  \else%
    \setlength{\unitlength}{\svgwidth}%
  \fi%
  \global\let\svgwidth\undefined%
  \global\let\svgscale\undefined%
  \makeatother%
  \begin{picture}(1,0.34514879)%
    \lineheight{1}%
    \setlength\tabcolsep{0pt}%
    \put(0.00006135,0.31072594){\color[rgb]{0,0,0}\makebox(0,0)[lt]{\lineheight{0}\smash{\begin{tabular}[t]{l}T1)\end{tabular}}}}%
    \put(0.00006135,0.07419731){\color[rgb]{0,0,0}\makebox(0,0)[lt]{\lineheight{0}\smash{\begin{tabular}[t]{l}T2)\end{tabular}}}}%
    \put(0,0){\includegraphics[width=\unitlength,page=1]{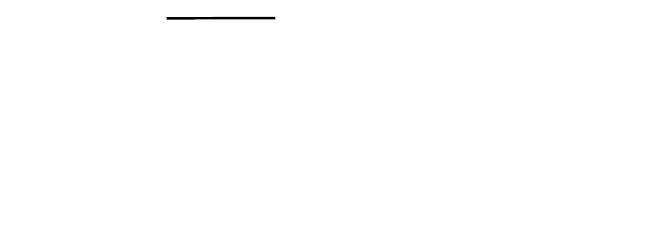}}%
    \put(0.57257419,0.31332811){\color[rgb]{0,0,0}\makebox(0,0)[lt]{\lineheight{0}\smash{\begin{tabular}[t]{l}$\sim$\end{tabular}}}}%
    \put(0,0){\includegraphics[width=\unitlength,page=2]{twist-moves.pdf}}%
    \put(0.57233971,0.08054088){\color[rgb]{0,0,0}\makebox(0,0)[lt]{\lineheight{1.25}\smash{\begin{tabular}[t]{l}$\sim$\end{tabular}}}}%
    \put(0,0){\includegraphics[width=\unitlength,page=3]{twist-moves.pdf}}%
  \end{picture}%
\endgroup%

  \caption{Moves for half twists.}
  \label{fig:twist-moves}
\end{figure}

\begin{figure}[htb]
  \centering
\begingroup%
  \makeatletter%
  \providecommand\color[2][]{%
    \errmessage{(Inkscape) Color is used for the text in Inkscape, but the package 'color.sty' is not loaded}%
    \renewcommand\color[2][]{}%
  }%
  \providecommand\transparent[1]{%
    \errmessage{(Inkscape) Transparency is used (non-zero) for the text in Inkscape, but the package 'transparent.sty' is not loaded}%
    \renewcommand\transparent[1]{}%
  }%
  \providecommand\rotatebox[2]{#2}%
  \newcommand*\fsize{\dimexpr\f@size pt\relax}%
  \newcommand*\lineheight[1]{\fontsize{\fsize}{#1\fsize}\selectfont}%
  \ifx\svgwidth\undefined%
    \setlength{\unitlength}{211.43190934bp}%
    \ifx\svgscale\undefined%
      \relax%
    \else%
      \setlength{\unitlength}{\unitlength * \real{\svgscale}}%
    \fi%
  \else%
    \setlength{\unitlength}{\svgwidth}%
  \fi%
  \global\let\svgwidth\undefined%
  \global\let\svgscale\undefined%
  \makeatother%
  \begin{picture}(1,0.19348393)%
    \lineheight{1}%
    \setlength\tabcolsep{0pt}%
    \put(0,0){\includegraphics[width=\unitlength,page=1]{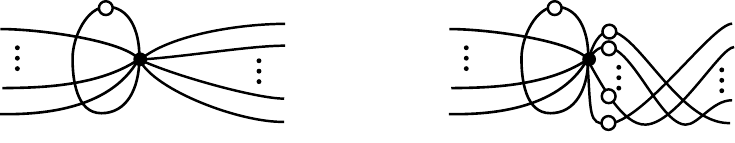}}%
    \put(0.45567489,0.08639406){\color[rgb]{0,0,0}\makebox(0,0)[lt]{\lineheight{1.25}\smash{\begin{tabular}[t]{l}$\overset{\delta_e}{\longleftrightarrow}$\end{tabular}}}}%
    \put(0.1501054,0.00531421){\color[rgb]{0,0,0}\makebox(0,0)[lt]{\lineheight{1.25}\smash{\begin{tabular}[t]{l}$e$\end{tabular}}}}%
    \put(0.75693702,0.00278052){\color[rgb]{0,0,0}\makebox(0,0)[lt]{\lineheight{1.25}\smash{\begin{tabular}[t]{l}$e$\end{tabular}}}}%
  \end{picture}%
\endgroup%

  \caption{The partial duality operation for a loop $e$ with a single twist along it.}
  \label{fig:twist-loop-dual}
\end{figure}

Now we will consider virtual graphs with signed edges, where an \emph{edge signing} is a function $E(G)\to\{\pm 1\}$.
We will take edges to be positive by default, while the negatively signed edges are marked with an open circle.
A negatively signed edge can be interpreted as a \emph{half twist} if we were to consider graphs in unoriented surfaces, and so edge signs are subject to the relations in Figure~\ref{fig:twist-moves}.
In the evaluation of $W_{\mathfrak{so}(N)}$, half twists are replaced with the anti-involution $\raisebox{-6pt}{\marginbox{1.5pt 0pt}{}}$, and the effect of twisting is that $W_{\mathfrak{so}(N)}(\tau_e G)=-W_{\mathfrak{so}(N)}$.
For an edge $e$ in $G$, let $\tau_e G$ denote $G$ with an additional twist along $e$.
When there are no twists along $e$, which can always be arranged when $e$ is not a loop, then $\delta_e G$ is defined as it was for virtual graphs without twists in Figure~\ref{fig:partial-dual}.
When $e$ is a loop with a twist, then by similarly considering a dual graph for a cellular embedding in an unoriented surface, or by considering the arrow presentation, partial duality is given by the more complicated transformation in Figure~\ref{fig:twist-loop-dual}.
Hence, Proposition~\ref{prop:son-contr-del} can be restated in a form closer to that in \cite{Ellis-Monaghan2013} by saying that for any edge $e$ in $G$,
\begin{equation*}
  W_{\mathfrak{so}(N)}(G)=W_{\mathfrak{so}(N)}(\delta_e G-e)-W_{\mathfrak{so}(N)}(\delta_e\tau_e G-e),
\end{equation*}
or, using $G/e=\delta_e G-e$, that $W_{\mathfrak{so}(N)}(G)=W_{\mathfrak{so}(N)}(G/e)-W_{\mathfrak{so}(N)}(\tau_e G/e)$.

By extending $W_{\mathfrak{sl}(N)}$ to accommodate half twists, using the same correspondence that a half twist is replaced with the anti-involution $\raisebox{-6pt}{\marginbox{1.5pt 0pt}{}}$, we obtain the following relationship:
\begin{lemma}
  \label{lem:so-as-sl}
  With a suitable normalization for the Penrose polynomials,
  \begin{align*}
    W_{\mathfrak{so}(N)}(G)
    &=\sum_{S\subset E(G)} (-1)^{\abs{S}} W_{\mathfrak{sl}(N)}(\tau_SG),
  \end{align*}
  where $\tau_S$ is the composition of all $\tau_s$ for $s\in S$.  
\end{lemma}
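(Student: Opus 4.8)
The plan is to work entirely inside the Brauer category, using the piece-by-piece prescriptions for $W_{\mathfrak{so}(N)}$ and $W_{\mathfrak{sl}(N)}$ reviewed above together with their extensions to signed graphs. The essential observation is that, before applying any of the $\mathfrak{so}(N)$ ``shortcuts,'' both Penrose polynomials are computed from the \emph{same} vertex data — each vertex is replaced by the antisymmetrized bracket tensor (for a trivalent vertex, $\BrLieOne-\BrLieTwo$, and the analogous antisymmetrizer-bar tensor for the arbitrary-degree extension) — and they differ only in the edge replacement: an edge is contracted with the $\mathfrak{sl}(N)$-Casimir $\BrJ=\BrI-\tfrac1N\BrE$ in one case and with (a scalar multiple of) the orthogonal projector $\tfrac12(\BrI-\BrX)$ in the other. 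A half twist on an edge is, in both extended theories, replaced by $\BrX$.

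Granting this, the lemma reduces to a single identity in $Br^N_2$. Using $\BrX\BrE=\BrE\BrX=\BrE$ (so $\BrX\BrJ=\BrJ\BrX=\BrX-\tfrac1N\BrE$), one computes
\[
 (\BrI-\BrX)\,\BrJ \;=\; \BrJ-\BrX\BrJ \;=\;\Bigl(\BrI-\tfrac1N\BrE\Bigr)-\Bigl(\BrX-\tfrac1N\BrE\Bigr)\;=\;\BrI-\BrX ,
\]
so that on a single edge $e$ the alternating sum $\sum_{s\in\{0,1\}}(-1)^{s}(\text{$\mathfrak{sl}(N)$-edge carrying $s$ twists})$ equals $\BrI-\BrX$, i.e. a fixed scalar multiple of the $\mathfrak{so}(N)$-edge element. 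Now evaluate $G$ by placing the common vertex tensors and, at each edge $e\in E(G)$, the element $\BrJ-\BrX\BrJ$; since the scalar obtained by closing up a Brauer diagram is multilinear in the labels of its edges, distributing the product over all edges gives
\[
 \sum_{S\subseteq E(G)}(-1)^{\abs{S}}\,W_{\mathfrak{sl}(N)}(\tau_S G),
\]
each summand being the $\mathfrak{sl}(N)$-evaluation of $G$ with twists inserted exactly along $S$. On the other hand, the same diagram with $\BrI-\BrX$ on every edge is a graph-independent power of $2$ times the $\mathfrak{so}(N)$-evaluation of $G$. Choosing the normalizations $a^{\abs{V(G)}}b^{\abs{E(G)}}$ of the two Penrose polynomials so that the common vertex factors cancel and the extra factor per edge is absorbed yields the claimed formula on the nose.

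The step needing the most care is verifying the common-data hypothesis across all vertex degrees: one must check that the arbitrary-degree extension of $W_{\mathfrak{sl}(N)}$ in Section~\ref{sec:sln-rels} uses the same higher-valence vertex replacement as the $W_{\mathfrak{so}(N)}$ extension of \cite{Ellis-Monaghan2013} (the $\mathfrak{gl}(N)$ antisymmetrizer bar with trace removal on edges), so that again only the edge labels differ. One should also confirm that the twist element is insensitive to where along an edge it is placed — which follows from $\BrX\BrJ=\BrJ\BrX$ and the half-twist moves of Figure~\ref{fig:twist-moves} — and dispose of the loop case, where a twisted loop still merely inserts a single $\BrX$ and the identity $(\BrI-\BrX)\BrJ=\BrI-\BrX$ applies after closing the strand. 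An alternative route that avoids the extension bookkeeping is to induct on $\abs{E(G)}$, playing the $W_{\mathfrak{so}(N)}$ contraction–deletion relation $W_{\mathfrak{so}(N)}(G)=W_{\mathfrak{so}(N)}(G/e)-W_{\mathfrak{so}(N)}(\tau_e G/e)$ from Proposition~\ref{prop:son-contr-del} against the corresponding relation for $W_{\mathfrak{sl}(N)}$ applied term by term to the right-hand side.
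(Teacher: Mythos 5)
Your proposal is correct and takes essentially the same route as the paper: the identity $(\BrI-\BrX)\circ\bigl(\BrI-\tfrac{1}{N}\BrE\bigr)=\BrI-\BrX$ in $Br^N_2$ that you derive is precisely the idempotent computation the paper's one-line proof cites, and your expansion of each edge as $\BrJ-\BrX\circ\BrJ$ followed by multilinear distribution over the edges (with the $a^{\abs{V(G)}}b^{\abs{E(G)}}$ normalization absorbing the factors of $2$) is the intended reading of that proof. You simply spell out the bookkeeping (common vertex tensors, placement of the twist, loops) that the paper leaves implicit.
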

\begin{proof}
  This follows from the analysis of the primitive central idempotents for $Br^N_2$.  In particular,
  \begin{align*}
    \left(\frac{1}{2}\raisebox{-6pt}{\marginbox{1.5pt 0pt}{}}-\frac{1}{2}\raisebox{-6pt}{\marginbox{1.5pt 0pt}{}}\right)
    \circ\left(\raisebox{-6pt}{\marginbox{1.5pt 0pt}{}}-\frac{1}{N}\raisebox{-6pt}{\marginbox{1.5pt 0pt}{}}\right)
    &=\frac{1}{2}\raisebox{-6pt}{\marginbox{1.5pt 0pt}{}}-\frac{1}{2}\raisebox{-6pt}{\marginbox{1.5pt 0pt}{}}.
  \end{align*}
\end{proof}

\subsection{Relations for \texorpdfstring{$W_{\mathfrak{sl}(N)}$}{Wsl(N)}}
\label{sec:sln-rels}

In this section, we extend the $W_{\mathfrak{sl}(N)}$ polynomial to non-cubic signed virtual graphs, derive some contraction-deletion-like rules, and then carry over Bar-Natan's results about planarity and the $\mathfrak{sl}(2)$ specialization.

For $v$ a vertex in a virtual graph $G$, let $\sigma_v G$ be the graph obtained from reversing the rotation system at $v$, and for $W\subset V(G)$, let $\sigma_W$ be the composition of all $\sigma_v$ for $v\in W$.
\begin{lemma}
  \label{lem:sl-as-s}
  With a suitable normalization, we can give $W_{\mathfrak{sl}(N)}$ in terms of the $S$-polynomial for a cubic virtual graph $G$ as
  \begin{equation*}
    W_{\mathfrak{sl}(N)}(G)=
    \sum_{W\subset V(G)}(-1)^{\abs{W}}S_{\sigma_W(G)}(N^2).
  \end{equation*}
\end{lemma}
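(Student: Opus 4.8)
The plan is to unwind the functor $\hat\Phi$ at the evaluation $Q=N^2$ and to recognize the alternating sum over vertex flips as the Penrose $\mathfrak{sl}(N)$ weight system, by matching the difference of the two rotation-dependent vertex replacements with the invariant Lie-bracket $3$-form. First I would recall that for $G$ a closed cubic virtual graph, $S_G(N^2)=\hat\Phi(G)\in\Br^N([0],[0])\cong k$ is computed by placing the $\mathfrak{so}$-type trivalent replacement scaled by $Q^{-1/2}=N^{-1}$ at each vertex and $Q^{1/2}P^{(2)}=N\,P^{(2)}$ on each edge, where $P^{(2)}=\BrI-N^{-1}\BrE$. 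Pulling out the scalars, $S_G(N^2)=N^{\abs{E(G)}-\abs{V(G)}}\,\mathsf{ev}(G)$, where $\mathsf{ev}(\,\cdot\,)$ denotes the evaluation in $\Br^N$ with the unscaled $\mathfrak{so}$-vertices and with $P^{(2)}$ on edges. Write $V_+$ for this vertex replacement at a vertex with its given rotation and $V_-$ for the one at $\sigma_v$. Since $\mathsf{ev}(\,\cdot\,)$ is multilinear in the tensor inserted at each vertex, and since flipping rotations changes neither $\abs{V(G)}$ nor $\abs{E(G)}$, the sum $\sum_{W\subset V(G)}(-1)^{\abs{W}}S_{\sigma_W G}(N^2)$ equals $N^{\abs{E(G)}-\abs{V(G)}}$ times the evaluation with $V_+-V_-$ placed at every vertex and $P^{(2)}$ on every edge.

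Next I would identify $V_+-V_-$, with its three legs precomposed with $P^{(2)}$, as the $\mathfrak{sl}(N)$ cubic vertex. Under $\mathfrak{gl}(N)\cong k^N\otimes k^N$, the $\mathfrak{so}$-type vertex $V_+$ is a chosen orientation of the cyclically symmetric $3$-form $(A,B,C)\mapsto\tr(ABC)$ — matrix multiplication around the rotation followed by the trace pairing — so reversing the rotation exchanges the two non-distinguished legs and produces $(A,B,C)\mapsto\tr(ACB)$. Hence $V_+-V_-$ corresponds to $\tr(ABC)-\tr(ACB)=\tr\bigl(A[B,C]\bigr)$, a scalar multiple of the invariant $3$-form $\langle-,[-,-]\rangle$ that defines the Penrose weight system; moreover $[B,C]$ is traceless, so the $N^{-1}\BrE$ summand of $P^{(2)}$ on the incident edges contributes nothing and $P^{(2)}$ may be replaced by $\BrI$ — precisely the shortcut ``replace edges incident to trivalent vertices with $\BrI$'' used to compute $W_{\mathfrak{sl}(N)}$ on cubic graphs. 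Thus the evaluation with $V_+-V_-$ at vertices and $P^{(2)}$ on edges is $\lambda^{\abs{V(G)}}W_{\mathfrak{sl}(N)}(G)$ for a fixed scalar $\lambda$ comparing $3$-form normalizations, whence $\sum_{W}(-1)^{\abs{W}}S_{\sigma_W G}(N^2)=\lambda^{\abs{V(G)}}N^{\abs{E(G)}-\abs{V(G)}}\,W_{\mathfrak{sl}(N)}(G)$; since the prefactor has the form $a^{\abs{V(G)}}b^{\abs{E(G)}}$ it is absorbed into the allowed rescaling of the Penrose $2$- and $3$-forms, giving the stated identity.

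The main obstacle I expect is this second step: pinning down the exact $\mathfrak{so}$-type vertex replacement used by $\hat\Phi$ (how a distinguished leg is treated and how a rotation reversal acts on the associated six-strand Brauer diagram) and confirming that $V_+-V_-$ is exactly the antisymmetrization yielding $\langle-,[-,-]\rangle$ rather than some other combination, all while tracking the single scalar $\lambda$ so that the normalization claim is honest. The parity $(-1)^{\abs{W}}$ is forced here — an even combination would collapse the difference of vertex forms rather than produce the Lie bracket — and it is the vertex-side analogue of the edge-side sign in Lemma~\ref{lem:so-as-sl}. The remaining points, namely the power-of-$N$ bookkeeping and the reduction of $P^{(2)}$ to $\BrI$ on edges via $\tr[-,-]=0$, are routine once the vertex identification is settled.
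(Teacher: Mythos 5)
Your argument is correct and is essentially the paper's proof read in the opposite direction: the paper simply expands each $W_{\mathfrak{sl}(N)}$ vertex $\BrLieOne-\BrLieTwo$ into a state sum over rotation choices, which is exactly your identification of $V_+-V_-$ with the bracket $3$-form $\tr(A[B,C])$ together with multilinearity, the tracelessness shortcut, and the $a^{\abs{V(G)}}b^{\abs{E(G)}}$ normalization bookkeeping. Your write-up just makes explicit the scalar factors that the paper absorbs into the phrase ``with a suitable normalization.''
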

\begin{proof}
  This is a state sum from expanding the vertices as $\raisebox{-8pt}{\marginbox{1.5pt 0pt}{}}$ and $-\raisebox{-8pt}{\marginbox{1.5pt 0pt}{}}$.
\end{proof}

We now extend the $W_{\mathfrak{sl}(N)}$ polynomial to non-cubic virtual graphs using the $S$-polynomial.
To get contraction-deletion-like relations, we deal with signed virtual graphs.
\begin{definition}
  A \emph{signed} virtual graph $G$ is a virtual graph along with a function $s:V(G)\to\{\pm 1\}$ assigning a sign to each vertex.
  When considering a signed virtual spatial graph, classical crossings are not given a sign.
  We will usually consider $s(v)=(-1)^{\deg(v)}$.
\end{definition}

Murakami in \cite{Murakami1993} defines an invariant $Z_s$ of signed spatial graphs in terms of the HOMFLY polynomial.
Recall that the HOMFLY polynomial is an invariant of oriented links in $S^3$ and is a Laurent polynomial in $\mathbb{Z}[(q-q^{-1})^{\pm 1},a^{\pm 1}]$ given by the skein relation
\begin{equation*}
  a^{-1}\raisebox{-6pt}{\marginbox{1.5pt 0pt}{}}-a\raisebox{-6pt}{\marginbox{1.5pt 0pt}{}}=(q-q^{-1})\raisebox{-6pt}{\marginbox{1.5pt 0pt}{}}
\end{equation*}
with the normalization that the HOMFLY polynomial of the unknot is $1$.
The HOMFLY polynomial of the disjoint union of an unknot and a link $L$ is that of $L$ times a factor of $\frac{a^{-1}-a}{q-q^{-1}}$.
The Murakami polynomial is given by replacing each vertex $v$ by
\begin{equation*}
  \raisebox{-14pt}{\marginbox{1.5pt 0}{}}
  \longmapsto
  \raisebox{-14pt}{\marginbox{1.5pt 0}{
\begingroup%
  \makeatletter%
  \providecommand\color[2][]{%
    \errmessage{(Inkscape) Color is used for the text in Inkscape, but the package 'color.sty' is not loaded}%
    \renewcommand\color[2][]{}%
  }%
  \providecommand\transparent[1]{%
    \errmessage{(Inkscape) Transparency is used (non-zero) for the text in Inkscape, but the package 'transparent.sty' is not loaded}%
    \renewcommand\transparent[1]{}%
  }%
  \providecommand\rotatebox[2]{#2}%
  \newcommand*\fsize{\dimexpr\f@size pt\relax}%
  \newcommand*\lineheight[1]{\fontsize{\fsize}{#1\fsize}\selectfont}%
  \ifx\svgwidth\undefined%
    \setlength{\unitlength}{41.38394479bp}%
    \ifx\svgscale\undefined%
      \relax%
    \else%
      \setlength{\unitlength}{\unitlength * \real{\svgscale}}%
    \fi%
  \else%
    \setlength{\unitlength}{\svgwidth}%
  \fi%
  \global\let\svgwidth\undefined%
  \global\let\svgscale\undefined%
  \makeatother%
  \begin{picture}(1,0.95805197)%
    \lineheight{1}%
    \setlength\tabcolsep{0pt}%
    \put(0,0){\includegraphics[width=\unitlength,page=1]{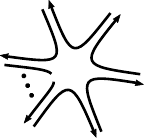}}%
  \end{picture}%
\endgroup%
}}
  +a^{-\deg(v)}s(v)
  \raisebox{-14pt}{\marginbox{1.5pt 0}{
\begingroup%
  \makeatletter%
  \providecommand\color[2][]{%
    \errmessage{(Inkscape) Color is used for the text in Inkscape, but the package 'color.sty' is not loaded}%
    \renewcommand\color[2][]{}%
  }%
  \providecommand\transparent[1]{%
    \errmessage{(Inkscape) Transparency is used (non-zero) for the text in Inkscape, but the package 'transparent.sty' is not loaded}%
    \renewcommand\transparent[1]{}%
  }%
  \providecommand\rotatebox[2]{#2}%
  \newcommand*\fsize{\dimexpr\f@size pt\relax}%
  \newcommand*\lineheight[1]{\fontsize{\fsize}{#1\fsize}\selectfont}%
  \ifx\svgwidth\undefined%
    \setlength{\unitlength}{41.3401239bp}%
    \ifx\svgscale\undefined%
      \relax%
    \else%
      \setlength{\unitlength}{\unitlength * \real{\svgscale}}%
    \fi%
  \else%
    \setlength{\unitlength}{\svgwidth}%
  \fi%
  \global\let\svgwidth\undefined%
  \global\let\svgscale\undefined%
  \makeatother%
  \begin{picture}(1,0.96109576)%
    \lineheight{1}%
    \setlength\tabcolsep{0pt}%
    \put(0,0){\includegraphics[width=\unitlength,page=1]{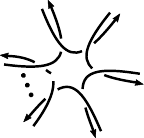}}%
  \end{picture}%
\endgroup%
}},
\end{equation*}
each edge by the idempotent
\begin{equation*}
  \raisebox{-6pt}{\marginbox{1.5pt 0pt}{}}-\frac{q-q^{-1}}{a^{-1}-a}\raisebox{-6pt}{\marginbox{1.5pt 0pt}{}},
\end{equation*}
and each classical crossing by $\raisebox{-6pt}{\marginbox{1.5pt 0pt}{}}$.

When $a=q^{-N}$ for $N\in\mathbb{N}$, the HOMFLY loop factor is the quantum integer $[N]_q$.
If $q\to 1$, then the skein relation reduces to $\raisebox{-6pt}{\marginbox{1.5pt 0pt}{}}=\raisebox{-6pt}{\marginbox{1.5pt 0pt}{}}$ with loops giving a factor of $[N]_1=N$, and so this evaluation of the HOMFLY polynomial can be thought of as taking place in $\Br^{N}$, with the edge idempotent reducing to the Jones-Wenzl projector $P^{(2)}$.
In fact, the Murakami polynomial for a signed cubic spatial graph, all of whose vertex signs are $-1$, gives at this evaluation a normalization of $W_{\mathfrak{sl}(N)}$ for the underlying virtual graph (the ribbon graph).
Conversely, by assigning arbitrary classical crossings to virtual crossings in a diagram of a virtual graph, the Murakami polynomial gives a way to extend $W_{\mathfrak{sl}(N)}$ to non-cubic virtual graphs.
We will deal exclusively with this evaluation as we work with virtual graphs.

With the normalization as in Lemma~\ref{lem:sl-as-s}, this evaluation of the Murakami polynomial in $\Br^N$ can be given functorially from the category of signed virtual graphs as
\begin{align*}
    \raisebox{-14pt}{\marginbox{1.5pt 0}{}}
  &\longmapsto
  N^{-1}\left(\raisebox{-14pt}{\marginbox{1.5pt 0}{
\begingroup%
  \makeatletter%
  \providecommand\color[2][]{%
    \errmessage{(Inkscape) Color is used for the text in Inkscape, but the package 'color.sty' is not loaded}%
    \renewcommand\color[2][]{}%
  }%
  \providecommand\transparent[1]{%
    \errmessage{(Inkscape) Transparency is used (non-zero) for the text in Inkscape, but the package 'transparent.sty' is not loaded}%
    \renewcommand\transparent[1]{}%
  }%
  \providecommand\rotatebox[2]{#2}%
  \newcommand*\fsize{\dimexpr\f@size pt\relax}%
  \newcommand*\lineheight[1]{\fontsize{\fsize}{#1\fsize}\selectfont}%
  \ifx\svgwidth\undefined%
    \setlength{\unitlength}{39.20439227bp}%
    \ifx\svgscale\undefined%
      \relax%
    \else%
      \setlength{\unitlength}{\unitlength * \real{\svgscale}}%
    \fi%
  \else%
    \setlength{\unitlength}{\svgwidth}%
  \fi%
  \global\let\svgwidth\undefined%
  \global\let\svgscale\undefined%
  \makeatother%
  \begin{picture}(1,0.95861196)%
    \lineheight{1}%
    \setlength\tabcolsep{0pt}%
    \put(0,0){\includegraphics[width=\unitlength,page=1]{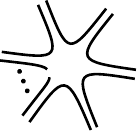}}%
  \end{picture}%
\endgroup%
}}
  +s(v)
    \raisebox{-14pt}{\marginbox{1.5pt 0}{
\begingroup%
  \makeatletter%
  \providecommand\color[2][]{%
    \errmessage{(Inkscape) Color is used for the text in Inkscape, but the package 'color.sty' is not loaded}%
    \renewcommand\color[2][]{}%
  }%
  \providecommand\transparent[1]{%
    \errmessage{(Inkscape) Transparency is used (non-zero) for the text in Inkscape, but the package 'transparent.sty' is not loaded}%
    \renewcommand\transparent[1]{}%
  }%
  \providecommand\rotatebox[2]{#2}%
  \newcommand*\fsize{\dimexpr\f@size pt\relax}%
  \newcommand*\lineheight[1]{\fontsize{\fsize}{#1\fsize}\selectfont}%
  \ifx\svgwidth\undefined%
    \setlength{\unitlength}{39.21044821bp}%
    \ifx\svgscale\undefined%
      \relax%
    \else%
      \setlength{\unitlength}{\unitlength * \real{\svgscale}}%
    \fi%
  \else%
    \setlength{\unitlength}{\svgwidth}%
  \fi%
  \global\let\svgwidth\undefined%
  \global\let\svgscale\undefined%
  \makeatother%
  \begin{picture}(1,0.95814899)%
    \lineheight{1}%
    \setlength\tabcolsep{0pt}%
    \put(0,0){\includegraphics[width=\unitlength,page=1]{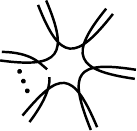}}%
  \end{picture}%
\endgroup%
}}\right)\\
\end{align*}
with edges being sent to $N\raisebox{-6pt}{\marginbox{1.5pt 0pt}{}}-\raisebox{-6pt}{\marginbox{1.5pt 0pt}{}}$.
This functor factors through $\Sc^{N^2}$.
In this category, define the following symbol for the element induced by the order reversing permutation:
\begin{equation*}
  \raisebox{-17pt}{\marginbox{4pt 0}{
\begingroup%
  \makeatletter%
  \providecommand\color[2][]{%
    \errmessage{(Inkscape) Color is used for the text in Inkscape, but the package 'color.sty' is not loaded}%
    \renewcommand\color[2][]{}%
  }%
  \providecommand\transparent[1]{%
    \errmessage{(Inkscape) Transparency is used (non-zero) for the text in Inkscape, but the package 'transparent.sty' is not loaded}%
    \renewcommand\transparent[1]{}%
  }%
  \providecommand\rotatebox[2]{#2}%
  \newcommand*\fsize{\dimexpr\f@size pt\relax}%
  \newcommand*\lineheight[1]{\fontsize{\fsize}{#1\fsize}\selectfont}%
  \ifx\svgwidth\undefined%
    \setlength{\unitlength}{38.57390958bp}%
    \ifx\svgscale\undefined%
      \relax%
    \else%
      \setlength{\unitlength}{\unitlength * \real{\svgscale}}%
    \fi%
  \else%
    \setlength{\unitlength}{\svgwidth}%
  \fi%
  \global\let\svgwidth\undefined%
  \global\let\svgscale\undefined%
  \makeatother%
  \begin{picture}(1,1.04955658)%
    \lineheight{1}%
    \setlength\tabcolsep{0pt}%
    \put(0,0){\includegraphics[width=\unitlength,page=1]{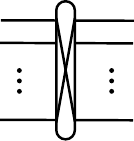}}%
  \end{picture}%
\endgroup%
}}
  =
  \raisebox{-12pt}{\marginbox{4pt 0}{
\begingroup%
  \makeatletter%
  \providecommand\color[2][]{%
    \errmessage{(Inkscape) Color is used for the text in Inkscape, but the package 'color.sty' is not loaded}%
    \renewcommand\color[2][]{}%
  }%
  \providecommand\transparent[1]{%
    \errmessage{(Inkscape) Transparency is used (non-zero) for the text in Inkscape, but the package 'transparent.sty' is not loaded}%
    \renewcommand\transparent[1]{}%
  }%
  \providecommand\rotatebox[2]{#2}%
  \newcommand*\fsize{\dimexpr\f@size pt\relax}%
  \newcommand*\lineheight[1]{\fontsize{\fsize}{#1\fsize}\selectfont}%
  \ifx\svgwidth\undefined%
    \setlength{\unitlength}{38.46091029bp}%
    \ifx\svgscale\undefined%
      \relax%
    \else%
      \setlength{\unitlength}{\unitlength * \real{\svgscale}}%
    \fi%
  \else%
    \setlength{\unitlength}{\svgwidth}%
  \fi%
  \global\let\svgwidth\undefined%
  \global\let\svgscale\undefined%
  \makeatother%
  \begin{picture}(1,0.76331007)%
    \lineheight{1}%
    \setlength\tabcolsep{0pt}%
    \put(0,0){\includegraphics[width=\unitlength,page=1]{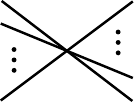}}%
  \end{picture}%
\endgroup%
}}.
\end{equation*}
Define for $a=\pm 1$ the following signed vertex elements of all degrees:
\begin{equation*}
  \raisebox{-21pt}{\marginbox{4pt 0}{
\begingroup%
  \makeatletter%
  \providecommand\color[2][]{%
    \errmessage{(Inkscape) Color is used for the text in Inkscape, but the package 'color.sty' is not loaded}%
    \renewcommand\color[2][]{}%
  }%
  \providecommand\transparent[1]{%
    \errmessage{(Inkscape) Transparency is used (non-zero) for the text in Inkscape, but the package 'transparent.sty' is not loaded}%
    \renewcommand\transparent[1]{}%
  }%
  \providecommand\rotatebox[2]{#2}%
  \newcommand*\fsize{\dimexpr\f@size pt\relax}%
  \newcommand*\lineheight[1]{\fontsize{\fsize}{#1\fsize}\selectfont}%
  \ifx\svgwidth\undefined%
    \setlength{\unitlength}{34.77544598bp}%
    \ifx\svgscale\undefined%
      \relax%
    \else%
      \setlength{\unitlength}{\unitlength * \real{\svgscale}}%
    \fi%
  \else%
    \setlength{\unitlength}{\svgwidth}%
  \fi%
  \global\let\svgwidth\undefined%
  \global\let\svgscale\undefined%
  \makeatother%
  \begin{picture}(1,1.14416332)%
    \lineheight{1}%
    \setlength\tabcolsep{0pt}%
    \put(0,0){\includegraphics[width=\unitlength,page=1]{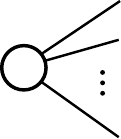}}%
    \put(0.11948407,0.48919745){\color[rgb]{0,0,0}\makebox(0,0)[lt]{\lineheight{0}\smash{\begin{tabular}[t]{l}$a$\end{tabular}}}}%
  \end{picture}%
\endgroup%
}}
  =
  \raisebox{-21pt}{\marginbox{4pt 0}{
\begingroup%
  \makeatletter%
  \providecommand\color[2][]{%
    \errmessage{(Inkscape) Color is used for the text in Inkscape, but the package 'color.sty' is not loaded}%
    \renewcommand\color[2][]{}%
  }%
  \providecommand\transparent[1]{%
    \errmessage{(Inkscape) Transparency is used (non-zero) for the text in Inkscape, but the package 'transparent.sty' is not loaded}%
    \renewcommand\transparent[1]{}%
  }%
  \providecommand\rotatebox[2]{#2}%
  \newcommand*\fsize{\dimexpr\f@size pt\relax}%
  \newcommand*\lineheight[1]{\fontsize{\fsize}{#1\fsize}\selectfont}%
  \ifx\svgwidth\undefined%
    \setlength{\unitlength}{30.40327399bp}%
    \ifx\svgscale\undefined%
      \relax%
    \else%
      \setlength{\unitlength}{\unitlength * \real{\svgscale}}%
    \fi%
  \else%
    \setlength{\unitlength}{\svgwidth}%
  \fi%
  \global\let\svgwidth\undefined%
  \global\let\svgscale\undefined%
  \makeatother%
  \begin{picture}(1,1.30870082)%
    \lineheight{1}%
    \setlength\tabcolsep{0pt}%
    \put(0,0){\includegraphics[width=\unitlength,page=1]{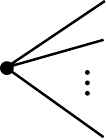}}%
  \end{picture}%
\endgroup%
}}
  +a
  \raisebox{-21pt}{\marginbox{4pt 0}{
\begingroup%
  \makeatletter%
  \providecommand\color[2][]{%
    \errmessage{(Inkscape) Color is used for the text in Inkscape, but the package 'color.sty' is not loaded}%
    \renewcommand\color[2][]{}%
  }%
  \providecommand\transparent[1]{%
    \errmessage{(Inkscape) Transparency is used (non-zero) for the text in Inkscape, but the package 'transparent.sty' is not loaded}%
    \renewcommand\transparent[1]{}%
  }%
  \providecommand\rotatebox[2]{#2}%
  \newcommand*\fsize{\dimexpr\f@size pt\relax}%
  \newcommand*\lineheight[1]{\fontsize{\fsize}{#1\fsize}\selectfont}%
  \ifx\svgwidth\undefined%
    \setlength{\unitlength}{30.40327754bp}%
    \ifx\svgscale\undefined%
      \relax%
    \else%
      \setlength{\unitlength}{\unitlength * \real{\svgscale}}%
    \fi%
  \else%
    \setlength{\unitlength}{\svgwidth}%
  \fi%
  \global\let\svgwidth\undefined%
  \global\let\svgscale\undefined%
  \makeatother%
  \begin{picture}(1,1.34021451)%
    \lineheight{1}%
    \setlength\tabcolsep{0pt}%
    \put(0,0){\includegraphics[width=\unitlength,page=1]{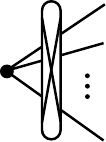}}%
  \end{picture}%
\endgroup%
}}.
\end{equation*}
Then by replacing each signed vertex $v$ with the $a=s(v)$ element, one gets the same evaluation of the Murakami polynomial, which for negatively signed cubic virtual graphs agrees with Lemma~\ref{lem:sl-as-s}.

\begin{definition}
  \label{lem:sl-as-s-extended}
  Let $G$ be a signed virtual graph.
  Define $W_{\mathfrak{sl}(N)}(G)$ using the above $S$-polynomial expansion.
  By expanding the signed vertex elements, we can write
  \begin{equation*}
    W_{\mathfrak{sl}(N)}(G)=\sum_{W\subset V(G)}\left(\prod_{v\in W}s(v)\right)S_{\sigma_W(G)}(N^2),
  \end{equation*}
  which agrees with the usual $W_{\mathfrak{sl}(N)}$ when $G$ is a cubic graph with negatively signed edges.
\end{definition}

\begin{remark}
  The expansion for a normalization of $W_{\mathfrak{sl}(N)}$ can also be expressed as follows.
  For each $n\in\mathbb{N}$ and $s\in\{\pm 1\}$
  there are invariant forms $\alpha^s_n:\mathfrak{sl}(N)^{\otimes n}\to \mathbb{C}$ on the adjoint representation  defined by $\alpha^s_n(A_1,\dots,A_n)=\tr(A_1\cdots A_n+s A_n\cdots A_1)$.
  In particular, $\alpha^{-1}_3(A_1,A_2,A_3)=c B(A_1,[A_2,A_3])$ for some constant $c$, where $B$ is the Killing form.
  By placing such operators at each vertex of a signed virtual graph, one can contract the edges using the Casimir element for $\mathfrak{sl}(N)$, yielding an element of $\mathbb{C}$.
\end{remark}

\begin{theorem}
  \label{thm:sl-edge-contr-del}
  Let $a,b\in\{\pm 1\}$.
  The extended $W_{\mathfrak{sl}(N)}$ polynomial satisfies
  \begin{equation*}
    \raisebox{-21pt}{\marginbox{4pt 0}{
\begingroup%
  \makeatletter%
  \providecommand\color[2][]{%
    \errmessage{(Inkscape) Color is used for the text in Inkscape, but the package 'color.sty' is not loaded}%
    \renewcommand\color[2][]{}%
  }%
  \providecommand\transparent[1]{%
    \errmessage{(Inkscape) Transparency is used (non-zero) for the text in Inkscape, but the package 'transparent.sty' is not loaded}%
    \renewcommand\transparent[1]{}%
  }%
  \providecommand\rotatebox[2]{#2}%
  \newcommand*\fsize{\dimexpr\f@size pt\relax}%
  \newcommand*\lineheight[1]{\fontsize{\fsize}{#1\fsize}\selectfont}%
  \ifx\svgwidth\undefined%
    \setlength{\unitlength}{76.72208059bp}%
    \ifx\svgscale\undefined%
      \relax%
    \else%
      \setlength{\unitlength}{\unitlength * \real{\svgscale}}%
    \fi%
  \else%
    \setlength{\unitlength}{\svgwidth}%
  \fi%
  \global\let\svgwidth\undefined%
  \global\let\svgscale\undefined%
  \makeatother%
  \begin{picture}(1,0.59446466)%
    \lineheight{1}%
    \setlength\tabcolsep{0pt}%
    \put(0,0){\includegraphics[width=\unitlength,page=1]{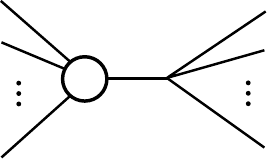}}%
    \put(0.28254586,0.2558121){\color[rgb]{0,0,0}\makebox(0,0)[lt]{\lineheight{1.25}\smash{\begin{tabular}[t]{l}$a$\end{tabular}}}}%
    \put(0,0){\includegraphics[width=\unitlength,page=2]{signed-vtx-a-b.pdf}}%
    \put(0.61142287,0.25319377){\color[rgb]{0,0,0}\makebox(0,0)[lt]{\lineheight{1.25}\smash{\begin{tabular}[t]{l}$b$\end{tabular}}}}%
  \end{picture}%
\endgroup%
}}
    =
    \raisebox{-21pt}{\marginbox{4pt 0}{
\begingroup%
  \makeatletter%
  \providecommand\color[2][]{%
    \errmessage{(Inkscape) Color is used for the text in Inkscape, but the package 'color.sty' is not loaded}%
    \renewcommand\color[2][]{}%
  }%
  \providecommand\transparent[1]{%
    \errmessage{(Inkscape) Transparency is used (non-zero) for the text in Inkscape, but the package 'transparent.sty' is not loaded}%
    \renewcommand\transparent[1]{}%
  }%
  \providecommand\rotatebox[2]{#2}%
  \newcommand*\fsize{\dimexpr\f@size pt\relax}%
  \newcommand*\lineheight[1]{\fontsize{\fsize}{#1\fsize}\selectfont}%
  \ifx\svgwidth\undefined%
    \setlength{\unitlength}{54.22212192bp}%
    \ifx\svgscale\undefined%
      \relax%
    \else%
      \setlength{\unitlength}{\unitlength * \real{\svgscale}}%
    \fi%
  \else%
    \setlength{\unitlength}{\svgwidth}%
  \fi%
  \global\let\svgwidth\undefined%
  \global\let\svgscale\undefined%
  \makeatother%
  \begin{picture}(1,0.84114303)%
    \lineheight{1}%
    \setlength\tabcolsep{0pt}%
    \put(0,0){\includegraphics[width=\unitlength,page=1]{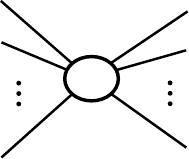}}%
    \put(0.38038788,0.37043635){\color[rgb]{0,0,0}\makebox(0,0)[lt]{\lineheight{1.25}\smash{\begin{tabular}[t]{l}$ab$\end{tabular}}}}%
  \end{picture}%
\endgroup%
}}
    +b
    \raisebox{-21pt}{\marginbox{4pt 0}{
\begingroup%
  \makeatletter%
  \providecommand\color[2][]{%
    \errmessage{(Inkscape) Color is used for the text in Inkscape, but the package 'color.sty' is not loaded}%
    \renewcommand\color[2][]{}%
  }%
  \providecommand\transparent[1]{%
    \errmessage{(Inkscape) Transparency is used (non-zero) for the text in Inkscape, but the package 'transparent.sty' is not loaded}%
    \renewcommand\transparent[1]{}%
  }%
  \providecommand\rotatebox[2]{#2}%
  \newcommand*\fsize{\dimexpr\f@size pt\relax}%
  \newcommand*\lineheight[1]{\fontsize{\fsize}{#1\fsize}\selectfont}%
  \ifx\svgwidth\undefined%
    \setlength{\unitlength}{54.22214213bp}%
    \ifx\svgscale\undefined%
      \relax%
    \else%
      \setlength{\unitlength}{\unitlength * \real{\svgscale}}%
    \fi%
  \else%
    \setlength{\unitlength}{\svgwidth}%
  \fi%
  \global\let\svgwidth\undefined%
  \global\let\svgscale\undefined%
  \makeatother%
  \begin{picture}(1,0.84114282)%
    \lineheight{1}%
    \setlength\tabcolsep{0pt}%
    \put(0,0){\includegraphics[width=\unitlength,page=1]{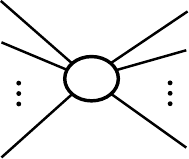}}%
    \put(0.38038775,0.37043609){\color[rgb]{0,0,0}\makebox(0,0)[lt]{\lineheight{1.25}\smash{\begin{tabular}[t]{l}$ab$\end{tabular}}}}%
    \put(0,0){\includegraphics[width=\unitlength,page=2]{signed-vtx-abx.pdf}}%
  \end{picture}%
\endgroup%
}}
    -  \raisebox{-21pt}{\marginbox{4pt 0}{
\begingroup%
  \makeatletter%
  \providecommand\color[2][]{%
    \errmessage{(Inkscape) Color is used for the text in Inkscape, but the package 'color.sty' is not loaded}%
    \renewcommand\color[2][]{}%
  }%
  \providecommand\transparent[1]{%
    \errmessage{(Inkscape) Transparency is used (non-zero) for the text in Inkscape, but the package 'transparent.sty' is not loaded}%
    \renewcommand\transparent[1]{}%
  }%
  \providecommand\rotatebox[2]{#2}%
  \newcommand*\fsize{\dimexpr\f@size pt\relax}%
  \newcommand*\lineheight[1]{\fontsize{\fsize}{#1\fsize}\selectfont}%
  \ifx\svgwidth\undefined%
    \setlength{\unitlength}{70.72207664bp}%
    \ifx\svgscale\undefined%
      \relax%
    \else%
      \setlength{\unitlength}{\unitlength * \real{\svgscale}}%
    \fi%
  \else%
    \setlength{\unitlength}{\svgwidth}%
  \fi%
  \global\let\svgwidth\undefined%
  \global\let\svgscale\undefined%
  \makeatother%
  \begin{picture}(1,0.64489848)%
    \lineheight{1}%
    \setlength\tabcolsep{0pt}%
    \put(0,0){\includegraphics[width=\unitlength,page=1]{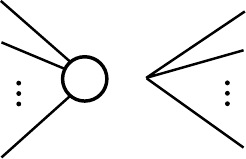}}%
    \put(0.30651671,0.27751504){\color[rgb]{0,0,0}\makebox(0,0)[lt]{\lineheight{1.25}\smash{\begin{tabular}[t]{l}$a$\end{tabular}}}}%
    \put(0,0){\includegraphics[width=\unitlength,page=2]{signed-vtx-asb.pdf}}%
    \put(0.57845634,0.27467454){\color[rgb]{0,0,0}\makebox(0,0)[lt]{\lineheight{1.25}\smash{\begin{tabular}[t]{l}$b$\end{tabular}}}}%
  \end{picture}%
\endgroup%
}}.
  \end{equation*}
\end{theorem}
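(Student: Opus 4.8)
The plan is to prove the displayed identity directly as an equality of morphisms in $\Sc^{N^2}$ (equally, in $\Sc^{Q}$ for the indeterminate $Q$, specialized at $Q=N^{2}$), between the local diagrams shown. By Definition~\ref{lem:sl-as-s-extended}, $W_{\mathfrak{sl}(N)}$ is computed by the functor that sends a signed vertex of sign $a$ to $v_n+a\bar v_n$ — the standard degree-$n$ vertex plus its rotation-reversal, the latter being the standard vertex post-composed with the order-reversing permutation on its legs — and sends edges to $N\BrI-\BrE$; since this functor is local and, via $\Phi$, trace-preserving, any identity among these building-block morphisms propagates to all graphs containing the configuration, so it is enough to check the local relation.

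First I would expand both signed vertices on the left using their defining formula, obtaining four diagrams of the form $X\,e\,Y$ with $X\in\{v,\bar v\}$ at one endpoint and $Y\in\{w,\bar w\}$ at the other, weighted by $1,a,b,ab$. Because $e$ is a non-loop edge, $\beta(G,e)=0$, so the contraction--deletion relation of Definition~\ref{def:s-category} gives $[X\,e\,Y]=[(X\,e\,Y)/e]-[(X\,e\,Y)-e]$ for each of the four: contraction merges the two vertices across $e$, deletion removes $e$ and shortens each endpoint's rotation by one leg. This produces eight diagrams, which I would then regroup.

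The four deletion terms factor: their sum is $-\bigl((v'+a\bar v')\sqcup(w'+b\bar w')\bigr)$, where primes denote the $e$-leg removed, which is exactly $-1$ times the $e$-deleted configuration with both signs preserved — the third term on the right. For the four contraction terms I would use the elementary cyclic-order identities $\operatorname{merge}(\bar v,\bar w)=\overline{\operatorname{merge}(v,w)}$ and $\operatorname{merge}(\bar v,w)=\overline{\operatorname{merge}(v,\bar w)}$ (equivalently, sliding the order-reversing permutation across the contracted edge using move II*), together with $a^{2}=b^{2}=1$. Writing $M=\operatorname{merge}(v,w)$ and $M_{R}=\operatorname{merge}(v,\bar w)$, the contraction part becomes $(M+ab\,\bar M)+b\,(M_{R}+ab\,\overline{M_{R}})$; here $M+ab\bar M$ is the merged vertex viewed as a signed vertex of sign $ab$, and $M_{R}+ab\overline{M_{R}}$ is the twisted merged vertex (one extra crossing) viewed as a signed vertex of sign $ab$ — these are the first two terms on the right, with coefficients $1$ and $b$. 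Assembling the deletion and contraction pieces yields the claimed relation.

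The only genuinely non-mechanical step is this regrouping: one must confirm that reversing a vertex's rotation is exactly the order-reversing permutation on its legs, track how such reversals compose through the merge so that the four contraction diagrams collapse to precisely two signed vertices of sign $ab$ with the stated crossing pattern, and verify that the signs land on $1$, $b$, $-1$. This is cleanest done on the explicit diagrams rather than in formulas, and it is where I expect the real work to be. (As an alternative avoiding the functor, one can resum the state sum of Definition~\ref{lem:sl-as-s-extended}, applying the $S$-polynomial axiom~(5) to each $S_{\sigma_{W}(G)}(N^{2})$ and reorganizing the sum over $W$; the interaction of $\sigma_{W}$ with $G/e$ versus $G-e$ is the same obstacle in a different guise.)
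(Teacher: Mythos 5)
Your proposal is correct and follows essentially the same route as the paper: expand each signed vertex into its standard and rotation-reversed parts, apply the $\Sc^{N^2}$ contraction--deletion relation to the connecting edge, and regroup the four contraction terms (via the reversal/merge identities and $a^2=b^2=1$) into the two sign-$ab$ merged vertices and the four deletion terms into $-1$ times the disjoint signed-vertex configuration. The diagrammatic bookkeeping you flag as the real work is exactly what the paper's displayed computation carries out.
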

\begin{proof}
  Expand both sides and use contraction-deletion in $\Sc^{N^2}$:
  \begin{align*}
    \raisebox{-21pt}{\marginbox{3pt 0}{}}
    &= \raisebox{-21pt}{\marginbox{3pt 0}{
\begingroup%
  \makeatletter%
  \providecommand\color[2][]{%
    \errmessage{(Inkscape) Color is used for the text in Inkscape, but the package 'color.sty' is not loaded}%
    \renewcommand\color[2][]{}%
  }%
  \providecommand\transparent[1]{%
    \errmessage{(Inkscape) Transparency is used (non-zero) for the text in Inkscape, but the package 'transparent.sty' is not loaded}%
    \renewcommand\transparent[1]{}%
  }%
  \providecommand\rotatebox[2]{#2}%
  \newcommand*\fsize{\dimexpr\f@size pt\relax}%
  \newcommand*\lineheight[1]{\fontsize{\fsize}{#1\fsize}\selectfont}%
  \ifx\svgwidth\undefined%
    \setlength{\unitlength}{76.72208342bp}%
    \ifx\svgscale\undefined%
      \relax%
    \else%
      \setlength{\unitlength}{\unitlength * \real{\svgscale}}%
    \fi%
  \else%
    \setlength{\unitlength}{\svgwidth}%
  \fi%
  \global\let\svgwidth\undefined%
  \global\let\svgscale\undefined%
  \makeatother%
  \begin{picture}(1,0.59446463)%
    \lineheight{1}%
    \setlength\tabcolsep{0pt}%
    \put(0,0){\includegraphics[width=\unitlength,page=1]{signed-vtx-d-d.pdf}}%
  \end{picture}%
\endgroup%
}}
      + a\raisebox{-21pt}{\marginbox{3pt 0}{
\begingroup%
  \makeatletter%
  \providecommand\color[2][]{%
    \errmessage{(Inkscape) Color is used for the text in Inkscape, but the package 'color.sty' is not loaded}%
    \renewcommand\color[2][]{}%
  }%
  \providecommand\transparent[1]{%
    \errmessage{(Inkscape) Transparency is used (non-zero) for the text in Inkscape, but the package 'transparent.sty' is not loaded}%
    \renewcommand\transparent[1]{}%
  }%
  \providecommand\rotatebox[2]{#2}%
  \newcommand*\fsize{\dimexpr\f@size pt\relax}%
  \newcommand*\lineheight[1]{\fontsize{\fsize}{#1\fsize}\selectfont}%
  \ifx\svgwidth\undefined%
    \setlength{\unitlength}{76.72208087bp}%
    \ifx\svgscale\undefined%
      \relax%
    \else%
      \setlength{\unitlength}{\unitlength * \real{\svgscale}}%
    \fi%
  \else%
    \setlength{\unitlength}{\svgwidth}%
  \fi%
  \global\let\svgwidth\undefined%
  \global\let\svgscale\undefined%
  \makeatother%
  \begin{picture}(1,0.59446465)%
    \lineheight{1}%
    \setlength\tabcolsep{0pt}%
    \put(0,0){\includegraphics[width=\unitlength,page=1]{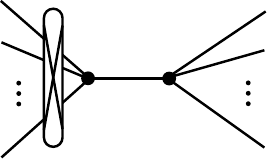}}%
  \end{picture}%
\endgroup%
}}\\
      &\qquad+ b\raisebox{-21pt}{\marginbox{3pt 0}{
\begingroup%
  \makeatletter%
  \providecommand\color[2][]{%
    \errmessage{(Inkscape) Color is used for the text in Inkscape, but the package 'color.sty' is not loaded}%
    \renewcommand\color[2][]{}%
  }%
  \providecommand\transparent[1]{%
    \errmessage{(Inkscape) Transparency is used (non-zero) for the text in Inkscape, but the package 'transparent.sty' is not loaded}%
    \renewcommand\transparent[1]{}%
  }%
  \providecommand\rotatebox[2]{#2}%
  \newcommand*\fsize{\dimexpr\f@size pt\relax}%
  \newcommand*\lineheight[1]{\fontsize{\fsize}{#1\fsize}\selectfont}%
  \ifx\svgwidth\undefined%
    \setlength{\unitlength}{76.7220806bp}%
    \ifx\svgscale\undefined%
      \relax%
    \else%
      \setlength{\unitlength}{\unitlength * \real{\svgscale}}%
    \fi%
  \else%
    \setlength{\unitlength}{\svgwidth}%
  \fi%
  \global\let\svgwidth\undefined%
  \global\let\svgscale\undefined%
  \makeatother%
  \begin{picture}(1,0.59446466)%
    \lineheight{1}%
    \setlength\tabcolsep{0pt}%
    \put(0,0){\includegraphics[width=\unitlength,page=1]{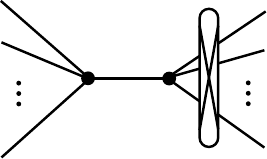}}%
  \end{picture}%
\endgroup%
}}
        + ab\raisebox{-21pt}{\marginbox{3pt 0}{
\begingroup%
  \makeatletter%
  \providecommand\color[2][]{%
    \errmessage{(Inkscape) Color is used for the text in Inkscape, but the package 'color.sty' is not loaded}%
    \renewcommand\color[2][]{}%
  }%
  \providecommand\transparent[1]{%
    \errmessage{(Inkscape) Transparency is used (non-zero) for the text in Inkscape, but the package 'transparent.sty' is not loaded}%
    \renewcommand\transparent[1]{}%
  }%
  \providecommand\rotatebox[2]{#2}%
  \newcommand*\fsize{\dimexpr\f@size pt\relax}%
  \newcommand*\lineheight[1]{\fontsize{\fsize}{#1\fsize}\selectfont}%
  \ifx\svgwidth\undefined%
    \setlength{\unitlength}{76.72208059bp}%
    \ifx\svgscale\undefined%
      \relax%
    \else%
      \setlength{\unitlength}{\unitlength * \real{\svgscale}}%
    \fi%
  \else%
    \setlength{\unitlength}{\svgwidth}%
  \fi%
  \global\let\svgwidth\undefined%
  \global\let\svgscale\undefined%
  \makeatother%
  \begin{picture}(1,0.59446466)%
    \lineheight{1}%
    \setlength\tabcolsep{0pt}%
    \put(0,0){\includegraphics[width=\unitlength,page=1]{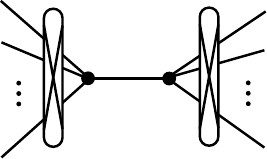}}%
  \end{picture}%
\endgroup%
}}\\
    &= \raisebox{-21pt}{\marginbox{3pt 0}{
\begingroup%
  \makeatletter%
  \providecommand\color[2][]{%
    \errmessage{(Inkscape) Color is used for the text in Inkscape, but the package 'color.sty' is not loaded}%
    \renewcommand\color[2][]{}%
  }%
  \providecommand\transparent[1]{%
    \errmessage{(Inkscape) Transparency is used (non-zero) for the text in Inkscape, but the package 'transparent.sty' is not loaded}%
    \renewcommand\transparent[1]{}%
  }%
  \providecommand\rotatebox[2]{#2}%
  \newcommand*\fsize{\dimexpr\f@size pt\relax}%
  \newcommand*\lineheight[1]{\fontsize{\fsize}{#1\fsize}\selectfont}%
  \ifx\svgwidth\undefined%
    \setlength{\unitlength}{54.22198792bp}%
    \ifx\svgscale\undefined%
      \relax%
    \else%
      \setlength{\unitlength}{\unitlength * \real{\svgscale}}%
    \fi%
  \else%
    \setlength{\unitlength}{\svgwidth}%
  \fi%
  \global\let\svgwidth\undefined%
  \global\let\svgscale\undefined%
  \makeatother%
  \begin{picture}(1,0.84114511)%
    \lineheight{1}%
    \setlength\tabcolsep{0pt}%
    \put(0,0){\includegraphics[width=\unitlength,page=1]{signed-vtx-d.pdf}}%
  \end{picture}%
\endgroup%
}}
      -\raisebox{-21pt}{\marginbox{3pt 0}{
\begingroup%
  \makeatletter%
  \providecommand\color[2][]{%
    \errmessage{(Inkscape) Color is used for the text in Inkscape, but the package 'color.sty' is not loaded}%
    \renewcommand\color[2][]{}%
  }%
  \providecommand\transparent[1]{%
    \errmessage{(Inkscape) Transparency is used (non-zero) for the text in Inkscape, but the package 'transparent.sty' is not loaded}%
    \renewcommand\transparent[1]{}%
  }%
  \providecommand\rotatebox[2]{#2}%
  \newcommand*\fsize{\dimexpr\f@size pt\relax}%
  \newcommand*\lineheight[1]{\fontsize{\fsize}{#1\fsize}\selectfont}%
  \ifx\svgwidth\undefined%
    \setlength{\unitlength}{66.22207681bp}%
    \ifx\svgscale\undefined%
      \relax%
    \else%
      \setlength{\unitlength}{\unitlength * \real{\svgscale}}%
    \fi%
  \else%
    \setlength{\unitlength}{\svgwidth}%
  \fi%
  \global\let\svgwidth\undefined%
  \global\let\svgscale\undefined%
  \makeatother%
  \begin{picture}(1,0.68872146)%
    \lineheight{1}%
    \setlength\tabcolsep{0pt}%
    \put(0,0){\includegraphics[width=\unitlength,page=1]{signed-vtx-dsd.pdf}}%
  \end{picture}%
\endgroup%
}}
      +a\raisebox{-21pt}{\marginbox{3pt 0}{
\begingroup%
  \makeatletter%
  \providecommand\color[2][]{%
    \errmessage{(Inkscape) Color is used for the text in Inkscape, but the package 'color.sty' is not loaded}%
    \renewcommand\color[2][]{}%
  }%
  \providecommand\transparent[1]{%
    \errmessage{(Inkscape) Transparency is used (non-zero) for the text in Inkscape, but the package 'transparent.sty' is not loaded}%
    \renewcommand\transparent[1]{}%
  }%
  \providecommand\rotatebox[2]{#2}%
  \newcommand*\fsize{\dimexpr\f@size pt\relax}%
  \newcommand*\lineheight[1]{\fontsize{\fsize}{#1\fsize}\selectfont}%
  \ifx\svgwidth\undefined%
    \setlength{\unitlength}{54.22197267bp}%
    \ifx\svgscale\undefined%
      \relax%
    \else%
      \setlength{\unitlength}{\unitlength * \real{\svgscale}}%
    \fi%
  \else%
    \setlength{\unitlength}{\svgwidth}%
  \fi%
  \global\let\svgwidth\undefined%
  \global\let\svgscale\undefined%
  \makeatother%
  \begin{picture}(1,0.84114544)%
    \lineheight{1}%
    \setlength\tabcolsep{0pt}%
    \put(0,0){\includegraphics[width=\unitlength,page=1]{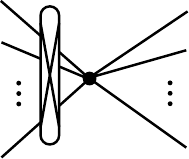}}%
  \end{picture}%
\endgroup%
}}
      -a\raisebox{-21pt}{\marginbox{3pt 0}{
\begingroup%
  \makeatletter%
  \providecommand\color[2][]{%
    \errmessage{(Inkscape) Color is used for the text in Inkscape, but the package 'color.sty' is not loaded}%
    \renewcommand\color[2][]{}%
  }%
  \providecommand\transparent[1]{%
    \errmessage{(Inkscape) Transparency is used (non-zero) for the text in Inkscape, but the package 'transparent.sty' is not loaded}%
    \renewcommand\transparent[1]{}%
  }%
  \providecommand\rotatebox[2]{#2}%
  \newcommand*\fsize{\dimexpr\f@size pt\relax}%
  \newcommand*\lineheight[1]{\fontsize{\fsize}{#1\fsize}\selectfont}%
  \ifx\svgwidth\undefined%
    \setlength{\unitlength}{66.22207397bp}%
    \ifx\svgscale\undefined%
      \relax%
    \else%
      \setlength{\unitlength}{\unitlength * \real{\svgscale}}%
    \fi%
  \else%
    \setlength{\unitlength}{\svgwidth}%
  \fi%
  \global\let\svgwidth\undefined%
  \global\let\svgscale\undefined%
  \makeatother%
  \begin{picture}(1,0.68872153)%
    \lineheight{1}%
    \setlength\tabcolsep{0pt}%
    \put(0,0){\includegraphics[width=\unitlength,page=1]{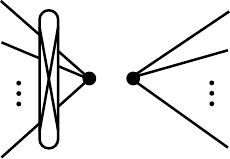}}%
  \end{picture}%
\endgroup%
}}\\
    &\qquad+b\raisebox{-21pt}{\marginbox{3pt 0}{
\begingroup%
  \makeatletter%
  \providecommand\color[2][]{%
    \errmessage{(Inkscape) Color is used for the text in Inkscape, but the package 'color.sty' is not loaded}%
    \renewcommand\color[2][]{}%
  }%
  \providecommand\transparent[1]{%
    \errmessage{(Inkscape) Transparency is used (non-zero) for the text in Inkscape, but the package 'transparent.sty' is not loaded}%
    \renewcommand\transparent[1]{}%
  }%
  \providecommand\rotatebox[2]{#2}%
  \newcommand*\fsize{\dimexpr\f@size pt\relax}%
  \newcommand*\lineheight[1]{\fontsize{\fsize}{#1\fsize}\selectfont}%
  \ifx\svgwidth\undefined%
    \setlength{\unitlength}{54.22197267bp}%
    \ifx\svgscale\undefined%
      \relax%
    \else%
      \setlength{\unitlength}{\unitlength * \real{\svgscale}}%
    \fi%
  \else%
    \setlength{\unitlength}{\svgwidth}%
  \fi%
  \global\let\svgwidth\undefined%
  \global\let\svgscale\undefined%
  \makeatother%
  \begin{picture}(1,0.84114544)%
    \lineheight{1}%
    \setlength\tabcolsep{0pt}%
    \put(0,0){\includegraphics[width=\unitlength,page=1]{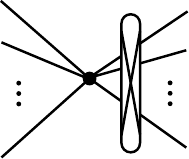}}%
  \end{picture}%
\endgroup%
}}
      -b\raisebox{-21pt}{\marginbox{3pt 0}{
\begingroup%
  \makeatletter%
  \providecommand\color[2][]{%
    \errmessage{(Inkscape) Color is used for the text in Inkscape, but the package 'color.sty' is not loaded}%
    \renewcommand\color[2][]{}%
  }%
  \providecommand\transparent[1]{%
    \errmessage{(Inkscape) Transparency is used (non-zero) for the text in Inkscape, but the package 'transparent.sty' is not loaded}%
    \renewcommand\transparent[1]{}%
  }%
  \providecommand\rotatebox[2]{#2}%
  \newcommand*\fsize{\dimexpr\f@size pt\relax}%
  \newcommand*\lineheight[1]{\fontsize{\fsize}{#1\fsize}\selectfont}%
  \ifx\svgwidth\undefined%
    \setlength{\unitlength}{66.22207681bp}%
    \ifx\svgscale\undefined%
      \relax%
    \else%
      \setlength{\unitlength}{\unitlength * \real{\svgscale}}%
    \fi%
  \else%
    \setlength{\unitlength}{\svgwidth}%
  \fi%
  \global\let\svgwidth\undefined%
  \global\let\svgscale\undefined%
  \makeatother%
  \begin{picture}(1,0.6887215)%
    \lineheight{1}%
    \setlength\tabcolsep{0pt}%
    \put(0,0){\includegraphics[width=\unitlength,page=1]{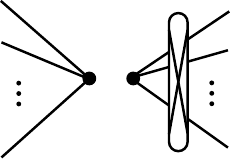}}%
  \end{picture}%
\endgroup%
}}
      +ab\raisebox{-21pt}{\marginbox{3pt 0}{
\begingroup%
  \makeatletter%
  \providecommand\color[2][]{%
    \errmessage{(Inkscape) Color is used for the text in Inkscape, but the package 'color.sty' is not loaded}%
    \renewcommand\color[2][]{}%
  }%
  \providecommand\transparent[1]{%
    \errmessage{(Inkscape) Transparency is used (non-zero) for the text in Inkscape, but the package 'transparent.sty' is not loaded}%
    \renewcommand\transparent[1]{}%
  }%
  \providecommand\rotatebox[2]{#2}%
  \newcommand*\fsize{\dimexpr\f@size pt\relax}%
  \newcommand*\lineheight[1]{\fontsize{\fsize}{#1\fsize}\selectfont}%
  \ifx\svgwidth\undefined%
    \setlength{\unitlength}{54.22197549bp}%
    \ifx\svgscale\undefined%
      \relax%
    \else%
      \setlength{\unitlength}{\unitlength * \real{\svgscale}}%
    \fi%
  \else%
    \setlength{\unitlength}{\svgwidth}%
  \fi%
  \global\let\svgwidth\undefined%
  \global\let\svgscale\undefined%
  \makeatother%
  \begin{picture}(1,0.8411454)%
    \lineheight{1}%
    \setlength\tabcolsep{0pt}%
    \put(0,0){\includegraphics[width=\unitlength,page=1]{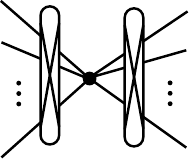}}%
  \end{picture}%
\endgroup%
}}
      -ab\raisebox{-21pt}{\marginbox{3pt 0}{
\begingroup%
  \makeatletter%
  \providecommand\color[2][]{%
    \errmessage{(Inkscape) Color is used for the text in Inkscape, but the package 'color.sty' is not loaded}%
    \renewcommand\color[2][]{}%
  }%
  \providecommand\transparent[1]{%
    \errmessage{(Inkscape) Transparency is used (non-zero) for the text in Inkscape, but the package 'transparent.sty' is not loaded}%
    \renewcommand\transparent[1]{}%
  }%
  \providecommand\rotatebox[2]{#2}%
  \newcommand*\fsize{\dimexpr\f@size pt\relax}%
  \newcommand*\lineheight[1]{\fontsize{\fsize}{#1\fsize}\selectfont}%
  \ifx\svgwidth\undefined%
    \setlength{\unitlength}{66.22207425bp}%
    \ifx\svgscale\undefined%
      \relax%
    \else%
      \setlength{\unitlength}{\unitlength * \real{\svgscale}}%
    \fi%
  \else%
    \setlength{\unitlength}{\svgwidth}%
  \fi%
  \global\let\svgwidth\undefined%
  \global\let\svgscale\undefined%
  \makeatother%
  \begin{picture}(1,0.68872148)%
    \lineheight{1}%
    \setlength\tabcolsep{0pt}%
    \put(0,0){\includegraphics[width=\unitlength,page=1]{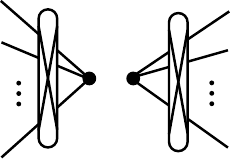}}%
  \end{picture}%
\endgroup%
}}\\
    &=\raisebox{-21pt}{\marginbox{3pt 0}{}}
      +b\raisebox{-21pt}{\marginbox{3pt 0}{}}
      -\raisebox{-21pt}{\marginbox{3pt 0}{}}.
  \end{align*}
\end{proof}

\begin{theorem}
  \label{thm:sl-loop-contr-del}
  Let $a\in\{\pm 1\}$.
  The extended $W_{\mathfrak{sl}(N)}$ polynomial satisfies
  \begin{equation*}
    \raisebox{-21pt}{\marginbox{4pt 0}{
\begingroup%
  \makeatletter%
  \providecommand\color[2][]{%
    \errmessage{(Inkscape) Color is used for the text in Inkscape, but the package 'color.sty' is not loaded}%
    \renewcommand\color[2][]{}%
  }%
  \providecommand\transparent[1]{%
    \errmessage{(Inkscape) Transparency is used (non-zero) for the text in Inkscape, but the package 'transparent.sty' is not loaded}%
    \renewcommand\transparent[1]{}%
  }%
  \providecommand\rotatebox[2]{#2}%
  \newcommand*\fsize{\dimexpr\f@size pt\relax}%
  \newcommand*\lineheight[1]{\fontsize{\fsize}{#1\fsize}\selectfont}%
  \ifx\svgwidth\undefined%
    \setlength{\unitlength}{54.22198877bp}%
    \ifx\svgscale\undefined%
      \relax%
    \else%
      \setlength{\unitlength}{\unitlength * \real{\svgscale}}%
    \fi%
  \else%
    \setlength{\unitlength}{\svgwidth}%
  \fi%
  \global\let\svgwidth\undefined%
  \global\let\svgscale\undefined%
  \makeatother%
  \begin{picture}(1,0.84114509)%
    \lineheight{1}%
    \setlength\tabcolsep{0pt}%
    \put(0,0){\includegraphics[width=\unitlength,page=1]{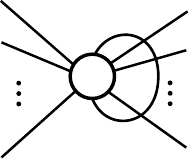}}%
    \put(0.44021824,0.37557571){\color[rgb]{0,0,0}\makebox(0,0)[lt]{\lineheight{1.25}\smash{\begin{tabular}[t]{l}$a$\end{tabular}}}}%
  \end{picture}%
\endgroup%
}}
    =
    \frac{1}{2}N^2\left(
    \raisebox{-21pt}{\marginbox{4pt 0}{
\begingroup%
  \makeatletter%
  \providecommand\color[2][]{%
    \errmessage{(Inkscape) Color is used for the text in Inkscape, but the package 'color.sty' is not loaded}%
    \renewcommand\color[2][]{}%
  }%
  \providecommand\transparent[1]{%
    \errmessage{(Inkscape) Transparency is used (non-zero) for the text in Inkscape, but the package 'transparent.sty' is not loaded}%
    \renewcommand\transparent[1]{}%
  }%
  \providecommand\rotatebox[2]{#2}%
  \newcommand*\fsize{\dimexpr\f@size pt\relax}%
  \newcommand*\lineheight[1]{\fontsize{\fsize}{#1\fsize}\selectfont}%
  \ifx\svgwidth\undefined%
    \setlength{\unitlength}{70.72207777bp}%
    \ifx\svgscale\undefined%
      \relax%
    \else%
      \setlength{\unitlength}{\unitlength * \real{\svgscale}}%
    \fi%
  \else%
    \setlength{\unitlength}{\svgwidth}%
  \fi%
  \global\let\svgwidth\undefined%
  \global\let\svgscale\undefined%
  \makeatother%
  \begin{picture}(1,0.64489843)%
    \lineheight{1}%
    \setlength\tabcolsep{0pt}%
    \put(0,0){\includegraphics[width=\unitlength,page=1]{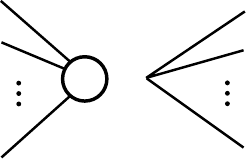}}%
    \put(0.3065166,0.27751507){\color[rgb]{0,0,0}\makebox(0,0)[lt]{\lineheight{1.25}\smash{\begin{tabular}[t]{l}$a$\end{tabular}}}}%
    \put(0,0){\includegraphics[width=\unitlength,page=2]{signed-vtx-asp.pdf}}%
    \put(0.55822008,0.27786789){\color[rgb]{0,0,0}\makebox(0,0)[lt]{\lineheight{1.25}\smash{\begin{tabular}[t]{l}$+$\end{tabular}}}}%
  \end{picture}%
\endgroup%
}}
    +
    \raisebox{-21pt}{\marginbox{4pt 0}{
\begingroup%
  \makeatletter%
  \providecommand\color[2][]{%
    \errmessage{(Inkscape) Color is used for the text in Inkscape, but the package 'color.sty' is not loaded}%
    \renewcommand\color[2][]{}%
  }%
  \providecommand\transparent[1]{%
    \errmessage{(Inkscape) Transparency is used (non-zero) for the text in Inkscape, but the package 'transparent.sty' is not loaded}%
    \renewcommand\transparent[1]{}%
  }%
  \providecommand\rotatebox[2]{#2}%
  \newcommand*\fsize{\dimexpr\f@size pt\relax}%
  \newcommand*\lineheight[1]{\fontsize{\fsize}{#1\fsize}\selectfont}%
  \ifx\svgwidth\undefined%
    \setlength{\unitlength}{70.72207666bp}%
    \ifx\svgscale\undefined%
      \relax%
    \else%
      \setlength{\unitlength}{\unitlength * \real{\svgscale}}%
    \fi%
  \else%
    \setlength{\unitlength}{\svgwidth}%
  \fi%
  \global\let\svgwidth\undefined%
  \global\let\svgscale\undefined%
  \makeatother%
  \begin{picture}(1,0.6448992)%
    \lineheight{1}%
    \setlength\tabcolsep{0pt}%
    \put(0,0){\includegraphics[width=\unitlength,page=1]{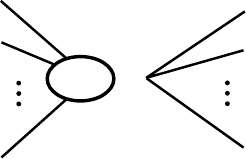}}%
    \put(0.22698006,0.28291362){\color[rgb]{0,0,0}\makebox(0,0)[lt]{\lineheight{1.25}\smash{\begin{tabular}[t]{l}$-a$\end{tabular}}}}%
    \put(0,0){\includegraphics[width=\unitlength,page=2]{signed-vtx-nasm.pdf}}%
    \put(0.5608451,0.28054622){\color[rgb]{0,0,0}\makebox(0,0)[lt]{\lineheight{1.25}\smash{\begin{tabular}[t]{l}$-$\end{tabular}}}}%
  \end{picture}%
\endgroup%
}}\right)
    -  \raisebox{-21pt}{\marginbox{4pt 0}{
\begingroup%
  \makeatletter%
  \providecommand\color[2][]{%
    \errmessage{(Inkscape) Color is used for the text in Inkscape, but the package 'color.sty' is not loaded}%
    \renewcommand\color[2][]{}%
  }%
  \providecommand\transparent[1]{%
    \errmessage{(Inkscape) Transparency is used (non-zero) for the text in Inkscape, but the package 'transparent.sty' is not loaded}%
    \renewcommand\transparent[1]{}%
  }%
  \providecommand\rotatebox[2]{#2}%
  \newcommand*\fsize{\dimexpr\f@size pt\relax}%
  \newcommand*\lineheight[1]{\fontsize{\fsize}{#1\fsize}\selectfont}%
  \ifx\svgwidth\undefined%
    \setlength{\unitlength}{54.22198963bp}%
    \ifx\svgscale\undefined%
      \relax%
    \else%
      \setlength{\unitlength}{\unitlength * \real{\svgscale}}%
    \fi%
  \else%
    \setlength{\unitlength}{\svgwidth}%
  \fi%
  \global\let\svgwidth\undefined%
  \global\let\svgscale\undefined%
  \makeatother%
  \begin{picture}(1,0.84114508)%
    \lineheight{1}%
    \setlength\tabcolsep{0pt}%
    \put(0,0){\includegraphics[width=\unitlength,page=1]{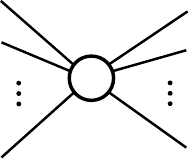}}%
    \put(0.4352783,0.36569564){\color[rgb]{0,0,0}\makebox(0,0)[lt]{\lineheight{1.25}\smash{\begin{tabular}[t]{l}$a$\end{tabular}}}}%
  \end{picture}%
\endgroup%
}}.
  \end{equation*}
\end{theorem}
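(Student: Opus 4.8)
The plan is to mirror the argument for Theorem~\ref{thm:sl-edge-contr-del}, now carrying out the expansion against the \emph{loop} contraction--deletion relation of $\Sc^{N^2}$. By Definition~\ref{lem:sl-as-s-extended} --- equivalently, by the functorial description of the $\Br^{N}$-evaluation of the Murakami polynomial that precedes it --- it suffices to prove the stated identity between elements of $\Sc^{N^2}$ once the distinguished degree-$n$ vertex carrying the loop is replaced by its signed vertex element: the plain vertex $v$ with the given rotation, plus $a$ times the same vertex with its rotation reversed. This presents the left-hand side as a sum of two terms, each a virtual graph in $\Sc^{N^2}$ with a loop $e$ at the distinguished vertex, carrying coefficient $1$ or $a$. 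First I would apply the contraction--deletion relation of Definition~\ref{def:s-category} to $e$: since $\beta(G,e)=1$ for a loop and the loop value is $Q=N^2$, this reads $[G]=N^2[G/e]-[G-e]$.

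Next I would match the two resulting pieces with the right-hand side. The deletion $G-e$ just erases the loop, leaving the distinguished vertex at degree $n-2$ with the same (respectively reversed) rotation; recombining the coefficients $1$ and $a$ reassembles the signed degree-$(n-2)$ vertex, which is the last term of the statement, with sign $-$. For the contraction $G/e=\delta_e G-e$ I would use Chmutov's partial-dual description (Figure~\ref{fig:partial-dual}, Definition~\ref{def:contraction}, Section~\ref{subsec:edge-contraction}): contracting a loop splits the distinguished vertex into two vertices whose rotation systems are the two cyclic arcs into which the feet of $e$ cut the original cyclic order. Reversing the rotation of $v$ amounts to simultaneously reversing the rotations of the two new vertices and interchanging them --- the same sign-reversal mechanism that the identity $\BrX\circ\BrAlt=-\BrAlt$ supplied in the proof of Proposition~\ref{prop:son-contr-del} --- so that, after re-expressing the split diagram in terms of signed vertex elements, it becomes the symmetric combination $\tfrac{1}{2}(\,\cdot+\cdot\,)$ of the two ways of distributing the feet of $e$ between the two new vertices. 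Multiplying by the loop factor $N^2$ yields the $\tfrac{1}{2}N^2(\,\cdot+\cdot\,)$ of the statement, while the remaining middle group of terms collects the contributions produced by how the rotation-reversal interacts with the cyclic position of the feet of $e$ (and, when these feet are cyclically adjacent, the isolated vertex produced by contraction, handled as in the boundary case flagged in the proof of Proposition~\ref{prop:son-contr-del}).

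I expect the one genuinely delicate step to be the bookkeeping of coefficients. The left-hand side generates two elements, each contributing $N^2$ times a contraction minus a deletion, and the contraction terms branch further according to which arc is sent to which new vertex, so there are roughly eight summands to be regrouped by their coefficients $1,a,-a,-1$ into the four groups on the right-hand side with the stated weights. As in Theorem~\ref{thm:sl-edge-contr-del} this is a finite diagrammatic check inside $\Sc^{N^2}$ rather than a conceptual obstruction; the only point that really needs care is verifying that the $\tfrac{1}{2}N^2$ is consistent with the normalization of $W_{\mathfrak{sl}(N)}$ fixed in Lemma~\ref{lem:sl-as-s} and Definition~\ref{lem:sl-as-s-extended}. (As a sanity cross-check one could instead run the computation directly on the state sum in Definition~\ref{lem:sl-as-s-extended}, splitting the sum over $W\subset V(G)$ according to whether the distinguished vertex lies in $W$ and applying $S_{G}(N^2)=N^2 S_{G/e}(N^2)-S_{G-e}(N^2)$ term by term, but the diagrammatic route keeps the bookkeeping more transparent.)
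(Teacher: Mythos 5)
You are following the paper's own route: the published proof is precisely the instruction to expand the signed vertex elements and apply the loop contraction--deletion relation in $\Sc^{N^2}$, and your setup is right --- the two deletion terms recombine into the final signed degree-$(n-2)$ vertex, and contracting the loop splits the distinguished vertex into two vertices whose rotations are the two arcs $A$ and $B$ into which the feet of $e$ cut the cyclic order, with the rotation-reversed copy contracting to the \emph{same} pair of vertices with both rotations reversed.

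The one step you describe incorrectly is the regrouping that produces the $\tfrac12 N^2$. Writing $D(A),D(B)$ for the two plain split vertices and $D(\bar A),D(\bar B)$ for their reversals, the total contraction contribution of the left-hand side is $N^2\bigl(D(A)\,D(B)+a\,D(\bar A)\,D(\bar B)\bigr)$. The two summands inside the parentheses of the statement are the two sign assignments $(a,+)$ and $(-a,-)$ to this same pair of split vertices, and the factor $\tfrac12$ is just the polarization identity
\[
XY+a\,\bar X\,\bar Y=\tfrac12\bigl[(X+a\bar X)(Y+\bar Y)+(X-a\bar X)(Y-\bar Y)\bigr],
\]
applied with $X=D(A)$, $\bar X=D(\bar A)$, $Y=D(B)$, $\bar Y=D(\bar B)$. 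It is not a symmetrization over ``two ways of distributing the feet of $e$'' between the new vertices, and there is no leftover ``middle group of terms'' recording how the reversal interacts with the cyclic position of the feet: the contraction accounts for the entire $\tfrac12 N^2(\,\cdot+\cdot\,)$ and the deletion exactly for the last term (when the feet are cyclically adjacent, one of $A,B$ is empty and the corresponding factor is an isolated signed vertex, which the same identity still covers). With that bookkeeping corrected, your argument is complete and coincides with the paper's.
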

\begin{proof}
  Similarly expand both sides and use contraction-deletion for the loop edge.
\end{proof}

\begin{corollary}
  Let $a,b\in\{\pm 1\}$.  The extended $W_{\mathfrak{sl}(N)}$ polynomial satisfies
  \begin{enumerate}
  \item $\displaystyle     \raisebox{-21pt}{\marginbox{1pt 0}{}}
    +\raisebox{-21pt}{\marginbox{1pt 0}{
\begingroup%
  \makeatletter%
  \providecommand\color[2][]{%
    \errmessage{(Inkscape) Color is used for the text in Inkscape, but the package 'color.sty' is not loaded}%
    \renewcommand\color[2][]{}%
  }%
  \providecommand\transparent[1]{%
    \errmessage{(Inkscape) Transparency is used (non-zero) for the text in Inkscape, but the package 'transparent.sty' is not loaded}%
    \renewcommand\transparent[1]{}%
  }%
  \providecommand\rotatebox[2]{#2}%
  \newcommand*\fsize{\dimexpr\f@size pt\relax}%
  \newcommand*\lineheight[1]{\fontsize{\fsize}{#1\fsize}\selectfont}%
  \ifx\svgwidth\undefined%
    \setlength{\unitlength}{76.72207917bp}%
    \ifx\svgscale\undefined%
      \relax%
    \else%
      \setlength{\unitlength}{\unitlength * \real{\svgscale}}%
    \fi%
  \else%
    \setlength{\unitlength}{\svgwidth}%
  \fi%
  \global\let\svgwidth\undefined%
  \global\let\svgscale\undefined%
  \makeatother%
  \begin{picture}(1,0.59446464)%
    \lineheight{1}%
    \setlength\tabcolsep{0pt}%
    \put(0,0){\includegraphics[width=\unitlength,page=1]{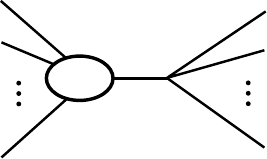}}%
    \put(0.20611213,0.26265337){\color[rgb]{0,0,0}\makebox(0,0)[lt]{\lineheight{0}\smash{\begin{tabular}[t]{l}$-a$\end{tabular}}}}%
    \put(0,0){\includegraphics[width=\unitlength,page=2]{signed-vtx-na-nb.pdf}}%
    \put(0.55090853,0.26443774){\color[rgb]{0,0,0}\makebox(0,0)[lt]{\lineheight{0}\smash{\begin{tabular}[t]{l}$-b$\end{tabular}}}}%
  \end{picture}%
\endgroup%
}}
    =
    2\raisebox{-21pt}{\marginbox{1pt 0}{}}
    -\raisebox{-21pt}{\marginbox{1pt 0}{}}
    -\raisebox{-21pt}{\marginbox{1pt 0}{
\begingroup%
  \makeatletter%
  \providecommand\color[2][]{%
    \errmessage{(Inkscape) Color is used for the text in Inkscape, but the package 'color.sty' is not loaded}%
    \renewcommand\color[2][]{}%
  }%
  \providecommand\transparent[1]{%
    \errmessage{(Inkscape) Transparency is used (non-zero) for the text in Inkscape, but the package 'transparent.sty' is not loaded}%
    \renewcommand\transparent[1]{}%
  }%
  \providecommand\rotatebox[2]{#2}%
  \newcommand*\fsize{\dimexpr\f@size pt\relax}%
  \newcommand*\lineheight[1]{\fontsize{\fsize}{#1\fsize}\selectfont}%
  \ifx\svgwidth\undefined%
    \setlength{\unitlength}{70.72207994bp}%
    \ifx\svgscale\undefined%
      \relax%
    \else%
      \setlength{\unitlength}{\unitlength * \real{\svgscale}}%
    \fi%
  \else%
    \setlength{\unitlength}{\svgwidth}%
  \fi%
  \global\let\svgwidth\undefined%
  \global\let\svgscale\undefined%
  \makeatother%
  \begin{picture}(1,0.6448994)%
    \lineheight{1}%
    \setlength\tabcolsep{0pt}%
    \put(0,0){\includegraphics[width=\unitlength,page=1]{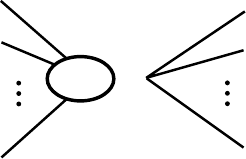}}%
    \put(0.22698011,0.28291349){\color[rgb]{0,0,0}\makebox(0,0)[lt]{\lineheight{0}\smash{\begin{tabular}[t]{l}$-a$\end{tabular}}}}%
    \put(0,0){\includegraphics[width=\unitlength,page=2]{signed-vtx-nasnb.pdf}}%
    \put(0.55278013,0.28348051){\color[rgb]{0,0,0}\makebox(0,0)[lt]{\lineheight{0}\smash{\begin{tabular}[t]{l}$-b$\end{tabular}}}}%
  \end{picture}%
\endgroup%
}}$
  \item $\displaystyle\raisebox{-21pt}{\marginbox{3pt 0}{
\begingroup%
  \makeatletter%
  \providecommand\color[2][]{%
    \errmessage{(Inkscape) Color is used for the text in Inkscape, but the package 'color.sty' is not loaded}%
    \renewcommand\color[2][]{}%
  }%
  \providecommand\transparent[1]{%
    \errmessage{(Inkscape) Transparency is used (non-zero) for the text in Inkscape, but the package 'transparent.sty' is not loaded}%
    \renewcommand\transparent[1]{}%
  }%
  \providecommand\rotatebox[2]{#2}%
  \newcommand*\fsize{\dimexpr\f@size pt\relax}%
  \newcommand*\lineheight[1]{\fontsize{\fsize}{#1\fsize}\selectfont}%
  \ifx\svgwidth\undefined%
    \setlength{\unitlength}{54.22198295bp}%
    \ifx\svgscale\undefined%
      \relax%
    \else%
      \setlength{\unitlength}{\unitlength * \real{\svgscale}}%
    \fi%
  \else%
    \setlength{\unitlength}{\svgwidth}%
  \fi%
  \global\let\svgwidth\undefined%
  \global\let\svgscale\undefined%
  \makeatother%
  \begin{picture}(1,0.84114524)%
    \lineheight{1}%
    \setlength\tabcolsep{0pt}%
    \put(0,0){\includegraphics[width=\unitlength,page=1]{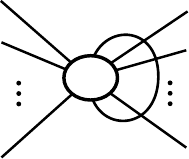}}%
    \put(0.37595277,0.37498266){\color[rgb]{0,0,0}\makebox(0,0)[lt]{\lineheight{0}\smash{\begin{tabular}[t]{l}$ab$\end{tabular}}}}%
  \end{picture}%
\endgroup%
}}
    =
    \frac{1}{2}N^2\left(
      \raisebox{-21pt}{\marginbox{3pt 0}{}}
      +
      \raisebox{-21pt}{\marginbox{3pt 0}{}}\right)
    -  \raisebox{-21pt}{\marginbox{3pt 0}{}}$.
  \end{enumerate}
\end{corollary}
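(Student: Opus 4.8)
The plan is to derive both relations as formal $\pm$ combinations of the contraction-deletion identities already in hand, namely Theorem~\ref{thm:sl-edge-contr-del} and Theorem~\ref{thm:sl-loop-contr-del}, exploiting the fact that a signed vertex element of sign $a$ is by definition the sum of an ``untwisted'' piece (the plain-rotation vertex) plus $a$ times a ``fully crossed'' piece (the order-reversing element). Consequently, adding the sign-$a$ and sign-$(-a)$ versions of any such element doubles the untwisted piece and kills the crossed one, while subtracting does the reverse. This observation, together with the fact that the contracted terms in Theorem~\ref{thm:sl-edge-contr-del} carry the product sign $ab$, is the whole mechanism.

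For part (1), I would write the identity of Theorem~\ref{thm:sl-edge-contr-del} twice: once for the pair of signs $(a,b)$ and once for $(-a,-b)$. Since $(-a)(-b)=ab$, the two ``contracted'' terms (those carrying sign $ab$) coincide, whereas the middle term, which appears with explicit coefficient $b$ in the first instance and $-b$ in the second, cancels in the sum. Adding the two identities then produces exactly the left-hand side of (1) on the left (the sign-$(a,b)$ diagram plus the sign-$(-a,-b)$ diagram), and on the right $2$ times the sign-$ab$ contracted vertex minus the sum of the two ``split'' diagrams, which are precisely the sign-$(a,b)$ and sign-$(-a,-b)$ versions of the lower-degree vertex on the far right of Theorem~\ref{thm:sl-edge-contr-del}. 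That is the stated identity.

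For part (2), I would run the analogous argument with the loop identity of Theorem~\ref{thm:sl-loop-contr-del}: either close the edge in the part~(1) computation into a loop, or, equivalently, apply Theorem~\ref{thm:sl-loop-contr-del} to the sign-$a$ and sign-$b$ loop elements and add, so that the contributions entering with opposite signs in the two instances cancel, the two $\tfrac{1}{2}N^2$-terms combine into the bracketed expression, and the residual untwisted term appears with coefficient $1$; the merged slot of the vertex then naturally carries the product sign $ab$. What remains is diagrammatic bookkeeping: tracking which rotation reversals $\sigma_v$ and which crossing elements $\BrX$ are introduced, and verifying that after flipping the relevant clusters over (as in the proof of Proposition~\ref{prop:son-contr-del}) the surviving ``crossed'' terms reorganize into exactly the signed vertex symbols written in the statement.

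I expect the main obstacle to be precisely this bookkeeping — keeping the half-edge orderings, the operators $\sigma_v$, and the sign parities $(-1)^m$ consistent between the two instances being summed, so that the cancellations are genuine and the leftover terms assemble into the stated signed vertex elements rather than a close cousin of them. No new idea beyond Theorems~\ref{thm:sl-edge-contr-del} and~\ref{thm:sl-loop-contr-del} and the signed-vertex decomposition should be required.
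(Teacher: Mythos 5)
Your derivation of part (1) is exactly the intended one: add the two instances of Theorem~\ref{thm:sl-edge-contr-del} for the sign pairs $(a,b)$ and $(-a,-b)$; since $(-a)(-b)=ab$, the two contracted terms coincide, the middle terms carry coefficients $b$ and $-b$ on the same sign-$ab$ crossed diagram and cancel, and what remains is twice the sign-$ab$ contracted vertex minus the two deleted-edge diagrams. Nothing more is needed there.

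For part (2), however, the mechanism you describe does not work as stated. The left-hand side of (2) is a \emph{single} loop diagram at a vertex of sign $ab$. A signed vertex of sign $s$ expands as $(\text{plain})+s\,(\text{crossed})$, so the sum of the sign-$a$ and sign-$b$ loop elements is $2(\text{plain loop})+(a+b)(\text{crossed loop})$, which is never the sign-$ab$ loop element; moreover, when $b\neq -a$ nothing cancels between those two instances, contrary to your claim that ``contributions entering with opposite signs cancel,'' and the residual terms of such a sum would be $-(\text{sign-}a\text{ vertex})-(\text{sign-}b\text{ vertex})$, not a single term. The correct route is one application of Theorem~\ref{thm:sl-loop-contr-del} with the vertex sign taken to be $ab$; if the bracketed split terms in (2) are written with the sign pair $(a,b)$ and $(-a,-b)$ rather than $(ab,+)$ and $(-ab,-)$, you then finish with the same even-part observation you used correctly in (1): summing a split pair over opposite sign assignments kills the terms containing exactly one crossed factor, so both bracketed sums equal $2(\text{plain}\otimes\text{plain})+2ab\,(\text{crossed}\otimes\text{crossed})$ and hence agree. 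In short, (2) is the loop identity at sign $ab$ plus that cancellation identity applied to the split terms --- no addition of loop-theorem instances with signs $a$ and $b$ is involved, and as written that step of your argument would fail.
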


If we were to define partial duality of signed graphs like so, where $e$ is the loop edge,
\begin{align*}
  \raisebox{-21pt}{\marginbox{3pt 0}{}}
  &\xleftrightarrow{\delta_e}
    \frac{1}{2}\left(\raisebox{-21pt}{\marginbox{3pt 0}{}}
    +\raisebox{-21pt}{\marginbox{3pt 0}{}}
    \right),
\end{align*}
then we could restate the corollary in the following form:
\begin{corollary}
  Let $G$ be a signed virtual graph and $e\in G$ a loop edge.
  Then, with partial duality extended as above,
  \begin{enumerate}
  \item $W_{\mathfrak{sl}(N)}(G)=N^2 W_{\mathfrak{sl}(N)}(\delta_e G-e)-W_{\mathfrak{sl}(N)}(G-e)$ and
  \item $W_{\mathfrak{sl}(N)}(\delta_e G)=W_{\mathfrak{sl}(N)}(G-e)- W_{\mathfrak{sl}(N)}(\delta_e G-e)$,
  \end{enumerate}
  where in $\delta_e G-e$ we delete $e$ from both graphs.
\end{corollary}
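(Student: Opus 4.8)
Both displayed identities are restatements of the corollary following Theorem~\ref{thm:sl-loop-contr-del}, so the plan is to substitute the defining equation of the extended partial dual $\delta_e$ at a loop edge directly into that corollary (and into Theorem~\ref{thm:sl-loop-contr-del} itself) and then do the bookkeeping; nothing genuinely new must be proved. First I would record the two translation facts that make the substitution go through. (i) Since $W_{\mathfrak{sl}(N)}$ is computed by the \emph{monoidal} functor from signed virtual graphs to $\Sc^{N^2}$ (and thence to $\Br^N$) that sends a signed vertex $v$ to the $a=s(v)$ signed-vertex element and each edge to $N\BrI-\BrE$, every local relation among these building blocks --- in particular those of Theorem~\ref{thm:sl-loop-contr-del} and its corollary --- remains valid inside an arbitrary ambient signed virtual graph, hence may be read as an equality among the scalars $W_{\mathfrak{sl}(N)}(\,\cdot\,)$. (ii) By construction the extended partial dual at a loop $e$ incident to $v$ is the formal average $\delta_e G=\tfrac12(G_1+G_2)$ of the two signed virtual graphs appearing inside the parentheses on the right-hand sides of those relations, each $G_i$ still carrying a loop $e$; and deletion commutes with this average, so $W_{\mathfrak{sl}(N)}(\delta_e G-e)=\tfrac12\big(W_{\mathfrak{sl}(N)}(G_1-e)+W_{\mathfrak{sl}(N)}(G_2-e)\big)$.

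For part (1): apply item (2) of the corollary following Theorem~\ref{thm:sl-loop-contr-del} at the vertex $v$ carrying $e$, with the vertex sign playing the role of $ab$. Its right-hand side is $\tfrac12 N^2\big((G_1-e)+(G_2-e)\big)-(G-e)$; by fact (ii) the first summand evaluates to $N^2\,W_{\mathfrak{sl}(N)}(\delta_e G-e)$, while the last is the original $G$ with the loop deleted and no rotation reversed. Applying $W_{\mathfrak{sl}(N)}$ yields $W_{\mathfrak{sl}(N)}(G)=N^2W_{\mathfrak{sl}(N)}(\delta_e G-e)-W_{\mathfrak{sl}(N)}(G-e)$.

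For part (2): expand $W_{\mathfrak{sl}(N)}(\delta_e G)=\tfrac12\big(W_{\mathfrak{sl}(N)}(G_1)+W_{\mathfrak{sl}(N)}(G_2)\big)$ and apply the loop contraction-deletion relation of $\Sc^{N^2}$ (Definition~\ref{def:s-category}(1), with $\beta=1$ since a loop is never a coloop) to $e$ inside each $G_i$, using that $G_i/e$ becomes, after averaging over $i$, the contraction $G-e$ occurring on the right of part (1); collecting terms turns $\tfrac12\big(W_{\mathfrak{sl}(N)}(G_1)+W_{\mathfrak{sl}(N)}(G_2)\big)$ into $W_{\mathfrak{sl}(N)}(G-e)-W_{\mathfrak{sl}(N)}(\delta_e G-e)$, which is the asserted identity. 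Equivalently, one subtracts a suitable multiple of part (1) from the $\Sc^{N^2}$ contraction-deletion relation for $e$.

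The main obstacle I anticipate is purely combinatorial bookkeeping: matching the rotation and twist data at the split vertex between the Figure~\ref{fig:twist-loop-dual}-style partial dual and the diagrams produced by the $\Sc^{N^2}$ contraction-deletion moves, and checking that the factors of $\tfrac12$, the powers of $N$, and the vertex signs $s(v)$ cancel so that no stray normalization survives. This is error-prone rather than deep; I would pin it down by tracking the $\BrI$-component of each signed-vertex element, exactly as in the linear-independence argument for the basis of $\Sc^Q([m],[n])$, which determines the underlying combinatorics unambiguously.
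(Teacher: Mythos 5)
Your part (1) is essentially the paper's own reading of the statement: the corollary is given no separate proof there, being introduced as a restatement of the corollary that precedes it (hence of Theorems~\ref{thm:sl-edge-contr-del} and~\ref{thm:sl-loop-contr-del}) once $\delta_e$ at a loop is defined as the formal average $\tfrac12(G_1+G_2)$; linearity of $W_{\mathfrak{sl}(N)}$ then turns $\tfrac12 N^2\bigl(W_{\mathfrak{sl}(N)}(G_1-e)+W_{\mathfrak{sl}(N)}(G_2-e)\bigr)$ into $N^2W_{\mathfrak{sl}(N)}(\delta_e G-e)$, which is exactly your argument.

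Part (2), however, contains a genuine error. In each of the two graphs $G_1,G_2$ making up $\delta_e G$, the edge $e$ is \emph{not} a loop: partial duality at a loop splits the vertex into two, and $e$ becomes an ordinary edge joining the two new vertices. So your fact (ii) (``each $G_i$ still carrying a loop $e$'') is false, and the justification ``$\beta=1$ since a loop is never a coloop'' misreads Definition~\ref{def:s-category}: $\beta(G,e)=1$ records precisely that $e$ is a loop, and coloops are irrelevant. If you actually applied the $\beta=1$ relation to $e$ inside each $G_i$, averaging would give $W_{\mathfrak{sl}(N)}(\delta_e G)=N^2W_{\mathfrak{sl}(N)}(G-e)-W_{\mathfrak{sl}(N)}(\delta_e G-e)$, with a spurious factor of $N^2$ --- not identity (2). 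Moreover, even with the correct non-loop relation you cannot simply quote contraction--deletion in $\Sc^{N^2}$ at the level of $W_{\mathfrak{sl}(N)}$ of a signed graph: the signed edge relation is Theorem~\ref{thm:sl-edge-contr-del}, which carries an extra crossing term weighted by an endpoint sign. The whole point of the formal average in the definition of $\delta_e$ at a loop is that this crossing term cancels between $G_1$ (split signs $a,+$) and $G_2$ (split signs $-a,-$) --- that cancellation is item (1) of the corollary immediately preceding the statement --- while both contractions $G_i/e$ return $G-e$ with the original vertex sign, since $a\cdot(+)=(-a)\cdot(-)=a$ and $(\delta_e G)/e=\delta_e\delta_e G-e=G-e$ by Definition~\ref{def:contraction}. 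Routing part (2) through that corollary, or equivalently applying Theorem~\ref{thm:sl-edge-contr-del} to $G_1$ and $G_2$ and adding so the crossing terms cancel, repairs the argument; your fallback (``subtract a suitable multiple of part (1) from the $\Sc^{N^2}$ relation for $e$'') does not, since in $G$ itself the only relation available for the loop $e$ is Theorem~\ref{thm:sl-loop-contr-del}, which is part (1) again.
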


These contraction-deletion-like rules along with the following relations give a recursive procedure to calculate $W_{\mathfrak{sl}(N)}$ for any signed virtual graph:
\begin{itemize}
\item $W_{\mathfrak{sl}(N)}(\emptyset)=1$.
\item If $v\in V(G)$ is an isolated vertex, $W_{\mathfrak{sl}(N)}(G)=(1+s(v))W_{\mathfrak{sl}(N)}(G-v)$.
\item $W_{\mathfrak{sl}(N)}(G_1\amalg G_2)=W_{\mathfrak{sl}(N)}(G_1)W_{\mathfrak{sl}(N)}(G_2)$.
\end{itemize}
Additional relations are:
\begin{itemize}
\item If $v\in V(G)$ is a vertex of degree $1$, $W_{\mathfrak{sl}(N)}(G)=0$.
\item Subdividing an edge with a vertex of sign $a$ multiplies $W_{\mathfrak{sl}(N)}$ by $1+a$.
\item If $V\in V(G)$, $W_{\mathfrak{sl}(N)}(\sigma_v G)=s(v)W_{\mathfrak{sl}(N)}(G)$.
\end{itemize}

\begin{figure}[htb]
  \centering
  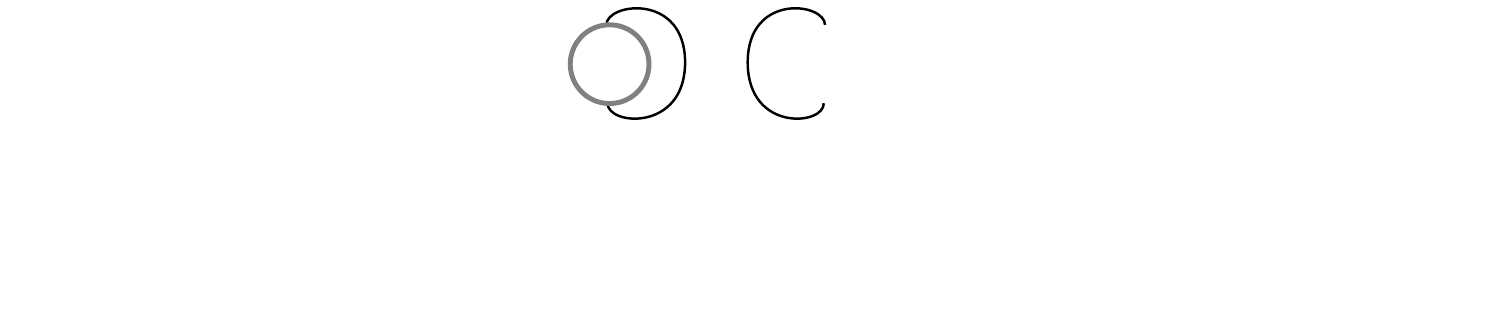
  \caption{$W_{\mathfrak{sl}(N)}$ relations for connect sums.}
  \label{fig:connect-sums-sln}
\end{figure}

\begin{proposition}
  If $G_1$ and $G_2$ are virtual graphs,
  \[ 2(N^2-1) W_{\mathfrak{sl}(N)}(G_1\csum_2 G_2) = W_{\mathfrak{sl}(N)}(G_1\amalg G_2), \]
  where the connect sum is performed at degree-$2$ vertices of positive sign.
  For $1\leq i\leq 2$, choose degree-$3$ vertices $v_i\in G_i$ and let $G_i^a$ for $a\in\{\pm 1\}$ be $G_i$ but with $s(v_i)=a$.
  Then,
  \[ p^+p^-W_{\mathfrak{sl}(N)}(G_1\csum_3 G_2) = p^- W_{\mathfrak{sl}(N)}(G_1^+\amalg G_2^+) + p^+ W_{\mathfrak{sl}(N)}(G_1^-\amalg G_2^-), \]
  where $p^s$, with $s\in\{\pm 1\}$, is the $W_{\mathfrak{sl}(N)}$ polynomial of the theta graph with both vertices given sign $s$: $p^+=2(N^2-1)(N^2-4)$ and $p^-=2N^2(N^2-1)$.
  See Figure~\ref{fig:connect-sums-sln} for an illustration.
\end{proposition}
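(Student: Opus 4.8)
The plan is to imitate the proofs of Propositions~\ref{prop:s-connect-sum} and~\ref{prop:s-vertex-connect-sum}. Recall from Definition~\ref{lem:sl-as-s-extended} and the functorial construction preceding it that $W_{\mathfrak{sl}(N)}$ is computed by a monoidal, trace-preserving functor from signed virtual graphs that factors through $\Sc^{N^2}$, each signed degree-$k$ vertex being sent to the corresponding signed-vertex element in $\Sc^{N^2}([k],[0])$. Consequently, a signed virtual graph assembled by gluing two pieces along a boundary arc carrying $k$ marked points has $W_{\mathfrak{sl}(N)}$-value equal to the $\Sc^{N^2}$-composite of the images of the two pieces in $\Sc^{N^2}([0],[k])$ and $\Sc^{N^2}([k],[0])$; for $k=2,3$ these are the finite-dimensional spaces whose bases and Gram matrices of the composition pairing were recorded in Section~\ref{sec:virtual-graph-inv-categories} and Subsection~\ref{subsec:local-relations-s-poly}, so the computation reduces to linear algebra.

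For the degree-$2$ assertion I would cut $G_1$ and $G_2$ open at their distinguished positive degree-$2$ vertices. The images of the resulting pieces lie in the one-dimensional spaces $\Sc^{N^2}([0],[2])$ and $\Sc^{N^2}([2],[0])$; write them as $\beta_1\BrCup$ and $\beta_2\BrCap$. Closing a piece back up through a positive degree-$2$ vertex reinstates the $1+1=2$ edge-subdivision factor and then applies $\BrCap\circ\BrCup=N^2-1$, so $W_{\mathfrak{sl}(N)}(G_i)=2\beta_i(N^2-1)$, while gluing the two pieces through the single degree-$2$ vertex that survives the connect sum gives $W_{\mathfrak{sl}(N)}(G_1\csum_2 G_2)=2\beta_1\beta_2(N^2-1)$. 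Eliminating $\beta_1,\beta_2$ and using multiplicativity $W_{\mathfrak{sl}(N)}(G_1\amalg G_2)=W_{\mathfrak{sl}(N)}(G_1)W_{\mathfrak{sl}(N)}(G_2)$ yields $2(N^2-1)\,W_{\mathfrak{sl}(N)}(G_1\csum_2 G_2)=W_{\mathfrak{sl}(N)}(G_1\amalg G_2)$.

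For the degree-$3$ assertion the ambient spaces $\Sc^{N^2}([3],[0])$ and $\Sc^{N^2}([0],[3])$ are two-dimensional, with bases $\{\ScTri,\ScTriFlip\}$ and $\{\ScTriDual,\ScTriFlipDual\}$, and the composition pairing between them is the $Q=N^2$ specialization of the Gram matrix from Subsection~\ref{subsec:local-relations-s-poly}. The key observation is that the signed degree-$3$ vertex element is $\ScTri+a\ScTriFlip$ with $a=s(v)$ (order-preserving plus $a$ times order-reversing, the two differing by a rotation reversal), and that $\ScTri+\ScTriFlip$ and $\ScTri-\ScTriFlip$ are orthogonal under this pairing with self-pairings $2(N^2-1)(N^2-4)=p^{+}$ and $2N^2(N^2-1)=p^{-}$ --- precisely the $W_{\mathfrak{sl}(N)}$ values of the theta graph with both vertices signed $+$, resp.\ both signed $-$. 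I would therefore cut $G_1$ at $v_1$ and $G_2$ at $v_2$, expand the images of the two cut pieces in the basis $\{\ScTri+\ScTriFlip,\ScTri-\ScTriFlip\}$ (resp.\ its dual), and identify the coefficient of the $\ScTri+a\ScTriFlip$ summand as $W_{\mathfrak{sl}(N)}(G_i^{a})/p^{a}$ --- this is read off by closing the piece up with the sign-$a$ trivalent vertex, which is orthogonal to the other summand. Recomposing the two pieces through the now-diagonal pairing and using multiplicativity gives $W_{\mathfrak{sl}(N)}(G_1\csum_3 G_2)=W_{\mathfrak{sl}(N)}(G_1^{+}\amalg G_2^{+})/p^{+}+W_{\mathfrak{sl}(N)}(G_1^{-}\amalg G_2^{-})/p^{-}$, which is the stated identity after clearing denominators.

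The main obstacle is the bookkeeping in the degree-$3$ case: one must pin down which of $\ScTri,\ScTriFlip$ is the order-preserving term, exactly how the vertex connect sum pairs the six half-edges at $v_1$ and $v_2$, and how the remaining normalization factors of $W_{\mathfrak{sl}(N)}$ (the $1+a$ edge-subdivision factor, the $N^{-1}$ in the vertex replacement) are absorbed, so that the coefficients genuinely equal $W_{\mathfrak{sl}(N)}(G_i^{\pm})/p^{\pm}$ and the self-pairings of $\ScTri\pm\ScTriFlip$ genuinely equal $p^{\pm}$. Once these conventions are fixed the verifications are mechanical and the argument runs exactly parallel to Proposition~\ref{prop:s-vertex-connect-sum}; Figure~\ref{fig:connect-sums-sln} records the resulting expansion diagrammatically.
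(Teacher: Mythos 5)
Your trivalent-vertex argument is correct and is essentially the route the paper itself indicates ("alternatively, one may expand in $\Sc^{N^2}$"), run in parallel with Proposition~\ref{prop:s-vertex-connect-sum}: the signed trivalent vertex is $\ScTri+a\,\ScTriFlip$, the Gramian at $Q=N^2$ makes $\ScTri+\ScTriFlip$ and $\ScTri-\ScTriFlip$ orthogonal with self-pairings $2(Q-1)(Q-4)\mapsto p^+$ and $2Q(Q-1)\mapsto p^-$, the coefficients of the cut pieces are $W_{\mathfrak{sl}(N)}(G_i^{\pm})/p^{\pm}$, and recombining with multiplicativity gives exactly the stated identity. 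The paper's primary wording (reduce each piece to a single signed interior vertex via Theorems~\ref{thm:sl-edge-contr-del} and~\ref{thm:sl-loop-contr-del}, using that the positive/negative trivalent vertices pair to zero) is the same computation in different clothing, so no complaint there.

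The degree-$2$ half is where your write-up has a real gap: the factor of $2$ is carried entirely by your unstated convention that "a single degree-$2$ vertex survives the connect sum," and that convention is not derived from anything in the paper. The paper's $\csum_2$ is defined only as the edge connect sum (Definition~\ref{def:edge-connect-sum}), and the proposition's phrase "performed at degree-$2$ vertices of positive sign" is exactly what has to be pinned down before the constant can be checked. If one reads the operation in direct analogy with the paper's own definition of $\csum_3$ --- delete the distinguished vertex from each $G_i$ and splice the half-edges, so no signed vertex remains at the junction --- then your own method gives image$(\hat G_i)=\beta_i\cdot\mathrm{arc}$, $W_{\mathfrak{sl}(N)}(G_i)=2\beta_i(N^2-1)$, but $W_{\mathfrak{sl}(N)}(G_1\csum_2 G_2)=\beta_1\beta_2(N^2-1)$, yielding the constant $4(N^2-1)$, which is the value of the two-vertex pairing graph $(1+1)^2(N^2-1)$ (the degree-$2$ analogue of the theta graphs $p^{\pm}$); if instead no positive vertex is distinguished and one simply cuts an edge, the constant comes out as $N^2-1$. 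Only the hybrid in which exactly one positive degree-$2$ vertex sits on the junction circle produces $2(N^2-1)$. So you have reverse-engineered the convention from the stated answer rather than proved the statement: you need to state precisely what graph $G_1\csum_2 G_2$ is (which vertices are removed, what remains at the junction) and then verify that your $\beta$-bookkeeping gives $2(N^2-1)$ for that operation; as written, under the most natural readings the step "$W_{\mathfrak{sl}(N)}(G_1\csum_2 G_2)=2\beta_1\beta_2(N^2-1)$" fails by a factor of $2$.
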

\begin{proof}
  Applying Theorems \ref{thm:sl-edge-contr-del} and \ref{thm:sl-loop-contr-del} to a graph with two or three boundary edges allows one to reduce to the case of a linear combination of graphs with a single interior vertex, since transposing the edges of a trivalent vertex is the same as flipping it.
  In the case of two boundary edges, the interior vertex must be positively signed to be nonzero, and the edge-connect-sum relation follows from expanding both sides.
  In the case of three boundary edges, the interior vertex is either positive or negative.
  The pairing between a positive and negative trivalent vertex is $0$, and again the result follows by expanding both sides.
  Alternatively, one may expand in $\Sc^{N^2}$.
\end{proof}

Bar-Natan in \cite{Bar-Natan1997} cites Whitney that for connected planar bridgeless cubic graphs, one can get between any two planar embeddings by repeatedly flipping over subgraphs in edge-connect-sum decompositions.
For convenience, we give a self-contained proof of this result for graphs of arbitrary degree.
\begin{lemma}
  \label{lem:whitney}
  Let $G$ be a connected planar bridgeless virtual graph, and let $\mathcal{P}=\{W\subset V(G):\sigma_W(G)\text{ is planar}\}$.
  Consider an equivalence relation generated as follows: call $W,W'\in\mathcal{P}$ equivalent if $W$ is the symmetric difference of $W'$ and $U$, where $U\subset V(G)$ is such that $U$ and $V(G)-U$ are the vertex sets of an edge-connect-sum decomposition of $G$ (allowing $U$ to be empty).
  Then $\mathcal{P}$ has exactly one equivalence class.
  In other words, all planar embeddings of $G$ modulo vertex flips are related by repeatedly flipping over subgraphs in edge-connect-sum decompositions and the whole graph.
\end{lemma}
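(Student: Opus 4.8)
The plan is to induct on $\abs{E(G)}$. Since the generating relation on $\mathcal{P}$ is symmetric and transitive and $\emptyset\in\mathcal{P}$ (the graph $G$ is planar), it suffices to show that every $W\in\mathcal{P}$ is equivalent to $\emptyset$. The organizing observation is that an edge-connect-sum decomposition $G=G_1\csum_2 G_2$, with vertex sets $U$ and $V(G)-U$, is exactly the data of a size-$2$ bond $\{f_1,f_2\}$ of $G$ (a $2$-edge-cut) together with a choice of which side to flip: $U$ and $V(G)-U$ are the two vertex classes of $G-\{f_1,f_2\}$, and $G_1$ is $G[U]$ with the extra edge created by the $\BrCup$ reconnecting the two loose half-edges (similarly $G_2$). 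First I would normalize: because $\VG$ identifies a virtual graph with its edge subdivisions, and flipping a vertex of degree at most $2$ does nothing while $\{v\}$ for such a $v$ is itself a trivial edge-connect-sum decomposition set, one may suppress all degree-$2$ vertices; the result is either a single vertex with a loop (if $G$ is a cycle), for which the claim is immediate, or a graph of minimum degree at least $3$. In the latter case every $2$-edge-cut has an edge on each side, so each side of a decomposition, with its reconnecting edge, is again connected, bridgeless, and planar (being a minor of $G$), with strictly fewer edges than $G$, and still of minimum degree at least $3$.

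In the first case, $G$ has a $2$-edge-cut, so $G=G_1\csum_2 G_2$ with vertex sets $U$ and $V(G)-U$. Cutting a planar embedding of $\sigma_W G$ along the two cut edges exhibits planar embeddings of $G_1$ and $G_2$, so $\sigma_{W\cap U}G_1$ and $\sigma_{W\cap(V(G)-U)}G_2$ are planar, up to replacing one of $W\cap U$, $W\cap(V(G)-U)$ by its complement inside $U$, respectively $V(G)-U$ --- which is harmless, since the corresponding $\mathcal{P}$-sets for $G_1,G_2$ are closed under the whole-graph flip and $W\sim W\mathbin{\triangle}U$ is itself an allowed move ($U$ being a decomposition set). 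By the inductive hypothesis, $W\cap U$ is equivalent to $\emptyset$ inside $G_1$ via edge-connect-sum flips of $G_1$, and likewise for $G_2$. The key step is a lifting claim: every size-$2$ bond of $G_1$, whether or not it involves the reconnecting edge, induces a size-$2$ bond of $G$ with the same flip vertex set; this is a finite case analysis on how the bond meets the cut edges $\{f_1,f_2\}$. Feeding in the lifted flips shows $W\cap U\sim_G\emptyset$ and $W\cap(V(G)-U)\sim_G\emptyset$, and since these are supported on disjoint vertex sets, $W=(W\cap U)\cup(W\cap(V(G)-U))\sim_G\emptyset$.

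In the second (base) case, $G$ has no $2$-edge-cut, i.e.\ $G$ is $3$-edge-connected (minimum degree at least $3$). I claim $\mathcal{P}=\{\emptyset,V(G)\}$, so the whole-graph flip already exhibits a single equivalence class. When $G$ is cubic this is precisely Whitney's theorem: a $3$-connected planar graph has an essentially unique embedding in $S^2$, so the only planar rotation systems on the labelled graph $G$ are the given one and its mirror $\sigma_{V(G)}G$. For arbitrary degree I would prove it directly: if $\emptyset\ne W\subsetneq V(G)$, take an edge $uv$ with $u\in W$ and $v\notin W$ and compare the face structures of the planar embeddings of $G$ and $\sigma_W G$ around the subgraph spanned by the edges joining $W$ to its complement; $3$-edge-connectivity then forces a bond of size at most $2$, a contradiction. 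Equivalently, any planar rotation system obtained by reversing rotations at a proper nonempty subset of vertices differs from $G$ by a flip localized at a $2$-separation, and the only $2$-separations acting as a pure rotation-reversal on both sides are $2$-edge-cuts.

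I expect the main obstacle to be this base case for non-cubic graphs: showing that vertex rotation-reversal cannot manufacture a genuinely new planar embedding of a $3$-edge-connected graph, and cleanly excluding the $2$-vertex-cut ``flips,'' which really do give distinct planar embeddings but are never realized by some $\sigma_W G$ (they rearrange, rather than reverse, the rotation at the two cut vertices). By comparison, the lifting claim in the first case and the bookkeeping of complements coming from the two ways of gluing at a $2$-edge-cut are routine finite checks needing no new ideas.
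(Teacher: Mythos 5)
Your reduction and inductive step are fine: suppressing degree-$2$ vertices, splitting $G$ along a $2$-edge-cut, cutting the planar embedding of $\sigma_W(G)$ along that cut, and lifting $2$-bonds of the summands back to $G$ are all routine checks (including the complement ambiguity, which the whole-graph flip absorbs). The problem is that the whole argument funnels into your base case, and there you have not given a proof. When $G$ is $3$-edge-connected the only admissible moves are the trivial ones $U=\emptyset$ and $U=V(G)$, so the claim $\mathcal{P}=\{\emptyset,V(G)\}$ is not an auxiliary fact you can defer --- it \emph{is} the lemma for such graphs, i.e.\ all of the remaining content. Your sketch (``compare the face structures\dots\ $3$-edge-connectivity then forces a bond of size at most $2$'') only restates the desired conclusion, and Whitney's theorem does not cover it: uniqueness of planar embeddings needs $3$-\emph{vertex}-connectivity (and simplicity), whereas a $3$-edge-connected planar graph can have $2$-vertex-cuts and hence many inequivalent embeddings related by Whitney flips. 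For a concrete instance, glue two copies of $K_4$ minus an edge along the two endpoints of the deleted edges: this is planar, $3$-edge-connected, has the $2$-vertex-cut $\{a,b\}$, and admits two embeddings differing by a flip of one side. What you must show is that no composition of such $2$-vertex-cut flips (and the global reflection) can act as a pure rotation reversal $\sigma_W$ with $\emptyset\neq W\neq V(G)$; a single flip is easy to exclude (it forces a $2$-edge-cut), but compositions across several, possibly nested or crossing, $2$-separations --- and the multigraph/loop cases your suppression step produces, such as the theta graph, where Whitney-type statements do not directly apply --- are exactly the content you yourself flag as the ``main obstacle.'' As written, that is a genuine gap, not a finite check, and nothing in the proposal fills it.

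For comparison, the paper's proof never needs a uniqueness-of-embedding input or the $3$-edge-connected dichotomy at all: it works directly with a planar embedding of $\sigma_W(G)$, takes a regular neighborhood of the subgraph induced by $W$, locates a disk component (of the neighborhood or of its complement) whose boundary meets the graph in exactly two edges --- this is where bridgelessness and connectivity enter --- and flips that disk, reducing the number of boundary circles and producing the required edge-connect-sum decomposition from the embedding itself. If you want to salvage your plan, the base case needs an argument of comparable strength, e.g.\ extracting a small cut from a comparison of the two face structures (essentially what the disk-boundary argument does); invoking planarity of $\sigma_W G$ only through ``it is some embedding, hence reachable by Whitney flips'' leaves precisely the hard step unproved.
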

\begin{proof}
  Let $G$ and $\mathcal{P}$ be as in the hypothesis, and let $W\in\mathcal{P}$ be arbitrary.
  Consider an embedding of $\sigma_W(G)$ in $S^2$.
  Take a closed regular neighborhood of the subgraph induced by $W$ in $S^2$.
  We will induct on the number of boundary components of the neighborhood to show $W$ is equivalent to $\emptyset$.
  If there are no boundary components, then either $W=\emptyset$ or $W=V(G)$, which are equivalent.

  Let $D$ be a connected component of the neighborhood that is homeomorphic to a disk, where if there are none there is one in the complement, which we may take instead since $W$ is equivalent to $V(G)-W$.

  The region just outside $D$ is the graph in the prime orientation, where inside $D$ is the graph in the reversed orientation.
  Hence, flipping the portion of the graph within $D$ over will result in a planar virtual graph.
  If there were more than two edges of $\sigma_W(G)$ incident to the boundary of $D$, then flipping the disk portion over would result in a nonplanar virtual graph.
  Since there are no bridge edges and $G$ is connected, there are exactly two edges incident to the boundary, hence $D$ demonstrates $G$ as an edge connect sum.
  If $U$ is the set of vertices of $G$ within $D$, then $W$ is equivalent to $W-U$, whose corresponding neighborhood has one fewer boundary edge.
\end{proof}

Let $g_{\mathrm{min}}(G)$ be the minimal genus over all rotation systems of a cubic virtual graph $G$:
\[g_{\mathrm{min}}(G)=\min_{W\subset V(G)}g(\sigma_W(G)).\]
It follows from Theorem~\ref{thm:s-poly-degree} that $\deg W_{\mathfrak{sl}(N)}(G)\leq 2b_1(G)-2g_{\mathrm{min}}(G)$.

The following theorem extends Bar-Natan's result\cite{Bar-Natan1997} that for cubic $G$ maximal degree is achieved if and only if the underlying graph is bridgeless and planar.
\begin{theorem}
  \label{thm:sl-planar}
  If $G$ is a bridgeless signed virtual graph with $s(v)=(-1)^{\deg(v)}$ for all $v\in V(G)$, then $\deg W_{\mathfrak{sl}(N)}(G)=2b_1(G)$ if and only if there is some $W\subset V(G)$ with $\sigma_W(G)$ planar.
\end{theorem}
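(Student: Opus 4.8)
The strategy is to read off the top-degree coefficient of $W_{\mathfrak{sl}(N)}(G)$ from the $S$-polynomial state sum in Definition~\ref{lem:sl-as-s-extended} and show it vanishes precisely when no rotation-reversal $\sigma_W(G)$ is planar. For $W\subseteq V(G)$ let $b(W)$ denote the number of edges of $G$ with exactly one endpoint in $W$; since $\sum_{v\in W}\deg(v)\equiv b(W)\pmod 2$ and $s(v)=(-1)^{\deg(v)}$, Definition~\ref{lem:sl-as-s-extended} reads
\begin{equation*}
  W_{\mathfrak{sl}(N)}(G)=\sum_{W\subseteq V(G)}(-1)^{b(W)}\,S_{\sigma_W(G)}(N^2).
\end{equation*}
Using multiplicativity under disjoint union together with additivity of $b_1$ and of edge cuts, I may reduce to the case that $G$ is connected; if moreover $b_1(G)=0$ then bridgelessness forces $G$ to be a single vertex, a trivial case, so assume $b_1(G)\ge 1$. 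Every $\sigma_W(G)$ has the same underlying graph as $G$, hence is bridgeless with $b_1(\sigma_W(G))=b_1(G)$, and $S_{\sigma_W(G)}$ is a polynomial in $Q$, so Theorem~\ref{thm:s-poly-degree} gives $\deg_N S_{\sigma_W(G)}(N^2)=2\deg_Q S_{\sigma_W(G)}\le 2\bigl(b_1(G)-g(\sigma_W(G))\bigr)$. Summing, $\deg W_{\mathfrak{sl}(N)}(G)\le 2b_1(G)-2g_{\mathrm{min}}(G)$ with $g_{\mathrm{min}}(G)=\min_{W}g(\sigma_W(G))$; this already yields the ``only if'' direction, since $\deg W_{\mathfrak{sl}(N)}(G)=2b_1(G)$ forces $g_{\mathrm{min}}(G)=0$, i.e.\ some $\sigma_W(G)$ is planar.

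For the converse, assume $\mathcal{P}:=\{W\subseteq V(G):\sigma_W(G)\text{ planar}\}$ is nonempty and fix $W_0\in\mathcal{P}$. A summand $S_{\sigma_W(G)}(N^2)$ has $N$-degree $2b_1(G)$ only if $g(\sigma_W(G))=0$, i.e.\ $W\in\mathcal{P}$, and then contributes a nonzero leading term only if $\sigma_W(G)$ additionally has no coloops; but for planar virtual graphs coloops are exactly the bridges of the underlying graph, which is bridgeless, so every $W\in\mathcal{P}$ qualifies, and for such $W$ the equality clause of Theorem~\ref{thm:s-poly-degree} makes $S_{\sigma_W(G)}(Q)$ monic of degree $b_1(G)$. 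Hence the coefficient of $N^{2b_1(G)}$ in $W_{\mathfrak{sl}(N)}(G)$ equals $\sum_{W\in\mathcal{P}}(-1)^{b(W)}$, and it remains to show this is nonzero.

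This is where Lemma~\ref{lem:whitney} enters. Applied to the planar representative $\sigma_{W_0}(G)$ (connected, planar, bridgeless, with the same underlying graph as $G$), it shows $\mathcal{P}$ is a single equivalence class under the moves $W\mapsto W\mathbin{\triangle}U$, where $\{U,V(G)-U\}$ are the two vertex sets of an edge-connect-sum decomposition of $G$, together with the whole-graph flip $U=V(G)$. For each generator $U$ one checks that $b(U)$ is even: an edge connect sum joins its two sides by exactly two edges (Definition~\ref{def:edge-connect-sum}), and $b(V(G))=b(\emptyset)=0$. Since $b(W\mathbin{\triangle}U)\equiv b(W)+b(U)\pmod 2$, the sign $(-1)^{b(W)}$ is constant on $\mathcal{P}$, equal to $(-1)^{b(W_0)}$, so the coefficient equals $(-1)^{b(W_0)}\abs{\mathcal{P}}\ne 0$. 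Therefore $\deg W_{\mathfrak{sl}(N)}(G)=2b_1(G)$.

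The only genuinely delicate point is the claim that every generator of the Whitney equivalence changes the edge cut $b(\cdot)$ by an even amount --- which forces a look at the combinatorics of the edge-connect-sum decomposition, where exactly two edges cross between the two halves --- together with the bookkeeping that Lemma~\ref{lem:whitney} must be invoked for $\sigma_{W_0}(G)$ rather than for $G$ itself. Everything else is a routine extraction of leading terms resting on Theorem~\ref{thm:s-poly-degree}.
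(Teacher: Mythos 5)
Your proof is correct and follows essentially the same route as the paper's: extract the $N^{2b_1(G)}$ coefficient from the expansion of Definition~\ref{lem:sl-as-s-extended}, use Theorem~\ref{thm:s-poly-degree} (with planar $+$ bridgeless $\Rightarrow$ no coloops) for the degree bound and monicity, and use Lemma~\ref{lem:whitney} to see the signs of all planar contributions agree. Your parity bookkeeping via the cut $b(W)$ is just the paper's computation $\sum_{v\in U}\deg(v)=2(e+1)$ in different clothing, since $\sum_{v\in W}\deg(v)\equiv b(W)\pmod 2$.
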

\begin{proof}
  By multiplicativity under disjoint union, we may assume $G$ is connected.
  In the Definition~\ref{lem:sl-as-s-extended} expansion, the only $S_{\sigma_W(G)}(N^2)$ terms with maximal degree are those for which $\sigma_W(G)$ is planar.
  We will show that $\prod_{v\in W}s(v)$ has the same value for all $W\subset V(G)$ with $\sigma_W(G)$ planar.
  By Lemma~\ref{lem:whitney}, we may reduce to the case of flipping over a single edge-connect-summand.
  Let $W\subset V(G)$ with $\sigma_W(G)$ planar, and let $U\subset V(G)$ be the summand.

  Within the edge-connect-summand, $\sum_{v\in U}\deg(v)=2(e+1)$, with $e$ the number of edges in the subgraph induced by $U$, where the additional $1$ counts the two half edges leaving the summand.
  With $W'$ being the symmetric difference of $W$ and $U$,
  \begin{align*}
    \prod_{v\in W'}s(v)&=\left(\prod_{v\in W}(-1)^{\deg(v)}\right)
                         \left(\prod_{v\in U}(-1)^{\deg(v)}\right)\\
                       &=\left(\prod_{v\in W}(-1)^{\deg(v)}\right)\left((-1)^{\sum_{v\in U}\deg(v)}\right)
                       =\prod_{v\in W}(-1)^{\deg(v)},
  \end{align*}
  since we established the sum of degrees is even.
  Therefore, the absolute value of the coefficient of the $N^{2b_1(G)}$ term in $W_{\mathfrak{sl}(N)}(G)$ is the number of $W\subset V(G)$ such that $\sigma_W(G)$ is planar.

  The converse is Theorem~\ref{thm:s-poly-degree}, since planar and bridgeless implies there being no coloops.
\end{proof}

\begin{lemma}
  \label{lemma:sl2-spoly4}
  If $G$ is a signed virtual graph, then if the vertices have sign $s(v)=(-1)^{\deg(v)}$ for all $v\in V(G)$, $W_{\mathfrak{sl}(2)}(G)=2^{\abs{V(G)}}S_G(4)$, and otherwise $W_{\mathfrak{sl}(2)}(G)=0$.
\end{lemma}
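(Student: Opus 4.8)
The plan is to reduce everything to the state-sum expansion of Definition~\ref{lem:sl-as-s-extended} together with the $Q=4$ vertex-flip relation of Lemma~\ref{lemma:q4-vtx-flip}. Specializing Definition~\ref{lem:sl-as-s-extended} at $N=2$ gives
\[
  W_{\mathfrak{sl}(2)}(G)=\sum_{W\subset V(G)}\left(\prod_{v\in W}s(v)\right)S_{\sigma_W(G)}(4).
\]
First I would rewrite each term $S_{\sigma_W(G)}(4)$ using Lemma~\ref{lemma:q4-vtx-flip}. Since the operations $\sigma_v$ for distinct $v$ commute and reversing the rotation at one vertex does not change the degree of any vertex, applying the lemma once for each $v\in W$ yields $S_{\sigma_W(G)}(4)=\left(\prod_{v\in W}(-1)^{\deg(v)}\right)S_G(4)$. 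Note that this is a statement about the underlying unsigned virtual graph, so the signs $s(v)$ play no role in this step.

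Substituting this back, the factor $S_G(4)$ pulls out of the sum and the remaining sum over subsets factors over vertices:
\[
  W_{\mathfrak{sl}(2)}(G)=S_G(4)\sum_{W\subset V(G)}\prod_{v\in W}\bigl(s(v)(-1)^{\deg(v)}\bigr)
  =S_G(4)\prod_{v\in V(G)}\bigl(1+s(v)(-1)^{\deg(v)}\bigr),
\]
using the elementary identity $\sum_{W\subset V}\prod_{v\in W}a_v=\prod_{v\in V}(1+a_v)$. The case analysis is then immediate. If $s(v)=(-1)^{\deg(v)}$ for every $v$, then each factor equals $1+1=2$, so the product is $2^{\abs{V(G)}}$ and $W_{\mathfrak{sl}(2)}(G)=2^{\abs{V(G)}}S_G(4)$. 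If instead $s(v_0)\neq(-1)^{\deg(v_0)}$ for some $v_0$, then, since both quantities lie in $\{\pm1\}$, we have $s(v_0)(-1)^{\deg(v_0)}=-1$, the factor indexed by $v_0$ vanishes, and hence $W_{\mathfrak{sl}(2)}(G)=0$.

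There is no serious obstacle here: the statement is a bookkeeping consequence of the two cited results. The only point deserving a little care is the iterated use of Lemma~\ref{lemma:q4-vtx-flip} --- one must observe that it applies to $\sigma_{W'}(G)$ for every intermediate subset $W'\subset V(G)$ and that vertex degrees are unaffected by rotation reversals at other vertices, both of which are clear from the definition of $\sigma_v$. Everything else is the factorization of the subset sum displayed above.
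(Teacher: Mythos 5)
Your proof is correct and is essentially the paper's argument: both hinge on Lemma~\ref{lemma:q4-vtx-flip} producing the per-vertex factor $1+s(v)(-1)^{\deg(v)}$, which is $2$ when $s(v)=(-1)^{\deg(v)}$ and $0$ otherwise. The only cosmetic difference is that the paper performs the rewriting locally on each signed vertex element inside $\Sc^4$, while you expand the full state sum of Definition~\ref{lem:sl-as-s-extended} and then factor the subset sum, which amounts to the same computation.
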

\begin{proof}
  Using Lemma~\ref{lemma:q4-vtx-flip}, at $Q=4$ we can obtain
  \begin{align*}
    \raisebox{-21pt}{\marginbox{4pt 0}{}}
    =
    \raisebox{-21pt}{\marginbox{4pt 0}{}}
    +a
    \raisebox{-21pt}{\marginbox{4pt 0}{}}
    =(1+a(-1)^{\deg(v)})\raisebox{-21pt}{\marginbox{4pt 0}{}}.
  \end{align*}
  Thus, if there is a vertex $v$ such that $s(v)\neq (-1)^{\deg(v)}$, the evaluation is $0$.
  Otherwise, each vertex contributes a factor of $2$, hence we have $W_{\mathfrak{sl}(2)}(G)=2^{\abs{V(G)}}S_G(4)$.
\end{proof}

The following theorem is a generalization of Penrose\cite{Penrose1971}, and all the evaluations give a normalization of what Jaeger calls the \emph{Penrose number}\cite{Jaeger1989}.
\begin{theorem}
  \label{thm:penrose-spoly-equivalence}
  For a virtual graph $G$, the evaluations $W_{\mathfrak{so}(3)}(G)$, $W_{\mathfrak{so}(-2)}(G)$, $W_{\mathfrak{sl}(\pm 2)}(G)$, and $S_{G}(4)$ are equal up to a suitable normalization, where the vertices are given the sign $s(v)=(-1)^{\deg(v)}$ for all $v\in V(G)$.
\end{theorem}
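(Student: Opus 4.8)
The plan is to prove that each of $W_{\mathfrak{so}(3)}(G)$, $W_{\mathfrak{so}(-2)}(G)$, and $W_{\mathfrak{sl}(\pm 2)}(G)$ equals $S_G(4)$ up to a multiplicative normalization of the form $a^{\abs{V(G)}}b^{\abs{E(G)}}$; by multiplicativity under disjoint union we may assume $G$ is connected. The $\mathfrak{sl}$ cases are essentially already in hand. Indeed, Lemma~\ref{lemma:sl2-spoly4} gives $W_{\mathfrak{sl}(2)}(G)=2^{\abs{V(G)}}S_G(4)$ under the standing sign convention $s(v)=(-1)^{\deg(v)}$, and the extended definition (Definition~\ref{lem:sl-as-s-extended}), $W_{\mathfrak{sl}(N)}(G)=\sum_{W\subset V(G)}\bigl(\prod_{v\in W}s(v)\bigr)S_{\sigma_W(G)}(N^2)$, depends on $N$ only through $N^{2}$, so $W_{\mathfrak{sl}(-2)}(G)=W_{\mathfrak{sl}(2)}(G)$. (If one prefers the classical Penrose normalization of $W_{\mathfrak{sl}(N)}$, whose form-rescaling factors $a^{\abs{V}}b^{\abs{E}}$ may carry honest $N$-dependence, one merely composes with the ratio of normalizations, which is again of the allowed shape.)

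For the $\mathfrak{so}$ cases I would show that the normalized polynomial $\hat W_{\mathfrak{so}(N)}(G):=W_{\mathfrak{so}(N)}(G)/(a^{\abs{V(G)}}b^{\abs{E(G)}})$ satisfies, at $N=3$ and $N=-2$, the defining recursion of $S_G(4)$: the five $S$-polynomial axioms specialized to $Q=4$, whose uniqueness (induction on $\abs{V(G)}+\abs{E(G)}$, as throughout the paper) then forces $\hat W_{\mathfrak{so}(3)}(G)=\hat W_{\mathfrak{so}(-2)}(G)=S_G(4)$. The input on the $\mathfrak{so}$ side is Proposition~\ref{prop:son-contr-del} together with the derived rules listed after its corollary: multiplicativity under $\amalg$, vanishing at a degree-$1$ vertex, an isolated vertex contributing a factor $N$, subdivision contributing a factor $2$, a loop with adjacent half-edges contributing a factor $N-1$, the vertex-flip rule (flipping a degree-$m$ vertex multiplies by $(-1)^m$, a symmetry $W_{\mathfrak{so}(N)}$ has for all $N$ and which $S$ acquires only at $Q=4$ by Lemma~\ref{lemma:q4-vtx-flip}), and the partial-dual/half-twist contraction-deletion rule. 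Existence of a relation $W_{\mathfrak{so}(N)}(G)=a^{\abs{V}}b^{\abs{E}}S_G(4)$ forces, via the subdivision and isolated-vertex rules, $ab=2$ and $a=N$, hence $b=2/N$; the loop rule then makes $\hat W_{\mathfrak{so}(N)}$ multiply by $N(N-1)/2$, which agrees with the $Q=4$ loop factor $3$ precisely when $N^{2}-N-6=0$, i.e. $N\in\{3,-2\}$ — this is where the special role of these two values comes from. It remains to match the contraction-deletion rule itself at these $N$ with the $Q=4$ rule $S_G(4)=4^{\beta(G,e)}S_{G/e}(4)-S_{G-e}(4)$: the half-twist that the partial dual $\delta_e$ introduces becomes, after contraction, a rotation reversal affecting part of the merged vertex, and one uses Lemma~\ref{lemma:q4-vtx-flip} (with the degree-$1$ vanishing and the vertex-flip rule) to convert the resulting twisted term into the ordinary deletion term with the correct sign. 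Because $W_{\mathfrak{so}(N)}$ is a genuine polynomial in $N$, the evaluation at the formal value $N=-2$ is legitimate even though $\mathfrak{so}(-2)$ is not a Lie algebra, and the argument runs verbatim there. As a sanity check and for intuition in the cubic case, the adjoint representations of $\mathfrak{so}(3)$ and $\mathfrak{sl}(2)$ coincide, so Penrose's original cubic identity already equates $W_{\mathfrak{so}(3)}$ and $W_{\mathfrak{sl}(2)}$ on cubic graphs; but the non-cubic extensions (antisymmetrizer at vertices for $\mathfrak{so}$ versus the signed-vertex elements for $\mathfrak{sl}$) are a priori different, so the recursion comparison above is what actually does the work.

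The main obstacle is the final step just described: faithfully matching the $\mathfrak{so}(N)$ contraction-deletion — which is naturally phrased with partial duals and half-twists, and whose twisted term $W_{\mathfrak{so}(N)}(\delta_e\tau_e G-e)$ is not literally $W_{\mathfrak{so}(N)}(G-e)$ — against the much simpler $Q=4$ contraction-deletion, while tracking the normalization factors and the signs produced by Lemma~\ref{lemma:q4-vtx-flip}, and handling loop and non-loop edges separately. Once that identification is carried out, the remainder (the $\mathfrak{sl}$ side, the reduction to connected graphs, and the degree bookkeeping) is routine.
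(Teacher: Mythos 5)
Your treatment of $W_{\mathfrak{sl}(\pm 2)}$ is exactly the paper's (Lemma~\ref{lemma:sl2-spoly4} plus the observation that Definition~\ref{lem:sl-as-s-extended} depends on $N$ only through $N^2$), so that half is fine. The gap is in your $\mathfrak{so}$ strategy, and it is not just the deferred ``main obstacle'': the step you postpone is actually false at $N=3$ in the framework you set up. Your constraints force $a=N$, $b=2/N$, but already the theta graph violates $W_{\mathfrak{so}(3)}(G)=a^{\abs{V}}b^{\abs{E}}S_G(4)$: the paper gives $W_{\mathfrak{so}(3)}(\Theta)=N(N-1)(N-2)=6$, while $a^2b^3S_\Theta(4)=\tfrac{8}{3}\cdot 6=16$. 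Equivalently, the contraction--deletion axiom you would need, $W_{\mathfrak{so}(3)}(G)=ab\,W_{\mathfrak{so}(3)}(G/e)-b\,W_{\mathfrak{so}(3)}(G-e)$, reads $6=2\cdot 12-\tfrac23\cdot 12=16$ for a non-loop edge of $\Theta$. Worse, no normalization depending only on $\abs{V}$, $\abs{E}$ and vertex degrees can repair this: applying the paper's own relation $W_{\mathfrak{so}(N)}(G)=W_{\mathfrak{so}(N)}(G/e)-W_{\mathfrak{so}(N)}(\tau_eG/e)$ to $\Theta$ gives $W_{\mathfrak{so}(3)}=12$ for the planar figure-eight (one vertex, two non-interleaved loops) and $6$ for the interleaved (toroidal) figure-eight, whereas $S(4)$ is $9$ and $-3$ respectively; the two graphs have identical combinatorial data (one degree-$4$ vertex, two edges) but ratios $4/3$ versus $-2$. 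So for vertices of degree $\ge 4$ (which your induction necessarily creates when contracting edges of cubic graphs) the $\mathfrak{so}(3)$ evaluation simply does not satisfy the $Q=4$ recursion up to any such normalization, and once the degree exceeds $3$ the invariant $n$-forms on the adjoint representation are no longer unique up to scale, so there is no reason to expect it to.

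This is why the paper does not argue by matching recursions: it obtains $W_{\mathfrak{sl}(2)}(G)=2^{\abs{V(G)}}S_G(4)$ from Lemma~\ref{lemma:sl2-spoly4}, gets $W_{\mathfrak{sl}(-2)}=W_{\mathfrak{sl}(2)}$ from the definition, handles $\mathfrak{so}(-2)$ via the binor identity $\BrX=-\BrI-\BrE$ in $Br^{-2}_2$ together with Lemma~\ref{lem:so-as-sl} (giving $W_{\mathfrak{so}(-2)}=2^{\abs{E(G)}}W_{\mathfrak{sl}(-2)}$), and disposes of $\mathfrak{so}(3)$ by the isomorphism $\mathfrak{so}(3)\cong\mathfrak{sl}(2)$ rather than by a contraction--deletion characterization. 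Even in the one case where your numerology is consistent ($N=-2$: the theta constraint $48=N^2(N-1)(N-2)$ does hold), the crucial verification you defer --- converting the twisted-contraction term $W_{\mathfrak{so}(N)}(\tau_eG/e)$ into the deletion term --- is genuinely missing, and Lemma~\ref{lemma:q4-vtx-flip} does not directly supply it: that lemma governs full vertex flips $\sigma_v$, while contracting a twisted edge reverses only part of the rotation at the merged vertex. To fix the proposal you should abandon the uniform recursion-matching for the $\mathfrak{so}$ evaluations and follow the Brauer-category/Lie-algebra identifications as the paper does (or at minimum restrict the $\mathfrak{so}(3)$ claim to the setting where the vertex forms are unique up to scale).
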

\begin{proof}
  Lemma~\ref{lemma:sl2-spoly4} was that $W_{\mathfrak{sl}(2)}(G)$ and $S_G(4)$ are equal up to normalization.
  From Definition~\ref{lem:sl-as-s-extended}, $W_{\mathfrak{sl}(2)}(G)=W_{\mathfrak{sl}(-2)}(G)$.
  The algebra $Br^{-2}_2$ has $\raisebox{-6pt}{\marginbox{1.5pt 0pt}{}}+\raisebox{-6pt}{\marginbox{1.5pt 0pt}{}}+\raisebox{-6pt}{\marginbox{1.5pt 0pt}{}}$ in its trace radical, so in the quotient there is the relation $\raisebox{-6pt}{\marginbox{1.5pt 0pt}{}}=-\raisebox{-6pt}{\marginbox{1.5pt 0pt}{}}-\raisebox{-6pt}{\marginbox{1.5pt 0pt}{}}$, which Penrose calls the \emph{binor identity}.
  Since $\raisebox{-6pt}{\marginbox{1.5pt 0pt}{}}\circ\raisebox{-6pt}{\marginbox{1.5pt 0pt}{}}=0$, we have the relation $W_{\mathfrak{sl}(-2)}(\tau_v G)=-W_{\mathfrak{sl}(-2)}(G)$, hence Lemma~\ref{lem:so-as-sl} implies $W_{\mathfrak{so}(-2)}=2^{\abs{E(G)}}W_{\mathfrak{sl}(-2)}(G)$.
  Finally, $\mathfrak{so}(3)\cong\mathfrak{sl}(2)$ as Lie algebras.
\end{proof}

\begin{remark}
  There is also a functor $\VG^{\C(Q)}\to\Br^{-Q^{1/2}}$ to calculate the $S$-polynomial, with the edges being replaced instead by some normalization of $P^{(2)}=\raisebox{-6pt}{\marginbox{1.5pt 0pt}{}}+Q^{-1/2}\raisebox{-6pt}{\marginbox{1.5pt 0pt}{}}$.
  One can extend the $S$-polynomial to graphs embedded in unorientable surfaces by replacing half twists with $\raisebox{-6pt}{\marginbox{1.5pt 0pt}{}}$ in either expansion, yielding distinct invariants.
\end{remark}

\begin{remark}
  For the $\Br^{-Q^{1/2}}$ expansion of the $S$-polynomial, at $Q=4$ the binor identity implies that the extension of the $S$-polynomial that replaces half twists by $\raisebox{-6pt}{\marginbox{1.5pt 0pt}{}}$ has the property that the insertion of a half twist multiplies the polynomial by $-1$.
  Thus, if $G$ is a cubic virtual graph whose underlying graph is planar, $S_G(4)=\pm F_G(4)$ whether or not $G$ has this sort of half twist (which are presumed to be ignored by $F$).
\end{remark}

\subsection{Cellular embedding polynomial}

Recall the formulation of $W_{\mathfrak{sl}(N)}(G)$ when $G$ is a cubic graph, where vertices are replaced by $\raisebox{-8pt}{\marginbox{1.5pt 0pt}{}}-\raisebox{-8pt}{\marginbox{1.5pt 0pt}{}}$ and edges by $\raisebox{-6pt}{\marginbox{1.5pt 0pt}{}}$.
As suggested in \cite{Bar-Natan1997}, by thinking of $\raisebox{-8pt}{\marginbox{1.5pt 0pt}{}}$ and $\raisebox{-8pt}{\marginbox{1.5pt 0pt}{}}$ as being two rotations for a vertex, we can identify the coefficient of $N^{2k}$ in $W_{\mathfrak{sl}(N)}(G)$ as a signed count of the number of genus-$(b_0(G)+\frac{1}{4}\abs{V(G)}-k)$ virtual graphs with the same underlying graph as $G$, with the sign being determined by the parity of the number of vertices given opposite rotation.

With this normalization, the evaluation $C_G(x)=x^{b_0(G)+\frac{1}{4}\abs{V(G)}}W_{\mathfrak{sl}(x^{-1/2})}(G)$ is a polynomial such that the coefficient of the $x^g$ term is for the genus-$g$ virtual graphs, and we call it the \emph{cellular embedding polynomial}.
For the cubic case, a restatement of Theorem~\ref{thm:sl-planar} is that the underlying graph of a connected cubic virtual graph $G$ is bridgeless and planar if and only if $C_G(0)\neq 0$.

One can define a kind of ``nonplanar'' algebra for $C_G$ by taking as generators compact connected surfaces whose boundary is partitioned into labeled arcs, with the relation that $[\Sigma\csum T^2]=x[\Sigma]$.
Then $C_G$ comes from a map from cubic virtual graphs by sending cubic vertices to a difference of two triangles with opposite orientations.
The second author calculated the cellular embedding polynomial in this way for all connected cubic graphs with up to $22$ vertices and girth at least $3$.
There are $471{,}932$ such polynomials, $684$ of which are for planar graphs.
Of the planar graphs, the nearest roots within $1/4$ of $1/4$ are all real, and the nearest occurs in $2 (1-x) (1 + 16 x + 87 x^2 - 504 x^3 + 368 x^4)$, the polynomial of a graph with $22$ vertices.

\section{Yamada polynomial}
\label{sec:yamada-polynomial}

Yamada introduced a one-variable Laurent polynomial invariant $R_G(q)$ for spatial graphs $G$ in \cite{Yamada1989}.
It is the $\mathcal{U}_q(\mathfrak{sl}_2)$ Reshetikhin-Turaev invariant of cubic spatial graphs, coloring the edges with the $3$-dimensional irreducible representation $V_2$ with the unique nontrivial intertwiner $V_2\otimes V_2\to V_2$ at vertices, extended to arbitrary degree through contraction-deletion.

There are a few normalizations of $R_G(q)$ in the literature, and we choose one implicitly in the following definition, differing by a factor of $(-1)^{\abs{V(G)}-\abs{E(G)}}$ from the original:
\begin{definition}
  \label{def:yamada-poly}
  Let $G$ be a spatial graph.
  The Laurent polynomial $R_G(q)$ is determined by the following properties:
  \begin{enumerate}
  \item If $G$ has a diagram with no (classical) crossings, then \[R_G(q)=F_G((q^{1/2}+q^{-1/2})^2),\] where $F_G$ is the flow polynomial of this diagram.
  \item There is the following local relation:
    \begin{equation*}
      \gRight_q=q\,\gUp_q+q^{-1}\,\gSide_q-\gX_q,
    \end{equation*}
    where all four disks represent diagrams that differ only within the disk in the way represented, and where $G_q$ is shorthand for $R_G(q)$.
  \end{enumerate}
\end{definition}

One may evaluate the Yamada polynomial of a spatial graph whose diagram has $n$ crossings by expanding all the crossings with the local relation to get $3^n$ planar graphs, and then evaluating the flow polynomial for each expansion.

If $G$ is a cubic flat vertex graph, then $R_G(q)$ is well-defined up to a power of $q^2$.
If it is merely a cubic pliable vertex graph, then it is well-defined up to a power of $-q$.
Thus, if $G$ is pliable isotopic to a planar graph, $R_G(q)=(-q)^k F_G((q^{1/2}+q^{-1/2})^2)$, for some power of $k$.

There is an extension of the Yamada polynomial to virtual spatial graphs by Fleming and Mellor in \cite{Fleming2007}, which we will denote by $R^F_G(q)$, where the flow polynomial is used for the expansions.
This polynomial has the weakness that it is unable to distinguish the graphs in Figure~\ref{fig:inequivalent-theta-graphs}.
We suggest a different extension of the Yamada polynomial for virtual spatial graphs that is, however, able to distinguish these graphs.
This is essentially a quantum virtual link invariant as Kauffman defined them in \cite{Kauffman1999}, but for virtual spatial graphs.
\begin{definition}
  Let $G$ be a virtual spatial graph. The Laurent polynomial $R^S_G(q)$ is determined by the following properties:
  \begin{enumerate}
  \item If $G$ is a virtual graph (that is, if there is a diagram for $G$ with no classical crossings), then \[R^S_G(q)=S_G((q^{1/2}+q^{-1/2})^2),\] where $S_G$ is the $S$-polynomial of this diagram.
  \item There is the following local relation:
    \begin{equation*}
      \gRight_q=q\,\gUp_q+q^{-1}\,\gSide_q-\gX_q
    \end{equation*}
    with the same convention as in Definition~\ref{def:yamada-poly}, but with $R^S$ instead of $R$.
  \end{enumerate}
\end{definition}
This is an invariant because the $S$-polynomial is locally the flow polynomial and because the Yamada polynomial is an invariant.
That is, the virtual crossings can be ``moved'' away from the region where a Reidemeister or rigid vertex isotopy move occurs.
Virtual graph moves come for free from the definition of $S_G$.
Well-definedness can be observed by writing out a state sum or by thinking about the local relation as corresponding to an expansion in $\Sc$ and therefore in $\Br$.

We can use the polynomials together to extend \cite[Proposition 5.1]{Miyazawa2007} to all virtual spatial graphs, rather than just virtual links:
\begin{theorem}
  \label{thm:R-polys-detect}
  If $G$ is a virtual spatial graph and $R^F_G(q)\neq R^S_G(q)$, then $G$ is not equivalent to a classical spatial graph.
  If additionally $G$ is cubic, then $G$ is not pliable vertex isotopy equivalent to a classical spatial graph.
\end{theorem}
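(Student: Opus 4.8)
The plan is to prove the contrapositive of each statement. For the first: if $G$ is equivalent to a classical spatial graph $G_0$, then since $R^F$ and $R^S$ are honest invariants of virtual spatial graphs under virtual moves, regular isotopy, and rigid vertex isotopy, it suffices to show $R^F_{G_0}(q)=R^S_{G_0}(q)$ for $G_0$ a classical spatial graph (one admitting a diagram with no virtual crossings).

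The engine of that comparison is the fact, recorded just after Definition~\ref{def:s-polynomial}, that $S_H=F_H$ whenever $H$ is a planar (genus-zero) virtual graph, together with the fact that $R^F$ and $R^S$ are built from the \emph{same} crossing-expansion relation (Definition~\ref{def:yamada-poly}), differing only in which crossingless invariant---the flow polynomial for $R^F$, the $S$-polynomial for $R^S$---is applied on the leaves of the expansion. First I would fix a diagram $D$ of $G_0$ having no virtual crossings. Expanding each of its classical crossings by $\gRight=q\,\gUp+q^{-1}\,\gSide-\gX$ produces a $\Z[q^{\pm1}]$-combination of diagrams, each still without virtual crossings, hence each representing a planar virtual graph (possibly with $4$-valent vertices). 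On every such leaf $S$ coincides with $F$, and $F$ of a planar virtual graph diagram is just the flow polynomial of its underlying abstract graph, which is exactly the quantity $R^F$ assigns to that leaf. Matching the two expansions term by term yields $R^F_{G_0}=R^S_{G_0}$, hence $R^F_G=R^S_G$; the contrapositive is the first claim.

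For the cubic statement I would upgrade rigid vertex isotopy to pliable vertex isotopy, whose only extra generator is the vertex-flip move VI. The point to establish is that move VI rescales $R^F$ and $R^S$ by \emph{the same} power of $-q$: diagrammatically, flipping a trivalent vertex is realized by a rigid-vertex move combined with Reidemeister-I curls on its incident edges, and the normalization factor introduced by such a curl is computed from a single edge or a small loop---that is, from a planar local picture in which the $S$-polynomial and the flow polynomial agree---so it cannot depend on which of the two polynomials we track. Consequently, if $G$ is cubic and pliable-vertex-isotopy equivalent to a classical $G_0$, then $R^F_G=(-q)^{k}R^F_{G_0}$ and $R^S_G=(-q)^{k}R^S_{G_0}$ for one and the same integer $k$ depending only on the chosen isotopy; combined with $R^F_{G_0}=R^S_{G_0}$ from the first part, this gives $R^F_G=R^S_G$. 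This parallels Miyazawa's argument for virtual links in \cite[Proposition 5.1]{Miyazawa2007}, with the flow/$S$-polynomial playing the role of the Kauffman-bracket loop count.

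The main obstacle is precisely this last point: pinning down how move VI acts on $R^F$ and $R^S$ and verifying that the discrepancy is an overall power of $-q$ that is identical for both. This demands a local analysis of the diagrammatic implementation of the cubic-vertex flip and a check that every normalization factor that arises lives in the genus-zero regime where $S=F$, so that it is common to the two theories. The first statement, by contrast, is essentially formal once the termwise matching of crossing expansions over planar leaves is set up.
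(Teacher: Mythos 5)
Your proposal is correct and follows essentially the same route as the paper: both arguments reduce to the fact that on a classical (virtual-crossing-free) diagram the crossing expansions of $R^F$ and $R^S$ agree term by term because $S=F$ on planar leaves, so both polynomials coincide with the classical Yamada polynomial $R_G(q)$, and for the cubic case both pick up the same powers of $-q$ under pliable vertex isotopy. Your write-up simply spells out in more detail the two facts the paper's proof invokes in one line each.
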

\begin{proof}
  If $G$ were equivalent to a classical spatial graph, both of these polynomials would coincide with $R_G(q)$.
  In the case $G$ is cubic, both $R^F$ and $R^S$ have the same relations for pliable vertex isotopy as does $R$, and so both sides accumulate the same number of factors of $-q$ in a sequence of pliable vertex isotopy moves.
\end{proof}

\begin{example}
  For example, if $G$ is the toroidal theta graph in Figure~\ref{fig:inequivalent-theta-graphs},
  \begin{align*}
    R^F_G(q)&=(q+q^{-1})(q+1+q^{-1})\\
    R^S_G(q)&=-2(q+1+q^{-1}),
  \end{align*}
  so $G$ is not pliable vertex isotopy equivalent to a classical spatial graph.
  Note also that $R^F$ is the same for both theta graphs.
\end{example}

\begin{example}
  Consider $D$ from \cite{Kauffman1999} (see Figure~\ref{fig:kauffman-knot-D}), a virtual knot with trivial virtual Jones polynomial that Kauffman's $Z$ refinement was unable to tell whether or not was classical.
  \begin{align*}
    R^F_D(q) &= q^2(q+1+q^{-1})\\
    R^S_D(q) &= -(q-1-q^{-1})(q^2+q-q^{-1})(q+1+q^{-1}).
  \end{align*}
\end{example}

\begin{example}
  The virtual knot $K$ in Figure~\ref{fig:virtual-toroidal-triv-invt} has $R^F_K(q)=R^S_K(q)=q+1+q^{-1}$.
  $K$ is nonclassical\cite{Kauffman1999}, hence Theorem~\ref{thm:R-polys-detect} is not a necessary condition for non-classicality, and there are nontrivial virtual knots with the same $R^F$ and $R^S$ polynomials as the unknot's.
\end{example}

\begin{example}
  If $U$ is the unlink of two circles, $R^F_U(q)=R^S_U(q)=(q+1+q^{-1})^2$.
  If $L$ is the ``virtual unlink'' in Figure~\ref{fig:links-two-circles},
  \begin{align*}
    R^F_L(q) &= -(q+1+q^{-1})\\
    R^S_L(q) &= (q+1+q^{-1})^2,
  \end{align*}
  hence $R^F$ can distinguish $L$ from $U$, even up to framing change.
  Paired with the theta graph example, we see that neither $R^F$ nor $R^S$ is a finer invariant than the other.
\end{example}

\begin{figure}[htp]
  \centering
  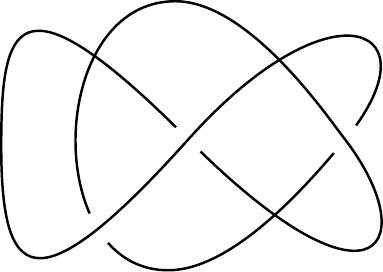
  \caption{Virtual knot $D$ from \cite{Kauffman1999} with trivial virtual Jones polynomial.}
  \label{fig:kauffman-knot-D}
\end{figure}
\begin{figure}[htp]
  \centering
  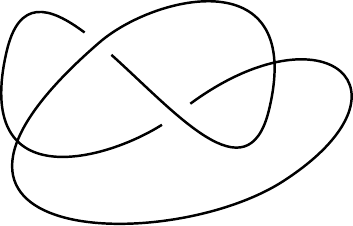
  \caption{A virtual knot of virtual genus $1$ for which Theorem~\ref{thm:R-polys-detect} cannot detect non-classicality.}
  \label{fig:virtual-toroidal-triv-invt}
\end{figure}
\begin{figure}[htp]
  \centering
  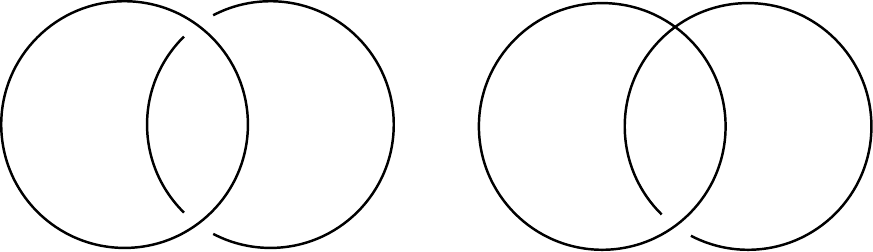
  \caption{Unlink and ``virtual unlink,'' distinguishable by $R^F$ and not $R^S$.}
  \label{fig:links-two-circles}
\end{figure}

\begin{remark}
  \label{rem:alg-virt-graph-genus}
  This theorem gives an algebraic proof of a weaker form of Corollary~\ref{cor:virtual-graph-genus}: if $G$ is a virtual graph with no bridge edges, then while Theorem~\ref{thm:s-poly-degree} implies $\deg S_G(Q)\leq b_1(G)-g(G)$, it is well known that $\deg F_G(Q)=b_1(G)$, so if $g(G)\neq 0$, Theorem~\ref{thm:R-polys-detect} implies that the virtual genus of $G$ as a virtual spatial graph is nonzero.
\end{remark}

\begin{figure}[htp]
  \centering
\begingroup%
  \makeatletter%
  \providecommand\color[2][]{%
    \errmessage{(Inkscape) Color is used for the text in Inkscape, but the package 'color.sty' is not loaded}%
    \renewcommand\color[2][]{}%
  }%
  \providecommand\transparent[1]{%
    \errmessage{(Inkscape) Transparency is used (non-zero) for the text in Inkscape, but the package 'transparent.sty' is not loaded}%
    \renewcommand\transparent[1]{}%
  }%
  \providecommand\rotatebox[2]{#2}%
  \newcommand*\fsize{\dimexpr\f@size pt\relax}%
  \newcommand*\lineheight[1]{\fontsize{\fsize}{#1\fsize}\selectfont}%
  \ifx\svgwidth\undefined%
    \setlength{\unitlength}{310.97474612bp}%
    \ifx\svgscale\undefined%
      \relax%
    \else%
      \setlength{\unitlength}{\unitlength * \real{\svgscale}}%
    \fi%
  \else%
    \setlength{\unitlength}{\svgwidth}%
  \fi%
  \global\let\svgwidth\undefined%
  \global\let\svgscale\undefined%
  \makeatother%
  \begin{picture}(1,0.12974238)%
    \lineheight{1}%
    \setlength\tabcolsep{0pt}%
    \put(0.1888963,0.06219678){\color[rgb]{0,0,0}\makebox(0,0)[lt]{\lineheight{0}\smash{\begin{tabular}[t]{l}$\leftrightarrow$\end{tabular}}}}%
    \put(0,0){\includegraphics[width=\unitlength,page=1]{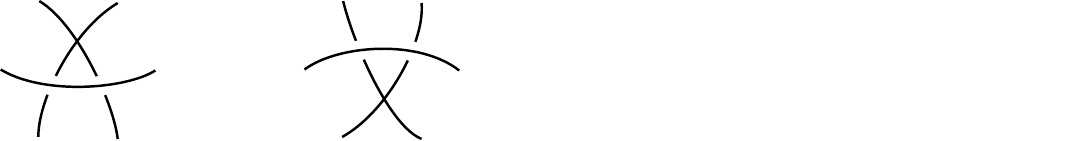}}%
    \put(0.7628981,0.06219678){\color[rgb]{0,0,0}\makebox(0,0)[lt]{\lineheight{0}\smash{\begin{tabular}[t]{l}$\leftrightarrow$\end{tabular}}}}%
    \put(0,0){\includegraphics[width=\unitlength,page=2]{forbidden-moves.pdf}}%
  \end{picture}%
\endgroup%

  \caption{The so-called ``forbidden moves'' for virtual spatial graphs, which can change the graph type.  They leave $R^S_G(q)$ invariant at $q=\pm 1,e^{\pm 2\pi i/3}$ and $R^F_G(q)$ invariant at $q=\pm 1,\pm i,e^{\pm 2\pi i/3}$, by analysis of negligible elements.}
  \label{fig:forbidden-moves}
\end{figure}

For the purpose of comparing $R^F_G(1)$ and $F_G(4)$, we introduce an invariant of virtual spatial graphs that is related to the planarity obstruction of \cite{Tutte1970,Sarkaria1991} and gives an algebraic characterization of a version of \cite[Proposition~1]{Fleming2007}.
The invariant is insensitive to crossing changes ($\raisebox{-6pt}{\marginbox{1.5pt 0pt}{}}\leftrightarrow\raisebox{-6pt}{\marginbox{1.5pt 0pt}{}}$) but is sensitive to the rotation system.
For a virtual spatial graph $G$ with oriented edges, let $G'$ denote its underlying ribbon graph, meaning the virtual graph obtained by replacing classical crossings with virtual crossings.
Have the group $C_2=\langle\tau:\tau^2=1\rangle$ act on $G'\times G'$ by $\tau(x,y)=(y,x)$, and let $\Sym^2G'$ denote the symmetric product $(G'\times G')/C_2$.
After deformation retracting the quotients of the $2$-cells intersecting the diagonal, we can describe the cell structure of $\Sym^2G'$ as follows: the $0$-cells correspond to unordered pairs of vertices, the $1$-cells to $V(G)\times E(G)$, and the $2$-cells to unordered pairs of distinct edges.
Write elementary $2$-cocycles of $\Sym^2G'$ as $e\wedge f$ for $e,f\in E(G')$, which is dual to the quotient of the cell $e\times f$.
This notation is indeed meant to suggest an exterior square: $e\wedge e=0$ since there are no diagonal $2$-cells, and $e\wedge f=-f\wedge e$ since $\tau$ reverses orientation.
Coboundaries are generated by elements of the form $\delta(x,e)=\sum_i \pm f_i\wedge e$ for all edges $f_i$ incident to vertex $x$, with sign determined by the orientation of $f_i$ at $x$.

\begin{definition}
  Let $G$ be a virtual spatial graph with oriented edges and consider a particular diagram for it.
  For a ring $R$, let $\overline{\mathfrak{o}}(G;R)\in H^2(\Sym^2G';R)$ be the cohomology class with representative cocycle $\alpha$ such that $\alpha(e\times f)$ is the algebraic intersection number of edges $e$ and $f$ at the classical crossings.
  Concretely, associate to each crossing $\raisebox{-6pt}{\marginbox{1.5pt 0pt}{}}$ and $\raisebox{-6pt}{\marginbox{1.5pt 0pt}{}}$ the value $e\wedge f\in H^2(\Sym^2G')$, where $e,f\in E(G')$ are the two edges taking part in the crossing, with $e$ being the edge pointing toward the top-right --- note that the sign of the crossing is taken into consideration but not its type.
  Then $\overline{\mathfrak{o}}(G;R)$ is the sum of these values over all crossings.
\end{definition}

\begin{remark}
  One could equivalently place the invariant in $H_{C_2}^2(G'\times G';R)$, the cohomology of $C_2$-equivariant cocycles, with $R$ as a trivial $C_2$-module.
\end{remark}

\begin{lemma}
  $\overline{\mathfrak{o}}(G;\mathbb{Z})$ is a complete invariant of virtual spatial graphs with oriented edges up to (a) move I, (b) the ``forbidden moves'' in Figure~\ref{fig:forbidden-moves}, and (c) crossing change ($\raisebox{-6pt}{\marginbox{1.5pt 0pt}{}}\leftrightarrow \raisebox{-6pt}{\marginbox{1.5pt 0pt}{}}$).
\end{lemma}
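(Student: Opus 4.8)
The plan is to prove the two implications separately: that each allowed move preserves $\overline{\mathfrak{o}}(G;\mathbb{Z})$, and conversely that two diagrams with the same class are related by the allowed moves together with the virtual-spatial-graph moves already present in the equivalence (Figures~\ref{fig:virt-graph-moves}, \ref{fig:regular-isotopy}, and \ref{fig:vert-crossing-moves}). Throughout I would work with the representing cocycle $\alpha$ on $\Sym^2 G'$ and track how it changes. First I would note that the underlying virtual graph $G'$ is itself unchanged by every move in play — a forbidden move becomes, after forgetting over/under, two virtual crossings related by move~II*, so it does not alter $G'$, and move~I and crossing change visibly do not either — so it makes sense to compare classes in a fixed group $H^2(\Sym^2 G';\mathbb{Z})$, and ``complete invariant'' is understood to include the datum of $G'$.

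For invariance, observe that the cocycle $\alpha$ is literally unchanged by moves~II and III of Figure~\ref{fig:regular-isotopy} and by the virtual detour moves: move~II inserts or deletes two crossings of a fixed edge pair $\{e,f\}$ of opposite sign, whose contributions $e\wedge f$ and $-e\wedge f$ cancel at the cell $e\times f$; move~III permutes three crossings among edges $e,f,g$ without changing the sign of any of the three pairings; and the detour moves touch only virtual crossings. Move~I and the crossing change $\Crright\leftrightarrow\Crleft$ likewise leave $\alpha$ unchanged, the latter because $\overline{\mathfrak{o}}$ records no over/under information and the former because a kink is a self-crossing, which is not recorded ($e\wedge e=0$, there being no diagonal $2$-cells). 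A forbidden move commutes a classical crossing past an adjacent classical or virtual crossing along a shared strand; in both configurations the relevant edge pairs contribute the same terms with the same signs, so $\alpha$ is again unchanged. The only moves that change $\alpha$ are those rigid-vertex moves of Figure~\ref{fig:vert-crossing-moves} that slide a strand entirely over or under a vertex $x$: such a move adds or removes the crossings of the sliding edge $f$ with each edge incident to $x$, changing $\alpha$ by exactly $\pm\delta(x,f)$, a coboundary. Hence the cohomology class is preserved by everything, which is the ``$\Leftarrow$'' direction.

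For completeness I would put an arbitrary diagram into a normal form depending only on $G'$ and $\overline{\mathfrak{o}}(G;\mathbb{Z})$. Fix a standard planar diagram of $G'$ and, for each unordered pair of distinct edges $\{e,f\}$, a small ``clasp window'' $w_{ef}$ placed near it. Given a diagram $D$: use crossing changes to make over/under irrelevant; then, one classical crossing at a time, slide it along one of its two edges into the appropriate window $w_{ef}$. This is possible because a classical crossing may be pushed past any other crossing met along a strand (a forbidden move past a classical one, a detour move past a virtual one) while an edge is an interval or a circle, so the crossing can always travel to its window. Inside $w_{ef}$, crossings of opposite sign cancel in pairs by move~II (after a crossing change to make the strands cancel), leaving $|n_{ef}|$ crossings of a fixed sign, where $n_{ef}$ is the value at $e\times f$ of the resulting cocycle $\alpha_D$. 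Two diagrams reduced this way with the same cocycle are then identical up to move~III and the virtual and planar moves, since every classical crossing lives in a window and the windows are filled in a prescribed way. Finally, if $\overline{\mathfrak{o}}(D_1)=\overline{\mathfrak{o}}(D_2)$ then $\alpha_{D_1}$ and $\alpha_{D_2}$ differ by a coboundary $\sum_j\pm\delta(x_j,f_j)$; realizing each $\delta(x_j,f_j)$ by a strand-over-vertex move as above, I can arrange the two normal forms to have equal cocycles, and then they coincide.

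The main obstacle I anticipate is the connectivity claim buried in the normal-form step: showing that a classical crossing can always be transported along a chosen edge into its window, past every type of obstruction it can meet — another classical crossing on the same strand, a vertex, or a virtual crossing — using only crossing changes, the Reidemeister and detour moves, and the two forbidden moves. This amounts to a finite but somewhat delicate check of local pictures, a signed refinement of the Tutte--Sarkaria analysis underlying \cite{Tutte1970, Sarkaria1991}; once it is in place, the remaining bookkeeping with coboundaries in $H^2(\Sym^2 G';\mathbb{Z})$ is routine.
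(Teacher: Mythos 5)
Your proposal is correct and follows essentially the same route as the paper: invariance is checked move by move with the strand-over-vertex move (IV) contributing exactly a coboundary $\pm\delta(x,f)$, and completeness is proved by commuting classical crossings along edges via the forbidden moves, removing self-crossings with move I and opposite-sign pairs with move II, leaving a normal form determined by the cocycle up to coboundaries realized by move IV. The ``delicate local check'' you flag as the main obstacle (that crossings slide past classical crossings by forbidden moves and past virtual ones by detour moves) is precisely the fact the paper imports from Fleming--Mellor's Proposition~1 and Nelson, so you could cite it rather than reprove it.
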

\begin{proof}
  First is to show that $\overline{\mathfrak{o}}(G;\mathbb{Z})$ is an invariant.
  Move I is that $e\wedge e=0$ for $e\in E(G')$.
  Move II is that $e\wedge f+ f\wedge e=0$ for $e,f\in E(G')$.
  Move III is that all three edges take part in the same crossings with the same sign.
  Move IV is that for a vertex $v$ whose incident edges are $e_1,\dots,e_n$ with an edge $f$ crossing all these edges with the same sign as in the diagram, $e_1\wedge f+\dots+e_n\wedge f=(e_1+\dots+e_n)\wedge f=(\delta v)\wedge f=\delta(v, f)$, which is zero in $H^2(\Sym^2G')$.
  As was the case for move III, the forbidden moves do not change which edges cross and with what sign.
  Lastly, for crossing change, $\overline{\mathfrak{o}}(G;\mathbb{Z})$ only uses the sign of the crossing, not the crossing type.

  Completeness follows from \cite[Proposition~1]{Fleming2007}:
  The forbidden moves allow crossings along an edge to commute\cite{Nelson2001} in the sense that
  \begin{equation*}
    \raisebox{-17.5pt}{\marginbox{4pt 0}{
\begingroup%
  \makeatletter%
  \providecommand\color[2][]{%
    \errmessage{(Inkscape) Color is used for the text in Inkscape, but the package 'color.sty' is not loaded}%
    \renewcommand\color[2][]{}%
  }%
  \providecommand\transparent[1]{%
    \errmessage{(Inkscape) Transparency is used (non-zero) for the text in Inkscape, but the package 'transparent.sty' is not loaded}%
    \renewcommand\transparent[1]{}%
  }%
  \providecommand\rotatebox[2]{#2}%
  \newcommand*\fsize{\dimexpr\f@size pt\relax}%
  \newcommand*\lineheight[1]{\fontsize{\fsize}{#1\fsize}\selectfont}%
  \ifx\svgwidth\undefined%
    \setlength{\unitlength}{108.75000189bp}%
    \ifx\svgscale\undefined%
      \relax%
    \else%
      \setlength{\unitlength}{\unitlength * \real{\svgscale}}%
    \fi%
  \else%
    \setlength{\unitlength}{\svgwidth}%
  \fi%
  \global\let\svgwidth\undefined%
  \global\let\svgscale\undefined%
  \makeatother%
  \begin{picture}(1,0.34482759)%
    \lineheight{1}%
    \setlength\tabcolsep{0pt}%
    \put(0.44314891,0.16107191){\color[rgb]{0,0,0}\makebox(0,0)[lt]{\lineheight{0}\smash{\begin{tabular}[t]{l}$\leftrightarrow$\end{tabular}}}}%
    \put(0,0){\includegraphics[width=\unitlength,page=1]{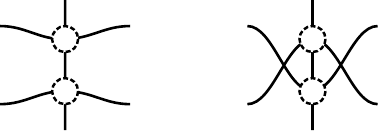}}%
  \end{picture}%
\endgroup%
}},
  \end{equation*}
  with the dotted circles containing classical crossings of either type, and each crossing slides across the other.
  Using this operation, all self-crossings along an edge can be removed using move I.
  Furthermore, between a pair of edges, crossings of opposite sign can be removed using move II.
  Once in this form, $\overline{\mathfrak{o}}(G;\mathbb{Z})$ gives the number of times pairs of edges cross and with what sign, up to move IV.
\end{proof}

\begin{lemma}
  \label{lem:vsg-z2-invt}
  For a virtual spatial graph $G$ up to move I, the forbidden moves, crossing change, and the virtualization move
  \begin{equation*}
    \raisebox{-17.5pt}{\marginbox{4pt 0}{
\begingroup%
  \makeatletter%
  \providecommand\color[2][]{%
    \errmessage{(Inkscape) Color is used for the text in Inkscape, but the package 'color.sty' is not loaded}%
    \renewcommand\color[2][]{}%
  }%
  \providecommand\transparent[1]{%
    \errmessage{(Inkscape) Transparency is used (non-zero) for the text in Inkscape, but the package 'transparent.sty' is not loaded}%
    \renewcommand\transparent[1]{}%
  }%
  \providecommand\rotatebox[2]{#2}%
  \newcommand*\fsize{\dimexpr\f@size pt\relax}%
  \newcommand*\lineheight[1]{\fontsize{\fsize}{#1\fsize}\selectfont}%
  \ifx\svgwidth\undefined%
    \setlength{\unitlength}{108.7499991bp}%
    \ifx\svgscale\undefined%
      \relax%
    \else%
      \setlength{\unitlength}{\unitlength * \real{\svgscale}}%
    \fi%
  \else%
    \setlength{\unitlength}{\svgwidth}%
  \fi%
  \global\let\svgwidth\undefined%
  \global\let\svgscale\undefined%
  \makeatother%
  \begin{picture}(1,0.3448276)%
    \lineheight{1}%
    \setlength\tabcolsep{0pt}%
    \put(0,0){\includegraphics[width=\unitlength,page=1]{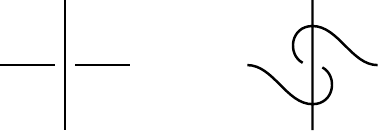}}%
    \put(0.44314885,0.16107194){\color[rgb]{0,0,0}\makebox(0,0)[lt]{\lineheight{0}\smash{\begin{tabular}[t]{l}$\leftrightarrow$\end{tabular}}}}%
  \end{picture}%
\endgroup%
}},
  \end{equation*}
  $\overline{\mathfrak{o}}(G;\mathbb{Z}/2\mathbb{Z})$ is a complete invariant.
\end{lemma}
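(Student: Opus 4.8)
The plan is to bootstrap off the previous lemma, which shows that $\overline{\mathfrak{o}}(G;\mathbb{Z})$ is a complete invariant of virtual spatial graphs with oriented edges up to move I, the forbidden moves, and crossing change. The new ingredient is the virtualization move, and the key point is that it has a completely controlled effect on $\overline{\mathfrak{o}}(G;\mathbb{Z})$: reading the move off the figure, virtualizing a classical crossing between edges $e$ and $f$ reverses the algebraic intersection sign at that crossing and affects no other crossing, so it changes the representing cocycle $\alpha$, and hence the class $\overline{\mathfrak{o}}(G;\mathbb{Z})$, by $\pm 2\,(e\wedge f)$. One also notes that the underlying virtual graph $G'$ — and therefore the space $\Sym^2 G'$ in which $\overline{\mathfrak{o}}$ lives — is preserved (up to virtual-graph equivalence) by move I, the forbidden moves, crossing change, and virtualization, since each of these becomes a composition of virtual moves at the level of $G'$.

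For invariance: the other three families of moves preserve $\overline{\mathfrak{o}}(G;\mathbb{Z})$ by the previous lemma, hence preserve its reduction mod $2$; and the change $\pm 2\,(e\wedge f)$ under virtualization vanishes in $H^2(\Sym^2 G';\mathbb{Z}/2\mathbb{Z})$. So $\overline{\mathfrak{o}}(\cdot;\mathbb{Z}/2\mathbb{Z})$ descends to an invariant of the equivalence relation generated by all four families of moves.

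For completeness: suppose $G_1$ and $G_2$ have the same underlying virtual graph $G'$ and $\overline{\mathfrak{o}}(G_1;\mathbb{Z}/2\mathbb{Z})=\overline{\mathfrak{o}}(G_2;\mathbb{Z}/2\mathbb{Z})$. By naturality of $\overline{\mathfrak{o}}$ in the coefficient group together with the Bockstein sequence for $0\to\mathbb{Z}\xrightarrow{2}\mathbb{Z}\to\mathbb{Z}/2\mathbb{Z}\to 0$, the kernel of reduction $H^2(\Sym^2 G';\mathbb{Z})\to H^2(\Sym^2 G';\mathbb{Z}/2\mathbb{Z})$ is exactly $2\,H^2(\Sym^2 G';\mathbb{Z})$, so $\overline{\mathfrak{o}}(G_1;\mathbb{Z})-\overline{\mathfrak{o}}(G_2;\mathbb{Z})=2\beta$ for some integral class $\beta$. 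Since $\Sym^2 G'$ is a $2$-complex (no $3$-cells), $H^2$ is a quotient of $C^2$, which is free abelian on the unordered pairs of distinct edges; hence $\beta=\bigl[\sum_i n_i\,(e_i\wedge f_i)\bigr]$ for integers $n_i$ and edges $e_i\ne f_i$. For each $i$ I would first produce $|n_i|$ cancelling pairs of classical crossings between $e_i$ and $f_i$ — bringing an arc of $e_i$ alongside an arc of $f_i$ by a detour (this only changes the diagram by virtual-graph moves) and then applying move II, which does not change $\overline{\mathfrak{o}}$ — and then virtualize one crossing of each pair, with the orientation of the move chosen according to the sign of $n_i$. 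This turns $G_1$ into a diagram $G_1'$, related to $G_1$ only by the allowed moves, with $\overline{\mathfrak{o}}(G_1';\mathbb{Z})=\overline{\mathfrak{o}}(G_2;\mathbb{Z})$. The previous lemma then gives $G_1'\sim G_2$ up to move I, the forbidden moves, and crossing change, and hence $G_1\sim G_2$ under all four families.

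The main obstacle I anticipate is pinning down the effect of the virtualization move on $\overline{\mathfrak{o}}$ from the figure — verifying that it is precisely $\pm 2\,(e\wedge f)$, neither $0$ (which would make the mod-$2$ reduction unnecessary) nor something involving a coboundary that would complicate the bookkeeping — together with confirming that every class of the form $2\beta$ is genuinely realizable, i.e.\ that cancelling classical crossing pairs between an arbitrary prescribed pair of edges can always be introduced by the allowed moves. Both are routine diagram manipulations, but they are the crux of the argument.
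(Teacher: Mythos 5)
Your proposal is correct and rests on the same essential mechanism as the paper's proof: the virtualization move flips the sign of a classical crossing, so $\overline{\mathfrak{o}}(\cdot;\mathbb{Z})$ can be altered by arbitrary even classes while nothing else changes, and completeness then reduces to the preceding integer-coefficient lemma. The paper packages this more tersely by cancelling same-sign crossing pairs directly in the normal form obtained in the previous proof, whereas you realize the even difference via a Bockstein argument and inserted move-II pairs, but the content is the same.
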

\begin{proof}
  The virtualization move changes the sign of a crossing, allowing one to cancel out pairs of crossings between edge pairs when the virtual spatial graph is in the form in the proof of the preceding lemma.
\end{proof}

\begin{remark}
  If we wanted an invariant of pliable vertex isotopy (move VI), we could instead consider the restriction $\mathfrak{o}(G;R)\in H^2(\Sym^2G'-\Delta(G);R)$, with $\Delta:G\to G\times G$ being the diagonal map.
  The restriction to this cohomology group sends $e\wedge f$ to $0$ if the edges $e$ and $f$ are incident.
\end{remark}

\begin{remark}
  One could also define an invariant $\overline{\mathfrak{o}}(G;R)\in H^2(\Sym^2G;R)$ for a virtual graph $G$, where we take any diagram for $G$ and follow the same procedure with the virtual crossings.
  It can be shown that this is independent of the diagram.
  As an obstruction, $\overline{\mathfrak{o}}(G;\mathbb{Z}/2\mathbb{Z})$ is zero if and only if $g(G)=0$, similarly to \cite{Tutte1970,Sarkaria1991}.
\end{remark}

\begin{theorem}
  Let $G$ be a virtual spatial graph and $G'$ be its underlying ribbon graph.
  Then there are the relations $R^F_G(-1)=F_{G'}(0)=S_{G'}(0)=R^S_G(-1)$, and $R^S_G(1)=S_{G'}(4)$.

  If $\overline{\mathfrak{o}}(G;\mathbb{Z}/2\mathbb{Z})=0$, then $R^F_G(1)=F_{G'}(4)$.
  If additionally there is some $W\subset V(G')$ such that $\sigma_W G'$ is planar, then $R^F_G(1)=(-1)^{\sum_{v\in W}\deg(v)}R^S_G(1)$.
\end{theorem}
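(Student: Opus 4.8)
The plan is to reduce every evaluation to the $S$- or flow polynomial of the underlying ribbon graph $G'$ by pushing the classical crossings across suitable local relations, and then to move between the two polynomials via Propositions~\ref{prop:s0-f0} and~\ref{prop:s4-f4}. Expanding each classical crossing of a diagram for $G$ by the Yamada relation $\gRight_q = q\,\gUp_q + q^{-1}\,\gSide_q - \gX_q$ (and its mirror for $\gLeft$) writes $R^F_G(q)$ and $R^S_G(q)$ as state sums over the $3^c$ crossingless resolutions, $c$ being the number of classical crossings, each term a flow polynomial (for $R^F$) resp.\ an $S$-polynomial (for $R^S$) of a virtual graph at $(q^{1/2}+q^{-1/2})^2$, weighted by a monomial in $q^{\pm 1}$ and a sign; since $(q^{1/2}+q^{-1/2})^2$ equals $0$ at $q=-1$ and $4$ at $q=1$, all the evaluations in the statement are of this type. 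For the first claim I would establish the local relation $[\gUp]+[\gSide]+[\gX]+[\gV]=0$ for the flow polynomial at $Q=0$ — equivalently, that replacing one classical crossing of a diagram by a virtual crossing leaves $R^F_{\cdot}(-1)$ unchanged — by checking that $\gUp+\gSide+\gV+\gX$ is negligible for the trace form on $\Flow^0([2],[2])$; the finitely many closures are flow polynomials of small graphs at $0$, and since at $Q=0$ the flow polynomial is an abstract-graph invariant the rotation at the $\gX$-vertex is irrelevant. Applying this at all $c$ crossings gives $R^F_G(-1)=R^F_{G'}(-1)=F_{G'}(0)$; the identical computation in $\Sc^0$, which coincides with $\Flow^0$ by Proposition~\ref{prop:s0-f0}, gives $R^S_G(-1)=S_{G'}(0)$, and $F_{G'}(0)=S_{G'}(0)$ is Proposition~\ref{prop:s0-f0} again.

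The second claim runs the same argument at $Q=4$: I would show $[\gUp]+[\gSide]-[\gX]-[\gV]=0$ as a local relation for the $S$-polynomial at $Q=4$, i.e.\ that $\gUp+\gSide-\gX-\gV$ is negligible for the trace form on $\Sc^4([2],[2])$, and then iterate over the $c$ crossings to get $R^S_G(1)=S_{G'}(4)$. Here the bookkeeping is genuinely more delicate than at $Q=0$: the $\gX$-term carries the cyclic order coming from the crossing, the $\gV$-term is a true virtual crossing which can raise the genus of a closed-up diagram, and the closures must therefore be computed as $S$-polynomials at $4$, not naively as flow polynomials. Lemma~\ref{lemma:q4-vtx-flip} (reversing the rotation of a degree-four vertex is trivial at $Q=4$) cuts down the number of cases, and alternatively one can verify the relation through the $\Br^{-2}$ presentation of $S_{\cdot}(4)$, where the binor identity $\BrX=-\BrI-\BrE$ is available. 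The third claim is instead topological: if $\overline{\mathfrak{o}}(G;\mathbb{Z}/2\mathbb{Z})=0$ then, as the all-virtual diagram $G'$ has vanishing obstruction too, Lemma~\ref{lem:vsg-z2-invt} shows $G$ and $G'$ are related by a sequence of move~I, forbidden moves, crossing changes, and virtualizations, and one checks that $R^F_{\cdot}(1)$ is preserved by each: at $q=1$ the relations for $\gRight$ and $\gLeft$ agree, so crossing change is invariant; virtual crossings are invisible to the flow polynomial, so virtualization reduces to a crossing change; move~I is handled by a direct expansion in which the two curl-smoothings and the $\gX$-term cancel; and the forbidden moves preserve $R^F_{\cdot}(1)$ by the analysis in Figure~\ref{fig:forbidden-moves}. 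Hence $R^F_G(1)=R^F_{G'}(1)=F_{G'}(4)$, and the fourth claim follows by combining this with the second claim and with Proposition~\ref{prop:s4-f4} applied to $G'$, which gives $F_{G'}(4)=(-1)^{\sum_{v\in W}\deg(v)}S_{G'}(4)$ whenever $\sigma_W G'$ is planar.

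The step I expect to be the main obstacle is proving the two local relations at $Q=0$ and $Q=4$ with the right signs — in particular, correctly accounting for the cyclic order at the $\gX$-vertex and for the genus contributed by the $\gV$-crossing, so that the negligibility computations really do cancel against every completion to a closed diagram.
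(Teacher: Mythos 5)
Your proposal is correct and follows essentially the same route as the paper: the crossing-replacement relations at $q=\pm 1$ via negligible elements of $\Sc_2$ (and of $\Flow_2$ at $q=-1$), Proposition~\ref{prop:s0-f0} for the $Q=0$ identification, the invariance of $R^F(1)$ under move~I, crossing change, virtualization, and the forbidden moves combined with Lemma~\ref{lem:vsg-z2-invt} for the third claim, and Proposition~\ref{prop:s4-f4} for the final sign relation. The negligibility computations you flag as the main obstacle are exactly the ones the paper asserts, so no substantive gap remains.
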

\begin{proof}
  At $q=\pm1$, the algebra $\Sc_2^{(q^{1/2}+q^{-1/2})^2}$ has the negligible element
  \[
    q \raisebox{-6pt}{\marginbox{1.5pt 0pt}{}} + q^{-1}\raisebox{-6pt}{\marginbox{1.5pt 0pt}{}} - \raisebox{-6pt}{\marginbox{1.5pt 0pt}{}} - \raisebox{-6pt}{\marginbox{1.5pt 0pt}{}},
  \]
  hence $R^S$ has the relation $\gRight_q=\gV_q$ at $q=\pm 1$.
  Thus $R^S_G(1)=S_{G'}(4)$ and $R^S_G(-1)=S_{G'}(0)$.
  Similarly, at $q=-1$, $\Flow_2^{(q^{1/2}+q^{-1/2})^2}$ has a negligible element of the same form, hence $R^F_G(-1)=F_{G'}(0)$.
  Proposition~\ref{prop:s0-f0} gives $S_{G'}(0)=F_{G'}(0)$.

  For $q=1$, we aim to apply Lemma~\ref{lem:vsg-z2-invt}.
  $R^F(1)$ is invariant under move I.
  The ninety-degree rotational symmetry of the right-hand side of the crossing relations for the $R^F$ polynomial gives $\gRight_1=\gLeft_1$, and by inspecting the expansion we can see it is invariant under the virtualization move.
  Through the analysis of negligible elements, $R^F(1)$ is invariant under the forbidden moves of Figure~\ref{fig:forbidden-moves}.
  Hence by Lemma~\ref{lem:vsg-z2-invt}, if $\overline{\mathfrak{o}}(G;\mathbb{Z}/2\mathbb{Z})=0$, there is a diagram for $G$ up to these moves with no classical crossings.
  By the definition of $R^F$, $R^F_G(1)=F_{G'}(4)$.

  If there is a $W\subset V(G')$ such that $\sigma_W G'$ is planar, then Proposition~\ref{prop:s4-f4} gives $(-1)^n S_{G'}(4)=F_{G'}(4)$ with $n=\sum_{v\in W}\deg(v)$.
\end{proof}
\begin{remark}
  If $G$ is cubic and $\mathfrak{o}(G;\mathbb{Z}/2\mathbb{Z})=0$, then $R^F_G(1)=\pm F_{G'}(4)$ since flipping a trivalent vertex negates the $R^F$ polynomial.
\end{remark}

\begin{example}
  For the ``virtual unlink'' $L$ in Figure~\ref{fig:links-two-circles}, $R_L^F(1)=-3$ yet $F_L(4)=9$.
\end{example}

\subsection{The golden inequality conjecture for virtual spatial graphs}
\label{sec:golden-conj-vsg}

In \cite{Agol2018}, Agol and Krushkal establish an extension of Tutte's golden identity for the flow polynomial of planar cubic graphs to cubic spatial graphs.
With the normalization used in this paper, if $G$ is a cubic spatial graph,
\begin{equation*}
  R_G(e^{\pi i/5})= \phi^{\abs{E(G)}} R_G(e^{-2\pi i/5})^2,
\end{equation*}
where $\phi=\frac{1}{2}(1+\sqrt{5})$ is the golden ratio.
They conjecture that for abstract cubic graphs, Tutte's golden identity holds if and only if the graph is planar.

The virtual spatial graphs in Figure~\ref{fig:virtual-spatial-golden} are a handcuff graph $H$ and a theta graph $\Theta$ having the following Yamada polynomials:
\begin{align*}
  R^F_H(q) &= -q^{-2}(q-1)(q+1)^2(q+q^{-1})(q-1+q^{-1})(q+1+q^{-1}) \\
  R^S_H(q) &= -q^{-2}(q-1)(q+1)^2(q+1+q^{-1})(q^2-2q+4-2q^{-1}+q^{-2}) \\
  R^F_\Theta(q) &= q^{-2}(q+q^{-1})(q+1+q^{-1}) \\
  R^S_\Theta(q) &= -q^{-1}(q^2-3)(q+1+q^{-1})
\end{align*}
By Theorem~\ref{thm:R-polys-detect}, neither virtual spatial graph is equivalent to a virtual graph, hence it is not the case that the virtual genus is $0$ if and only if $R^F_G(e^{\pi i/5})= \phi^{\abs{E(G)}} R^F_G(e^{-2\pi i/5})^2$.

\begin{figure}[htp]
  \centering
  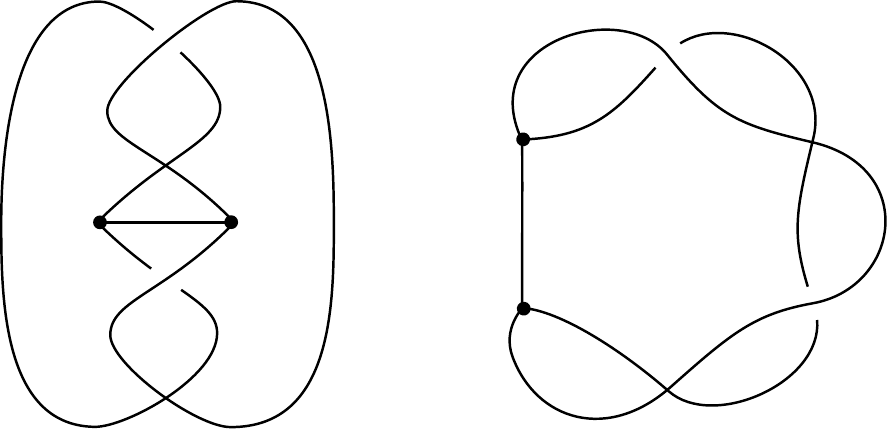
  \caption{Non-classical virtual spatial graphs with $R^F_G(e^{\pi i/5})= \phi^{\abs{E(G)}} R^F_G(e^{-2\pi i/5})^2$.}
  \label{fig:virtual-spatial-golden}
\end{figure}

\section*{Acknowledgments}

We would like to thank Noah Snyder for helpful advice and for introducing us to the connection between diagrams on surfaces and virtual crossings, Slava Krushkal for the suggestion to study chromatic identities for surface graphs, Paul Fendley for encouragement, and Ian Agol for valuable discussion.
K.\ M. was supported by NSF RTG grant DMS-1344991 and the Simons Foundation.

\let\MRhref\undefined
\bibliographystyle{hamsplain}
\bibliography{sources}

\end{document}
